\newcommand{\subscript}[2]{$#1 _ #2$}
\newtheorem{theorem}{Theorem}[section]
\newtheorem{lemma}[theorem]{Lemma}
\theoremstyle{remark}
\newtheorem{remark}[theorem]{Remark}
\def\d{{\rm d}}
\def\n{\mathbf{n}}
\newcommand{\numberset}{\mathbb}
\newcommand{\R}{\numberset{R}}
\def \au {\rm}
\def \ti {\it}
\def \jou {\rm}
\def \bk {\it}
\def \no#1#2#3 {{\bf #1} (#3), #2.}
\def \eds#1#2#3 {#1, #2, #3.}
\begin{document}
\title[Global well-posedness and asymptotic behavior for the nonlocal AGG
model]{Global well-posedness and convergence to equilibrium for the
Abels-Garcke-Gr\"{u}n model with nonlocal free energy}
\author{\textsc{Ciprian G. Gal}$^{\dagger }$, \textsc{Andrea Giorgini}$%
^{\ast}$, \textsc{Maurizio Grasselli}$^{\ast }$, \textsc{Andrea Poiatti}$%
^{\ast }$}
\address{$^\dagger$Department of Mathematics,\\
Florida International University\\
Miami, FL 33199, USA
}
\email{cgal@fiu.edu}
\address{$^\ast$Dipartimento di Matematica\\
Politecnico di Milano\\
Milano 20133, Italy}
\email{andrea.giorgini@polimi.it,\! maurizio.grasselli@polimi.it,\! andrea.poiatti@polimi.it}
%\date{}
%\keywords{Incompressible binary fluid mixtures, 
%unmatched densities, Abels-Garcke-Gr\"{u}n model, 
%nonlocal free energy, Boltzmann-Gibbs entropy, %Tsallis entropy,
%strong solutions, 
%global well-posedness, 
%regularization, 
%separation property, 
%convergence to equilibrium.}
\maketitle

\begin{abstract}
We investigate the nonlocal version of the Abels-Garcke-Gr\"{u}n (AGG)
system, which describes the motion of a mixture of two
viscous incompressible fluids. This consists of the incompressible
Navier-Stokes-Cahn-Hilliard system characterized by concentration-dependent
density and viscosity, and an additional flux term due to interface
diffusion. In particular, the Cahn-Hilliard dynamics of the concentration
(phase-field) is governed by the aggregation/diffusion competition of the
nonlocal Helmholtz free energy with singular (logarithmic) potential and
constant mobility. We first prove the existence of global \textit{strong}
solutions in general two-dimensional bounded domains and their uniqueness
when the initial datum is strictly separated from the pure phases. The key
points are a novel well-posedness result of strong solutions to the nonlocal
convective Cahn-Hilliard equation with singular potential and constant
mobility under minimal integral assumption on the incompressible velocity
field, and a new two-dimensional interpolation estimate for the $L^4(\Omega)$
control of the pressure in the stationary Stokes problem. Secondly, we show
that any weak solution, whose existence was already known, is globally
defined, enjoys the propagation of regularity and converges towards an
equilibrium (i.e., a stationary solution) as $t\rightarrow \infty$. Furthermore, we
demonstrate the uniqueness of strong solutions and their continuous
dependence with respect to general (not necessarily separated) initial data
in the case of matched densities and unmatched viscosities (i.e., the
nonlocal model H with variable viscosity, singular potential and constant
mobility). Finally, we provide a stability estimate between the strong
solutions to the nonlocal AGG model and the nonlocal Model H in terms of the
difference of densities.
\end{abstract}

\tableofcontents

%{cgal@fiu.edu}
%\address{
%Department of Mathematics\\
%Imperial College London\\
%London, SW7 2AZ, UK\\
%\href{mailto:a.giorgini@imperial.ac.uk}{a.giorgini@imperial.ac.uk}}
%\email{a.giorgini@imperial.ac.uk}

%\email{maurizio.grasselli@polimi.it, andrea.poiatti@polimi.it}

%\subjclass[2010]{35D35, 35Q35, 76T06}

%\date{\today}

\section{Introduction and main results}

\label{1}

%The Diffuse Interface theory, also called Phase Field method, is one of the oldest and efficient approach to multi-phase problems. This approach is characterized by the notion of diffuse interface, meaning that the transition layer between the two phases (or component) has a finite (but narrow) size. The interface is not explicit tracked as in boundary integral and front-tracking methods. On the other hand, the phase state is incorporated into the macroscopic equations and the internal microstructures arise from the competition between the diffusion and aggregation mechanisms included in the free energy. The fundamental advantage of this theory is the natural
%representation of singular interfacial behaviors, such as topological change,
%self-intersection, merger and pinch-off. The (diffuse) interface is associated with a smooth, but highly localized variation, of the so-called phase-field variable $\phi$, representing the relative concentration difference.
In the Diffuse Interface theory, the motion of a mixture of two
incompressible viscous Newtonian fluids and the evolution of the interface
separating the bulk phases have been originally modeled by the so-called Model
H (see, e.g., \cite{gurtin,halperin}). This leads to the following
Navier-Stokes-Cahn-Hilliard system:
\begin{equation}
\begin{cases}
\rho \partial _{t} \mathbf{u} +\rho (\mathbf{u}\cdot \nabla )\mathbf{u}-%
\mathrm{div}\,(\nu (\phi )D\mathbf{u})+\nabla \Pi =-\mathrm{div}\,\left(
\nabla \phi \otimes \nabla \phi \right) , \\
\mathrm{div}\,\mathbf{u}=0, \\
\partial _{t}\phi +\mathbf{u}\cdot \nabla \phi =\mathrm{div}\,(m(\phi
)\nabla \mu ), \\
\mu =-\Delta \phi +\Psi ^{\prime }(\phi ),%
\end{cases}
\label{modelH}
\end{equation}%
in $\Omega \times (0,\infty )$. Here, $\Omega $ is a bounded domain in $%
\mathbb{R}^{d}$, $d=2,3$, $\mathbf{u}$ represents the (volume
averaged) velocity, $D\mathbf{u}=(\nabla \mathbf{u}+(\nabla \mathbf{u}%
)^{T})/2$ is the symmetric strain tensor, $\Pi $ denotes the pressure, $\nu
(\cdot )>0$ is the viscosity of the mixture, $\rho $ is the \textit{constant}
mixture density, $m(\cdot )\geq 0$ is the mobility function, and $\Psi $ is
the Flory-Huggins double-well potential defined by
\begin{equation}
\Psi (s)=\frac{\alpha }{2}\left( (1+s)\ln (1+s)+(1-s)\ln (1-s)\right) -\frac{%
\alpha _{0}}{2}s^{2}=F(s)-\frac{\alpha _{0}}{2}s^{2},\quad \forall \,s\in
\lbrack -1,1],  \label{potential}
\end{equation}%
where the two positive parameters $\alpha ,\alpha _{0}$ satisfy the
relations $0<\alpha <\alpha _{0}$. This potential, in particular, ensures the
existence of \textit{physical solutions}, that is, solutions such that $\phi
\in \lbrack -1,1]$. One of the fundamental modeling assumptions of %
\eqref{modelH} is that the densities of both components match and thereby
the density of the mixture $\rho $ is constant. This restricts the
applicability of the model to those fluid mixtures having a negligible
difference between the two densities. To overcome it, the so-called
Abels-Garcke-Gr\"{u}n (AGG) system has been introduced in the seminal work
\cite{agg} as a thermodynamically consistent generalization of the Model H,
allowing to treat fluids with unmatched densities. The AGG model
reads as follows
\begin{equation}
\begin{cases}
\partial _{t}(\rho (\phi )\mathbf{u})+\mathrm{div}\,\left(\mathbf{u}\otimes
\left( \rho (\phi )\mathbf{u}+\mathbf{J}\right) \right) -\mathrm{div}\,(\nu
(\phi )D\mathbf{u})+\nabla \Pi =-\mathrm{div}\,\left( \nabla \phi \otimes
\nabla \phi \right) , \\
\mathrm{div}\,\mathbf{u}=0, \\
\partial _{t}\phi +\mathbf{u}\cdot \nabla \phi =\mathrm{div}\,(m(\phi
)\nabla \mu ), \\
\mu =-\Delta \phi +\Psi ^{\prime }(\phi ),%
\end{cases}
\label{AGGlocal2d}
\end{equation}%
in $\Omega \times (0,\infty )$, where
\begin{equation}
\rho (\phi )=\rho _{1}\frac{1+\phi }{2}+\rho _{2}\frac{1-\phi }{2},\quad
\mathbf{J}=-\frac{\rho _{1}-\rho _{2}}{2}m(\phi )\nabla \mu .
\label{RHO-Jtilde}
\end{equation}%
System \eqref{AGGlocal2d} is usually supplemented with the boundary and
initial conditions
\begin{equation}
\begin{cases}
\mathbf{u}=0,\quad \partial _{\n}\phi =\partial _{\mathbf{n}}\mu =0,\quad &
\text{on }\partial \Omega \times (0,\infty), \\
\mathbf{u}|_{t=0}=\mathbf{u}_{0},\quad \phi |_{t=0}=\phi _{0},\quad & \text{%
in }{\Omega },%
\end{cases}
\label{locAGG-bc}
\end{equation}%
where $\n$ is the unit outward normal vector to $\partial
\Omega $.

In both Model H and AGG model, the fluid mixture is driven by the capillary forces
$-\mathrm{div}\,(\nabla \phi \otimes \nabla \phi )$, accounting for the
surface tension effect, together with a partial diffusive mixing. The latter
is assumed in the interfacial region and it is modeled by $\Delta \mu $. The
specificity of the AGG model compared to the Model H lies in the presence of
the flux term $\mathbf{J}$. In contrast to the one-phase flow, the (average)
density $\rho (\phi )$ in \eqref{AGGlocal2d} does not satisfy the continuity
equation with respect to the flux associated with the velocity $\mathbf{u}$.
Instead, the density satisfies the continuity equation with a flux given by
the sum of the transport term $\rho (\phi )\mathbf{u}$ and the term $\mathbf{%
J}$, which is due to the diffusion of the concentration in the unmatched
densities case\footnote{%
Indeed, $\rho =\rho \left( \phi \right) $ satisfies the continuity equation $%
\partial _{t}\rho +\mathrm{div}\,\left( \rho \mathbf{u}+\mathbf{J}\right)
=0. $} (see also \cite{AGG2d} and the references therein). Notice that we recover \eqref{modelH} when $\rho _{1}=\rho _{2}$ in \eqref{AGGlocal2d}-\eqref{RHO-Jtilde}.

Concerning the mathematical analysis of the AGG model \eqref{AGGlocal2d}-%
\eqref{locAGG-bc}, the existence of global weak solutions were proven in
\cite{abels} and \cite{ADG2} in the case of strictly positive and degenerate
mobility $m(\cdot)$, respectively. The existence of global weak solutions
has been extended in \cite{AB} to viscous non-Newtonian binary fluids (with
constant mobility) and in \cite{GalGW} to the case of dynamic boundary
conditions (with strictly positive mobility). The convergence of a fully
discrete numerical scheme to weak solutions was shown in \cite{GGM2016}.
More advanced issues related to the well-posedness, regularity and
longtime behavior have obtained a renewed interest in the last years. In
\cite{AW2021}, the local well-posedness of strong solutions has been proven in
three dimensions for polynomial-like potentials $\Psi_{\mathrm{%
pol}}$ and strictly positive mobility. We point out that the solution in
\cite{AW2021} may not satisfy $\vert\phi(x,t)\vert\leq 1$ in the space-time domain.
In \cite{AGG2d}, the well-posedness of (local-in-time) strong
solutions in two dimensional bounded domains has been obtained for the
logarithmic potential \eqref{potential} and constant mobility. In this case,
$\phi$ takes its values in the physical range $[-1,1]$.
If the boundary conditions are periodic then the strong solutions
are globally defined in time (see \cite{AGG2d}). The case of
bounded three-dimensional domains has been investigated in \cite{AGG3d},
where the well-posedness of local strong solutions is shown. More recently,
the propagation of regularity in time of any weak solutions in three
dimensions and its stabilization towards an equilibrium state as $t \to
\infty$ have been achieved in \cite{AGG2022} in the case of constant
mobility (see also \cite{A2009,GMT2019} for the matched density case). In the latter, the authors also discussed the global
well-posedness in bounded two-dimensional domains and the deep quench limit. 
We conclude this part by mentioning that the realm of Diffuse Interface
(phase field) models for fluid mixtures has been widely deepened in the past
decades. Several models have been proposed to describe binary mixtures with
non-constant density relying on different assumptions. We refer the
interested reader to the models derived, e.g., in \cite{B2002, DSS2007, GKL2018,
HMR2012, LT1998, SSBZ2017} and the analysis carried out in \cite{A2009-2,
A2012, B2001, GT2020, KZ2015}.

The evolution of the phase-field variable in both Model H and AGG model is
modeled by the local Cahn-Hilliard equation driven by divergence-free drift.
The chemical potential $\mu $ in \eqref{AGGlocal2d} is defined as the first
variation of the Ginzburg-Landau free energy
\begin{equation}
{\mathcal{E}}_{\mathrm{loc}}(\phi )=\int_{\Omega }\frac{1}{2}|\nabla \phi
|^{2}+\Psi (\phi )\,\mathrm{d}x.  \label{energyphi}
\end{equation}%
The free energy $\mathcal{E}_{\mathrm{loc}}(\phi )$ only focuses on short
range interactions between particles. Indeed, the gradient square term
accounts for the fact that the local interaction energy is spatially
dependent and varies across the interfacial surface due to spatial
inhomogeneities in the concentration. Going back to the general approach of
statistical mechanics, the mutual short and long-range interactions between
particles is described through convolution integrals weighted by
interactions kernels. Based on this ancient approach (see \cite{vanderwaals}),
Giacomin and Lebowitz (%
\cite{GL1996, GL1997, GL1998}) observed that a physically more rigorous
derivation leads to a nonlocal dynamics, which is the nonlocal Cahn-Hilliard
equation. In particular, this equation is rigorously justified as a
macroscopic limit of microscopic phase segregation models with particles
conserving dynamics. In this case, the gradient term is replaced by a
nonlocal spatial interaction integral, namely
\begin{equation}
{\mathcal{E}}_{\mathrm{nloc}}(\phi )=\int_{\Omega }F(\phi (x))\,\mathrm{d}x-%
\frac{1}{2}\int_{\Omega }\int_{\Omega }J(x-y)\phi (x)\phi (y)\,\mathrm{d}x\,%
\mathrm{d}y,  \label{nonloc-en}
\end{equation}%
where $J$ is a sufficiently smooth symmetric interaction kernel. As shown in
\cite{GL1997} (see also \cite{GGG2022} and the references therein), the
energy $\mathcal{E}_{\mathrm{loc}}$ can be seen as an approximation of $%
\mathcal{E}_{\mathrm{nloc}}$, as long as we suitably redefine $\Psi $ as $%
\widetilde{\Psi }(x,s)=F(s)-(J\ast 1)(x)s^{2}/2$. The physical relevance of
nonlocal interactions was already pointed out in the pioneering paper \cite%
{vanderwaals} (see also \cite[4.2]{emmerich} and references therein) and
studied for different kind of evolution equations, mainly Cahn-Hilliard and
phase field systems, see, e.g., \cite{BEG2007, CKRS2007, zach, GGG2017, GG,
KRS2007, LP02011,LP2011}. In this context, the nonlocal AGG model we want to analyze
reads as
\begin{equation}
\begin{cases}
\partial _{t}(\rho (\phi )\mathbf{u})+\mathrm{div}\,\left( \mathbf{u}
\otimes \left( \rho (\phi )\mathbf{u}+\mathbf{J}\right) \right) -\mathrm{div}%
\,(\nu (\phi )D\mathbf{u})+\nabla \Pi =\mu \nabla \phi , \\
\mathrm{div}\,\mathbf{u}=0, \\
\partial _{t}\phi +\mathbf{u}\cdot \nabla \phi =\mathrm{div}\,(m(\phi
)\nabla \mu ), \\
\mu =F^{\prime }(\phi )-J\ast \phi ,%
\end{cases}
\label{syst2}
\end{equation}%
in $\Omega \times (0,\infty )$, where  $F$, $\rho $ and $\mathbf{J}$  are
given in \eqref{potential} and \eqref{RHO-Jtilde}. System %
\eqref{syst2} is endowed with the boundary and initial conditions
\begin{equation}
\begin{cases}
\mathbf{u}=0,\quad \partial _{\mathbf{n}}\mu =0\quad & \text{on }\partial
\Omega \times (0,\infty), \\
\mathbf{u}|_{t=0}=\mathbf{u}_{0},\quad \phi |_{t=0}=\phi _{0}\quad & \text{%
in }{\Omega }.%
\end{cases}
\label{bic}
\end{equation}

Regarding the mathematical analysis of the nonlocal AGG system, there are
much fewer contributions than in the local case. A first nonlocal variant has been investigated in \cite{Abelsterasawa}, where the authors proved the existence
of weak solutions in two and three dimensional bounded domains in the case
of a singular nonlocal free energy and strictly positive mobility. More
precisely, the ``diffusive'' term $|\nabla \phi |^{2}$ in $E_{\mathrm{loc}}$
is replaced by singular nonlocal operator which controls the $H^{\frac{%
\alpha }{2}}$ norm of the concentration for $\alpha \in (0,2)$ (as a
consequence, $\Delta \phi $ in \eqref{AGGlocal2d}$_{4}$ is replaced by a
regional fractional Laplacian). The nonlocal AGG system \eqref{syst2}-\eqref{bic} has only been analyzed so far
in \cite{Frig15} and in \cite{Frigeri}. In the former, the
existence of weak solutions is shown in the case of singular (logarithmic)
potential and strictly positive mobility in two and three dimensional
bounded domains. In the latter, the existence of weak solutions is proven in
the case of singular potential and degenerate mobility. More recently, the authors in \cite{Abelsterasawa2} have shown that such weak solutions converge to those of \eqref{AGGlocal2d} in the setting introduced in \cite{DST2021}-\cite{DST2021-2}. In addition, in \cite%
[Theorem 3.5]{Frigeri} the author proved that, under suitable assumptions on
the initial datum, the kernel $J$, the potential $F$ and the degenerate
mobility, the concentration function $\phi $ enjoys the regularity $%
L^{2}(0,T;H^{2}(\Omega ))\cap H^{1}(0,T;L^{2}(\Omega ))$ in two dimensions
(cf. Theorem \ref{R:weak-sol} for weak solutions reported below). In particular, it is
worth pointing out that the assumptions $\mathbf{(A4)}$ and $\mathbf{(A1b)}$ in
\cite{Frigeri}, i.e. $m(\cdot )F^{\prime \prime }(\cdot )\in C^{1}([-1,1])$
such that $m(\cdot )F^{\prime \prime }(\cdot )$ is strictly positive, allows
to rewrite \eqref{syst2}$_{3,4}$ as
\begin{equation}
\partial _{t}\phi +\mathbf{u}\cdot \nabla \phi =\mathrm{div}\left( m(\phi
)F^{\prime \prime }(\phi )\nabla \phi -\nabla J\ast \phi \right) ,\quad
\text{in}\ \Omega \times (0,\infty ),  \label{div-formCH}
\end{equation}%
where the main diffusion operator is a second-order divergence form operator
with positive and bounded coefficients. On the other hand, if $m$ is
constant (or even non-degenerate) at the endpoints $\pm 1,$ the product $%
mF^{\prime \prime }(\cdot )$ no longer enjoys the typical cancellation
effect seen with phase segregation phenomena when $m\left( \pm 1\right) =0$
(see \cite{FGG2021}). In particular, the (variable) diffusion coefficient $%
mF^{\prime \prime }(\cdot )$ in (\ref{div-formCH}) is highly singular and
unbounded since $mF^{\prime \prime }(s)=\alpha m(1-s^{2})^{-1}$ , owing to
(\ref{potential}).
To the best of our knowledge, there are yet no results concerning the
global well-posedness for the nonlocal AGG model \eqref{syst2}-\eqref{bic}
in dimension two in the case of singular potentials and constant mobility.
As a matter of fact, the same open questions remain still unresolved even
for the nonlocal Model H with unmatched viscosities and logarithmic-like
potential, which corresponds to \eqref{syst2}-\eqref{bic} with $\rho
_{1}=\rho _{2}$. In fact, beyond the global existence of weak solution for
the nonlocal Model H established in \cite{FG}, the uniqueness of weak
solutions, their propagation of regularity and their longtime behavior in
two dimensions has been discussed in \cite{FGG} and \cite{GGG2017} in the
case of constant viscosity only.

The aim of the present paper is to present the first well-posedness result
concerning the nonlocal AGG model (with unmatched densities and viscosities)
in presence of singular-like potentials and constant mobility. In fact,
following \cite{GGG2022}, we consider a general class of entropy potentials,
commonly employed for complex binary particle systems experiencing long
range interactions. This class generalizes the classical logarithmic density
function (\ref{potential}). Recall that the latter is uniquely generated by
Boltzmann-Gibbs statistics of macroscopic mixing of the fluid constituents.
To this end, let us first state the main assumptions, which will be adopted
throughout our analysis:

\begin{enumerate}[label=(\subscript{H}{{\arabic*}})]
\item \label{Omega} $\Omega $ is a bounded domain in $\mathbb{R}^{2}$ with
boundary $\partial \Omega $ of class $C^{2}$.

\item   \label{nu} The viscosity $\nu \in W^{1,\infty }(\mathbb{R})$
satisfies
\begin{equation*}
0<\nu _{\ast }\leq \nu (s)\leq \nu ^{\ast },\quad \forall \,s\in \mathbb{R},
\end{equation*}%
for some positive values $\nu _{\ast },\nu ^{\ast }$.

\item   \label{J-ass} The interaction kernel $J\in W^{1,1}(\mathbb{R}^{2})$ is such that $J(x)=J(-x)$
for all $x\in \mathbb{R}^{2}$.

\item   \label{F-ass-1} $F\in C([-1,1])\cap C^{2}(-1,1)$ fulfills
\begin{equation*}
\lim_{s\rightarrow -1}F^{\prime }(s)=-\infty ,\quad \lim_{s\rightarrow
1}F^{\prime }(s)=+\infty ,\quad F^{\prime \prime }(s)\geq {\alpha },\quad
\forall \,s\in (-1,1).
\end{equation*}%
We extend $F(s)=+\infty $ for any $s\notin \lbrack -1,1]$. Without loss of
generality, $F(0)=0$ and $F^{\prime }(0)=0$. In particular, this entails
that $F(s)\geq 0$ for any $s\in \lbrack -1,1]$.
%Also, we assume that $\Fsec$ is convex and there exists $\gamma\in(0,1)$ such that $F''$ is nondecreasing in $[1-\gamma,1)$ and nonincreasing in $(-1,-1+\gamma]$.

\item  \label{h3} We assume\footnote{%
Without loss of generality we also assume that $F^{\prime \prime }$ is
symmetric on $\left( -1,1\right) ,$ see \cite{GGG2022}. Other statistical
entropy density functionals (that can be more singular at $\pm 1$ than the
logarithmic density (\ref{potential})) from information theory are included
in this study.} that $F^{\prime \prime }$ is monotone non-decreasing on $%
[1-\varepsilon _{0},1)$, for some $\varepsilon _{0}>0$ and there exist $p\in
\lbrack 2,\infty )$ and a continuous function $h:\left( 0,1\right)
\rightarrow \mathbb{R}_{+},$ $h\left( \delta \right) =o(\delta ^{4/p}),$ as $%
\delta \rightarrow 0^{+},$ such that%
\begin{equation}
\begin{cases}
F^{\prime \prime }(s)\leq C\mathrm{e}^{C|F^{\prime }(s)|^{\beta }},
\quad &\text{for all }\,s\in (-1,1),  \\
F^{\prime \prime }\left( 1-2\delta \right) h\left( \delta \right) \geq 1,%
\quad &\text{for all }\delta \leq \frac{\varepsilon_{0}}{2}, 
\end{cases}  \label{sep}
\end{equation}%
for some $C>0$ and $\beta \in \lbrack 1,2).$

\item   \label{m-ass} The mobility is constant and equal to unity, i.e., $m\equiv 1$.
\end{enumerate}

\begin{remark}
The logarithmic convex function $F$ in \eqref{potential} fulfills \ref%
{F-ass-1} and \ref{h3} (cf. \cite{GGG2022}). A common form for the
viscosity is the following
\begin{equation*}
\nu (s)=\nu _{1}\frac{1+s}{2}+\nu _{2}\frac{1-s}{2},\quad s\in \lbrack -1,1],
\end{equation*}%
which can be easily extended on the whole $\mathbb{R}$ in such way to comply
with \ref{nu}. Many other examples of entropy densities satisfying \ref%
{F-ass-1} and \ref{h3} (including the Tsallis entropy) can be found in
\cite[Sections 6.2, 6.3]{GGG2022}.
\end{remark}

Before proceeding with the statements of the main results, we report the
only available result for the nonlocal AGG system \eqref{syst2}-\eqref{bic},
which concerns the existence of weak solutions on any fixed time interval $%
(0,T)$ proven in \cite{Frig15}. For the sake of completeness,
we report it in the original form with the non-degenerate
mobility. We refer the reader to Section \ref{prel} for functional space notation.

\begin{theorem}
\label{R:weak-sol} Let \ref{Omega}-\ref{h3} hold and let $m\in C_{%
\mathrm{loc}}^{1,1}(\mathbb{R})$ such that $0<m_{\ast }\leq m(s)\leq m^{\ast
}$ for all $s\in \mathbb{R}$ for some $m_{\ast }$ and $m^{\ast }$. Assume
that $\mathbf{u}_{0}\in L_{\sigma }^{2}(\Omega )$ and $\phi _{0}\in
L^{\infty }(\Omega )$ with $F(\phi _{0})\in L^{1}(\Omega )$ and $|\overline{%
\phi _{0}}|<1$. Then, for any $T>0$, there exists a weak solution to %
\eqref{syst2}-\eqref{bic} in $(0,T)$ such that

\begin{itemize}
\item[(i)] \label{E11} The pair $(\mathbf{u},\phi )$ satisfies the
properties
\begin{equation}
\begin{cases}
\mathbf{u}\in C_{\mathrm{w}}([0,T];L_{\sigma }^{2}(\Omega ))\cap
L^{2}(0,T;H_{0,\sigma }^{1}(\Omega )), \\
\phi \in L^{\infty }(\Omega \times (0,T))\cap L^{2}(0,T;H^{1}(\Omega ))%
\text{ with }|\phi|<1\ \text{a.e. in }\Omega \times (0,T), \\
\partial _{t}\left( \rho (\phi )\mathbf{u}\right) \in L^{\frac{4}{3}%
}(0,T;H_{0,\sigma }^{2}(\Omega )^{\prime }),\quad \partial _{t}\phi \in
L^{2}(0,T;H^{1}(\Omega )^{\prime }), \\
\mu =F^{\prime }(\phi )-J\ast \phi \in L^{2}(0,T;H^{1}(\Omega )).
\end{cases}
\label{regg-weak}
\end{equation}

\item[(ii)] \label{E2} The solution $(\mathbf{u},\phi )$ fulfills the system
in weak sense:%
\begin{align}
& \langle \partial _{t} (\rho (\phi )\mathbf{u}) ,\mathbf{w}%
\rangle _{H_{0,\sigma }^{2}(\Omega )}-\left( \rho (\phi )\mathbf{u}\otimes
\mathbf{u},D\mathbf{w}\right) +\left( \nu (\phi )D\mathbf{u},D\mathbf{w}%
\right) -\left( \mathbf{u},\left( \mathbf{J}\cdot \nabla \right) \mathbf{w}%
\right) =-\left( \phi \nabla \mu ,\mathbf{w}\right) , \\
& \langle \partial _{t}\phi ,v\rangle _{H^{1}(\Omega )}-\left( \phi \,%
\mathbf{u},\nabla v\right) +\left( m(\phi )\nabla \mu ,\nabla v\right) =0,
\label{weak-NS}
\end{align}%
for any $\mathbf{w}\in H_{0,\sigma }^{2}(\Omega )$, $v\in H^{1}(\Omega )$
and for almost any $t\in (0,T)$.

\item[(iii)] \label{E3} The initial conditions $\mathbf{u}(\cdot ,0)=\mathbf{%
u}_{0}$ and $\phi (\cdot ,0)=\phi _{0}$ hold in $\Omega $.

\item[(iv)] \label{E4} The energy inequality
\begin{equation}
E(\mathbf{u}(t),\phi (t))+\int_{s}^{t}\left\Vert \sqrt{\nu (\phi (\tau ))}D%
\mathbf{u}(\tau )\right\Vert _{L^{2}(\Omega )}^{2}+\left\Vert \sqrt{m(\phi )}%
\nabla \mu (\tau )\right\Vert _{L^{2}(\Omega )}^{2}\,\mathrm{d}\tau \leq E(%
\mathbf{u}(s),\phi (s))  \label{energy-ineq}
\end{equation}%
holds for all $t\in \lbrack s,\infty )$ and almost all $s\in \lbrack
0,\infty )$ (including $s=0$), where the total energy is defined as
\begin{equation}
E(\mathbf{u},\phi ):=\frac{1}{2}\int_{\Omega }\rho (\phi )|\mathbf{u}|^{2}%
\, \mathrm{d}x+\int_{\Omega }F(\phi ) \, \mathrm{d}x-\frac{1}{2}\int_{\Omega
}(J\ast \phi )\phi \,  \mathrm{d}x.
\label{TOTALEN}
\end{equation}
\end{itemize}
\end{theorem}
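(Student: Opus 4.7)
My plan is to construct weak solutions via a Faedo--Galerkin scheme combined with a regularization of the singular potential $F$. For $\lambda\in(0,1)$ I would replace $F$ by its Moreau--Yosida-type approximation $F_{\lambda}\in C^{2}(\mathbb{R})$, which is convex with at most quadratic growth at infinity and satisfies $F_{\lambda}^{\prime\prime}\ge \alpha/2$, so that the approximate chemical potential $\mu_{\lambda}=F_{\lambda}'(\phi_{\lambda})-J\ast\phi_{\lambda}$ is well defined on all of $\mathbb{R}$. For the velocity I would use a spectral Galerkin basis $\{\mathbf{w}_{k}\}_{k\ge 1}$ of $H^{2}_{0,\sigma}(\Omega)$ and seek $\mathbf{u}_{\lambda,n}(t)=\sum_{k=1}^{n} c_{k}(t)\mathbf{w}_{k}$, coupled with $\phi_{\lambda,n}$ solving the regularized nonlocal convective Cahn--Hilliard equation in full PDE form. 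Short-time existence of the coupled ODE--PDE system follows from a Banach fixed-point argument based on the Lipschitz continuity of $F_{\lambda}'$, $\rho$, $\nu$ and of the nonlocal convolution; the energy identity then extends the solution to the whole interval $[0,T]$.

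The uniform-in-$(\lambda,n)$ bounds follow from the standard energy identity obtained by testing the momentum equation with $\mathbf{u}_{\lambda,n}$ and the Cahn--Hilliard equation with $\mu_{\lambda,n}$: the algebraic cancellation
\[
\big(\partial_{t}(\rho(\phi)\mathbf{u})+\mathrm{div}\bigl(\mathbf{u}\otimes(\rho(\phi)\mathbf{u}+\mathbf{J})\bigr),\mathbf{u}\big)=\frac{1}{2}\frac{\d}{\d t}\int_{\Omega}\rho(\phi)|\mathbf{u}|^{2}\,\d x,
\]
which relies on the continuity equation for $\rho(\phi)$, together with the skew-symmetry of $\mathbf{u}\cdot\nabla\phi$ tested against $\mu$, produces the approximate version of \eqref{energy-ineq}. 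Combined with the coercivity of $F_{\lambda}$ this yields $\mathbf{u}_{\lambda,n}\in L^{\infty}(0,T;L^{2}_{\sigma})\cap L^{2}(0,T;H^{1}_{0,\sigma})$, $F_{\lambda}(\phi_{\lambda,n})\in L^{\infty}(0,T;L^{1}(\Omega))$, and $\nabla\mu_{\lambda,n}\in L^{2}(0,T;L^{2}(\Omega))$. A uniform $L^{2}(0,T;L^{2})$ bound on $F_{\lambda}'(\phi_{\lambda,n})-\overline{F_{\lambda}'(\phi_{\lambda,n})}$ is obtained by testing the Cahn--Hilliard equation with $F_{\lambda}'(\phi_{\lambda,n})-\overline{F_{\lambda}'(\phi_{\lambda,n})}$; a control on the mean $\overline{F_{\lambda}'(\phi_{\lambda,n})}$ follows from a classical trick based on $|\overline{\phi_{0}}|<1$ and the monotonicity of $F_{\lambda}'$, giving altogether $\mu_{\lambda,n}\in L^{2}(0,T;H^{1}(\Omega))$ by Poincar\'e--Wirtinger. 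The time-derivative bounds $\partial_{t}\phi_{\lambda,n}\in L^{2}(0,T;H^{1}(\Omega)')$ and $\partial_{t}(\rho(\phi_{\lambda,n})\mathbf{u}_{\lambda,n})\in L^{4/3}(0,T;H^{2}_{0,\sigma}(\Omega)')$ then follow by duality, using the weak formulation and the two-dimensional Sobolev embeddings.

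The passage to the limit proceeds in two stages. Fixing $\lambda>0$ and letting $n\to\infty$, Aubin--Lions yields strong convergence of $\phi_{\lambda,n}$ in $L^{2}(0,T;L^{2}(\Omega))$ (hence pointwise a.e.) and of $\mathbf{u}_{\lambda,n}$ in $L^{2}(0,T;L^{2}_{\sigma})$, enough to identify the quadratic nonlinearities $\rho(\phi_{\lambda})\mathbf{u}_{\lambda}\otimes\mathbf{u}_{\lambda}$, $\mathbf{J}_{\lambda}$ and $F_{\lambda}'(\phi_{\lambda})$ in the limit via dominated convergence. Sending next $\lambda\to 0^{+}$, Fatou's lemma together with $F\equiv+\infty$ outside $[-1,1]$ and the uniform bound on $F_{\lambda}(\phi_{\lambda})$ in $L^{\infty}(0,T;L^{1})$ forces $|\phi|\le 1$ a.e., while the uniform $L^{2}(0,T;L^{2})$ bound on $F_{\lambda}'(\phi_{\lambda})$ combined with the blow-up of $F_{\lambda}'$ at $\pm 1$ as $\lambda\to 0$ upgrades this to the strict inequality $|\phi|<1$ a.e. The main obstacle is the identification of $F'(\phi)$ as the weak limit of $F_{\lambda}'(\phi_{\lambda})$: I would resolve it by a Minty-type monotonicity argument exploiting the convexity of $F_{\lambda}$ together with the a.e.\ convergence of $\phi_{\lambda}$, concluding that $\mu_{\lambda}\rightharpoonup F'(\phi)-J\ast\phi$ weakly in $L^{2}(0,T;H^{1}(\Omega))$. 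The energy inequality \eqref{energy-ineq} then follows from weak lower semicontinuity of the dissipation and of the total energy \eqref{TOTALEN}, and the initial conditions are recovered from the continuity in time of $\rho(\phi)\mathbf{u}$ and $\phi$ in appropriate dual spaces.
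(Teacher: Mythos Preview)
The paper does not supply its own proof of this theorem: Theorem~\ref{R:weak-sol} is explicitly reported as the existence result already established in \cite{Frig15} (see the sentence immediately preceding its statement and the Remark that follows), so there is no in-paper proof to compare against. Your sketch is, in broad strokes, the scheme actually used in \cite{Frig15}: a semi-Galerkin approximation of the velocity combined with a Yosida-type regularization of the singular entropy $F$, the energy identity obtained via the continuity equation for $\rho(\phi)$, and a two-step limit $n\to\infty$ then $\lambda\to 0$ with Aubin--Lions compactness and a monotonicity argument to identify $F'(\phi)$.

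Two minor remarks on your outline. First, the standard way to recover the full $H^{1}$ bound on $\mu_{\lambda}$ (hence on $F_{\lambda}'(\phi_{\lambda})$) is not to test the Cahn--Hilliard equation with $F_{\lambda}'(\phi_{\lambda})-\overline{F_{\lambda}'(\phi_{\lambda})}$, but rather to multiply the definition of $\mu_{\lambda}$ by $\phi_{\lambda}-\overline{\phi_{\lambda}}$ and invoke the inequality $\|F_{\lambda}'(\phi_{\lambda})\|_{L^{1}}\le C_{1}|\int_{\Omega}F_{\lambda}'(\phi_{\lambda})(\phi_{\lambda}-\overline{\phi_{\lambda}})\,\mathrm{d}x|+C_{2}$ (cf.~\eqref{MZ} and \eqref{MZ-2} in the paper); your proposed test would require an a~priori $L^{2}$ control on $F_{\lambda}'(\phi_{\lambda})$ that you do not yet have at that stage. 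Second, the Minty argument for identifying $F'(\phi)$ is unnecessary once you have a.e.\ convergence of $\phi_{\lambda}$ together with a uniform $L^{2}$ bound on $F_{\lambda}'(\phi_{\lambda})$ and the pointwise convergence $F_{\lambda}'\to F'$ on $(-1,1)$: Fatou plus Vitali (or simply weak-$L^{2}$ compactness and pointwise identification) suffice. These are refinements rather than gaps; your overall strategy is the right one.
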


\begin{remark}
The result of \cite[Theorem 1]{Frig15} actually holds for a slightly
different model than \eqref{syst2}. Indeed, the nonlocal Helmholtz free
energy considered in \cite{Frig15} is
\begin{equation*}
E_{\mathrm{nloc}}^{\star }(\phi )=\frac{1}{4}\int_{\Omega }\int_{\Omega
}J(x-y)\left( \phi (x)-\phi (y)\right) ^{2}\mathrm{d}x\,\mathrm{d}%
y+\int_{\Omega }F(\phi )-\frac{\alpha _{0}}{2}\phi ^{2} \, \d x.
\end{equation*}%
As a consequence, the chemical potential is $\mu =a\phi -J\ast \phi
+F^{\prime }(\phi )-\alpha _{0}\phi $. On the other, as explained in \cite%
{GGG2017} the two problems are strictly related and \cite[Theorem 1]{Frig15}
can be extended also to the problem in our analysis. More precisely, the two
models are equivalent as long as we suppose $\alpha _{0}(x)=a(x)$, where $%
a(x)=J\ast 1$, and the differences in the analysis provided in \cite{Frig15}
are related only to lower order terms.
\end{remark}

Our first main result concerns the global existence and uniqueness of strong
solutions to \eqref{syst2}-\eqref{bic}.

\begin{theorem}
\label{MR:strong} Let the assumptions \ref{Omega}-\ref{m-ass} hold.
Assume that $\mathbf{u}_{0}\in H_{0,\sigma }^{1}(\Omega )$, $\phi _{0}\in
H^{1}(\Omega )$, with $|\overline{\phi _{0}}|<1$, $F^{\prime }(\phi _{0})\in
L^{2}(\Omega )$ and $F^{\prime \prime }(\phi _{0})\nabla \phi _{0}\in
L^{2}(\Omega ;\mathbb{R}^{2})$. Then, there exists a global strong solution $%
(\mathbf{u},\Pi ,\phi ):\Omega \times \lbrack 0,\infty )\rightarrow \mathbb{R%
}^{2}\times \mathbb{R}\times \mathbb{R}$ to \eqref{syst2}-\eqref{bic} such
that:

\begin{itemize}
\item[(i)] \label{K1} The solution $(\mathbf{u},\Pi ,\phi )$ satisfies the
properties
\begin{equation}
\begin{cases}
\mathbf{u}\in BC([0,\infty );H_{0,\sigma }^{1}(\Omega ))\cap L_{\mathrm{%
uloc}}^{2}([0,\infty );H_{0,\sigma }^{2}(\Omega ))\cap H_{\mathrm{uloc}%
}^{1}([0,\infty );L_{\sigma }^{2}(\Omega )), \\
\Pi \in L_{\mathrm{uloc}}^{2}([0,\infty );H_{(0)}^{1}(\Omega )), \\
\phi \in BC_{\mathrm{w}}([0,\infty );H^{1}(\Omega ))\cap L_{\mathrm{uloc}%
}^{q}([0,\infty );W^{1,p}(\Omega )),\quad q=\frac{2p}{p-2},\quad \forall
\,p\in (2,\infty ), \\
\phi \in L^{\infty }(0,\infty ;L^{\infty }(\Omega )):\quad |\phi (x,t)|<1\
\text{ for a.a. } x\in\Omega ,\,\forall \,t\in \lbrack 0,\infty ), \\
\partial _{t}\phi \in L^{\infty }(0,\infty ;H^{1}(\Omega )^{\prime })\cap
L^{2}(0,\infty ;L^{2}(\Omega )),\quad F^{\prime }(\phi )\in L^{\infty
}(0,\infty ;H^{1}(\Omega )), \\
\mu \in BC_{\mathrm{w}}([0,\infty );H^{1}(\Omega ))\cap L_{\mathrm{uloc}%
}^{2}([0,\infty );H^{2}(\Omega ))\cap H_{\mathrm{uloc}}^{1}([0,\infty
);H^{1}(\Omega )^{\prime }).
\end{cases}
\label{regg}
\end{equation}

\item[(ii)] \label{K2} $(\mathbf{u},\Pi ,\phi )$ fulfills the
system \eqref{syst2} almost everywhere in $\Omega \times (0,\infty )$ and
the boundary condition $\partial _{\mathbf{n}}\mu =0$ almost everywhere on $%
\partial \Omega \times (0,\infty )$.

\item[(iii)] \label{K3}   $(\mathbf{u},\Pi ,\phi )$ is such that $%
\mathbf{u}(\cdot ,0)=\mathbf{u}_{0}$ and $\phi (\cdot ,0)=\phi _{0}$ in $%
\Omega $.

\item[(iv)] \label{K4} For any $\tau >0$, there exists $\delta =\delta (\tau
)\in (0,1)$ (depending on the norms of the initial datum) such that
\begin{equation}
\sup_{t\in \lbrack \tau ,\infty )}\Vert \phi (t)\Vert _{L^{\infty }(\Omega
)}\leq 1-\delta .  \label{delt}
\end{equation}
\end{itemize}

Moreover, if we additionally assume that $\Vert \phi _{0}\Vert _{L^{\infty
}(\Omega )}\leq 1-\delta _{0}$, for some $\delta _{0}\in (0,1)$, then there
exists $\delta ^{\star }>0$ such that
\begin{equation}
\label{SP-Star}
\sup_{t\in \lbrack 0,\infty )}\Vert \phi (t)\Vert _{L^{\infty }(\Omega
)}\leq 1-\delta ^{\star }.
\end{equation}%
As a consequence, $\partial _{t}\mu \in L_{\mathrm{uloc}}^{2}([0,\infty
);L^{2}(\Omega ))$, and %\begin{align}
%\label{extra2}
%\partial_t\mu\in L^2(0,T;H),
%\end{align}
$(\mathbf{u},\phi )$ is unique and depends continuously on the initial data
in $L_{\sigma }^{2}(\Omega )\times L^{2}(\Omega )$ on $[0,T]$, for any $T>0$.
More precisely, if $(\mathbf{u}_{j},\Pi _{j},\phi _{j})$ is the strong solutions
to \eqref{syst2}-\eqref{bic}
originating from the initial datum ($\mathbf{u}^j_{0},\phi^j_{0}$), $j=1,2$, then
\begin{equation}
\label{contdep}
\Vert \mathbf{u}_1(t) - \mathbf{u}_2(t)\Vert _{L^{2}(\Omega )}^{2}
+\Vert (\phi_1(t) - \phi_2(t)\Vert_{L^{2}(\Omega )}^{2}\leq C\left( \Vert \mathbf{u}^1_{0} - \mathbf{u}^2_{0}\Vert _{L^{2}(\Omega)}^{2}
+\Vert \phi^1_{0} - \phi^2_{0}\Vert _{L^{2}(\Omega )}^{2}\right) \mathrm{e}^{\int_{0}^{T}K(\tau )\,\mathrm{d}\tau },
\end{equation}%
where $C>0$ is a constant depending on the norms of both the initial data and
\begin{equation}
\label{contdepconst}
K(t)=C\left( 1+\Vert \partial _{t}\mathbf{u}_{2}(t)\Vert _{L^{2}(\Omega
)}^{2}+\Vert \nabla \mathbf{u}_{2}(t)\Vert _{L^{4}(\Omega )}^{4}+\Vert
\mathbf{u}_{2}(t)\Vert _{H^{2}(\Omega )}^{2}+\Vert \nabla \phi _{2}(t)\Vert
_{L^{4}(\Omega )}^{4}\right) .
\end{equation}
 \end{theorem}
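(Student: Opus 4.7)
The plan is to construct the global strong solution through a three-stage strategy: first, regularise the problem (for instance by replacing $F$ with a $C^2(\mathbb{R})$ Moreau--Yosida type approximation $F_\varepsilon$ and mollifying $\mathbf{u}_0,\phi_0$) so that Theorem \ref{R:weak-sol} supplies smooth approximate solutions; second, derive a priori estimates, uniform in $\varepsilon$, matching the regularity class \eqref{regg}; third, pass to the limit $\varepsilon\to 0^+$ by compactness, prove the separation property, and then uniqueness/continuous dependence. The zeroth-order information is precisely the energy inequality \eqref{energy-ineq}, giving $\mathbf{u}\in L^\infty_t L^2_\sigma\cap L^2_tH^1_{0,\sigma}$, $|\phi|<1$ a.e.\ and $\nabla\mu\in L^2_tL^2$. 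Combined with $\mu=F'(\phi)-J\ast\phi$ and the lower bound $F''\geq\alpha$ from \ref{F-ass-1}, one immediately gets $F'(\phi)\in L^2_tH^1$ and therefore $\phi\in L^2_tH^1$.

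The first-order regularity for $\mathbf{u}$ is obtained by formally testing the momentum equation with $\partial_t\mathbf{u}$. The variable-density term $\partial_t(\rho(\phi)\mathbf{u})$ produces the awkward contribution $\tfrac12\int_\Omega\rho'(\phi)\partial_t\phi\,|\mathbf{u}|^2\,\mathrm{d}x$, which is rewritten via \eqref{syst2}$_3$ and absorbed through 2D Ladyzhenskaya/Gagliardo--Nirenberg estimates; the flux term $\mathrm{div}\,(\mathbf{u}\otimes\mathbf{J})$ is controlled via $\mathbf{J}\propto\nabla\mu\in L^2_tL^2$, and the Korteweg force $\mu\nabla\phi$ is closed using the $L^2_tH^1$ control on $\mu$ already derived. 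The dissipation $\int\nu(\phi)|D\mathbf{u}|^2\,\mathrm{d}x$ supplies the good time derivative. To upgrade $\mathbf{u}$ to $L^2_tH^2$ and extract $\Pi\in L^2_tH^1_{(0)}$ one invokes the stationary Stokes problem with variable viscosity $\nu(\phi)$, and here the key input is the novel two-dimensional interpolation estimate announced in the introduction, which controls $\|\Pi\|_{L^4}$ in terms of $\|\nabla\mathbf{u}\|_{L^2}$ plus higher norms; without it, the low regularity of $\nu(\phi)$ prevents a closed estimate.

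The matching regularity for $\phi$ relies on the new well-posedness result for the nonlocal convective Cahn--Hilliard equation with singular potential, constant mobility and a velocity field with only the integrability coming from the previous step. Once $\nabla F'(\phi)=\nabla\mu+\nabla J\ast\phi\in L^2_tL^2$, assumption \ref{h3} triggers a De~Giorgi/Alikakos iteration in the spirit of \cite{GGG2022}, producing the eventual separation \eqref{delt} for $t\geq\tau>0$ and the uniform bound \eqref{SP-Star} whenever $\phi_0$ is itself strictly separated. Once separation is in force, $F''(\phi)$ becomes bounded and one bootstraps to $\mu\in L^2_tH^2$ via elliptic regularity for $-\Delta\mu=-\partial_t\phi-\mathbf{u}\cdot\nabla\phi$ with homogeneous Neumann data, then to $F'(\phi)\in L^\infty_tH^1$, $\phi\in BC_\mathrm{w}([0,\infty);H^1)$ and $\partial_t\mu\in L^2_\mathrm{uloc}L^2$. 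The principal obstacle at this stage is that the constant mobility \ref{m-ass} destroys the cancellation $mF''\in L^\infty$ used in \cite{Frigeri}: the singular coefficient $F''(\phi)$ can only be tamed by the $H^1$ control on $F'(\phi)$ in combination with \ref{h3}, and this is exactly the role of the nonlocal convective CH step.

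Uniqueness and continuous dependence follow from a difference argument under the strict separation hypothesis. Denoting $(\mathbf{u},\phi,\mu)=(\mathbf{u}_1-\mathbf{u}_2,\phi_1-\phi_2,\mu_1-\mu_2)$, one tests the difference momentum equation with $\mathbf{u}$ and the difference CH transport equation with $\mathcal{N}\phi$, where $\mathcal{N}$ is the inverse Neumann Laplacian on zero-mean functions; since $F''(\phi_j)$ is bounded, $\mu$ is controllable by $\phi$ and $\mathbf{u}$. The cross terms are estimated by 2D Ladyzhenskaya and Agmon-type inequalities, producing a differential inequality of the form $\tfrac{\mathrm{d}}{\mathrm{d}t}(\|\mathbf{u}\|_{L^2}^2+\|\phi\|_{L^2}^2)\leq K(t)(\|\mathbf{u}\|_{L^2}^2+\|\phi\|_{L^2}^2)$ with $K$ precisely as in \eqref{contdepconst}; Gronwall then yields \eqref{contdep}. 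The hardest single ingredient is the joint deployment of the three novel tools, namely the $L^4$ pressure estimate, the nonlocal convective CH well-posedness and the instantaneous separation: each is tailored to remove the obstruction left by the other two, and the $L^4$ pressure bound is the linchpin that permits the variable-viscosity Stokes estimate to close inside the strong-solution energy space.
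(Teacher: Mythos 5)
Your high-level architecture captures the right ingredients (energy estimate, $L^4$-pressure interpolation, new convective nonlocal Cahn--Hilliard well-posedness, De~Giorgi-type separation, Gronwall for uniqueness), but two concrete steps are misstated in a way that would make the argument fail as written.

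First, the approximation scheme. You propose to replace $F$ by a Moreau--Yosida approximation and to invoke Theorem~\ref{R:weak-sol} for the regularised system to produce ``smooth approximate solutions.'' But Theorem~\ref{R:weak-sol} only supplies Leray--Hopf class weak solutions ($\mathbf{u}\in C_{\mathrm{w}}L^2_\sigma\cap L^2 H^1_{0,\sigma}$, $\partial_t(\rho(\phi)\mathbf{u})\in L^{4/3}(H^2_{0,\sigma})'$), regardless of whether $F$ is singular or regularised; this is not enough to justify the formal $\partial_t\mathbf{u}$ and $\mathbf{A}\mathbf{u}$ tests. The paper instead uses a \emph{semi-Galerkin} scheme: a finite-dimensional Galerkin approximation for the velocity in the span of Stokes eigenfunctions, paired with the full-PDE nonlocal Cahn--Hilliard problem (solved via Theorem~\ref{ExistCahn}), and a Schauder fixed-point argument to couple them; the Yosida regularisation of $F$ is used only \emph{inside} the proof of Theorem~\ref{REG-APPR-nCH} in the appendix, not at the coupled-system level. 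Without the finite-dimensional velocity, your $\partial_t\mathbf{u}$ and $\mathbf{A}\mathbf{u}$ tests are purely formal at the approximation stage. Moreover, while you correctly identify the new nonlocal convective Cahn--Hilliard result as the crucial scalar input, you do not mention its key structural trick --- the rewriting in \eqref{drift} of $\int_\Omega \mathbf{u}\cdot\nabla\phi\,\partial_t\mu\,\mathrm{d}x$ using $\nabla(F'(\phi))=F''(\phi)\nabla\phi$ and the $H^1$-structure of the $J$-convolution --- which is precisely what lets one close with only $\mathbf{u}\in L^4L^4_\sigma$ and no pointwise cancellation $mF''\in L^\infty$.

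Second, the uniqueness test functions are inconsistent with the claimed estimate. You propose testing the momentum difference with $\mathbf{u}$ and the Cahn--Hilliard difference with $\mathcal{N}\phi$ (the inverse Neumann Laplacian); that combination would yield a differential inequality for $\|\mathbf{u}\|_{L^2}^2+\|\phi-\overline{\phi}\|_*^2$, i.e.\ an $H^{-1}$-type control on $\phi$, which cannot produce \eqref{contdep} in $L^2\times L^2$. The paper's Section~\ref{Uniqueness} tests the Cahn--Hilliard difference with $\Phi=\phi_1-\phi_2$ itself, using strict separation to obtain pointwise Lipschitz bounds $|F'(\phi_1)-F'(\phi_2)|\lesssim|\Phi|$ and $|F''(\phi_1)-F''(\phi_2)|\lesssim|\Phi|$, and the coercivity $\int_\Omega\nabla\Theta\cdot\nabla\Phi\,\mathrm{d}x\geq\alpha\|\nabla\Phi\|^2_{L^2}+\dots$, before closing via Ladyzhenskaya/Agmon estimates with exactly the weight $K$ of \eqref{contdepconst}. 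The $\mathcal{N}$-dual test you describe is in fact the one used in the paper's Theorem~\ref{Uniq:strong:H} (matched densities, no separation assumption), which produces a weaker $H^{-1}_\sigma\times(H^1)'$ continuous-dependence estimate --- a different statement.

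Everything else --- the energy balance, the role of the $L^4$ pressure bound in the $\mathbf{A}\mathbf{u}$ test to handle $\nu'(\phi)\pi\nabla\phi$, the separation property and its consequences for $\mu$-regularity, and the global extension by concatenation --- is in line with the paper.
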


\begin{remark}[Unique continuation property]
Let us consider two strong solutions according Theorem \ref{MR:strong} which
depart from the same initial data with $\phi _{0}$ not necessarily strictly
separated. If there exists $\widetilde{\tau }>0$ such that the two solutions
coincide at $t=\widetilde{\tau }$, they coincide over the entire interval $[%
\widetilde{\tau },\infty )$. In fact, since both solutions are strictly
separated in $[\widetilde{\tau },\infty )$ by \eqref{delt}, the claim
follows from \eqref{contdep}.
\end{remark}

The strategy of our proof relies on two new tools. First, we show a novel
well-posedness result of strong solutions to the nonlocal Cahn-Hilliard
system driven by an incompressible velocity field:%
\begin{align}
&\partial _{t}\phi +\mathbf{u}\cdot \nabla \phi  =\Delta \mu ,\quad \text{ }\mu
=F^{\prime }(\phi )-J\ast \phi, \quad \text{ in }\Omega \times (0,\infty),
\label{CH-intr} \\
&\partial _{\mathbf{n}}\mu  =0, \quad\text{ on }\partial \Omega \times (0,\infty),\quad\text{
}\phi (0)={\phi }_{0}, \quad\text{ in }{\Omega }.  \notag
\end{align}%
We recall that the regularity achieved in \cite[Lemma 6.1]{GGG2017} has been
obtained by assuming that
$$
\mathbf{u}\in L^{2}(0,T; L_{\sigma
}^{\infty }(\Omega ))\cap H^{1}(0,T; L_{\sigma }^{2}(\Omega )).
$$
To the best of our knowledge, no other results in the case of singular
potential and constant mobility are available. In Theorem \ref{ExistCahn},
we prove the existence and uniqueness of the strong solutions \eqref{CH-intr}
under the solely assumption that $\mathbf{u}\in L^{4}(0,T;L_{\sigma
}^{4}(\Omega ))$, which holds when $\mathbf{u}$ just belongs to the Leray-Hopf class.
In doing so, we exploit the specific form of the chemical
potential $\mu $ to rewrite the apparently unmanageable term $\int_{\Omega }%
\mathbf{u}\cdot \nabla \phi \,\partial _{t}\mu \,\mathrm{d}x$ (cf. %
\eqref{drift}). The second tool is a new interpolation estimate for the
pressure of the Stokes operator given in Lemma \ref{press} which improves
\cite[Lemma B.2]{GMT2019}. Once these two preliminary results are proven,
the strong couplings in \eqref{syst2} are handled through a
suitable approximation scheme to obtain global-in-time higher-order
Sobolev/energy estimates. Finally, we mention that it is unlikely
to study the three-dimensional case in the functional framework
considered in Theorem \ref{MR:strong}. Indeed, assuming that $\mu $ and $%
\mathbf{u}$ satisfy the properties \eqref{regg}, in three-dimensions the
nonlinear term $(\nabla \mu \cdot \nabla )\mathbf{u}$ does not even belong
to $L^{2}(0,T;L^{2}(\Omega ;\mathbb{R}^{3}))$.

%
%As a consequence of the above-mentioned result about the nonlocal Cahn-Hilliard equation with convection (see Section \ref{convective}), we also establish the global existence of a solution with intermediate regularity between weak and strong, that are called {\it semi-strong} solutions. More specifically, we show the global existence of a solution which is weak relatively to the velocity field $\uu$ (requiring the initial velocity to simply belong to $\textbf{H}_\sigma$)  and strong with respect to $\phi$. In particular, such {semi-strong} solutions satisfy the energy identity.

Our second main result regards the global behavior and propagation of
regularity of any weak solution. Weak uniqueness or weak-strong uniqueness
results are not available in this context, therefore we need to exploit a
different proof to obtain that \textit{any} weak solution enjoys an
instantaneous propagation of regularity and converges to an
equilibrium, i.e., to a stationary state as time goes to $+\infty$.

\begin{theorem}
\label{MR:weak} Let the assumptions \ref{Omega}-\ref{m-ass} hold. Assume
that $\mathbf{u}_{0}\in L_{\sigma }^{2}(\Omega )$ and $\phi _{0}\in
L^{\infty }(\Omega )$ with $F(\phi _{0})\in L^{1}(\Omega )$ and $|\overline{%
\phi _{0}}|<1$. For any $T>0$, we consider the weak solution $(\mathbf{u}%
,\phi )$ to \eqref{syst2}-\eqref{bic} defined on $\Omega \times [0,T)$ given
by Theorem \ref{R:weak-sol}. Then, $(\mathbf{u},\phi )$ is uniquely extended
on $\Omega \times [0,\infty )$ and, for any $\tau >0$, $(\mathbf{u},\phi )$
is a strong solution on $[\tau ,+\infty )$. More precisely, for any $t\geq
\tau >0$, $(\mathbf{u},\phi )$ satisfies
\begin{equation}
\begin{cases}
 \mathbf{u}\in BC([\tau ,\infty );H_{0,\sigma }^{1}(\Omega ))\cap L_{%
\mathrm{uloc}}^{2}([\tau ,\infty );H_{0,\sigma }^{2}(\Omega ))\cap H_{%
\mathrm{uloc}}^{1}([\tau ,\infty );L_{\sigma }^{2}(\Omega )), \\
\Pi \in L_{\mathrm{uloc}}^{2}([\tau ,\infty );H_{(0)}^{1}(\Omega )), \\
\phi \in L^{\infty }(\tau ,\infty ;H^{1}(\Omega ))\cap L_{\mathrm{uloc}%
}^{q}([\tau ,\infty );W^{1,p}(\Omega )),\quad q=\frac{2p}{p-2},\quad p\in
(2,\infty ), \\
\phi \in L^{\infty }(\tau ,\infty ;L^{\infty }(\Omega )):\quad |\phi
(x,t)|<1 \ \text{ for a.a. }x\in\Omega ,\,\forall \,t\in \lbrack \tau ,\infty ), \\
\partial _{t}\phi \in L^{\infty }(\tau ,\infty ;H^{1}(\Omega )^{\prime
})\cap L^{2}(\tau,\infty ;L^{2}(\Omega )),\quad F^{\prime }(\phi )\in
L^{\infty }(\tau ,\infty ;H^{1}(\Omega )), \\
\mu \in BC_{\mathrm{w}}([\tau ,\infty );H^{1}(\Omega ))\cap L_{\mathrm{uloc}%
}^{2}([\tau ,\infty );H^{2}(\Omega ))\cap H_{\mathrm{uloc}}^{1}([\tau
,\infty );H^{1}(\Omega )^{\prime }),
\end{cases}
\label{reggbis}
\end{equation}%
and the energy identity
\begin{equation}
E(\mathbf{u}(t),\phi (t))+\int_{\tau }^{t}\left\Vert \sqrt{\nu (\phi (s))}D%
\mathbf{u}(s)\right\Vert _{L^{2}(\Omega )}^{2}+\Vert \nabla \mu (s)\Vert
_{L^{2}(\Omega ))}^{2}\,\mathrm{d}s=E(\mathbf{u}(\tau ),\phi (\tau ))
\label{ENERGYID}
\end{equation}%
holds, for every $0<\tau \leq t<\infty $. Moreover, there exists a constant $%
\delta =\delta (\tau )\in (0,1)$ such that
\begin{equation*}
\sup_{t\in \lbrack \tau ,+\infty )}\Vert \phi (t)\Vert _{L^{\infty }(\Omega
)}\leq 1-\delta .
\end{equation*}%
If, in addition, $F$ is also real analytic, then $(\mathbf{u}(t),\phi (t))\rightarrow (\mathbf{0},\phi _{\infty
})$ in $L_{\sigma }^{2}(\Omega )\times L^{\infty }(\Omega )$ as $%
t\rightarrow +\infty $, where $\phi _{\infty }\in L^{\infty }(\Omega )\cap
H^{1}(\Omega )$ is a solution to the stationary nonlocal Cahn-Hilliard
equation
\begin{equation}
\begin{cases}
F^{\prime }(\phi _{\infty })-J\ast \phi_\infty=\mu _{\infty },\quad \text{in }%
\Omega , \\
\frac{1}{|\Omega |}\int_{\Omega }\phi _{\infty }(x)\mathrm{d}x =\overline{%
\phi _{0}},\quad \mu _{\infty }\in \mathbb{R}.
\end{cases}
\label{Stat-CH}
\end{equation}
\end{theorem}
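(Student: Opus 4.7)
I plan to: (a) show that any weak solution becomes strictly separated from the pure phases for positive times, (b) bootstrap it to the strong-solution class and identify it with the unique strong solution of Theorem~\ref{MR:strong}, (c) upgrade the energy inequality \eqref{energy-ineq} to the identity \eqref{ENERGYID}, and (d) invoke the \L ojasiewicz--Simon machinery. For the separation, I would run the Moser--Alikakos $L^p$-iteration of \cite{GGG2022}: from $\nabla\mu\in L^2(0,T;L^2)$ and $\mu=F'(\phi)-J\ast\phi$ one has $F'(\phi)\in L^2(0,T;H^1)$, so testing the nonlocal Cahn--Hilliard equation against $F'(\phi)^{2p-1}$ -- the transport term drops out because $\mathrm{div}\,\mathbf{u}=0$ and $\mathbf{u}\cdot\mathbf{n}=0$, so the fluid coupling is invisible -- produces uniform $L^p$-bounds on $F'(\phi)$. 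Combining with the second inequality of \eqref{sep} and sending $p\to\infty$ yields $\|F'(\phi(t))\|_{L^\infty(\Omega)}\leq C(\tau)$ for $t\geq\tau$, hence $|\phi|\leq 1-\delta(\tau)$.

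Once $F''(\phi)$ is uniformly bounded on $[\tau/2,\infty)\times\Omega$, the same differential inequalities that drive the proof of Theorem~\ref{MR:strong} -- notably the rewriting of $\int_\Omega\mathbf{u}\cdot\nabla\phi\,\partial_t\mu\,\mathrm{d}x$ from Theorem~\ref{ExistCahn} and the new Stokes pressure bound of Lemma~\ref{press} -- become admissible for the weak solution on this interval, so a standard bootstrap yields $\partial_t\phi\in L^2_{\mathrm{loc}}(L^2)$, $\mu\in L^2_{\mathrm{loc}}(H^2)$, $\phi\in L^\infty_{\mathrm{loc}}(H^1)$, and $\mathbf{u}\in L^\infty_{\mathrm{loc}}(H^1_{0,\sigma})\cap L^2_{\mathrm{loc}}(H^2)$ on $[\tau/2,\infty)$. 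For a.a.\ $\tau_0\in(\tau/2,\tau)$ the pair $(\mathbf{u}(\tau_0),\phi(\tau_0))$ then fulfils the hypotheses of Theorem~\ref{MR:strong}; since both the bootstrapped weak solution and the resulting strong solution are strictly separated on $[\tau_0,\infty)$ (by the previous step and \eqref{SP-Star}), the continuous-dependence estimate \eqref{contdep} applied at $t=\tau_0$ forces them to coincide, giving \eqref{reggbis} together with uniqueness of the extension to $[0,\infty)$. In this regularity, testing \eqref{syst2}$_1$ with $\mathbf{u}$ and \eqref{syst2}$_3$ with $\mu$ is rigorous and upgrades \eqref{energy-ineq} to \eqref{ENERGYID}.

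Assuming $F$ is real analytic, \eqref{ENERGYID} yields $\mathbf{u}\in L^2(\tau,\infty;H^1_{0,\sigma})$ and $\nabla\mu\in L^2(\tau,\infty;L^2)$; together with the uniform $H^1$-bounds on $(\mathbf{u},\phi)$ and the strict separation, an Aubin--Lions argument makes the trajectory $\{(\mathbf{u}(t),\phi(t))\}_{t\geq\tau}$ relatively compact in $L^2_\sigma(\Omega)\times L^\infty(\Omega)$, and every cluster point is of the form $(\mathbf{0},\phi_\infty)$ with $\phi_\infty$ solving \eqref{Stat-CH}. A nonlocal \L ojasiewicz--Simon inequality for $\mathcal{E}_{\mathrm{nloc}}$ near each separated critical point (established as in \cite{GGG2017} using the analyticity of $F$ on the separation range) integrated along the trajectory gives $\int_\tau^\infty\|\partial_t\phi\|_{H^1(\Omega)^{\prime}}\,\mathrm{d}t<\infty$, hence convergence of $\phi(t)$ to a unique $\phi_\infty$; $\mathbf{u}(t)\to\mathbf{0}$ then follows from dissipation combined with trajectory compactness.

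The most delicate point is the identification at $\tau_0$: \eqref{contdep} was derived in Theorem~\ref{MR:strong} under strictly separated initial data, whereas the weak solution starts from a merely bounded $\phi_0$. This is exactly what the separation step supplies -- separation at every positive time -- so that \eqref{contdep} is invoked from $t=\tau_0>0$ rather than from $t=0$; one must still check that the constant $K(t)$ in \eqref{contdepconst} is locally integrable on $[\tau_0,\infty)$, which follows from the regularity propagated in the bootstrap step.
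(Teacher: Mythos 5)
Your outline follows the right overall strategy — propagate Cahn--Hilliard regularity, secure strict separation, compare with the strong solution of Theorem~\ref{MR:strong}, upgrade the energy inequality, and run \L ojasiewicz--Simon — but there is a genuine gap in step (b): the velocity bootstrap is circular. The weak solution of Theorem~\ref{R:weak-sol} only satisfies the momentum equation tested against $H^{2}_{0,\sigma}(\Omega)$, and $\partial_t(\rho(\phi)\mathbf{u})$ is known a priori only in $L^{4/3}(0,T;H^{2}_{0,\sigma}(\Omega)')$. The higher-order velocity estimates in the proof of Theorem~\ref{MR:strong} are obtained by testing with $\partial_t\mathbf{u}$ and $\mathbf{A}\mathbf{u}$ (and by exploiting the Galerkin structure), which requires $\mathbf{u}\in H^1_{\mathrm{loc}}(L^2_\sigma)\cap L^2_{\mathrm{loc}}(H^2_{0,\sigma})$ — exactly what you are trying to prove. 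You cannot simply declare that ``the same differential inequalities become admissible for the weak solution''; there is no mechanism yet to insert the required test functions.

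The paper closes this gap in two stages that your outline skips. First, after obtaining the Cahn--Hilliard regularity \eqref{W:reg1} (so that in particular $\mu\in L^2(H^2)$ and $\nabla\phi\in L^\infty(L^2)$), one shows that the functional $\mathbf{f}$ defining $\partial_t(\rho(\phi)\mathbf{u})$ actually lies in $L^2(\tau_1,T;H^1_{0,\sigma}(\Omega)')$, which allows $\mathbf{u}$ itself as a test function and makes the chain rule of \cite[Lemma~5.3]{Frigeri} available, yielding \eqref{ENERGYID}. Second, the strong regularity of $\mathbf{u}$ is \emph{not} obtained by bootstrapping the weak solution directly; rather, one picks $\tau_3$ so that $(\mathbf{u}(\tau_3),\phi(\tau_3))$ satisfies the hypotheses of Theorem~\ref{MR:strong}, constructs a strong solution on $[\tau_3,\infty)$, and then proves the two trajectories coincide by redoing the continuous-dependence computation for the pair (weak-with-improved-CH-regularity, strong). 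This last step again uses \cite[Lemma~5.3]{Frigeri} for the chain rule applied to $\rho(\phi_1)\mathbf{u}$, and one must verify $\partial_t(\rho(\phi_1)\mathbf{u})\in L^2(H^1_{0,\sigma}')$ — it is \emph{not} a direct invocation of \eqref{contdep}, which was derived under the premise that both solutions are strong. Your proposal also glosses over the Moser iteration on the weak solution: the paper instead secures separation by applying the nonlocal Cahn--Hilliard regularity theory of Theorem~\ref{ExistCahn} (Remark~\ref{Rem-SP}) and the uniqueness of the convective Cahn--Hilliard equation, which sidesteps the regularity issues one would face testing the weak equation with $F'(\phi)^{2p-1}$.

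Once these points are addressed, the remainder — the energy identity and the \L ojasiewicz--Simon argument via the $\omega$-limit set — is in line with the paper's proof; the paper also passes through the Helmholtz free energy identity \eqref{Hee} and uses \eqref{ENERGYID} minus \eqref{Hee} to control the kinetic energy separately, which may be a convenient bookkeeping you will want to adopt.
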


\begin{remark}
The convergence to a single equilibrium stated in Theorem \ref{MR:weak} also
holds for the global strong solutions constructed in Theorem \ref{MR:strong}.
\end{remark}

Let us now consider the matched densities case, i.e. $\rho=\rho_1=\rho_2$.
Both Theorems \ref{MR:strong} and \ref{MR:weak} remains true for the
nonlocal Navier-Stokes-Cahn-Hilliard (NSCH) system with unmatched
viscosities, singular potential and constant mobility (also called nonlocal Model H, see \cite{FGG, GGG2017}). Thanks to the aforementioned novel
interpolation result (see Lemma \ref{press} below),
we prove the following continuous dependence estimate for strong solutions
to the nonlocal NSCH. In particular, this guarantees the uniqueness of
strong solutions (not necessarily ``separated" as in Theorem \ref{MR:strong}%
) to the nonlocal NSCH system. This result has been an open question since
\cite{FGG} where only the constant viscosity case was considered.

\begin{theorem}
\label{Uniq:strong:H} Let the assumptions \ref{Omega}-\ref{m-ass} hold.
Suppose that $(\mathbf{u}_{1},\Pi _{1},\phi _{1})$ and $(\mathbf{u}_{2},\Pi _{2},\phi
_{2})$ are two strong solutions given by Theorem \ref{MR:strong} with
constant density $\rho =\rho _{1}=\rho _{2}>0$ corresponding to the initial
data $(\mathbf{u}_{0}^{1},\phi _{0}^{1})$ and $(\mathbf{u}_{0}^{2},\phi
_{0}^{2})$, respectively. Then, there holds
\begin{equation}
\begin{split}
& \Vert \mathbf{u}_{1}(t)-\mathbf{u}_{2}(t)\Vert _{H_{\sigma }^{-1}(\Omega
)}^{2}+\Vert \phi _{1}(t)-\phi _{2}(t)\Vert _{H^{1}(\Omega )^{\prime }}^{2}
\\
& \quad \leq C\left( \Vert \mathbf{u}_{0}^{1}-\mathbf{u}_{0}^{2}\Vert
_{H_{\sigma }^{-1}(\Omega )}^{2}+\Vert \phi _{0}^{1}-\phi _{0}^{2}\Vert
_{H^{1}(\Omega )^{\prime }}^{2}\right) \mathrm{{e}}^{S(t)}+%
R(t)\left\vert \overline{\phi }_{0}^{1}-\overline{\phi }%
_{0}^{2}\right\vert \mathrm{e}^{{S}(t)}+Ct\left\vert \overline{%
\phi }_{0}^{1}-\overline{\phi }_{0}^{2}\right\vert ^{2}\mathrm{e}^{S(t)},
\label{weakstrong1}
\end{split}
\end{equation}
for all $t>0$, where
\begin{equation*}
S(t)=C\int_{0}^{t}\left( 1+\Vert \mathbf{u}_{1}(s)\Vert
_{L^{4}(\Omega )}^{4}+\Vert \mathbf{u}_{2}(s)\Vert _{L^{4}(\Omega
)}^{4}+\Vert D\mathbf{u}_{2}(s)\Vert _{L^{4}(\Omega )}^{4}+\Vert \nabla \phi
_{1}(s)\Vert _{L^{4}(\Omega )}^{4}\right) \,\mathrm{d}s
\end{equation*}%
and
\begin{equation*}
R(t)=C\int_{0}^{t}\left( \Vert F^{\prime }(\phi _{1}(s))\Vert
_{L^{1}(\Omega )}+\Vert F^{\prime }(\phi _{2}(s))\Vert _{L^{1}(\Omega
)}\right) \,\mathrm{d}s,
\end{equation*}%
as well as some positive constant $C$ depending on the norm of the initial
data.
\end{theorem}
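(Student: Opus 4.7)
The plan is to derive a differential inequality for $\|\mathbf{u}\|_{H^{-1}_\sigma}^2+\|\widetilde\phi\|_{H^1(\Omega)'}^2$, where $(\mathbf{u},\phi,\mu,\Pi)=(\mathbf{u}_1-\mathbf{u}_2,\phi_1-\phi_2,\mu_1-\mu_2,\Pi_1-\Pi_2)$ and $\widetilde\phi:=\phi-c_0$ with $c_0:=\overline{\phi_0^1}-\overline{\phi_0^2}$ (which is time-independent by mass conservation), and then close via Gr\"onwall. Since $\rho_1=\rho_2$, the flux $\mathbf{J}$ vanishes and the difference system reduces to the Model~H form
\[
\rho\partial_t\mathbf{u}+\rho\bigl[(\mathbf{u}_1\cdot\nabla)\mathbf{u}_1-(\mathbf{u}_2\cdot\nabla)\mathbf{u}_2\bigr]-\mathrm{div}\bigl(\nu(\phi_1)D\mathbf{u}_1-\nu(\phi_2)D\mathbf{u}_2\bigr)+\nabla\Pi=\mu_1\nabla\phi_1-\mu_2\nabla\phi_2,
\]
together with $\partial_t\phi+\mathbf{u}_1\cdot\nabla\phi_1-\mathbf{u}_2\cdot\nabla\phi_2=\Delta\mu$ and $\mu=F'(\phi_1)-F'(\phi_2)-J\ast\phi$. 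I test the momentum equation with $\mathbf{w}:=\mathbf{A}^{-1}\mathbf{u}$, the Stokes inverse with homogeneous Dirichlet data, so that $(\mathbf{u},\mathbf{w})=\|\mathbf{u}\|_{H^{-1}_\sigma}^2$ and $\int D\mathbf{u}:D\mathbf{w}\,\mathrm{d}x=\tfrac12\|\mathbf{u}\|_{L^2}^2$; the Cahn--Hilliard equation I test with $\psi:=\mathcal{N}\widetilde\phi$, the inverse Neumann Laplacian on zero-mean functions, so that $(\widetilde\phi,\psi)=\|\widetilde\phi\|_{H^1(\Omega)'}^2$ and $\|\nabla\psi\|_{L^2}=\|\widetilde\phi\|_{H^1(\Omega)'}$.

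For the phase equation, using $\partial_\mathbf{n}\mu=\partial_\mathbf{n}\psi=0$ and $-\Delta\psi=\widetilde\phi$, one has $(\Delta\mu,\psi)=-\int\mu\widetilde\phi\,\mathrm{d}x$, and the decomposition $\widetilde\phi=\phi-c_0$ produces
\[
\tfrac12\tfrac{d}{dt}\|\widetilde\phi\|_{H^1(\Omega)'}^2+\int_\Omega(F'(\phi_1)-F'(\phi_2))\phi\,\mathrm{d}x=c_0\!\int_\Omega(F'(\phi_1)-F'(\phi_2))\,\mathrm{d}x+\int_\Omega(J\ast\phi)\widetilde\phi\,\mathrm{d}x+(\text{conv.}),
\]
where the second LHS term is nonnegative by the convexity of $F$, and the first RHS term is bounded by $|c_0|(\|F'(\phi_1)\|_{L^1}+\|F'(\phi_2)\|_{L^1})$; integrating in time yields precisely the $R(t)|c_0|\mathrm{e}^{S(t)}$ contribution in \eqref{weakstrong1}. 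For the capillary force I use the crucial identity $\mu_j\nabla\phi_j=\nabla F(\phi_j)-(J\ast\phi_j)\nabla\phi_j$: the gradient part is annihilated by $\mathrm{div}\,\mathbf{w}=0$, while the convolution part is handled through $\|\nabla J\|_{L^1}$ after integration by parts. The quadratic term $Ct|c_0|^2\mathrm{e}^{S(t)}$ arises when the cross contribution $c_0(J\ast 1,\widetilde\phi)$ is split by Young's inequality into $\tfrac12|c_0|^2+\tfrac12\|\widetilde\phi\|_{L^2}^2$, the former integrating in time to give the $Ct|c_0|^2$ factor.

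For the momentum testing, the decomposition $\nu(\phi_1)=\nu_*+(\nu(\phi_1)-\nu_*)$ yields the dissipative contribution $\tfrac{\nu_*}{2}\|\mathbf{u}\|_{L^2}^2$ which absorbs the inertial terms after integrating by parts and using the Stokes-regularity interpolation $\|\nabla\mathbf{w}\|_{L^4}\le C\|\mathbf{u}\|_{L^2}^{1/2}\|\mathbf{u}\|_{H^{-1}_\sigma}^{1/2}$, producing the weights $\|\mathbf{u}_j\|_{L^4}^4$ in $S(t)$; the phase-transport and convolution terms are treated by $\|\phi\|_{L^4}^2\le C\|\phi\|_{H^1(\Omega)'}\|\phi\|_{H^1}$ together with the uniform $H^1$-bound on $\phi_j$ furnished by Theorem~\ref{MR:strong}, yielding the $\|\nabla\phi_1\|_{L^4}^4$ contribution. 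The genuinely delicate piece is the variable-viscosity mismatch
\[
\bigl|\bigl((\nu(\phi_1)-\nu(\phi_2))D\mathbf{u}_2,D\mathbf{w}\bigr)\bigr|\le C\|\phi\|_{L^4}\|D\mathbf{u}_2\|_{L^4}\|\nabla\mathbf{w}\|_{L^2},
\]
which demands $D\mathbf{u}_2\in L^4(\Omega)$ time-integrably. This is the main obstacle of the proof and precisely where the novel pressure interpolation of Lemma~\ref{press} is indispensable: it upgrades the stationary Stokes regularity of $\mathbf{u}_2$ to $L^4$ on the gradient, giving the weight $\|D\mathbf{u}_2\|_{L^4}^4$ in $S(t)$. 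With this in hand, summing the two inequalities, absorbing the $\|\mathbf{u}\|_{L^2}^2$ contributions by the viscous dissipation, and applying Gr\"onwall's lemma produces the estimate \eqref{weakstrong1}.
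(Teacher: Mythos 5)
Your overall architecture agrees with the paper's: test the momentum equation with $\mathbf{A}^{-1}(\mathbf{u}_1-\mathbf{u}_2)$ and the phase equation with $\mathcal{N}(\Phi-\overline{\Phi})$, isolate the nonzero-mean contribution to produce the $R(t)|c_0|$ and $t|c_0|^2$ terms, and close via Gr\"onwall. However, you have misidentified where Lemma~\ref{press} actually enters, and your treatment of the viscous term is incomplete.

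The term you single out as the ``genuinely delicate piece,'' namely $\bigl((\nu(\phi_1)-\nu(\phi_2))D\mathbf{u}_2,\,\nabla\mathbf{A}^{-1}\mathbf{u}\bigr)$, does not use Lemma~\ref{press} at all: it is estimated directly by $\|\Phi\|_{L^2}\|D\mathbf{u}_2\|_{L^4}\|\nabla\mathbf{A}^{-1}\mathbf{u}\|_{L^4}$, with $D\mathbf{u}_2\in L^4$ (time-integrably) already a consequence of $\mathbf{u}_2\in L^\infty(H^1)\cap L^2_{\mathrm{uloc}}(H^2)$ from Theorem~\ref{MR:strong}; no pressure estimate is involved. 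The genuinely delicate term is the one your proposal never actually handles: the own-viscosity term $\bigl(\nu(\phi_1)D\mathbf{u},\,\nabla\mathbf{A}^{-1}\mathbf{u}\bigr)$. Your splitting $\nu(\phi_1)=\nu_*+(\nu(\phi_1)-\nu_*)$ recovers the constant-viscosity dissipation $\tfrac{\nu_*}{2}\|\mathbf{u}\|_{L^2}^2$, but the remainder $\bigl((\nu(\phi_1)-\nu_*)D\mathbf{u},\,\nabla\mathbf{A}^{-1}\mathbf{u}\bigr)$ is simply dropped from your discussion. This remainder contains $D\mathbf{u}$, the gradient of the unknown at the very regularity level you are trying to avoid, and cannot be estimated directly. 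The paper's resolution is to integrate by parts the full term $\bigl(\nu(\phi_1)D\mathbf{u},\,\nabla\mathbf{A}^{-1}\mathbf{u}\bigr)$, yielding a piece $-\bigl(\mathbf{u},\nu'(\phi_1)D\mathbf{A}^{-1}\mathbf{u}\,\nabla\phi_1\bigr)$ (giving the $\|\nabla\phi_1\|_{L^4}^4$ weight), the dissipation $\tfrac12\bigl(\mathbf{u},\nu(\phi_1)\mathbf{u}\bigr)\geq\tfrac{\nu_*}{2}\|\mathbf{u}\|^2_{L^2}$, and a term $-\tfrac12\bigl(\mathbf{u},\nu(\phi_1)\nabla\pi\bigr)$, where $\pi$ is the artificial pressure of the stationary Stokes problem $-\Delta\mathbf{A}^{-1}\mathbf{u}+\nabla\pi=\mathbf{u}$. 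The last term becomes $\tfrac12\bigl(\mathbf{u},\nu'(\phi_1)\pi\nabla\phi_1\bigr)$, and it is precisely here that Lemma~\ref{press}, giving $\|\pi\|_{L^4}\leq C\|\mathbf{u}\|_\sharp^{1/2}\|\mathbf{u}\|_{L^2}^{1/2}$, is indispensable; a weaker pressure bound would not let the $\|\mathbf{u}\|_{L^2}$ factor be absorbed by the dissipation. You would encounter the same three-piece structure by integrating by parts your remainder, but that step is missing, and with it the actual reason Lemma~\ref{press} is needed.
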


Finally, we estimate the difference between the strong solutions to the
nonlocal AGG model and the nonlocal Model H originating from the same initial
datum, in terms of the density values. In particular, the following result
gives a rigorous justification of the nonlocal Model H as the \emph{constant density
approximation} of the nonlocal AGG model:

\begin{theorem}
\label{stability} Let the assumptions \ref{Omega}-\ref{m-ass} hold.
Given an initial datum $(\mathbf{u}_{0},\phi _{0})$ as in Theorem \ref%
{MR:strong}, we consider a strong solution $(\mathbf{u},\Pi ,\phi )$ to the
AGG model and the strong solution $(u_{H},\Pi _{H},\phi _{H}) $ to the AGG
model with constant density $\overline{\rho }>0$ (nonlocal Model H). Then,
for any given $T>0$, there exists a constant $C>0$, that depends on the norm of
the initial data, the time $T$ and the parameters of the systems, such that
\begin{equation}
\sup_{t\in \lbrack 0,T]}\Vert \mathbf{u}(t)-\mathbf{u}_{H}(t)\Vert
_{H_{\sigma }^{-1}(\Omega )}+\sup_{t\in \lbrack 0,T]}\Vert \phi (t)-\phi
_{H}(t)\Vert _{H^{1}(\Omega )^{\prime }}\leq C\left( \left\vert \frac{\rho
_{1}-\rho _{2}}{2}\right\vert +\left\vert \frac{\rho _{1}+\rho _{2}}{2}-%
\overline{\rho }\right\vert \right) .  \label{stab}
\end{equation}
\end{theorem}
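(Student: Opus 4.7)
Set $\widetilde{\mathbf{u}} := \mathbf{u} - \mathbf{u}_H$, $\widetilde{\phi} := \phi - \phi_H$, and $\widetilde{\mu} := \mu - \mu_H$. Because both solutions emanate from the same initial datum $(\mathbf{u}_0, \phi_0)$ and the Cahn-Hilliard dynamics preserves the spatial mean of the phase field, we have $\widetilde{\mathbf{u}}(0)\equiv \mathbf{0}$, $\widetilde{\phi}(0)\equiv 0$, and $\overline{\widetilde{\phi}}(t)\equiv 0$ for all $t\in[0,T]$. This last fact makes the inverse Neumann-Laplacian $\mathcal{N}$ well-defined on $\widetilde{\phi}$, so I will work with the natural dual norms $\|\widetilde{\mathbf{u}}\|^{2}_{H^{-1}_\sigma} = \langle \mathcal{A}_\sigma^{-1}\widetilde{\mathbf{u}},\widetilde{\mathbf{u}}\rangle$ and $\|\widetilde{\phi}\|_{H^1(\Omega)'}^2 \sim \|\nabla\mathcal{N}\widetilde{\phi}\|_{L^2}^2$, mirroring the norms appearing in Theorem \ref{Uniq:strong:H}.

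The starting point is to recast the AGG momentum equation as a perturbation of the Model H one. Using $\partial_t\rho(\phi)+\mathrm{div}\,(\rho(\phi)\mathbf{u}+\mathbf{J})=0$, the first equation of \eqref{syst2} becomes
\begin{equation*}
\rho(\phi)\partial_t\mathbf{u}+\rho(\phi)(\mathbf{u}\cdot\nabla)\mathbf{u}+(\mathbf{J}\cdot\nabla)\mathbf{u}-\mathrm{div}\,(\nu(\phi)D\mathbf{u})+\nabla\Pi=\mu\nabla\phi.
\end{equation*}
Splitting $\rho(\phi)=\bar\rho+(\rho(\phi)-\bar\rho)$ and subtracting the Model H momentum equation for $(\mathbf{u}_H,\Pi_H,\phi_H)$ yields, for $\widetilde{\mathbf{u}}$, an equation of the \emph{exact same structure} as the one in the uniqueness proof of Theorem \ref{Uniq:strong:H} (with $\phi_H$ playing the role of $\phi_2$), \emph{plus} a remainder
\begin{equation*}
\mathcal{R}:=-(\rho(\phi)-\bar\rho)\partial_t\mathbf{u}-(\rho(\phi)-\bar\rho)(\mathbf{u}\cdot\nabla)\mathbf{u}-(\mathbf{J}\cdot\nabla)\mathbf{u}
\end{equation*}
that encodes the model-to-model discrepancy. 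The two Cahn-Hilliard equations share the same structure, so $\widetilde{\phi}$ solves $\partial_t\widetilde{\phi}+(\widetilde{\mathbf{u}}\cdot\nabla)\phi+(\mathbf{u}_H\cdot\nabla)\widetilde{\phi}=\Delta\widetilde{\mu}$ with $\partial_{\mathbf{n}}\widetilde{\mu}=0$ on $\partial\Omega$. I then test the $\widetilde{\mathbf{u}}$-equation with $\mathcal{A}_\sigma^{-1}\widetilde{\mathbf{u}}$ and the $\widetilde{\phi}$-equation with $\mathcal{N}\widetilde{\phi}$, producing the time derivative of $\tfrac{\bar\rho}{2}\|\widetilde{\mathbf{u}}\|_{H^{-1}_\sigma}^{2}+\tfrac{1}{2}\|\widetilde{\phi}\|_{H^1(\Omega)'}^{2}$. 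All contributions not involving $\mathcal{R}$ are controlled exactly as in Theorem \ref{Uniq:strong:H}, yielding a bound $K(t)\bigl(\|\widetilde{\mathbf{u}}\|_{H^{-1}_\sigma}^{2}+\|\widetilde{\phi}\|_{H^1(\Omega)'}^{2}\bigr)$ with $K\in L^1(0,T)$; the $L^4$-in-time control of $D\mathbf{u}$ and $D\mathbf{u}_H$ required to close $K$ is exactly what Lemma \ref{press} delivers. For $\mathcal{R}$ the crucial observation is the decomposition
\begin{equation*}
\rho(\phi)-\bar\rho=\frac{\rho_1-\rho_2}{2}\,\phi+\Bigl(\frac{\rho_1+\rho_2}{2}-\bar\rho\Bigr),
\end{equation*}
which, together with $|\phi|\leq 1$, gives the pointwise bounds $|\rho(\phi)-\bar\rho|\leq \varepsilon$ and $|\mathbf{J}|\leq \varepsilon\,|\nabla\mu|$, where $\varepsilon:=|(\rho_1-\rho_2)/2|+|(\rho_1+\rho_2)/2-\bar\rho|$. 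Using the strong-solution bounds $\partial_t\mathbf{u},\,\nabla\mu\in L^2(0,T;L^2)$ and $\mathbf{u}\in L^\infty(0,T;H^1)\cap L^2(0,T;H^2)$ from Theorem \ref{MR:strong}, plus $\|\mathcal{A}_\sigma^{-1}\widetilde{\mathbf{u}}\|_{L^2}\lesssim \|\widetilde{\mathbf{u}}\|_{H^{-1}_\sigma}$, one obtains $|\langle \mathcal{R},\mathcal{A}_\sigma^{-1}\widetilde{\mathbf{u}}\rangle|\leq \eta\,\|\widetilde{\mathbf{u}}\|_{H^{-1}_\sigma}^{2}+C\,\varepsilon^{2}g(t)$ with $g\in L^1(0,T)$. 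Gronwall's lemma, combined with $\widetilde{\mathbf{u}}(0)=\mathbf{0}$ and $\widetilde{\phi}(0)=0$, then gives $\|\widetilde{\mathbf{u}}(t)\|_{H^{-1}_\sigma}^{2}+\|\widetilde{\phi}(t)\|_{H^1(\Omega)'}^{2}\leq C\varepsilon^{2}$ uniformly on $[0,T]$; taking square roots produces \eqref{stab}.

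The main obstacle I anticipate is the control of the $-(\rho(\phi)-\bar\rho)\partial_t\mathbf{u}$ contribution to $\mathcal{R}$: its natural bound calls on the regularity $\partial_t\mathbf{u}\in L^2_{\mathrm{uloc}}([0,\infty);L^2_\sigma(\Omega))$ from Theorem \ref{MR:strong} and must be paired with $\mathcal{A}_\sigma^{-1}\widetilde{\mathbf{u}}$ through a Cauchy-Schwarz-type argument that preserves the small prefactor $\varepsilon$ while remaining compatible with Gronwall. A related technical difficulty is the viscosity-mismatch term $\mathrm{div}\,\bigl((\nu(\phi)-\nu(\phi_H))D\mathbf{u}\bigr)$, whose dual estimate against $\mathcal{A}_\sigma^{-1}\widetilde{\mathbf{u}}$ demands $L^4(0,T;L^4)$-bounds on $D\mathbf{u}$ — precisely the setting for which the new interpolation inequality of Lemma \ref{press} was designed, and without which the nonlinear chain of Gronwall estimates cannot be closed uniformly in $T$.
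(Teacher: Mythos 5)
Your proposal follows essentially the same path as the paper's proof: test the difference equations against $\mathbf{A}^{-1}\widetilde{\mathbf{u}}$ and $\mathcal{N}\widetilde{\phi}$ (using that $\overline{\widetilde{\phi}}\equiv 0$ since the initial data coincide), exploit the algebraic identity $\rho(\phi)-\overline{\rho}=\tfrac{\rho_1-\rho_2}{2}\phi+\bigl(\tfrac{\rho_1+\rho_2}{2}-\overline{\rho}\bigr)$ together with $|\phi|\leq 1$ to isolate the small parameter, and close with Gronwall; your ``Model H uniqueness plus remainder $\mathcal{R}$'' framing is just a repackaging of the paper's direct split of the coefficient in \eqref{phih}. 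One imprecision worth flagging: Lemma \ref{press} does not deliver $L^4$-in-time control of $D\mathbf{u}$ and $D\mathbf{u}_H$ --- that comes from the strong-solution regularity $\mathbf{u},\mathbf{u}_H\in L^\infty(0,T;H^1_{0,\sigma})\cap L^2(0,T;H^2_{0,\sigma})$ via parabolic interpolation --- whereas Lemma \ref{press} is invoked to bound, in $L^4(\Omega)$, the \emph{artificial Stokes pressure} $\tilde\Pi$ arising in the identity $(\nu(\phi)D\widetilde{\mathbf{u}},\nabla\mathbf{A}^{-1}\widetilde{\mathbf{u}})=-(\widetilde{\mathbf{u}},\nu'(\phi)D\mathbf{A}^{-1}\widetilde{\mathbf{u}}\nabla\phi)+\tfrac12(\widetilde{\mathbf{u}},\nu(\phi)\widetilde{\mathbf{u}})-\tfrac12(\widetilde{\mathbf{u}},\nu(\phi)\nabla\tilde\Pi)$ used to handle the variable-viscosity dissipation in the dual-norm framework.
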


\begin{remark}
Assuming that $\rho_1=\overline{\rho}$ and $\rho_2=\overline{\rho}%
+\varepsilon$, for some (small) $\varepsilon>0$, estimate
\eqref{stab} reads
\begin{align*}
\sup_{t\in[0,T]}\Vert\mathbf{u}(t)-\mathbf{u}_H(t)\Vert_{H^{-1}_{\sigma}(%
\Omega)}+\sup_{t\in[0,T]}\Vert\phi(t)-\phi_H(t)\Vert_{H^1(\Omega)^{\prime}}
\leq C\varepsilon.
\end{align*}
\end{remark}

%\textbf{Plan of the paper.} We report in Section \ref{prel} the functional
%setting used in the subsequent analysis. In Section \ref{tools} we prove a
%new pressure estimate related to the Stokes problem. In Section \ref%
%{convective}, we study the nonlocal Cahn-Hilliard equation with
%divergence-free drift. Sections \ref{proofmain}-\ref{last} are devoted to
%the proofs of the main results. In appendix \ref{Appendix}, we show an
%existence result for the nonlocal Cahn-Hilliard equation with smooth
%divergence-free drift.

\section{Mathematical Setting}

\label{prel} \setcounter{equation}{0}

Let $\Omega $ be a bounded domain of class $C^{2}$ in $\mathbb{R}^{2}$. The
Sobolev spaces of functions $u:\Omega \rightarrow \mathbb{R}$ and of vector
fields $\mathbf{u}:\Omega \rightarrow \mathbb{R}^{d}$ ($d\in \mathbb{N}$)
are denoted by $W^{k,p}(\Omega )$ and $W^{k,p}(\Omega ;\mathbb{R}^{d})$,
respectively, where $k\in \mathbb{N}$ and $1\leq p\leq \infty $. For
simplicity of notation, we will denote their norms by $\Vert \cdot \Vert
_{W^{k,p}(\Omega )}$ in both cases. If $p=2$, the Hilbert spaces $%
W^{k,2}(\Omega )$ and $W^{k,2}(\Omega ;\mathbb{R}^{d})$ are denoted by $%
H^{k}(\Omega )$ and $H^{k}(\Omega ;\mathbb{R}^{d})$, respectively, with norm
$\Vert \cdot \Vert _{H^{k}(\Omega )}$.
%In particular, we will adopt the notation
%\begin{equation*}
%H=L^2(\Omega),\quad V=H^1(\Omega),\quad  V_2=\{v\in H^2(\Omega):\ \partial_\textbf{n}v=0\ \text{on }\partial\Omega\}.
%\end{equation*}
We will adopt the notation $(\cdot ,\cdot )$ for the inner product in $%
L^{2}(\Omega )$ and in $L^{2}(\Omega ;\mathbb{R}^{d})$. The dual spaces of $%
W^{k,p}(\Omega )$ and $W^{k,p}(\Omega ;\mathbb{R}^{d})$ (as well as $%
H^{k}(\Omega )$ and $H^{k}(\Omega ;\mathbb{R}^{d})$) are denoted by $%
W^{k,p}(\Omega )^{\prime }$ and $W^{k,p}(\Omega ;\mathbb{R}^{d})^{\prime }$,
respectively, and the duality product by $\langle \cdot ,\cdot \rangle
_{W^{k,p}(\Omega )}$ and $\langle \cdot ,\cdot \rangle _{W^{k,p}(\Omega ;%
\mathbb{R}^{d})}$.
%and by $\Vert   \cdot\Vert$ the induced norm. We indicate by $(\cdot,\cdot)_1$ and $\Vert   \cdot\Vert_1$ the canonical inner product and its induced norm in $V$, respectively.
In addition, we define the linear subspaces
\begin{equation*}
L_{(0)}^{2}(\Omega )=\left\{ u\in L^{2}(\Omega ):\overline{u}=\frac{(u,1)}{%
|\Omega |}=0\right\} ,\quad H_{(0)}^{1}(\Omega )=H^{1}(\Omega )\cap
L_{(0)}^{2}(\Omega )
\end{equation*}%
and
\begin{equation*}
H_{(0)}^{-1}(\Omega )=\left\{ u\in H^{1}(\Omega )^{\prime }:\overline{u}%
=\left\vert \Omega \right\vert ^{-1}\left\langle u,1\right\rangle =0\right\}
,
\end{equation*}%
endowed with the norms of $L^{2}(\Omega )$, $H^{1}(\Omega )$ and $%
H^{1}(\Omega )^{\prime }$, respectively. By the Poincar\'{e}-Wirtinger
inequality, it follows that $\big(\Vert \nabla u\Vert _{L^{2}(\Omega )}^{2}+|%
\overline{u}|^{2}\big)^{\frac{1}{2}}$ is a norm in $H^{1}(\Omega )$, that is
equivalent to $\Vert u\Vert _{H^{1}(\Omega )}$. The Laplace operator $%
A_{0}:H_{(0)}^{1}(\Omega )\rightarrow H_{(0)}^{-1}(\Omega )$ defined by $%
\left\langle A_{0}u,v\right\rangle_{H^1_{(0)}(\Omega)} =(\nabla u,\nabla v)$%
, for any $v\in H_{(0)}^{1}(\Omega )$, is a bijective map between $%
H_{(0)}^{1}(\Omega )$ and $H_{(0)}^{-1}(\Omega )$. We denote its inverse by $%
\mathcal{N}=A_{0}^{-1}:H_{(0)}^{-1}(\Omega )\rightarrow H_{(0)}^{1}(\Omega )$%
, namely for any $u\in H_{(0)}^{-1}(\Omega )$, $\mathcal{N}u$ is the unique
function in $H_{(0)}^{1}(\Omega )$ such that $(\nabla \mathcal{N}u,\nabla
v)=\langle u,v\rangle _{H_{(0)}^{1}(\Omega )}$ for any $v\in
H_{(0)}^{1}(\Omega )$. As a consequence, for any $u\in H_{(0)}^{-1}(\Omega )$%
, we set $\Vert u\Vert _{\ast }:=\Vert \nabla \mathcal{N}u\Vert
_{L^{2}(\Omega )}$, which is a norm in $H_{(0)}^{-1}(\Omega )$, that is
equivalent to the canonical dual norm. In turn, $\big(\Vert u-\overline{u}%
\Vert _{\ast }^{2}+|\overline{u}|^{2}\big)^{\frac{1}{2}}$ is a norm $%
H^{1}(\Omega )^{\prime }$, that is equivalent to $\Vert u\Vert
_{H^{1}(\Omega )^{\prime }}$. Moreover, by the regularity theory for the
Laplace operator with homogeneous Neumann boundary conditions, there is a constant $C>0$
such that
\begin{equation}
\Vert \nabla \mathcal{N}u\Vert _{H^{1}(\Omega )}\leq C\Vert u\Vert
_{L^{2}(\Omega )},\quad \forall \,u\in L_{(0)}^{2}(\Omega ).  \label{H_2}
\end{equation}%
Lastly, we report the following Gagliardo-Nirenberg and Agmon inequalities
\begin{align}
& \Vert u\Vert _{L^{4}(\Omega )}\leq C\Vert u\Vert _{L^{2}(\Omega )}^{\frac{1%
}{2}}\Vert u\Vert _{H^{1}(\Omega )}^{\frac{1}{2}},\quad & & \forall \,u\in
H^{1}(\Omega ),  \label{LADY} \\
& \Vert u\Vert _{L^{\infty }(\Omega )}\leq C\Vert u\Vert _{L^{2}(\Omega )}^{%
\frac{1}{2}}\Vert u\Vert _{H^{2}(\Omega )}^{\frac{1}{2}},\quad & & \forall
\,u\in H^{2}(\Omega ),  \label{W14bis} \\
& \Vert u\Vert _{W^{1,4}(\Omega )}\leq C\Vert u\Vert _{L^{\infty }(\Omega
)}^{\frac{1}{2}}\Vert u\Vert _{H^{2}(\Omega )}^{\frac{1}{2}},\quad & &
\forall \,u\in H^{2}(\Omega ),  \label{W14tris}
\end{align}
for some suitable constants $C>0$ depending only on $\Omega$.

Next, we introduce the solenoidal function spaces
\begin{align*}
& L_{\sigma }^{p}(\Omega )=\{\mathbf{u}\in L^{p}(\Omega ;\mathbb{R}^{2}):%
\mathrm{div}\,\mathbf{u}=0\ \text{in }\Omega ,\,\mathbf{u}\cdot \mathbf{n}%
=0\ \text{on}\ \partial \Omega \},\quad  \forall\, p \in (1,\infty)  , \\
& W_{0,\sigma }^{1,p}(\Omega )=\{\mathbf{u}\in W^{1,p}(\Omega ;\mathbb{R}%
^{2}):\mathrm{div}\,\mathbf{u}=0\ \text{in }\Omega ,\,\mathbf{u}=\mathbf{0}\
\text{on}\ \partial \Omega \},\quad \forall\, p \in (1,\infty) .
\end{align*}%
We recall that $L_{\sigma }^{p}(\Omega )$ and $W_{0,\sigma }^{1,p}(\Omega )$
corresponds to the completion of $C_{0,\sigma }^{\infty }(\Omega ;\mathbb{R}%
^{2})$, namely the space of divergence-free vector fields in $C_{0}^{\infty
}(\Omega ;\mathbb{R}^{2})$, in the norm of $L^{p}(\Omega ;\mathbb{R}^{2})$
and $W^{1,p}(\Omega ;\mathbb{R}^{2})$, respectively. For simplicity of
notation, we will also use $\Vert \cdot \Vert _{L^{p}(\Omega )}$ and $\Vert
\cdot \Vert _{W^{1,p}(\Omega )}$ to denote the norms in $L_{\sigma
}^{p}(\Omega )$ and $W_{0,\sigma }^{1,p}(\Omega )$, respectively.
%and $( \cdot ,\cdot )$ for the inner product in $\Ls^2_\sigma$.
The space $H_{0,\sigma }^{1}(\Omega )=W_{0,\sigma }^{1,2}(\Omega )$ is
endowed with the inner product and norm $(\mathbf{u},\mathbf{v}%
)_{H_{0,\sigma }^{1}(\Omega )}=(\nabla \mathbf{u},\nabla \mathbf{v}) $ and $%
\Vert \mathbf{u}\Vert _{H_{0,\sigma }^{1}(\Omega )}=\Vert \nabla \mathbf{u}%
\Vert _{L^{2}(\Omega )}$, respectively. By the Korn inequality,
\begin{equation}
\Vert \nabla \mathbf{u}\Vert _{L^{2}(\Omega )}\leq \sqrt{2}\Vert D\mathbf{u}%
\Vert _{L^{2}(\Omega )}\leq \sqrt{2}\Vert \nabla \mathbf{u}\Vert
_{L^{2}(\Omega )},\quad \forall \,\mathbf{u}\in H_{0,\sigma }^{1}(\Omega ).
\label{korn}
\end{equation}%
%
%
%
%
%
%
%
%
%
%
%
%
%
%
%
%
%
%
%
%
%
%
%
%
%
%
%
%
%
%
%
%
%
%
%
%
%
%
%
%
%
%
%
%
%
%
%
%
%
%where $D\uu=\frac{1}{2}(\nabla\uu+(\nabla\uu)^T)$.
% In turn, this gives that $\uu \mapsto \| D\uu \|_{L^2(\Omega)}$ is a norm in $\H^1_{0,\sigma}$ equivalent to the $\| \uu\|_{\H^1_{0,\sigma}}$.
%We denote by $\H_\sigma^{-1}$ its dual space.		
The Stokes operator is $\mathbf{A}=-\mathbb{P}\Delta $, with domain $D(%
\mathbf{A})=H_{0,\sigma }^{2}(\Omega )$, where $H_{0,\sigma }^{2}(\Omega
)=H^{2}(\Omega ;\mathbb{R}^{2})\cap H_{0,\sigma }^{1}(\Omega )$ and $\mathbb{%
P}$ is the Leray orthogonal projector from $L^{2}(\Omega )$ onto $L_{\sigma
}^{2}(\Omega )$. We denote by $\mathbf{A}^{-1}$ the inverse map of the
Stokes operator. In particular, $\mathbf{A}^{-1}$ is an isomorphism between $%
H_{\sigma }^{-1}(\Omega )=H_{0,\sigma }^{1}(\Omega )^{\prime }$ and $%
H_{0,\sigma }^{1}(\Omega )$ such that, for any $\mathbf{u}\in H_{\sigma
}^{-1}(\Omega )$, $\mathbf{A}^{-1}\mathbf{u}$ is the unique function in $%
H_{0,\sigma }^{1}(\Omega )$ such that $(\nabla \mathbf{A}^{-1}\mathbf{u}%
,\nabla \mathbf{v})=\langle \mathbf{u},\mathbf{v}\rangle _{H_{0,\sigma
}^{1}(\Omega )}$ for any $\mathbf{v}\in H_{0,\sigma }^{1}(\Omega )$. Then,
it follows that $\Vert \mathbf{u}\Vert _{\sharp }:=\Vert \nabla \mathbf{A}%
^{-1}\mathbf{u}\Vert _{L^{2}(\Omega )}$ is an equivalent norm on $H_{\sigma
}^{-1}(\Omega )$. By the regularity theory of the Stokes operator, there
exists a constant $C$ such that
\begin{equation}
\Vert \mathbf{u}\Vert _{H^{2}(\Omega )}\leq C\Vert \mathbf{A}\mathbf{u}\Vert
_{L^{2}(\Omega )},\quad \forall \,\mathbf{u}\in H_{0,\sigma }^{2}(\Omega ).
\label{stoke}
\end{equation}%
Then, we define $\Vert \mathbf{u}\Vert _{H_{0,\sigma }^{2}(\Omega )}=\Vert
\mathbf{A}\mathbf{u}\Vert _{L^{2}(\Omega )}$, which is a norm in $%
H_{0,\sigma }^{2}(\Omega )$, that is equivalent to $\Vert \mathbf{u}\Vert
_{H^{2}(\Omega )}$.

Let $X$ be a real Banach space and consider an interval $I\subseteq[0,\infty)$. The
Banach space $BC(I;X)$ consists of all bounded and continuous $f: I\to X$
equipped with the supremum norm. The subspace $BUC(I;X)$ denotes the set of
all bounded and uniformly continuous functions $f: I\to X$. We denote by $%
BC_{\mathrm{w}}(I;X)$ the topological vector space of all bounded and weakly
continuous functions $f: I\to X$. If $I$ is a compact interval, then we simply use the notation $C(I; X)$ or $C_{\mathrm{w}}(I;X)$. 
The set $C^\infty_0(I;X)$ denotes the
vector space of all smooth functions $f: I\to X$ whose support is compactly
embedded in $I$. Given $p\in[1,\infty]$, the Lebesgue space $L^p(I;X)$
denotes the set of all strongly measurable $f: I\to X$ that are $p$%
-integrable/essentially bounded. In particular, $L^p_{\mathrm{uloc}%
}([0,\infty);X)$ is the set of all strongly measurable $f: [0,\infty) \to X$
such that
\begin{equation*}
\|f\|_{L^p_{\mathrm{uloc}}([0,\infty); X)}= \sup_{t\geq
0}\|f\|_{L^q(t,t+1;X)} <\infty.
\end{equation*}
The Bochner spaces $W^{1,p}(I;X)$ consists of all $f\in L^p(I;X)$ with $%
\partial_t f \in L^p(I;X)$. If $p=2$, then $H^1(I;X)=W^{1,2}(I;X)$. In a
similar way, we also define $H^1_{\mathrm{uloc}}([0,\infty);X)$.

In the following sections, we will denote by $C$ a generic positive constant, which may even vary within the same line, possibly depending on $\Omega$ as well as on the parameters of the system.

% We then report the following embedding result (see, e.g., \cite{Boyerlibro})
%\begin{lemma}
%\label{compact}
%Let $X,Y,Z$ be three Banach spaces such that $X\subset Y\subset Z$. Assume that $X \overset{c}{\hookrightarrow} Y$ and $Y\hookrightarrow Z$. Then, for all $1\leq q \leq \infty$ and $0<\sigma<1$, we have
%$$
%L^q(0,T;X)\cap B^\sigma_{q,\infty}(0,T;Z)\overset{c}{\hookrightarrow} L^q(0,T;Y).
%$$
%\end{lemma}
%\begin{lemma}
%\label{weak}
%Let $X,Y$ be two Banach spaces such that $X$ is reflexive, $X$ is dense in $Y$ and $X\hookrightarrow Y$. Then $L^\infty(0,T;X)\cap C_{\mathrm{w}}([0,T];Y)\hookrightarrow C_{\mathrm{w}}([0,T];X)$.
%\end{lemma}

%\noindent
%\textbf{General agreement.} Throughout the paper, the symbol $C$ denotes a positive constant which may be estimated in terms of $\Omega$, the parameters of the system and the initial data. Any further dependence will be explicitly pointed out when necessary (e.g., $C_\lambda$ will depend on $\lambda$ and so on).

\section{Interpolation estimate for the $L^4$-norm of the pressure in the
Stokes problem}

\label{tools} \setcounter{equation}{0}

The regularity theory of the Stokes operator ensures that, for any $\mathbf{f%
}\in L_{\sigma }^{2}(\Omega )\subset L^{2}(\Omega ;\mathbb{R}^{2})$, there
exist $\mathbf{u}=\mathbf{A}^{-1}\mathbf{f}\in H_{0,\sigma }^{2}(\Omega )$
and $P\in H_{(0)}^{1}(\Omega )$ such that
\begin{equation}
-\Delta \mathbf{u}+\nabla P=\mathbf{f},\quad \text{a.e. in }\Omega .
\label{st}
\end{equation}%
We refer the reader to \cite{Galdi} and the references therein for the
comprehensive theory. The $L^{p}$-norms of the pressure $P$ are usually
controlled by the norms of negative Sobolev spaces of the external force $%
\mathbf{f}$ (see, for instance, \cite[Lemma IV.2.1]{Galdi}). Notwithstanding
these results are sharp from the viewpoint of the regularity theory of
steady Stokes flows, an estimate of the $L^{p}$-norms of $P$ in terms of the
norms in $H_{\sigma }^{-1}(\Omega )=(H_{0,\sigma }^{1}(\Omega ))^{\prime }$
and in $L_{\sigma }^{2}(\Omega )$ of $\mathbf{f}$ is more effective for some
purposes in the context of evolutionary Navier-Stokes flows. A first
interpolation result for the $L^{2}$-norm of the pressure $P$ was
established in \cite[Lemma B.2]{GMT2019}. We present a novel interpolation
result for the $L^{4}$-norm of the pressure $P$, which improves the one in
\cite[Lemma B.2]{GMT2019}. This is essential to perform some crucial
estimates in the sequel in order to deal with the low regularity
guaranteed by the nonlocal setting.

\begin{lemma}
\label{press} Let $\Omega$ be a bounded domain in $\mathbb{R}^2$ of class $%
C^2$. There exists $C$ such that
\begin{align}
\| P \|_{L^4(\Omega)} \leq C\| \nabla \mathbf{A}^{-1}\mathbf{f}
\|_{L^2(\Omega)}^{\frac12} \| \mathbf{f} \|_{L^2(\Omega)}^{\frac12}, \quad
\forall \, \mathbf{f} \in L^2_\sigma(\Omega),
\end{align}
where $P$ is the pressure in \eqref{st}.
\end{lemma}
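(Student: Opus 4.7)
The plan is to argue by duality. Since $\overline{P}=0$, one has $\|P\|_{L^4(\Omega)}\sim \sup\bigl\{\int_\Omega P\,g\,\mathrm{d}x:\ g\in L^{4/3}_{(0)}(\Omega),\ \|g\|_{L^{4/3}}\le 1\bigr\}$. For such a $g$, the Bogovskii operator furnishes $\mathbf{w}\in W^{1,4/3}_{0}(\Omega;\mathbb{R}^2)$ with $\mathrm{div}\,\mathbf{w}=g$ and $\|\mathbf{w}\|_{W^{1,4/3}}\le C\|g\|_{L^{4/3}}$. Starting from $\int_\Omega P g\,\mathrm{d}x=\int_\Omega P\,\mathrm{div}\,\mathbf{w}\,\mathrm{d}x=-\int_\Omega \nabla P\cdot \mathbf{w}\,\mathrm{d}x$ (justified since $\mathbf{w}|_{\partial\Omega}=0$ and, by 2D Sobolev embedding, $\mathbf{w}\in L^4$ with $\nabla\mathbf{w}\in L^{4/3}$), substituting $\nabla P=\mathbf{f}+\Delta\mathbf{u}$ (with $\mathbf{u}=\mathbf{A}^{-1}\mathbf{f}$) and integrating the $\Delta\mathbf{u}$ term by parts gives
\[
\int_\Omega P\, g\,\mathrm{d}x=-\int_\Omega \mathbf{f}\cdot\mathbf{w}\,\mathrm{d}x+\int_\Omega\nabla\mathbf{u}:\nabla\mathbf{w}\,\mathrm{d}x.
\]
The second term is handled by H\"older with exponents $(4,4/3)$, the two-dimensional Gagliardo-Nirenberg inequality $\|\nabla\mathbf{u}\|_{L^4}\le C\|\nabla\mathbf{u}\|_{L^2}^{1/2}\|\nabla\mathbf{u}\|_{H^1}^{1/2}$, and the Stokes regularity $\|\mathbf{u}\|_{H^2}\le C\|\mathbf{f}\|_{L^2}$, which together yield the target bound $C\|\nabla\mathbf{A}^{-1}\mathbf{f}\|_{L^2}^{1/2}\|\mathbf{f}\|_{L^2}^{1/2}\|g\|_{L^{4/3}}$.

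The first term is delicate, since a direct Cauchy-Schwarz would spoil the needed $1/2$-power splitting. To fix this, I use the Helmholtz decomposition $\mathbf{w}=\mathbb{P}\mathbf{w}+\nabla q$, where $q\in W^{2,4/3}(\Omega)$ solves $\Delta q=g$ in $\Omega$, $\partial_{\mathbf{n}} q=0$ on $\partial\Omega$, $\overline{q}=0$. The gradient part drops out because $\mathrm{div}\,\mathbf{f}=0$ and $\mathbf{f}\cdot\mathbf{n}|_{\partial\Omega}=0$, so $\int\mathbf{f}\cdot\mathbf{w}=\int\mathbf{f}\cdot\mathbb{P}\mathbf{w}$. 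Rewriting $\mathbf{f}=-\mathbb{P}\Delta\mathbf{u}$, using the $L^2$-self-adjointness of $\mathbb{P}$, and integrating by parts in $\mathbf{u}$---a new boundary contribution now appears since $\mathbb{P}\mathbf{w}$ need not vanish on $\partial\Omega$---one obtains
\[
\int_\Omega \mathbf{f}\cdot\mathbf{w}\,\mathrm{d}x=\int_\Omega\nabla\mathbf{u}:\nabla\mathbb{P}\mathbf{w}\,\mathrm{d}x-\int_{\partial\Omega}(\partial_{\mathbf{n}}\mathbf{u})\cdot\mathbb{P}\mathbf{w}\,\mathrm{d}S.
\]
The bulk term is estimated exactly as in the previous paragraph, using also the boundedness of $\mathbb{P}$ on $W^{1,4/3}(\Omega;\mathbb{R}^2)$ (which holds because $\partial\Omega\in C^2$).

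The main obstacle---and the essential novelty relative to Lemma B.2 in \cite{GMT2019}---is the control of the new boundary integral. Two structural facts will be crucial. First, $\mathbf{u}|_{\partial\Omega}=0$ together with $\mathrm{div}\,\mathbf{u}=0$ forces $(\partial_{\mathbf{n}}\mathbf{u})\cdot\mathbf{n}=0$ on $\partial\Omega$, so that $\partial_{\mathbf{n}}\mathbf{u}|_{\partial\Omega}$ is purely tangential. Second, $\mathbf{w}|_{\partial\Omega}=0$ combined with $\partial_{\mathbf{n}} q|_{\partial\Omega}=0$ gives $\mathbb{P}\mathbf{w}|_{\partial\Omega}=-\nabla q|_{\partial\Omega}$, which is likewise tangential. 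H\"older on $\partial\Omega$ then bounds the boundary integral by $\|\nabla\mathbf{u}\|_{L^2(\partial\Omega)}\,\|\mathbb{P}\mathbf{w}\|_{L^2(\partial\Omega)}$. The first factor is controlled via the standard multiplicative trace inequality $\|\nabla\mathbf{u}\|_{L^2(\partial\Omega)}\le C\|\nabla\mathbf{u}\|_{L^2}^{1/2}\|\nabla\mathbf{u}\|_{H^1}^{1/2}$ combined with Stokes regularity, and the second by the trace embedding $W^{1,4/3}(\Omega)\hookrightarrow W^{1/4,4/3}(\partial\Omega)$ followed by the one-dimensional Sobolev embedding $W^{1/4,4/3}(\partial\Omega)\hookrightarrow L^{2}(\partial\Omega)$. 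Summing all contributions and passing to the supremum over $g$ completes the proof.
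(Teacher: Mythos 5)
Your proof is correct, and the key mechanism is the same as in the paper: pass to the Leray-projected test function via self-adjointness of $\mathbb{P}$, integrate by parts to expose a boundary integral, and then control that boundary integral by combining the multiplicative trace inequality $\|f\|_{L^2(\partial\Omega)}\le C\|f\|_{L^2(\Omega)}^{1/2}\|f\|_{H^1(\Omega)}^{1/2}$ with the trace embedding $W^{1,4/3}(\Omega)\hookrightarrow L^2(\partial\Omega)$ and the $W^{1,4/3}$-boundedness of $\mathbb{P}$. The difference lies in how you arrive at this point. The paper starts by invoking \cite[Lemma IV.2.1]{Galdi}, which gives $\|P\|_{L^4(\Omega)}\le C\|\mathbf{f}\|_{W^{-1,4}(\Omega)}$ as a black box, and then estimates the $W^{-1,4}$ norm of $\mathbf{f}$ directly against a test function $\mathbf{v}\in W^{1,4/3}_0(\Omega;\mathbb{R}^2)$. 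You instead run an $L^4$--$L^{4/3}$ duality on $P$ itself, manufacturing the test field $\mathbf{w}$ via the Bogovskii operator and substituting $\nabla P=\mathbf{f}+\Delta\mathbf{u}$ with $\mathbf{u}=\mathbf{A}^{-1}\mathbf{f}$; this produces an extra harmless bulk term $\int_\Omega\nabla\mathbf{u}:\nabla\mathbf{w}\,\mathrm{d}x$ that the paper never sees, along with the Helmholtz split needed to kill the gradient part of $\mathbf{w}$ against the divergence-free $\mathbf{f}$. Your route is more self-contained (you effectively re-derive the $L^4$ pressure estimate rather than cite it) and makes the geometric cancellations ($\partial_\mathbf{n}\mathbf{u}$ and $\mathbb{P}\mathbf{w}$ both tangential on $\partial\Omega$) explicit, though the latter observations are not actually needed since H\"older on $\partial\Omega$ suffices; the paper's route is shorter because Galdi's theorem absorbs the Bogovskii/duality bookkeeping. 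Both proofs give the same constant dependence and the same conclusion.
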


\begin{proof}
We know from \cite[Lemma IV.2.1]{Galdi}) that there exists $C$ such that
\begin{align}  \label{pp}
\Vert P \Vert_{L^4(\Omega)}\leq C \Vert \mathbf{f} \Vert_{W^{-1,4}(\Omega)},
\end{align}
where $W^{-1,4}(\Omega;\mathbb{R}^2)= (W_0^{1,\frac43}(\Omega;\mathbb{R}%
^2))^{\prime}$ and $W_0^{1,\frac43}(\Omega;\mathbb{R}^2)$ is the completion
of $C_0^{\infty}(\Omega;\mathbb{R}^2)$ with respect to the norm of $%
W^{1,\frac43}(\Omega;\mathbb{R}^2)$. In order to estimate the right-hand
side in \eqref{pp}, let us consider $\mathbf{v}\in W^{1,\frac43}_0(\Omega;%
\mathbb{R}^2)$. By using the integration by parts and the properties of $\mathbb{%
P}$, we have
\begin{align}  \label{fv}
(\mathbf{f},\mathbf{v}) =(\mathbb{P} (-\Delta) \mathbf{A}^{-1} \mathbf{f} ,%
\mathbf{v}) =((-\Delta) \mathbf{A}^{-1}\mathbf{f},\mathbb{P}\mathbf{v})
=(\nabla \mathbf{A}^{-1} \mathbf{f}, \nabla \mathbb{P}\mathbf{v})
-\int_{\partial\Omega} \left(\nabla\mathbf{A}^{-1} \mathbf{f} \, \mathbf{n}
\right) \cdot \mathbb{P} \mathbf{v} \, \mathrm{d} \sigma.
\end{align}
Since $\mathbf{f} \in L^2_\sigma(\Omega)$, we observe from \eqref{stoke}
that $\| \mathbf{A}^{-1} \mathbf{f}\|_{H^2(\Omega)}\leq C \| \mathbf{f}%
\|_{L^2(\Omega)}$. By using this fact, together with \eqref{LADY} and the
continuity of the projection operator $\mathbb{P}$ from $W_0^{1,\frac43}(%
\Omega;\mathbb{R}^2)$ onto $W^{1,\frac43}(\Omega;\mathbb{R}^2) \cap
L^2_\sigma(\Omega)$, we easily infer that
\begin{align*}
(\nabla \mathbf{A}^{-1} \mathbf{f},\nabla\mathbb{P} \mathbf{v}) &\leq \|
\nabla \mathbf{A}^{-1} \mathbf{f} \|_{L^4(\Omega)} \| \mathbb{P} \mathbf{v}
\|_{W^{1,\frac43}(\Omega)} \\
&\leq C\| \nabla\mathbf{A}^{-1} \mathbf{f} \|_{L^2(\Omega)}^{\frac12} \|
\mathbf{f} \|_{L^2(\Omega)}^{\frac12} \| \mathbf{v}\|_{W_0^{1,\frac43}(%
\Omega)}.
\end{align*}
%\\
%&\leq C\| \nabla\A^{-1} \f \|^{\frac12} \| \f \|^{\frac12}
%\| \vv \|_{\W_0^{1,\frac43}(\Omega)}.
%By the continuity of $\textbf{P}$ on $\textbf{W}^{1,4/3}(\Omega)$ (which can be obtained by elliptic regularity applied to the Neumann problem \cite[(III.1.10)]{Galdi}, given $\uu\in \textbf{W}^{1,4}(\Omega)$, and then exploiting the definition of $\textbf{P}$ given, e.g., in \cite[(III.1.10)]{Galdi}), the interpolation $\Vert \textbf{w}\Vert_{\textbf{L}^2(\partial\Omega)}\leq C\Vert\textbf{w}\Vert^{1/2}\Vert\textbf{w}\Vert_{\textbf{H}^1(\Omega)}^{1/2}$, and the crucial trace estimate (see \cite[Thm II.4.1]{Galdi}, with $n=2$, $m=1$, $q=4/3$ and $r=2$)
%	$$
%	\Vert\textbf{P}\vv\Vert_{\textbf{L}^2(\partial\Omega)}\leq C\Vert\textbf{Pv}\Vert_{\textbf{W}^{1,4/3}(\Omega)},
%	$$
Next, by the interpolation trace estimate, there exists $C$ such
that
\begin{equation}  \label{Trace1}
\Vert f \Vert_{L^2(\partial\Omega)} \leq C\Vert f
\Vert_{L^2(\Omega)}^{\frac12} \Vert f \Vert_{H^1(\Omega)}^{\frac12}, \quad
\forall \, f \in H^1(\Omega).
\end{equation}
Furthermore, we also report the following trace estimate (see \cite[Thm
II.4.1]{Galdi}, with $n=2$, $m=1$, $q=4/3$ and $r=2$): there exists $C$ such that
\begin{equation}  \label{Trace2}
\Vert f \Vert_{L^2(\partial\Omega)} \leq C\Vert f
\Vert_{W^{1,\frac43}(\Omega)}, \quad \forall \, f \in W^{1,\frac43}(\Omega).
\end{equation}
Thus, exploiting \eqref{Trace1} and \eqref{Trace2}, together with %
\eqref{stoke}, we obtain
\begin{align*}
\left| \int_{\partial\Omega} \left( \nabla \mathbf{A}^{-1} \mathbf{f} \,
\mathbf{n}\right) \cdot \mathbb{P} \mathbf{v} \, \mathrm{d}\sigma \right|
&\leq C \Vert \nabla\mathbf{A}^{-1} \mathbf{f} \Vert_{L^2(\partial\Omega)}
\Vert\mathbb{P} \mathbf{v} \Vert_{L^2(\partial\Omega)} \\
&\leq C\Vert \nabla \mathbf{A}^{-1} \mathbf{f} \Vert_{L^2(\Omega)}^{\frac12}
\Vert \nabla \mathbf{A}^{-1} \mathbf{f} \Vert_{H^1(\Omega)}^{\frac12} \Vert
\mathbb{P} \mathbf{v} \Vert_{W^{1,\frac43}(\Omega)} \\
&\leq C\Vert\nabla \mathbf{A}^{-1} \mathbf{f} \Vert_{L^2(\Omega)}^{\frac12}
\Vert \mathbf{f} \Vert_{L^2(\Omega)}^{\frac12} \Vert\mathbf{v}%
\Vert_{W_0^{1,\frac43}(\Omega)}.
\end{align*}
%\\&\leq C\Vert\nabla\textbf{A}^{-1}\textbf{f}\Vert^{1/2}\Vert\textbf{f}\Vert^{1/2}\Vert\vv\Vert_{\textbf{W}_0^{1,4/3}(\Omega)},
%where we have used that $\Vert\textbf{A}^{-1}\textbf{f}\Vert_{\textbf{H}^2(\Omega)}\leq C\Vert\textbf{f}\Vert$.
Therefore, we deduce that
\begin{align}  \label{fondd}
\Vert \mathbf{f} \Vert_{W^{-1,4}(\Omega)} =\sup_{0\neq \mathbf{v} \in
W_0^{1,\frac43}(\Omega; \mathbb{R}^2)} \frac{\left| \left( \mathbf{f},
\mathbf{v} \right) \right|}{\| \mathbf{v} \|_{W_0^{1,\frac43}(\Omega)} }
\leq C\Vert\nabla\mathbf{A}^{-1} \mathbf{f} \Vert_{L^2(\Omega)}^{\frac12}
\Vert\mathbf{f} \Vert_{L^2(\Omega)}^{\frac12},
\end{align}
which entails the desired conclusion in light of \eqref{pp}.
%since also $\Vert\textbf{A}^{-1}\textbf{f}\Vert_{\textbf{W}^{1,4}(\Omega)}\leq C\Vert\nabla\textbf{A}^{-1}\textbf{f}\Vert^{1/2}\Vert\textbf{f}\Vert^{1/2}$.
\end{proof}

\begin{remark}
A similar result can be obtained in the three-dimensional case. Replacing
the $L^4$-norm with the $L^3$-norm of $P$ and exploiting the corresponding
Gagliardo-Nirenberg and trace estimates in dimension three, one can repeat
word by word the arguments of the above proof to obtain
\begin{equation*}
\| P \|_{L^3(\Omega)} \leq C \| \nabla\mathbf{A}^{-1} \mathbf{f}
\|_{L^2(\Omega)}^{\frac12} \| \mathbf{f} \|_{L^2(\Omega)}^{\frac12}, \quad \forall
\, \mathbf{f} \in L^2_\sigma(\Omega).
\end{equation*}
%where $C$ is a positive {\it universal} constant.%when $d=3$.
\end{remark}

\section{The nonlocal Cahn-Hilliard equation with divergence-free drift}

\label{convective} \setcounter{equation}{0}

Let $\mathbf{u}$ be a divergence-free vector field. We consider the
initial-boundary value problem for the nonlocal Cahn-Hilliard equation with
divergence-free drift
\begin{equation}  \label{nCH}
\begin{cases}
\partial_t\phi+\mathbf{u}\cdot \nabla\phi=\Delta\mu,\quad \mu=F^\prime(\phi)-J\ast
\phi,\quad & \text{in } \Omega\times(0,T), \\
\partial_\textbf{n}\mu=0,\quad & \text{on }\partial\Omega\times(0,T), \\
\phi(\cdot, 0)=\phi_0,\quad & \text{in }\Omega.%
\end{cases}%
\end{equation}
We present herein a novel well-posedness and regularity result under minimal
assumptions on the velocity field for the system \eqref{nCH} (cf. with the
analysis in \cite{GGG2017}). Beyond its own interest \textit{per se}, this
will play an essential role in the proof of Theorem \ref{MR:strong}.

\begin{theorem}
\label{ExistCahn} Let the assumptions \ref{Omega}-\ref{m-ass} hold and
let $T>0$. Assume that $\mathbf{u}\in L^{4}(0,T;L_{\sigma }^{4}(\Omega ))$
and $\phi _{0}\in L^{\infty }(\Omega )$ with $\Vert \phi _{0}\Vert
_{L^{\infty }(\Omega )}\leq 1$ and $|\overline{\phi _{0}}|<1$. Then, there
exists a unique weak solution to \eqref{nCH} such that
\begin{equation}
\begin{cases}
\phi \in C([0,T];L^{2}(\Omega ))\cap L^{2}(0,T;H^{1}(\Omega ))\cap
H^{1}(0,T;H^{1}(\Omega )^{\prime }), \\
\phi \in L^{\infty }(\Omega \times (0,T)):\quad |\phi|<1 \ \text{a.e.
in }\Omega \times (0,T), \\
\mu \in L^{2}(0,T;H^{1}(\Omega )),\quad F^{\prime }(\phi )\in
L^{2}(0,T;H^{1}(\Omega )),
\end{cases}
\label{0}
\end{equation}%
which satisfies%
\begin{equation}
\begin{split}
\left\langle \partial _{t}\phi ,v\right\rangle_{H^1(\Omega)} -(\phi \,%
\mathbf{u},\nabla v)+(\nabla \mu ,\nabla v)=0, \quad & \forall \,v\in H^{1}(\Omega
),\ \text{a.e. in }(0,T), \\
\mu =F^{\prime }(\phi )-J\ast \phi , \quad & \text{a.e. in }\Omega \times (0,T),%
\end{split}
\label{weak-nCH-2}
\end{equation}%
and $\phi (\cdot ,0)=\phi _{0}$ almost everywhere in $\Omega $. The weak
solution fulfills the energy identity
\begin{equation}
\mathcal{E}_{\mathrm{nloc}}(\phi (t))+\int_{0}^{t}\Vert \nabla \mu (\tau
)\Vert _{L^{2}(\Omega )}^{2}\,\mathrm{d}\tau +\int_{0}^{t}\int_{\Omega }\phi
\,\mathbf{u}\cdot \nabla \mu \,\mathrm{d}x\,\mathrm{d}\tau =\mathcal{E}_{%
\mathrm{nloc}}(\phi _{0}),\quad \forall \,t\in \lbrack 0,T].  \label{EE-nCH}
\end{equation}%
In addition, given two weak solutions $\phi ^{1}$ and $\phi ^{2}$
corresponding to the initial data $\phi _{0}^{1}$ and $\phi _{0}^{2}$, respectively, we
have
\begin{equation}
\begin{split}
& \left\Vert \phi ^{1}-\phi ^{2}\right\Vert _{C([0,T];H^{1}(\Omega )^{\prime })} \\
& \quad \leq \left( \left\Vert \phi _{0}^{1}-\phi _{0}^{2}\right\Vert _{H^{1}(\Omega
)^{\prime }}+\left\vert \overline{\phi _{0}^{1}}-\overline{\phi _{0}^{2}}%
\right\vert ^{\frac{1}{2}}\Vert \Lambda \Vert _{L^{1}(0,T)}^{\frac{1}{2}%
}+CT^{\frac{1}{2}}\left\vert \overline{\phi _{0}^{1}}-\overline{\phi _{0}^{2}%
}\right\vert \right) \mathrm{exp} \left(C\left( T+\Vert \mathbf{u}\Vert
_{L^{4}(0,T;L^{4}(\Omega ))}^{4}\right) \right),
\end{split}
\label{cd}
\end{equation}%
for all $t\in \lbrack 0,T]$, where $\Lambda =2\left\Vert F^{\prime }(\phi
^{1})\right\Vert _{L^{1}(\Omega )}+2\left\Vert F^{\prime }\left(\phi ^{2}\right)\right\Vert
_{L^{1}(\Omega )}$ and $C$ only depends on $\alpha $, $J$ and $\Omega $.
\medskip

\noindent Furthermore, the following regularity results hold:

\begin{itemize}
\item[(i)] \label{i1} If additionally $\phi _{0}\in H^{1}(\Omega )$ such
that $F^{\prime }(\phi _{0})\in L^{2}(\Omega )$ and $F^{\prime \prime }(\phi
_{0})\nabla \phi _{0}\in L^{2}(\Omega ;\mathbb{R}^{2})$, then
\begin{equation}
\begin{cases}
\phi \in L^{\infty }(0,T;L^{\infty }(\Omega )):\quad |\phi (x,t)|<1\ \text{ 
for a.a. } x\in\Omega ,\,\forall \,t\in \lbrack 0,T], \\
\phi \in L^{\infty }(0,T;H^{1}(\Omega ))\cap L^{q}(0,T;W^{1,p}(\Omega
)),\quad q=\frac{2p}{p-2},\quad \forall \,p\in (2,\infty ), \\
\partial _{t}\phi \in L^{4}(0,T;H^{1}(\Omega )^{\prime })\cap
L^{2}(0,T;L^{2}(\Omega )), \\
\mu \in L^{\infty }(0,T;H^{1}(\Omega ))\cap L^{2}(0,T;H^{2}(\Omega ))\cap
H^{1}(0,T;H^{1}(\Omega )^{\prime }), \\
F^{\prime }(\phi )\in L^{\infty }(0,T;H^{1}(\Omega )),\quad F^{\prime
\prime }(\phi )\in L^{\infty }(0,T;L^{p}(\Omega )),\quad \forall \,p\in
\lbrack 2,\infty ).
\end{cases}
\label{3}
\end{equation}%
In particular, we have the estimates
\begin{equation}
\begin{split}
& \Vert \nabla \mu \Vert _{L^{\infty }(0,T;L^{2}(\Omega ))} \\
& \leq \left( \left\Vert F^{\prime \prime }(\phi _{0})\nabla \phi _{0}-\nabla
J\ast \phi _{0}\right\Vert _{L^{2}(\Omega )}+C\left( \int_{0}^{T}\Vert \mathbf{u}%
(\tau )\Vert _{L^{2}(\Omega )}^{2}+\Vert \nabla \mu (\tau )\Vert
_{L^{2}(\Omega )}^{2}\,\mathrm{d}\tau \right) ^{\frac{1}{2}}\right) \\
& \quad \times \mathrm{exp} \left(C\int_{0}^{T}\Vert \mathbf{u}(\tau )\Vert
_{L^{4}(\Omega )}^{4}\,\mathrm{d}\tau \right)=:\Xi _{1},
\end{split}
\label{ST1}
\end{equation}%
\begin{equation}
\begin{split}
& \int_{0}^{T}\Vert \partial _{t}\phi (\tau )\Vert _{L^{2}(\Omega
)}^{2}+\Vert \nabla \mu (\tau )\Vert _{H^{1}(\Omega )}^{2}\,\mathrm{d}\tau \\
& \quad \leq C\left( \left\Vert F^{\prime \prime }(\phi _{0})\nabla \phi
_{0}-\nabla J\ast \phi _{0}\right\Vert _{L^{2}(\Omega )}^{2}+C\int_{0}^{T}\Vert
\mathbf{u}(\tau )\Vert _{L^{2}(\Omega )}^{2}+\Vert \nabla \mu (\tau )\Vert
_{L^{2}(\Omega )}^{2}\,\mathrm{d}\tau \right) \\
& \qquad \times \left( 1+\int_{0}^{T}\Vert \mathbf{u}(\tau )\Vert
_{L^{4}(\Omega )}^{4}\,\mathrm{d}\tau \right) \mathrm{exp}\left( C\int_{0}^{t}\Vert
\mathbf{u}(\tau )\Vert _{L^{4}(\Omega )}^{4}\,\mathrm{d}\tau \right)=:\Xi _{2},
\end{split}
\label{ST2}
\end{equation}%
and the bounds
\begin{equation}
\begin{cases}
\Vert \mu \Vert _{L^{\infty }(0,T;H^{1}(\Omega ))}+\Vert \phi \Vert
_{L^{\infty }(0,T;H^{1}(\Omega ))}+\Vert F^{\prime }(\phi )\Vert _{L^{\infty
}(0,T;H^{1}(\Omega ))}\leq \mathcal{Q}\left( \overline{\phi _{0}},\Xi
_{1},\alpha ,\Vert J\Vert _{W^{1,1}(\mathbb{R}^{2})},\Omega \right), \\
\Vert F^{\prime \prime }(\phi )\Vert _{L^{\infty }(0,T;L^{p}(\Omega
))}\leq \mathcal{Q}\left( p,\overline{\phi _{0}},\Xi _{1},\alpha ,\Vert
J\Vert _{W^{1,1}(\mathbb{R}^{2})},\Omega \right) ,\quad \forall \,p\in
\lbrack 2,\infty ), \\
\Vert \phi \Vert _{L^{q}(0,T;L^{p}(\Omega ))}\leq \mathcal{Q}\left(
\overline{\phi _{0}},\Xi _{1},\Xi _{2},\alpha ,\Vert J\Vert _{W^{1,1}(%
\mathbb{R}^{2})},\Omega ,T\right) ,\quad q=\frac{2p}{p-2},\quad \forall
\,p\in (2,\infty ), \\
\Vert \mu \Vert _{L^{2}(0,T;H^{2}(\Omega ))}+\Vert \partial _{t}\mu \Vert
_{L^{2}(0,T;H^{1}(\Omega )^{\prime })}\leq \mathcal{Q}\left( \overline{\phi
_{0}},\Xi _{1},\Xi _{2},\alpha ,\Vert J\Vert _{W^{1,1}(\mathbb{R}%
^{2})},\Omega ,T\right) ,
\end{cases}
\label{ST3}
\end{equation}%
where $C$ only depends on $\alpha $, $J$ and $\Omega $ and $%
\mathcal{Q}$ is a generic increasing and continuous function of its
arguments. Moreover, if $\mathbf{u}\in L^{\infty }(0,T;L_{\sigma
}^{2}(\Omega ))$, we also have $\partial _{t}\phi \in L^{\infty
}(0,T;H^{1}(\Omega )^{\prime })$. \smallskip

\item[(ii)] \label{i2} Let the assumptions of (i) hold. Suppose also $\Vert
\phi _{0}\Vert _{L^{\infty }(\Omega )}\leq 1-\delta _{0},$ for some $\delta
_{0}\in (0,1)$. Then, there exists $\delta >0$ such that
\begin{equation}
\sup_{t\in \lbrack 0,T]}\Vert \phi (t)\Vert _{L^{\infty }(\Omega )}\leq
1-\delta .  \label{SP}
\end{equation}%
As a consequence, $\partial _{t}\mu \in L^{2}(0,T;L^{2}(\Omega ))$ and $\mu
\in C([0,T];H^{1}(\Omega ))$.
\end{itemize}
\end{theorem}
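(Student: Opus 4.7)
The plan is to follow a Galerkin-approximation hierarchy in the spirit of \cite{Frig15,GGG2017,GGG2022}, but to exploit the algebraic structure of $\mu=F'(\phi)-J\ast\phi$ in order to circumvent the fact that only $\mathbf{u}\in L^4(0,T;L^4_\sigma(\Omega))$ is available. First I would replace $F$ by a family $\{F_\varepsilon\}\subset C^2(\mathbb{R})$ with $F_\varepsilon''\geq \alpha$ and $F_\varepsilon''$ bounded (Moreau--Yosida type that preserves the singular structure of $F'$), solve the associated convective equation by a Faedo--Galerkin scheme in the Neumann Laplacian basis, and derive the energy identity by testing with $\mu_\varepsilon$. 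The drift contribution $-(\phi_\varepsilon\mathbf{u},\nabla\mu_\varepsilon)$ is controlled by $\Vert\phi_\varepsilon\Vert_{L^4}\Vert\mathbf{u}\Vert_{L^4}\Vert\nabla\mu_\varepsilon\Vert_{L^2}$ via \eqref{LADY} and the $H^1$-bound on $\phi_\varepsilon$ extracted from the energy, so standard Aubin--Lions compactness delivers the weak solution in \eqref{0}, and $F(\phi)\in L^\infty(0,T;L^1)$ forces $|\phi|<1$ a.e. For uniqueness and \eqref{cd}, given $\phi^j$ with data $\phi_0^j$, set $\psi:=\phi^1-\phi^2$; since $\overline\psi(t)=\overline\psi(0)$ by mass conservation, I can test the difference equation against $\mathcal{N}(\psi-\overline\psi)$, use $F''\geq\alpha$ to extract $\alpha\Vert\psi-\overline\psi\Vert_{L^2}^2$, bound the drift by $\Vert\psi\,\mathbf{u}\Vert_{L^2}\Vert\nabla\mathcal{N}(\psi-\overline\psi)\Vert_{L^2}\leq C\Vert\mathbf{u}\Vert_{L^4}^2\Vert\psi-\overline\psi\Vert_\ast^2+\tfrac{\alpha}{2}\Vert\psi\Vert_{L^2}^2$, and handle the contribution of $|\overline{\phi_0^1}-\overline{\phi_0^2}|$ via the $L^1(0,T)$ bound on $\Lambda$ coming from the weak energy inequality; Gronwall then yields \eqref{cd}.

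For the strong regularity (i), I would work at the $\varepsilon$-level and formally test the equation by $\partial_t\mu_\varepsilon$. Using $\int_\Omega\Delta\mu_\varepsilon\,\partial_t\mu_\varepsilon = -\tfrac12\tfrac{\d}{\d t}\Vert\nabla\mu_\varepsilon\Vert_{L^2}^2$ (from $\partial_\mathbf{n}\mu_\varepsilon=0$) and $\partial_t\mu_\varepsilon = F_\varepsilon''(\phi_\varepsilon)\partial_t\phi_\varepsilon - J\ast\partial_t\phi_\varepsilon$, the LHS yields $\tfrac12\tfrac{\d}{\d t}\Vert\nabla\mu_\varepsilon\Vert_{L^2}^2+\alpha\Vert\partial_t\phi_\varepsilon\Vert_{L^2}^2$ up to absorbing $\Vert J\Vert_{L^1}\Vert\partial_t\phi_\varepsilon\Vert_{L^2}^2$. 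The critical term $I:=-\int_\Omega(\mathbf{u}\cdot\nabla\phi_\varepsilon)\,\partial_t\mu_\varepsilon\,\d x$ cannot be estimated directly, because $\partial_t\mu_\varepsilon$ lacks a uniform $L^2(L^2)$ bound without the separation property. The key algebraic identity, which crucially exploits the nonlocality of $\mu$, is
\[
F_\varepsilon''(\phi_\varepsilon)\nabla\phi_\varepsilon = \nabla F_\varepsilon'(\phi_\varepsilon) = \nabla\mu_\varepsilon+\nabla J\ast\phi_\varepsilon;
\]
inserting this into $\partial_t\mu_\varepsilon$ and rearranging rewrites
\[
I = -\int_\Omega(\mathbf{u}\cdot\nabla\mu_\varepsilon)\partial_t\phi_\varepsilon\,\d x - \int_\Omega \mathbf{u}\cdot(\nabla J\ast\phi_\varepsilon)\partial_t\phi_\varepsilon\,\d x + \int_\Omega(\mathbf{u}\cdot\nabla\phi_\varepsilon)(J\ast\partial_t\phi_\varepsilon)\,\d x,
\]
so every factor $\partial_t\mu_\varepsilon$ has been replaced by $\partial_t\phi_\varepsilon$, which is exactly what the LHS coercivity $\alpha\Vert\partial_t\phi_\varepsilon\Vert_{L^2}^2$ controls. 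The first term requires $\Vert\nabla\mu_\varepsilon\Vert_{L^4}\leq C\Vert\nabla\mu_\varepsilon\Vert_{L^2}^{1/2}\Vert\mu_\varepsilon\Vert_{H^2}^{1/2}$ (via \eqref{LADY}), combined with the elliptic bound $\Vert\mu_\varepsilon\Vert_{H^2}\leq C(\Vert\partial_t\phi_\varepsilon\Vert_{L^2}+\Vert\mathbf{u}\cdot\nabla\phi_\varepsilon\Vert_{L^2}+\Vert\mu_\varepsilon\Vert_{L^2})$ from $-\Delta\mu_\varepsilon=-\partial_t\phi_\varepsilon-\mathbf{u}\cdot\nabla\phi_\varepsilon$, and with $\nabla\phi_\varepsilon=(F_\varepsilon''(\phi_\varepsilon))^{-1}(\nabla\mu_\varepsilon+\nabla J\ast\phi_\varepsilon)$ (using $F_\varepsilon''\geq\alpha$) to re-express $\Vert\nabla\phi_\varepsilon\Vert_{L^4}$. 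After Young, one obtains
\[
\tfrac{\d}{\d t}\Vert\nabla\mu_\varepsilon\Vert_{L^2}^2+\tfrac{\alpha}{2}\Vert\partial_t\phi_\varepsilon\Vert_{L^2}^2 \leq C\bigl(1+\Vert\mathbf{u}\Vert_{L^4}^4\bigr)\Vert\nabla\mu_\varepsilon\Vert_{L^2}^2+g(t),\quad g\in L^1(0,T),
\]
and Gronwall together with $\mathbf{u}\in L^4(L^4)$ gives \eqref{ST1}--\eqref{ST2} uniformly in $\varepsilon$. The remaining bounds in \eqref{ST3} then follow from the equation $F'(\phi)=\mu+J\ast\phi$, elliptic regularity for $\mu$, and assumption \ref{h3}, which controls $F''(\phi)$ via $\exp(C|F'(\phi)|^\beta)$ with $\beta<2$ and yields $F''(\phi)\in L^\infty(L^p)$ for every $p<\infty$ once $F'(\phi)\in L^\infty(H^1)$.

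For the strict separation in (ii), I would adapt the De Giorgi iteration developed in \cite{GGG2022} (see also \cite{Frigeri}) to the convective setting: for a sequence $\delta_k\searrow\delta$ with $\delta>0$ to be chosen, test the equation with a truncation of $F'(\phi)-F'(1-\delta_k)$ supported on $\{\phi>1-\delta_k\}$, integrate in time, and use \ref{h3} (in particular $F''(1-2\delta)h(\delta)\geq 1$ with $h(\delta)=o(\delta^{4/p})$) together with the control on $\nabla\mu$ from (i) to derive a super-quadratic recursion for $y_k:=\int_0^T\int_\Omega ((\phi-1+\delta_k)_+)^2\,\d x\,\d t$; the convective contribution is absorbed via H\"older using $\mathbf{u}\in L^4(L^4)$. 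The iteration forces $y_k\to 0$ for $\delta$ small enough depending on $\delta_0$ and the bounds already established, which yields \eqref{SP}; an analogous argument at $-1$ completes the proof. Once separation holds, $F''(\phi)$ is bounded, so $\partial_t\mu=F''(\phi)\partial_t\phi-J\ast\partial_t\phi\in L^2(L^2)$ by \eqref{ST2} and $\mu\in C([0,T];H^1)$ follows from $L^2(H^2)\cap H^1(L^2)\hookrightarrow C(H^1)$. The principal obstacle throughout is precisely step (i): a direct bound of $I$ would demand $\partial_t\mu\in L^2(L^2)$, available only after separation, so without the rewriting above one faces a circular dependency. The nonlocal structure of $\mu$ is essential, since in the local Cahn--Hilliard setting the analogous manipulation would involve $\nabla\Delta\phi$ and cannot be closed under the minimal assumption $\mathbf{u}\in L^4(L^4)$.
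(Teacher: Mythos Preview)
Your proposal captures the paper's strategy precisely, including the decisive algebraic rewriting of $\int(\mathbf{u}\cdot\nabla\phi)\,\partial_t\mu$ via $F''(\phi)\nabla\phi=\nabla\mu+\nabla J\ast\phi$ so that only $\partial_t\phi$ (not $\partial_t\mu$) appears on the right; this is exactly the paper's equation \eqref{drift} and is the main new idea. Two small points deserve attention. First, you cannot simply ``absorb $\|J\|_{L^1}\|\partial_t\phi_\varepsilon\|_{L^2}^2$'' into $\alpha\|\partial_t\phi_\varepsilon\|_{L^2}^2$, since no smallness of $J$ relative to $\alpha$ is assumed; the paper instead writes $(J\ast\partial_t\phi,\partial_t\phi)=(\nabla J\ast\partial_t\phi,\nabla\mathcal{N}\partial_t\phi)\leq \tfrac{\alpha}{8}\|\partial_t\phi\|_{L^2}^2+C\|\partial_t\phi\|_{(H^1)'}^2$ and controls the dual norm by comparison in the equation (see \eqref{Jterm}). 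Second, in the De~Giorgi iteration the convective contribution actually vanishes identically because $\int_\Omega\mathbf{u}\cdot\nabla(\phi-k_n)_+^2\,\mathrm{d}x=0$, so no $L^4(L^4)$ absorption is needed there. The paper also organizes the approximation layer differently---it mollifies $\mathbf{u}$ and truncates $\phi_0$, then invokes an appendix result (Theorem~\ref{REG-APPR-nCH}) that already produces strictly separated approximants on which the $\partial_t\mu$ testing is rigorous---whereas you regularize $F$ by Yosida and work at the Galerkin level; both routes are valid and lead to the same uniform estimates.
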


%	\item {\color{red}[In realtà forse nel semigalerkin basterebbe fino a \ref{i2}, avrei che $\rho(\phi)$ essendo lineare  è ad esempio in $C([0,T];H^{1/2}(\Omega))$...potrebbe bastare per risolvere la parte Galerkin sulla velocità...]}Let the assumptions of \ref{i2} hold. Suppose also $\uu\in H^1(0,T;\textbf{H}_\sigma)$ and $\phi_0\in H^2(\Omega)$ {\color{red}[Non dovrebbe servire chiedere di più su $J$, scarico una derivata spaziale su $\phi_0$ e una sola su $J$...]}. Then additionally, for $p\in[2,\infty)$,
%	\begin{align}
%\nonumber&\phi\in L^\infty(0,T;W^{1,p}(\Omega)),\\& \partial_t\phi\in L^\infty(0,T;H)\cap L^2(0,T;V),\quad \partial_{tt}\phi\in L^2(0,T;V^\prime),\label{r}\\&\nonumber
%	\partial_t\mu\in L^2(0,T;V).
%	\end{align}
%\begin{oss}
%Observe that, by \eqref{r}, $\phi\in C^{0,1/2}([0,T];V)$, but we have also, e.g., $ \phi\in L^\infty(0,T;W^{1,4}(\Omega))$ and
%$$
%\Vert f\Vert_{W^{1,3}(\Omega)}\leq C\Vert f\Vert^{2/3}_{W^{1,4}(\Omega)}\Vert f\Vert_{V}^{1/3},
%$$
%therefore by \cite[Lemma 1]{Abels2} we infer that
%\begin{align}
%\phi\in C^{0,1/6}([0,T]; W^{1,3}(\Omega)),
%\end{align}
%implying
%\begin{align}
%\phi\in C(\overline{\Omega\times[0,T]}).
%\label{cont}
%\end{align}
%\end{oss}

\begin{remark}
The existence of (at least) one weak solution to \eqref{nCH} satisfying %
\eqref{0}, \eqref{weak-nCH-2} as well as \eqref{EE-nCH} holds under the
milder regularity $\mathbf{u}\in L^{2}(0,T;L_{\sigma }^{2}(\Omega )) $. We
refer the reader to the proof of Theorem \ref{ExistCahn} (see below).
\end{remark}

\begin{remark}
\label{R3}
%Notice that $F(\phi_0)\in L^1(\Omega)$ already implies $\phi_0\in L^\infty(\Omega)$ and $\Vert\phi_0\Vert_{L^\infty(\Omega)}\leq 1.$
In the case (i) above, the assumption $\phi _{0}\in H^{1}(\Omega )$ can be
relaxed by only requiring that $\nabla \phi _{0}\in L_{\text{loc}%
}^{1}(\Omega )$. Indeed, it implies that $\nabla \phi _{0}$ is measurable.
Then, $\Vert \nabla \phi _{0}\Vert _{L^{2}(\Omega )}<\infty $ follows from $%
F^{\prime \prime }(\phi _{0})\nabla \phi _{0}\in L^{2}(\Omega ;\mathbb{R}%
^{2})$ due to the strict convexity of $F$. Besides, our set of assumptions
in the case (i) entails that $F^{\prime }(\phi _{0})\in H^{1}(\Omega )$
(similarly to \cite[Theorem 4.1]{DPGG2018}). In fact, we first observe that
the chain rule $\nabla F^{\prime }(\phi _{0})=F^{\prime \prime }(\phi
_{0})\nabla \phi _{0}$ holds almost everywhere in $\Omega $. More precisely,
by exploiting the approximation $\phi _{0}^{k}$ of the initial datum
provided in the proof of Theorem \ref{ExistCahn}, it can be shown (cf. also
\cite[Lemma 3.2]{He}) that
\begin{align*}
\int_{\Omega }F^{\prime }(\phi _{0})\,\partial _{i}\varphi \mathrm{d}x&
=\lim_{k\rightarrow \infty }\int_{\Omega }F^{\prime }(\phi
_{0}^{k})\,\partial _{i}\varphi \, \mathrm{d}x =-\lim_{k\rightarrow \infty
}\int_{\Omega }F^{\prime \prime }(\phi _{0}^{k})\partial _{i}\phi
_{0}^{k}\,\varphi \, \mathrm{d}x =-\lim_{k\rightarrow \infty }\int_{\Omega
}F^{\prime \prime }(\phi _{0})\partial _{i}\phi _{0}\varphi \, \mathrm{d}x
\end{align*}%
for any $i=1,2$ and $\varphi \in C_{0}^{\infty }(\Omega )$. Then, owing to
this, we immediately infer that $F^{\prime }(\phi _{0})\in H^{1}(\Omega )$.
On the other hand, by the previous reasoning together with the Fatou lemma,
it is possible to show that $\phi _{0}\in H^{1}(\Omega )$ with $F^{\prime
}(\phi _{0})\in H^{1}(\Omega )$ guarantees that $F^{\prime \prime }(\phi
_{0})\nabla \phi _{0}\in L^{2}(\Omega ;\mathbb{R}^{2})$.
%We observe that $\phi_{0}\in V$ is a technical assumption due to the approximating procedure we followed (see the proof of Theorem \ref{ExistCahn}, Step 3), in order to ensure the validity of the chain rule $\nabla F^\prime(\phi_0)=\Fsec(\phi_0)\nabla\phi_0$. Actually, to get this result it would be sufficient $\phi_0\in W^{1,q}(\Omega)$ for some $q>1$ (indeed, thanks to $F^\prime(\phi_0)\in V$, one obtains $\Fsec(\phi_0)\nabla\phi_0\in L^1(\Omega)$), but then, by the convexity of $F$, it follows from $F^\prime(\phi_0)\in V$ that $\phi_0\in V$. Therefore, the two assumptions are equivalent.
%Moreover, recalling assumption \ref{h3} , for sufficiently large $k$, and by \eqref{stamp},
%\begin{align*}
%&	\Vert\nabla\mu^k(0)\Vert_{L^2(\Omega)}
%\leq\Vert\nabla J \ast \phi_0^k\Vert_{L^2(\Omega)}
%+\Vert\nabla F^\prime(\phi_0^k)\Vert_{L^2(\Omega)}
%\leq C\Vert\phi_0\Vert+\Vert \Fsec(\phi_{0,k})\nabla\phi_{0,k}\Vert\\&\leq C\Vert\phi_0\Vert+\Vert \Fsec(\phi_{0})\nabla\phi_{0}\Vert=C\Vert\phi_0\Vert+\Vert\nabla F^\prime(\phi_{0})\Vert\leq C.
%\end{align*}
%Notice that we were able to apply the chain rule $\nabla F^\prime(\phi_{0})=\Fsec(\phi_{0})\nabla\phi_{0}$ since, by Trudinger-Moser's inequality, $\Vert\Fsec(\phi_0)\Vert_{L^p(\Omega)}^p\leq C(p)e^{C(p)\Vert F^\prime(\phi_0)\Vert_V^2}\leq C$ for any $p\in [2,\infty)$ and $\phi_0\in V$ (see for example a proof by approximation in \cite[Lemma 3.2]{He}).
\end{remark}

%\begin{remark}
%	Being $\phi\in C([0,T];H^{1/2}(\Omega))\cap C_w([0,T];V)$ and $\uu\in C([0,T];\V_\sigma)$, the initial conditions make sense.
%	\label{regphi1}
%\end{remark}	

\begin{remark}
\label{Rem-SP} In the case (i), the separation property holds for positive
times. More precisely, for any $0<\tau \leq T$ there exists $\delta =\delta
(\tau )\in (0,1)$ such that it holds
\begin{equation}
\sup_{t\in \lbrack \tau ,T]}\Vert \phi (t)\Vert _{L^{\infty }(\Omega )}\leq
1-\delta .  \label{del-SP}
\end{equation}%
Although we only have $\phi _{0}\in H^{1}(\Omega )$, thereby $\phi _{0}$ is
not strictly separated. Nevertheless, since $\mu \in L^{2}(0,T;H^{2}(\Omega
))$ and $J\ast \phi \in L^{\infty }(\Omega \times (0,T))$, we observe that $%
F^{\prime }(\phi )\in L^{2}(0,T;L^{\infty }(\Omega ))$. In turn, this
implies for any $\tau >0$ there exists $\tau ^{\ast }\in (0,\tau )$ such
that $F^{\prime }(\phi (\tau ^{\star }))\in L^{\infty }(\Omega )$, which
gives $\Vert \phi _{0}\Vert _{L^{\infty }(\Omega )}<1$. Thus, \eqref{del-SP}
also follows from (ii).
\end{remark}

\begin{remark}
In light of the assumptions in Theorem \ref{ExistCahn}, while in the case
(i), the assumption $\Vert \phi _{0}\Vert _{L^{\infty }(\Omega )}<1$ in \ref%
{i2} is ensured if (additionally) $\phi _{0}\in W^{1,p}(\Omega )$ for some $%
p>2$. In fact, by Remark \ref{R3}, $F^{\prime }(\phi _{0})\in H^{1}(\Omega )$%
. Then, the Trudinger-Moser inequality and the assumption \ref{h3}
(exactly as in \cite[Theorem 5.2]{GGG2017}) ensure that $F^{\prime \prime
}(\phi _{0})\in L^{r}(\Omega )$ for every $r\in \lbrack 2,\infty )$. Thus,
by the chain rule in Remark \ref{R3}, we conclude
\begin{equation*}
\left\Vert \nabla F^{\prime }(\phi _{0})\right\Vert _{L^{s}(\Omega )}\leq 
\left\Vert
F^{\prime \prime }(\phi _{0}) \right\Vert _{L^{\frac{sp}{p-s}}(\Omega )}
\left\Vert \nabla \phi _{0}\right\Vert _{L^{p}(\Omega )}<\infty ,\quad \text{where }s\in
(2,p).
\end{equation*}%
Since $s>2$, $F^{\prime }(\phi _{0})\in W^{1,s}(\Omega )\hookrightarrow
L^{\infty }(\Omega )$ implies that the initial datum $\phi _{0}$ is strictly
separated from $\pm 1$. %We can then apply the chain rule to obtain
%	\begin{equation*}
%	\nabla F'(\phi_0)=F''(\phi_0)\nabla\phi_0,
%	\end{equation*}
%	but then $\Vert \nabla F'(\phi_0)\Vert_{L^r(\Omega)} \leq \Vert F''(\phi_0) \Vert_{L^{\alpha}(\Omega)}\Vert \nabla\phi_0 \Vert_{L^{s}(\Omega)}<\infty$, with $\alpha=\frac{r^2s}{s-r^2}$, and $r\in (2,\sqrt{s})$ (notice that this is possible since $s>4$). Therefore we get $F'(\phi_0)\in W^{1,r}(\Omega)$ for every $r\in[2,\sqrt{s})$, implying that $F'(\phi_0)\in L^\infty(\Omega)$,
\end{remark}

\begin{proof}[Proof of Theorem \protect\ref{ExistCahn}]
The proof is divided into several steps. \medskip

\noindent \textbf{Uniqueness and continuous dependence estimate.}
%First of all uniqueness is standard (see, e.g. the proof of \cite[Thm. 3.4]{GGG2017}).
Let us consider two weak solutions $\phi ^{1}$ and $\phi ^{2}$
satisfying \eqref{0} and \eqref{weak-nCH-2}, and originating from two
initial data $\phi _{0}^{1}$ and $\phi _{0}^{2}$ (where possibly $\overline{%
\phi _{0}^{1}}\neq \overline{\phi _{0}^{2}}$). Setting $\phi =\phi ^{1}-\phi
^{2}$ and $\mu =F^{\prime }(\phi ^{1})-F^{\prime }(\phi ^{2})-J\ast \phi $,
it is easy to realize that
\begin{equation}
\langle \partial _{t}\phi ,v\rangle _{H^{1}(\Omega )}-(\phi \,\mathbf{u}%
,\nabla v)+(\nabla \mu ,\nabla v)=0,\quad \forall \, v \in H^1(\Omega),\ \text{a.e. in }%
(0,T).  \label{weaky}
\end{equation}%
Taking $v=\mathcal{N}(\phi -\overline{\phi })$, we find
\begin{equation*}
\frac{1}{2}\frac{\mathrm{d}}{\mathrm{d}t}
\left\Vert \phi -\overline{\phi }\right\Vert
_{\ast }^{2}+
\left(\mathbf{u}\cdot \nabla \phi ,\mathcal{N}\left(\phi -\overline{\phi }%
\right)\right)+\left(\mu ,\phi -\overline{\phi }\right)=0.
\end{equation*}%
Exploiting \ref{J-ass} and \ref{F-ass-1}, together with Young
inequality, we have
\begin{equation}
\begin{split}
\left(\mu ,\phi -\overline{\phi }\right)& \geq
\alpha \Vert \phi \Vert _{L^{2}(\Omega )}^{2}-
\left(F^{\prime }\left(\phi
^{1}\right)-F^{\prime }\left(\phi ^{2}\right),\overline{\phi }\right)  
-\left(J\ast \phi ,\phi -\overline{\phi }\right)
\\
& =\alpha
\Vert \phi \Vert _{L^{2}(\Omega )}^{2}-
\left(F^{\prime }\left(\phi ^{1}\right)-
F^{\prime}\left(\phi ^{2}\right),\overline{\phi }\right) -
\left(\nabla J\ast \phi ,\nabla \mathcal{N}\left(\phi -\overline{\phi }\right)\right)
\\
& \geq 
\alpha \Vert
\phi \Vert _{L^{2}(\Omega )}^{2}-
\left(F^{\prime }\left(\phi ^{1}\right)-F^{\prime }\left(\phi
^{2}\right),\overline{\phi }\right) -
\Vert J\Vert _{W^{1,1}(\mathbb{R}^{2})}\Vert \phi \Vert
_{L^{2}(\Omega )}
\left\Vert \phi -\overline{\phi }\right\Vert _{\ast }\\
& \geq \frac{3\alpha }{4}\Vert \phi \Vert _{L^{2}(\Omega )}^{2}-C\Vert \phi -%
\overline{\phi }\Vert _{\ast }^{2}-\left\vert \overline{\phi ^{1}}-\overline{%
\phi ^{2}}\right\vert \left( \Vert F^{\prime }(\phi ^{1})\Vert
_{L^{1}(\Omega )}+\Vert F^{\prime }(\phi ^{2})\Vert _{L^{1}(\Omega )}\right)
.
\end{split}
\label{mu-diff}
\end{equation}%
Concerning the convective term, by \eqref{H_2} and \eqref{LADY}, we obtain
\begin{equation}
\begin{split}
\left| \left(\mathbf{u}\cdot \phi ,\nabla \mathcal{N}\left(\phi -\overline{\phi }
\right) \right) \right|& \leq
C\Vert \mathbf{u}\Vert _{L^{4}(\Omega )}\Vert \phi \Vert _{L^{2}(\Omega
)}
\left\Vert \nabla \mathcal{N}\left(\phi -\overline{\phi }\right)\right\Vert _{L^{2}(\Omega )}^{%
\frac{1}{2}}
\left\Vert \phi -\overline{\phi }\right\Vert _{L^{2}(\Omega )}^{\frac{1}{2}}
\\
& \leq \frac{\alpha }{8}\Vert \phi \Vert _{L^{2}(\Omega )}^{2}+C\Vert
\mathbf{u}\Vert _{L^{4}(\Omega )}^{2}
\left(\Vert \phi \Vert +C\left|\overline{\phi }\right| \right)
\left\Vert \phi -\overline{\phi }\right\Vert _{\ast } \\
& \leq \frac{\alpha }{4}\Vert \phi \Vert _{L^{2}(\Omega )}^{2}
+C\Vert
\mathbf{u}\Vert _{L^{4}(\Omega )}^{4}
\left\Vert \phi -\overline{\phi } \right\Vert
_{\ast }^{2}+C\left\vert \overline{\phi }\right\vert ^{2}.
\end{split}
\label{U:weak-u}
\end{equation}%
Then, recalling the conservation of mass, i.e. $\overline{\phi ^{i}}(t)=%
\overline{\phi _{0}^{i}}$ for all $t\in \lbrack 0,T]$ and $i=1,2$, we are
led to
\begin{equation*}
\frac{\mathrm{d}}{\mathrm{d}t}\Vert \phi \Vert _{H^{1}(\Omega )^{\prime
}}^{2}+\alpha \Vert \phi \Vert _{L^{2}(\Omega )}^{2}\leq C\left( 1+\Vert
\mathbf{u}\Vert _{L^{4}(\Omega )}^{4}\right) \Vert \phi \Vert _{H^{1}(\Omega
)^{\prime }}^{2}+\Lambda \left\vert \overline{\phi }(0)\right\vert
+C\left\vert \overline{\phi }(0)\right\vert ^{2},
\end{equation*}%
where $\Lambda =2\left\Vert F^{\prime }\left(\phi ^{1} \right)\right\Vert _{L^{1}(\Omega )}+2\left\Vert
F^{\prime }\left(\phi ^{2}\right) \right\Vert _{L^{1}(\Omega )}$. Therefore, an application of Gronwall's Lemma implies \eqref{cd},
%\begin{align*}
%& \left\Vert \phi ^{1}(t)-\phi ^{2}(t)\right\Vert _{H^{1}(\Omega )^{\prime }}^{2} 
%\\
%& \leq \left( \Vert \phi _{0}^{1}-\phi _{0}^{2}\Vert _{H^{1}(\Omega
%)^{\prime }}^{2}+\left\vert \overline{\phi _{0}^{1}}-\overline{\phi _{0}^{2}}%
%\right\vert \left( \Vert \Lambda \Vert _{L^{1}(0,T)}+CT\left\vert \overline{%
%\phi _{0}^{1}}-\overline{\phi _{0}^{2}}\right\vert \right) \right) \mathrm{e}%
%^{C\left( T+\Vert \mathbf{u}\Vert _{L^{4}(0,T;L^{4}(\Omega ))}^{4}\right) },
%\end{align*}%
%for all $t\in \lbrack 0,T]$.
which, in particular, entails the
uniqueness of the weak solutions. \medskip

\noindent \textbf{Definition of the regularized problem.} Let us consider a
sequence $\{ \mathbf{u}^{k} \}_{k\in \mathbb{N}}\subset C_0^\infty(
(0,T);C_{0,\sigma }^{\infty }(\Omega ;\mathbb{R}^{2}))$ such that $\mathbf{u}%
^{k}\rightarrow \mathbf{u}$ in $L^{4}(0,T;L_{\sigma }^{4})$. We assume first
that $\phi _{0}\in H^{1}(\Omega )\cap L^{\infty }(\Omega )$ with $\Vert \phi
_{0}\Vert _{L^{\infty }(\Omega )}\leq 1$ and $|\overline{\phi _{0}}|<1$. For
any $k\in \mathbb{N}$, we introduce the Lipschitz function $h_{k}:\mathbb{R}%
\rightarrow \mathbb{R},\ k\in \mathbb{N}$ such that
\begin{equation*}
h_{k}(s)=%
\begin{cases}
-1+\frac{1}{k},\quad & s<-1+\frac{1}{k}, \\
s,\quad & s\in \left[ -1+\frac{1}{k},1-\frac{1}{k}\right] , \\
1-\frac{1}{k},\quad & s>1-\frac{1}{k}.%
\end{cases}%
\end{equation*}%
We define $\phi _{0}^{k}:=h_{k}(\phi _{0})$. It follows from the Stampacchia
superposition principle \cite{MM} that $\phi _{0}^{k}\in H^{1}(\Omega )\cap
L^{\infty }(\Omega )$ such that $\nabla \phi _{0}^{k}=\nabla \phi _{0}\cdot
\chi _{\lbrack -1+\frac{1}{k},1-\frac{1}{k}]}(\phi _{0})$ almost everywhere
in $\Omega $, where $\chi _{A}(\cdot )$ is the indicator function of the set
$A$. By definition, we have
\begin{equation}
\left|\phi _{0}^{k}\right| 
\leq \left|\phi _{0}\right|,\quad \left|\nabla \phi _{0}^{k}\right|
\leq \left|\nabla
\phi _{0}\right|,\quad \text{a.e. in }\Omega .  \label{stamp}
\end{equation}%
As a consequence, we infer that $\phi _{0}^{k}\rightarrow \phi _{0}$ in $%
H^{1}(\Omega )$ as $k\rightarrow \infty $. Observe also that $\left\vert
\overline{\phi _{0}^{k}}\right\vert \rightarrow \left\vert \overline{\phi
_{0}}\right\vert $. Then, there exist $\varpi >0$ and $\overline{k}>0$ such
that
\begin{equation}
\left\vert \overline{\phi _{0}^{k}}\right\vert \leq 1-\varpi ,\quad \forall
\,k>\overline{k}.  \label{kk}
\end{equation}%
Thanks to Theorem \ref{REG-APPR-nCH}, there exists a sequence of functions $%
\{\phi ^{k},\mu ^{k}\}_{k\in \mathbb{N}}$ satisfying
\begin{equation}
\begin{split}
& \phi ^{k}\in L^{\infty }(0,T;H^{1}(\Omega )\cap L^{\infty }(\Omega
)):\quad \sup_{t\in \lbrack 0,T]}\Vert \phi ^{k}(t)\Vert _{L^{\infty
}(\Omega )}\leq 1-\delta _{k}, \\
& \phi ^{k}\in L^{q}(0,T;W^{1,p}(\Omega )),\quad q=\frac{2p}{p-2},\quad
\forall \,p\in (2,\infty ), \\
& \partial _{t}\phi ^{k}\in L^{\infty }(0,T;H^{1}(\Omega )^{\prime })\cap
L^{2}(0,T;L^{2}(\Omega )), \\
& \mu ^{k}\in C([0,T];H^{1}(\Omega ))\cap L^{2}(0,T;H^{2}(\Omega ))\cap
H^{1}(0,T;L^{2}(\Omega )),
\end{split}
\label{Reg-Appr-nch}
\end{equation}%
where $\delta _{k}\in (0,1)$ depends on $k$. The solutions satisfies
\begin{equation}
\partial _{t}\phi ^{k}+\mathbf{u}^{k}\cdot \nabla \phi ^{k}=\Delta \mu ^{k},%
\text{ }\mu ^{k}=F^{\prime } (\phi ^{k})-J\ast \phi ^{k},\quad\text{ a.e. in }%
\Omega \times (0,T).  \label{K-nCH}
\end{equation}%
In addition, $\partial _{\mathbf{n}}\mu ^{k}=0$ almost everywhere on $%
\partial \Omega \times (0,T)$ and $\phi ^{k}(\cdot ,0)=\phi _{0}^{k}$ almost
everywhere in $\Omega $. \medskip

\noindent \textbf{Energy estimates.} Integrating \eqref{K-nCH}$_{1}$ over $%
\Omega \times (0,t)$ for any $t\in (0,T]$, we obtain the conservation of
mass
\begin{equation}
\overline{\phi ^{k}}(t)=\overline{\phi _{0}^{k}},\quad \forall \,t\in
\lbrack 0,T].  \label{cons-mass}
\end{equation}%
We multiply \eqref{K-nCH}$_{1}$ by $\mu ^{k}$ and integrate over $\Omega $.
By exploiting the convexity of $F$, the regularity \eqref{Reg-Appr-nch} and
\cite[Proposition 4.2]{CKRS2007}, we obtain
\begin{equation}
\frac{\mathrm{d}}{\mathrm{d}t}\mathcal{E}_{\mathrm{nloc}}(\phi
^{k})+\int_{\Omega }\mathbf{u}^{k}\cdot \nabla \phi ^{k}\,\mu ^{k}\,\mathrm{d%
}x+\int_{\Omega }|\nabla \mu ^{k}|^{2}\,\mathrm{d}x=0.  \label{Energy-1}
\end{equation}%
Since $\mathrm{div}\,\mathbf{u}=0$ in $\Omega \times (0,T)$ and $\mathbf{u}%
\cdot \n=0$ on $\partial \Omega \times (0,T)$, by using the
uniform bound $\Vert \phi ^{k}(t)\Vert _{L^{\infty }(\Omega )}\leq 1$, we
find
\begin{equation*}
\left\vert \int_{\Omega }\mathbf{u}^{k}\cdot \nabla \phi ^{k}\,\mu ^{k}\,%
\mathrm{d}x\right\vert =\left\vert \int_{\Omega }\mathbf{u}^{k}\cdot \nabla
\mu ^{k}\,\phi ^{k}\,\mathrm{d}x\right\vert \leq \frac{1}{2}\int_{\Omega
}|\nabla \mu ^{k}|^{2}\,\mathrm{d}x+\frac{1}{2}\int_{\Omega }|\mathbf{u}%
^{k}|^{2}\,\mathrm{d}x.
\end{equation*}%
Then, we easily infer from \eqref{Energy-1} that
\begin{equation}
\mathcal{E}_{\mathrm{nloc}} (\phi ^{k}(t) )+\frac{1}{2}\int_{0}^{t}\Vert
\nabla \mu ^{k}(\tau )\Vert _{L^{2}(\Omega )}^{2}\,\mathrm{d}\tau \leq
\mathcal{E}_{\mathrm{nloc}}(\phi _{0}^{k})+\frac{1}{2}\int_{0}^{t}\Vert
\mathbf{u}^{k}(\tau )\Vert _{L^{2}(\Omega )}^{2}\,\mathrm{d}\tau ,\quad
\forall \,t\in \lbrack 0,T].  \label{Energy-2}
\end{equation}%
We observe that $0\leq F(s)\leq C$, for $s\in \lbrack -1,1]$. By the
Young inequality, we also have that $|(J\ast u,u)|\leq \Vert J\Vert _{L^{1}(%
\mathbb{R}^{2})}\Vert u\Vert _{L^{1}(\Omega )}\Vert u\Vert _{L^{\infty
}(\Omega )}$, for any $u\in L^{\infty }(\Omega )$. Therefore, we simply
deduce that
\begin{equation}
\int_{0}^{T}\Vert \nabla \mu ^{k}(\tau )\Vert _{L^{2}(\Omega )}^{2}\,\mathrm{%
d}\tau \leq C+\int_{0}^{T}\Vert \mathbf{u}^{k}(\tau )\Vert _{L^{2}(\Omega
)}^{2}\,\mathrm{d}\tau ,  \label{nablamu-L2}
\end{equation}%
where $C$ depends on $\Omega $, $F$ and $J$, but is independent
of $k$. Along the proof, we will adopt the same agreement for all the other constants $C$ appearing in the following subsections. 
Next, we compute the gradient of \eqref{K-nCH}$_{2}$. In light of
the regularity \eqref{Reg-Appr-nch}, we notice that $F^{\prime }(\phi
^{k}(t))\in H^{1}(\Omega )$ almost everywhere in $(0,T)$ and, in particular,
$\nabla F^{\prime }(\phi )=F^{\prime \prime }(\phi ^{k})\nabla \phi $ almost
everywhere in $\Omega \times (0,T)$ by the Stampacchia superposition
principle \cite{MM}. Then, by the convexity of $F$, we have
\begin{equation}
\int_{\Omega }\left\vert F^{\prime \prime }(\phi ^{k})\nabla \phi
^{k}\right\vert ^{2}\,\mathrm{d}x\leq \Vert \nabla \mu ^{k}\Vert
_{L^{2}(\Omega )}+\Vert \nabla J\ast \phi ^{k}\Vert _{L^{2}(\Omega )}.
\end{equation}%
Thus, we infer from \ref{J-ass} and the uniform $L^{\infty }$ bound of $%
\phi ^{k}$ that
\begin{equation}
\Vert \nabla \phi ^{k}\Vert _{L^{2}(\Omega )}\leq C\left( 1+\Vert \nabla \mu
^{k}\Vert _{L^{2}(\Omega )}\right) .
\end{equation}%
Thanks to \eqref{nablamu-L2}, the above inequality entails that
\begin{equation}
\int_{0}^{T}\Vert \nabla \phi ^{k}(\tau )\Vert _{L^{2}(\Omega )}^{2}\,%
\mathrm{d}\tau \leq C(1+T)+C\int_{0}^{T}\Vert \mathbf{u}^{k}(\tau )\Vert
_{L^{2}(\Omega )}\,\mathrm{d}\tau.  \label{nablaphi-L2}
\end{equation}%
In addition, by duality in %
\eqref{K-nCH}, we easily infer that
\begin{equation}
\Vert \partial _{t}\phi ^{k}\Vert _{H^{1}(\Omega )^{\prime }}\leq \Vert
\mathbf{u}^{k}\Vert _{L^{2}(\Omega )}+\Vert \nabla \mu ^{k}\Vert
_{L^{2}(\Omega )},  \label{phit-dual}
\end{equation}%
which implies that
\begin{equation}
\int_{0}^{T}\Vert \partial _{t}\phi ^{k}(\tau )\Vert _{H^{1}(\Omega
)^{\prime }}^{2}\,\mathrm{d}\tau \leq C+\int_{0}^{T}\Vert \mathbf{u}%
^{k}(\tau )\Vert _{L^{2}(\Omega )}^{2}\,\mathrm{d}\tau .  \label{phit-L2}
\end{equation}%
\medskip

\noindent \textbf{Existence of weak solutions.} Let us consider $k\geq
\overline{k}$ such that \eqref{kk} holds. As such, we have from %
\eqref{cons-mass} that $|\overline{\phi ^{k}}(t) |\leq 1-\xi $ for all $t\in
\lbrack 0,T]$ uniformly in $k$. Then, recalling that $\mathbf{u}%
^{k}\rightarrow \mathbf{u}$ in $L^{4}(0,T;L_{\sigma }^{4}(\Omega))\hookrightarrow
L^{2}(0,T;L_{\sigma }^{2}(\Omega ))$, we infer from \eqref{nablamu-L2}, %
\eqref{nablaphi-L2} and \eqref{phit-L2} that
\begin{equation}
\Vert \phi ^{k}\Vert _{L^{\infty }(\Omega \times (0,T))}\leq 1,\quad \Vert
\phi ^{k}\Vert _{L^{2}(0,T;H^{1}(\Omega ))}+\Vert \partial _{t}\phi
^{k}\Vert _{L^{2}(0,T;H^{1}(\Omega )^{\prime })}+\Vert \nabla \mu ^{k}\Vert
_{L^{2}(0,T;L^{2}(\Omega ))}\leq C.  \label{est1-w}
\end{equation}%
In order to recover a uniform estimate
of the full $H^{1}$ norm of $\mu ^{k}$, we multiply \eqref{K-nCH}$_{2}$ by $%
\phi ^{k}-\overline{\phi ^{k}}$ and integrate over $\Omega $. By the
generalized Poincar\'{e} inequality, the assumption \ref{J-ass} and the
uniform $L^{\infty }$ bound of $\phi ^{k}$, we find
\begin{equation*}
\int_{\Omega }F^{\prime }(\phi ^{k})\big(\phi ^{k}-\overline{\phi ^{k}}\big)%
\,\mathrm{d}x\leq C\left( 1+\Vert \nabla \mu ^{k}\Vert _{L^{2}(\Omega
)}\right) .
\end{equation*}%
We report that there exists two positive constants $C_{F}^{1}$ and $%
C_{F}^{2} $ such that (see, e.g. \cite{MZ})
\begin{equation}
\Vert F^{\prime }(\phi ^{k})\Vert _{L^{1}(\Omega )}\leq C_{F}^{1}\left\vert
\int_{\Omega }F^{\prime }(\phi ^{k})\big(\phi ^{k}-\overline{\phi ^{k}}\big)%
\,\mathrm{d}x\right\vert +C_{F}^{2},  \label{MZ}
\end{equation}%
where $C_{F}^{1}$ and $C_{F}^{2}$ only depends on $F$, $\Omega $ and $\varpi
$. Then, we conclude that
\begin{equation}
\Vert \mu ^{k}\Vert _{L^{1}(\Omega )}+\Vert F^{\prime }(\phi ^{k})\Vert
_{L^{1}(\Omega )}\leq C\left( 1+\Vert \nabla \mu ^{k}\Vert _{L^{2}(\Omega
)}\right) .  \label{L1-est}
\end{equation}%
Thus, we deduce from \eqref{L1-est}, the Poincar\'{e}-Wirtinger inequality
and the definition of $\mu ^{k}$ that
\begin{equation}
\Vert \mu ^{k}\Vert _{L^{2}(0,T;H^{1}(\Omega ))}+\Vert F^{\prime }(\phi
^{k})\Vert _{L^{2}(0,T;H^{1}(\Omega ))}\leq C.  \label{est2-w}
\end{equation}%
Therefore, we infer from the Banach-Alaoglu
theorem and the Aubin-Lions theorem that
\begin{equation}
\begin{split}
\phi ^{k}\rightharpoonup \phi \quad & \text{weakly$^\star$ in }L^{\infty }(\Omega \times (0,T)), \quad
\phi ^{k}\rightharpoonup \phi \quad  \text{weakly in }L^{2}(0,T;H^{1}(%
\Omega )), \\
\phi ^{k}\rightarrow \phi \quad & \text{strongly in }L^{2}(0,T;L^{2}(\Omega
)), \quad
\partial _{t}\phi ^{k}\rightharpoonup \partial _{t}\phi \quad  \text{weakly
in }L^{2}(0,T;H^{1}(\Omega )^{\prime }), \\
\mu ^{k}\rightharpoonup \mu \quad & \text{weakly in }L^{2}(0,T;H^{1}(\Omega
)), \quad
F^{\prime }(\phi ^{k})\rightharpoonup \xi \quad  \text{weakly in }%
L^{2}(0,T;H^{1}(\Omega )).
\end{split}
\label{conv-weak}
\end{equation}%
%\begin{equation}
%\begin{split}
%\phi ^{k}\rightharpoonup \phi \quad & \text{weakly$^\star$ in }L^{\infty }(\Omega
%\times (0,T)), \\
%\phi ^{k}\rightharpoonup \phi \quad & \text{weakly in }L^{2}(0,T;H^{1}(%
%\Omega )), \\
%\phi ^{k}\rightarrow \phi \quad & \text{strongly in }L^{2}(0,T;L^{2}(\Omega
%)), \\
%\partial _{t}\phi ^{k}\rightharpoonup \partial _{t}\phi \quad & \text{weakly
%in }L^{2}(0,T;H^{1}(\Omega )^{\prime }), \\
%\mu ^{k}\rightharpoonup \mu \quad & \text{weakly in }L^{2}(0,T;H^{1}(\Omega
%)), \\
%F^{\prime }(\phi ^{k})\rightharpoonup \xi \quad & \text{weakly in }%
%L^{2}(0,T;H^{1}(\Omega )).
%\end{split}
%\label{conv-weak}
%\end{equation}%
Clearly, the limit function $\phi $ satisfies $|\phi (x,t)|\leq 1$ almost
everywhere in $\Omega \times (0,T)$. Thanks to the strong convergence in %
\eqref{conv-weak}, we infer that $\phi ^{k}\rightarrow \phi $ almost
everywhere in $\Omega \times (0,T)$. Then, $F^{\prime }(\phi
^{k})\rightarrow \widetilde{F^{\prime }}(\phi)$ almost everywhere in $\Omega \times (0,T)$, where $\widetilde{F'}(s)=F'(s)$ if $s\in (-1,1)$ and 
$\widetilde{F'}(\pm 1)=\pm \infty$. An application of the Fatou lemma, together with \eqref{est2-w},
entails that $\widetilde{F^{\prime}}(\phi )\in L^{2}(0,T;L^{2}(\Omega ))$. In turn, it also implies that $|\phi (x,t)|<1 $ almost everywhere in $\Omega \times (0,T)$. In addition, this is sufficient to conclude that $\xi =F^{\prime }(\phi )\in
L^{2}(0,T;H^{1}(\Omega ))$. Finally, passing to
the limit as $k\rightarrow \infty $ in \eqref{K-nCH}, we obtain that $\phi $
is a weak solution to \eqref{nCH} fulfilling \eqref{0}, \eqref{weak-nCH-2},
while corresponding to $\phi _{0}\in H^{1}(\Omega )\cap L^{\infty }(\Omega )$
with $\Vert \phi _{0}\Vert _{L^{\infty }(\Omega )}\leq 1$ and $|\overline{%
\phi _{0}}|<1$.

In order to conclude this part, we are left to deal the general case where
the initial datum $\phi _{0}$ only belongs to $L^{\infty }(\Omega )$ with $%
\Vert \phi _{0}\Vert _{L^{\infty }(\Omega )}\leq 1$ and $|\overline{\phi _{0}%
}|<1$. To this aim, by classical mollification there exists a sequence $%
\{\phi _{0}^{m}\}_{m\in \mathbb{N}}$ such that $\phi _{0}^{m}\in C^{\infty }(%
\overline{\Omega })$: $-1\leq \phi _{0}^{m}(x)\leq 1$ for all $x\in
\overline{\Omega }$, for any $m\in \mathbb{N}$, and $\phi
_{0}^{m}\rightarrow \phi _{0}$ strongly in $L^{r}(\Omega )$, for any $r\in
\lbrack 1,\infty )$, $\phi _{0}^{m}\rightharpoonup \phi _{0}$ weakly$^\star$ in $L^{\infty }(\Omega )$, and $|\overline{\phi_0^m}|<1$. By the previous analysis, there exists a weak
solution $\phi ^{m}$ for any $m\in \mathbb{N}$. Then, since $\mathcal{E}%
(\phi _{0}^{m})$ is uniformly bounded in $m$ and the lower semicontinuity of
the norm with respect to the weak convergence, it is straightforward to
deduce  \eqref{est1-w} and \eqref{est2-w} by replacing $\phi ^{k}$ and $\mu
^{k}$ with $\phi ^{m}$ and $\mu ^{m}$. Thus, arguing as before, the sequence
$\phi ^{m}$ converges as in \eqref{conv-weak} to a limit function $\phi $
satisfying \eqref{0} and \eqref{weak-nCH-2}, as well as $\phi (0)=\phi _{0}$
in $\Omega $.

Finally, concerning the energy identity, the convexity of $F$, the
assumption \ref{J-ass}, the regularity \eqref{0} and \cite[Proposition 4.2]%
{CKRS2007} entail that%
\begin{equation*}
\mathcal{E}_{\mathrm{nloc}}(\phi (t))-\mathcal{E}_{\mathrm{nloc}}(\phi
_{0})=\int_{0}^{t}\langle \partial _{t}\phi (\tau ),\mu (\tau )\rangle \,%
\mathrm{d}\tau, \quad \text{ for all }t\in \lbrack 0,T].
\end{equation*}%
Owing to this, \eqref{EE-nCH} directly follows from choosing $v=\mu $ in %
\eqref{weak-nCH-2} and integrating the resulting equation in $[0,t]$ for any
$0\leq t\leq T$.

%Repeating then the same estimates to obtain \eqref{mu5}, \eqref{mu3} and \eqref{phi3}, we reach in the end the following uniform estimates
%
%\begin{align}
%\Vert\phi_{\lambda}\Vert_{L^\infty(0,T;L^3(\Omega))}+\Vert\phi_{\lambda}\Vert_{L^2(0,T;V)}+\Vert\mu_{\lambda}\Vert_{L^2(0,T;V)}+	\left\Vert  \partial_t \phi_{\lambda}\right\Vert  _{L^2(0,T;V')}\leq C.
%\label{energy_bis2}
%\end{align}
%Therefore, we can conclude the first part of the proof, i.e., the proof of \eqref{0}, passing to the limit as $\lambda\rightarrow0$ by compactness arguments as in the proof of \cite[Thm 3.15]{GGG2017}). Indeed, in the case of $\uu\in L^4(0,T;\textbf{L}^4_\sigma(\Omega))$ we can consider, by the density of $C^\infty([0,T];\textbf{L}^4_\sigma(\Omega))$ in $L^4(0,T;\textbf{L}^4_\sigma(\Omega))$, a sequence $\{\uu_r\}_{r\in\N}\subset C^\infty([0,T];\textbf{L}^4_\sigma(\Omega))$ such that $\uu_r\to \uu$ in $L^4(0,T;\textbf{L}^4(\Omega))$ as $r\to \infty$ and, since all the estimates depend at most on the $L^4(0,T;\textbf{L}^4_\sigma(\Omega))$ norm of $\uu_r$, we can first obtain the solutions $\phi_r$ and then easily pass to the limit as $r\to\infty$, giving the existence of $\phi$ as a weak solution to the nonlocal convective Cahn-Hilliard problem with the given velocity $\uu\in L^4(0,T;\textbf{L}^4_\sigma(\Omega))$.
\medskip

\noindent \textbf{Sobolev estimates.}
%First of all, we recall that the estimates leading to \eqref{energy_bis2} are also independent of $k$ (see \eqref{E2}).
We first observe that the regularity of the approximated solutions $\{\phi
^{k},\mu ^{k}\}_{k\in \mathbb{N}}$ in \eqref{Reg-Appr-nch} (in particular,
the strict separation property) allows us to compute the time and the
spatial derivatives of \eqref{K-nCH}$_{2}$, which gives
\begin{equation}
\partial _{t}\mu ^{k}=F^{\prime \prime }(\phi ^{k})\partial _{t}\phi
^{k}-J\ast \partial _{t}\phi ^{k},\quad \nabla \mu ^{k}=F^{\prime \prime
}(\phi ^{k})\nabla \phi ^{k}-\nabla J\ast \phi ^{k},\quad \text{a.e. in }%
\Omega \times (0,T).  \label{mut}
\end{equation}%
In addition, the map $t\rightarrow \Vert \nabla \mu (t)\Vert _{L^{2}(\Omega
)}^{2}$ belongs to $AC([0,T])$ and the chain rule $\frac{\mathrm{d}}{\mathrm{%
d}t} \frac12\Vert \nabla \mu ^{k}\Vert _{L^{2}(\Omega )}^{2}=(\partial _{t}\mu
,\Delta \mu )$ holds almost everywhere in $(0,T)$. Thus, multiplying %
\eqref{K-nCH}$_{1}$ by $\partial _{t}\mu ^{k}$, integrating over $\Omega $,
and exploiting \eqref{mut}, we obtain
\begin{equation}
\frac{1}{2}\frac{\mathrm{d}}{\mathrm{d}t}\Vert \nabla \mu ^{k}\Vert
_{L^{2}(\Omega )}^{2}+\int_{\Omega }F^{\prime \prime }(\phi ^{k})|\partial
_{t}\phi ^{k}|^{2}\,\mathrm{d}x-\int_{\Omega }J\ast \partial _{t}\phi
^{k}\,\partial _{t}\phi ^{k}\,\mathrm{d}x+\int_{\Omega }\mathbf{u}^{k}\cdot
\nabla \phi ^{k}\,\partial _{t}\mu ^{k}\,\mathrm{d}x=0.  \label{mu7}
\end{equation}%
We rewrite the key term $(\mathbf{u}^{k}\cdot \nabla \phi ^{k},\partial
_{t}\mu ^{k})$. By using \eqref{mut} and the properties of $\mathbf{u}^{k}$
in $C_0^\infty((0,T);C_{0,\sigma }^{\infty }(\Omega ;\mathbb{R}^{2}))$, we
observe that
\begin{equation}
\begin{split}
\int_{\Omega }\mathbf{u}^{k}\cdot \nabla \phi ^{k}\,\partial _{t}\mu ^{k}\,%
\mathrm{d}x& =\int_{\Omega }\left( \mathbf{u}^{k}\cdot \nabla \phi
^{k}\right) F^{\prime \prime }(\phi ^{k})\,\partial _{t}\phi ^{k}\,\mathrm{d}%
x-\int_{\Omega }\left( \mathbf{u}^{k}\cdot \nabla \phi ^{k}\right) J\ast
\partial _{t}\phi ^{k}\,\mathrm{d}x \\
& =\int_{\Omega }\mathbf{u}^{k}\cdot \nabla \left( F^{\prime }(\phi
^{k})\right) \partial _{t}\phi ^{k}\,\mathrm{d}x-\int_{\Omega }\left(
\mathbf{u}^{k}\cdot \nabla \phi ^{k}\right) J\ast \partial _{t}\phi ^{k}\,%
\mathrm{d}x \\
& =\int_{\Omega }\left( \mathbf{u}^{k}\cdot \nabla \mu ^{k}\right) \partial
_{t}\phi ^{k}\,\mathrm{d}x-\int_{\Omega }\left( \mathbf{u}^{k}\cdot \big(%
\nabla J\ast \phi ^{k}\big)\right) \partial _{t}\phi ^{k}\,\mathrm{d}x \\
& \quad -\int_{\Omega }\left( \mathbf{u}^{k}\cdot \nabla \phi ^{k}\right)
J\ast \partial _{t}\phi ^{k}\,\mathrm{d}x \\
& =\int_{\Omega }\left( \mathbf{u}^{k}\cdot \nabla \mu ^{k}\right) \partial
_{t}\phi ^{k}\,\mathrm{d}x-\int_{\Omega }\left( \mathbf{u}^{k}\cdot \big(%
\nabla J\ast \phi ^{k}\big)\right) \partial _{t}\phi ^{k}\,\mathrm{d}x \\
& \quad -\int_{\Omega }\left( \mathbf{u}^{k}\cdot \nabla (J\ast \partial
_{t}\phi ^{k})\right) \phi ^{k}\,\mathrm{d}x.
\end{split}
\label{drift}
\end{equation}%
By exploiting the assumption \ref{J-ass} and the uniform $L^{\infty }$
bound of $\phi ^{k}$, we have
\begin{equation}
\begin{split}
\left\vert \int_{\Omega }\left( \mathbf{u}^{k}\cdot \big(\nabla J\ast \phi
^{k}\big)\right) \partial _{t}\phi ^{k}\,\mathrm{d}x\right\vert & \leq \Vert
\mathbf{u}^{k}\Vert _{L^{2}(\Omega )}\Vert \nabla J\ast \phi ^{k}\Vert
_{L^{\infty }(\Omega )}\Vert \partial _{t}\phi ^{k}\Vert _{L^{2}(\Omega )} \\
& \leq \Vert \mathbf{u}^{k}\Vert _{L^{2}(\Omega )}\Vert J\Vert _{W^{1,1}(%
\mathbb{R}^{2})}\Vert \phi ^{k}\Vert _{L^{\infty }(\Omega )}\Vert \partial
_{t}\phi ^{k}\Vert _{L^{2}(\Omega )} \\
& \leq \frac{\alpha }{8}\Vert \partial _{t}\phi ^{k}\Vert _{L^{2}(\Omega
)}^{2}+C\Vert \mathbf{u}^{k}\Vert _{L^{2}(\Omega )}^{2}.
\end{split}
\label{I2}
\end{equation}%
Similarly, we also find
\begin{equation}
\begin{split}
\left\vert \int_{\Omega }\left( \mathbf{u}^{k}\cdot \nabla (J\ast \partial
_{t}\phi ^{k})\right) \phi ^{k}\,\mathrm{d}x\right\vert & \leq \Vert \mathbf{%
u}^{k}\Vert _{L^{2}(\Omega )}\Vert \nabla J\ast \partial _{t}\phi ^{k}\Vert
_{L^{2}(\Omega )}\Vert \phi ^{k}\Vert _{L^{\infty }(\Omega )} \\
& \leq \Vert \mathbf{u}^{k}\Vert _{L^{2}(\Omega )}\Vert J\Vert _{W^{1,1}(%
\mathbb{R}^{2})}\Vert \partial _{t}\phi ^{k}\Vert _{L^{2}(\Omega )}\Vert
\phi ^{k}\Vert _{L^{\infty }(\Omega )} \\
& \leq \frac{\alpha }{8}\Vert \partial _{t}\phi ^{k}\Vert _{L^{2}(\Omega
)}^{2}+C\Vert \mathbf{u}^{k}\Vert _{L^{2}(\Omega )}^{2}.
\end{split}
\label{I3}
\end{equation}%
In order to control the first term $(\mathbf{u}^{k}\cdot \nabla \mu
^{k},\partial _{t}\phi ^{k})$ on the right-hand side in \eqref{drift}, we
need a preliminary estimate of the $H^{1}$ norm of $\nabla \mu ^{k}$. To
this end, let us first observe from \eqref{K-nCH} that $\mu^k -\overline{\mu^k }=%
\mathcal{N} (\partial _{t}\phi^k +\mathbf{u}\cdot \nabla \phi^k )$. Then, in
light of \eqref{H_2}, we find
\begin{equation}
\Vert \nabla \mu ^{k}\Vert _{H^{1}(\Omega )}\leq C\left( \Vert \partial
_{t}\phi ^{k}\Vert _{L^{2}(\Omega )}+\Vert \mathbf{u}^{k}\cdot \nabla \phi
^{k}\Vert _{L^{2}(\Omega )}\right) .  \label{muH1}
\end{equation}%
In order to estimate the second term on the right-hand side in \eqref{muH1},
we deduce from \eqref{mut} that
\begin{equation}
\mathbf{u}^{k}\cdot \nabla \phi ^{k}=\frac{1}{F^{\prime \prime }(\phi ^{k})}%
\left( \mathbf{u}^{k}\cdot \nabla \mu ^{k}+\mathbf{u}^{k}\cdot (\nabla J\ast
\phi ^{k})\right), \quad \text{a.e. in }\Omega \times (0,T).  \label{convec}
\end{equation}%
By the strict convexity of $F$, we notice that $F^{\prime \prime
}(s)^{-1}\leq \alpha ^{-1}$ for any $s\in (-1,1)$. By using \ref{J-ass}, %
\eqref{LADY} and the uniform $L^{\infty }$ bound of $\phi ^{k}$, we obtain
\begin{equation}
\begin{split}
\Vert \mathbf{u}^{k}\cdot \nabla \phi ^{k}\Vert _{L^{2}(\Omega )}& \leq
\frac{2}{\alpha }\left( \Vert \mathbf{u}^{k}\cdot \nabla \mu ^{k}\Vert
_{L^{2}(\Omega )}+\Vert \mathbf{u}\cdot \big(\nabla J\ast \phi ^{k}\big)%
\Vert _{L^{2}(\Omega )}\right) \\
& \leq C\Vert \mathbf{u}^{k}\Vert _{L^{4}(\Omega )}\Vert \nabla \mu
^{k}\Vert _{L^{4}(\Omega )}+C\Vert \mathbf{u}^{k}\Vert _{L^{2}(\Omega
)}\Vert \nabla J\ast \phi ^{k}\Vert _{L^{\infty }(\Omega )} \\
& \leq C\Vert \mathbf{u}^{k}\Vert _{L^{4}(\Omega )}\Vert \nabla \mu
^{k}\Vert _{L^{2}(\Omega )}^{\frac{1}{2}}\Vert \nabla \mu ^{k}\Vert
_{H^{1}(\Omega )}^{\frac{1}{2}}+C\Vert \mathbf{u}^{k}\Vert _{L^{2}(\Omega
)}\Vert J\Vert _{W^{1,1}(\mathbb{R}^{2})}\Vert \phi ^{k}\Vert _{L^{\infty
}(\Omega )} \\
& \leq C\Vert \mathbf{u}^{k}\Vert _{L^{4}(\Omega )}\Vert \nabla \mu
^{k}\Vert _{L^{2}(\Omega )}^{\frac{1}{2}}\Vert \nabla \mu ^{k}\Vert
_{H^{1}(\Omega )}^{\frac{1}{2}}+C\Vert \mathbf{u}^{k}\Vert _{L^{2}(\Omega )}.
\end{split}
\label{convecL2}
\end{equation}%
Then, combining \eqref{muH1} and \eqref{convecL2}, we arrive at
\begin{equation}
\Vert \nabla \mu ^{k}\Vert _{H^{1}(\Omega )}\leq C\left( \Vert \partial
_{t}\phi ^{k}\Vert _{L^{2}(\Omega )}+\Vert \mathbf{u}^{k}\Vert
_{L^{4}(\Omega )}^{2}\Vert \nabla \mu ^{k}\Vert _{L^{2}(\Omega )}+\Vert
\mathbf{u}^{k}\Vert _{L^{2}(\Omega )}\right) .  \label{muH1-f}
\end{equation}%
Now, by using \eqref{LADY} and \eqref{muH1-f}, we find
\begin{equation}
\begin{split}
\left\vert \int_{\Omega }\left( \mathbf{u}^{k}\cdot \nabla \mu ^{k}\right)
\partial _{t}\phi ^{k}\,\mathrm{d}x\right\vert & \leq \Vert \mathbf{u}%
^{k}\Vert _{L^{4}(\Omega )}\Vert \nabla \mu ^{k}\Vert _{L^{4}(\Omega )}\Vert
\partial _{t}\phi ^{k}\Vert _{L^{2}(\Omega )} \\
& \leq \Vert \mathbf{u}^{k}\Vert _{L^{4}(\Omega )}\Vert \nabla \mu ^{k}\Vert
_{L^{2}(\Omega )}^{\frac{1}{2}}\Vert \nabla \mu ^{k}\Vert _{H^{1}(\Omega )}^{%
\frac{1}{2}}\Vert \partial _{t}\phi ^{k}\Vert _{L^{2}(\Omega )} \\
& \leq \Vert \mathbf{u}^{k}\Vert _{L^{4}(\Omega )}\Vert \nabla \mu ^{k}\Vert
_{L^{2}(\Omega )}^{\frac{1}{2}}\Vert \partial _{t}\phi ^{k}\Vert
_{L^{2}(\Omega )}^{\frac{3}{2}}+\Vert \mathbf{u}^{k}\Vert _{L^{4}(\Omega
)}^{2}\Vert \nabla \mu ^{k}\Vert _{L^{2}(\Omega )}\Vert \partial _{t}\phi
^{k}\Vert _{L^{2}(\Omega )} \\
& \quad +\Vert \mathbf{u}^{k}\Vert _{L^{2}(\Omega )}^{\frac{1}{2}}\Vert
\mathbf{u}^{k}\Vert _{L^{4}(\Omega )}\Vert \nabla \mu ^{k}\Vert
_{L^{2}(\Omega )}^{\frac{1}{2}}\Vert \partial _{t}\phi ^{k}\Vert
_{L^{2}(\Omega )} \\
& \leq \frac{\alpha }{8}\Vert \partial _{t}\phi ^{k}\Vert _{L^{2}(\Omega
)}^{2}+C\Vert \mathbf{u}^{k}\Vert _{L^{4}(\Omega )}^{4}\Vert \nabla \mu
^{k}\Vert _{L^{2}(\Omega )}^{2}+C\Vert \mathbf{u}^{k}\Vert _{L^{2}(\Omega
)}^{2}.
\end{split}
\label{convec-L2-2}
\end{equation}%
%
%
%
%
%
%
%
%
%
%
%
%
%
%
%
%
%
%
%
%
%
%
%
%
%
%
%
%
%
%
%
%
%
%
%
%
%
%
%
%
%
%
%
%
%
%
%
%
%&\leq \epsilon\Vert\partial_t\phi^k\Vert^2+C \Vert \uu^k\Vert_{L^4(\Omega)}^2 \Vert\nabla\mu^k\Vert_{L^2(\Omega)} \Vert\mu^k\Vert_{H^2(\Omega)}
%\\
%&\leq \epsilon \Vert\partial_t\phi^k\Vert_{L^2(\Omega)}^2
%+ \widetilde{\epsilon} (\Vert A_N\mu^k\Vert_{L^2(\Omega)}^2
%+\Vert \mu^k\Vert_{L^2(\Omega)}^2)
%+C\Vert \uu^k\Vert_{L^4(\Omega)}^4\Vert\nabla\mu^k\Vert^2.
Concerning the last term $(J\ast \partial _{t}\phi ^{k},\partial _{t}\phi
^{k})$ in \eqref{mu7}, by exploiting \ref{J-ass}, the properties of the
Laplace operator $A_{0}$ and \eqref{phit-dual}, we are led to
\begin{equation}
\begin{split}
\int_{\Omega }J\ast \partial _{t}\phi ^{k}\,\partial _{t}\phi ^{k}\,\mathrm{d%
}x& \leq \Vert \nabla J\ast \partial _{t}\phi ^{k}\Vert _{L^{2}(\Omega
)}\Vert \nabla \mathcal{N}\partial _{t}\phi \Vert _{L^{2}(\Omega )} \\
& \leq \frac{\alpha }{8}\Vert \partial _{t}\phi \Vert _{L^{2}(\Omega
)}^{2}+C\left( \Vert \mathbf{u}^{k}\Vert _{L^{2}(\Omega )}^{2}+\Vert \nabla
\mu ^{k}\Vert _{L^{2}(\Omega )}^{2}\right) .
\end{split}
\label{Jterm}
\end{equation}%
Inserting the estimates \eqref{I2}, \eqref{I3}, \eqref{convec-L2-2} and %
\eqref{Jterm} in \eqref{mu7}, and recalling \ref{F-ass-1}, we eventually
deduce that
\begin{equation}
\frac{1}{2}\frac{\mathrm{d}}{\mathrm{d}t}\Vert \nabla \mu ^{k}\Vert
_{L^{2}(\Omega )}^{2}+\frac{\alpha }{2}\Vert \partial _{t}\phi ^{k}\Vert
_{L^{2}(\Omega )}^{2}\leq C\left( 1+\Vert \mathbf{u}^{k}\Vert _{L^{4}(\Omega
)}^{4}\right) \Vert \nabla \mu ^{k}\Vert _{L^{2}(\Omega )}^{2}+C\Vert
\mathbf{u}^{k}\Vert _{L^{2}(\Omega )}^{2}.  \label{mut2}
\end{equation}%
%Therefore, from \eqref{mut2} by virtue of \eqref{An} we further derive
%\begin{align*}
%\frac{1}{2} \ddt \Vert\nabla\mu^k\Vert^2+\frac{\alpha}{2}\left(\Vert\partial_t\phi^k\Vert^2+\frac{1}{2}\Vert A_N\mu^k\Vert^2\right)
%&\leq 4\epsilon\Vert\partial_t\phi^k\Vert^2+\epsilon\left(2+\frac{\alpha}{2}\right)(\Vert A_N\mu^k\Vert^2+\Vert \mu^k\Vert^2)
%\\
%&\quad +C(\Vert \uu\Vert^2+\Vert \uu\Vert_{\textbf{L}^4(\Omega)}^2+\Vert \uu\Vert_{\textbf{L}^4(\Omega)}^4\Vert\nabla\mu^k\Vert^2+\Vert\partial_t\phi^k\Vert^2_{V^\prime}),
%\end{align*}
%and choosing $\epsilon>0$ sufficiently small we infer
%\begin{align}
%&\nonumber\frac{1}{2}\ddt\Vert\nabla\mu^k\Vert^2+\frac{\alpha}{4}\left(\Vert\partial_t\phi^k\Vert^2+\frac{1}{2}\Vert A_N\mu^k\Vert^2\right)\\&\leq C(\Vert\mu^k\Vert^2+\Vert \uu\Vert^2+\Vert \uu\Vert_{\textbf{L}^4(\Omega)}^2+\Vert \uu\Vert_{\textbf{L}^4(\Omega)}^4\Vert\nabla\mu^k\Vert^2+\Vert\partial_t\phi^k\Vert^2_{V^\prime})\label{mut3}.
%\end{align}
Therefore, the Gronwall lemma entails that
\begin{equation}
\Vert \nabla \mu ^{k}(t)\Vert _{L^{2}(\Omega )}^{2}\leq \left( \Vert \nabla
\mu ^{k}(0)\Vert _{L^{2}(\Omega )}^{2}+C\int_{0}^{t}\Vert \mathbf{u}%
^{k}(\tau )\Vert _{L^{2}(\Omega )}^{2}+\Vert \nabla \mu ^{k}(\tau )\Vert
_{L^{2}(\Omega )}^{2}\,\mathrm{d}\tau \right) 
\mathrm{e}^{C\int_{0}^{t}\Vert
\mathbf{u}(\tau )\Vert _{L^{4}(\Omega )}^{4}\,\mathrm{d}\tau},
\label{mu-LinfH1}
\end{equation}%
for all $t\in \lbrack 0,T]$. Furthermore, integrating in time \eqref{muH1-f}
and \eqref{mut2} on $[0,T]$, and using \eqref{mu-LinfH1}, we also obtain
\begin{equation}
\begin{split}
& \int_{0}^{T}\Vert \partial _{t}\phi ^{k}(\tau )\Vert _{L^{2}(\Omega
)}^{2}+\Vert \nabla \mu ^{k}(\tau )\Vert _{H^{1}(\Omega )}^{2}\,\mathrm{d}%
\tau \\
& \quad \leq C\left( \Vert \nabla \mu ^{k}(0)\Vert _{L^{2}(\Omega
)}^{2}+C\int_{0}^{T}\Vert \mathbf{u}^{k}(\tau )\Vert _{L^{2}(\Omega
)}^{2}+\Vert \nabla \mu ^{k}(\tau )\Vert _{L^{2}(\Omega )}^{2}\,\mathrm{d}%
\tau \right) \\
& \qquad \times \left( 1+\int_{0}^{T}\Vert \mathbf{u}^{k}(\tau )\Vert
_{L^{4}(\Omega )}^{4}\,\mathrm{d}\tau \right) 
\mathrm{exp}\left( C\int_{0}^{T}\Vert
\mathbf{u}^{k}(\tau )\Vert _{L^{4}(\Omega )}^{4}\,\mathrm{d}\tau \right).
\end{split}
\label{phit-mu-L2t}
\end{equation}%
Before concluding this step, we derive a further estimate for the $L^{p}$
norms of $\nabla \phi ^{k}$. Since $\mu ^{k}\in L^{2}(0,T;H^{2}(\Omega ))$,
it follows by comparison in \eqref{mut}$_{2}$ and by the Sobolev embedding $%
H^{1}(\Omega )\hookrightarrow L^{p}(\Omega )$ for any $p\in \lbrack 2,\infty
)$ that $F^{\prime \prime }(\phi ^{k})\nabla \phi ^{k}\in
L^{2}(0,T;L^{p}(\Omega ))$. This allows us to rigorously multiply \eqref{mut}%
$_{2}$ by $|\nabla \phi ^{k}|^{p-2}\nabla \phi ^{k}$ and integrate over $%
\Omega $. As a result, we get
\begin{equation*}
\int_{\Omega }F^{\prime \prime }(\phi ^{k})|\nabla \phi ^{k}|^{p}\,\mathrm{d}%
x=\int_{\Omega }|\nabla \phi ^{k}|^{p-2}\nabla \phi ^{k}\cdot \nabla \mu
^{k}\,\mathrm{d}x+\int_{\Omega }|\nabla \phi ^{k}|^{p-2}\nabla \phi
^{k}\cdot \nabla J\ast \phi ^{k}\,\mathrm{d}x.
\end{equation*}%
By using the assumption \ref{J-ass}, the H\"{o}lder inequality and the
Young inequalities, together with uniform $L^{\infty }$ bound of $\phi ^{k}$%
, it is easily seen that
\begin{equation}
\int_{\Omega }F^{\prime \prime }(\phi ^{k})|\nabla \phi ^{k}|^{p}\,\mathrm{d}%
x\leq C\left( 1+\Vert \nabla \mu ^{k}\Vert _{L^{p}(\Omega )}^{p}\right) ,
\label{Fsec-nablaphi}
\end{equation}%
for some $C$ depending on $p$. %$$
%\vert(\vert\nabla\phi^k\vert^{p-2}\nabla\phi^k,\nabla\mu^k)\vert
%\leq \frac{\alpha}{4}\Vert\nabla\phi^k\Vert_{L^p(\Omega)}^{p}
%+ C\Vert\nabla\mu^k\Vert_{L^p(\Omega)}^p.
%$$
%Moreover, by similar arguments, the properties of $J$ and Gagliardo-Nirenberg's inequality,
%\begin{align*}
%\vert(\nabla J \ast \phi^k,\vert\nabla\phi^k\vert^{p-2}\nabla\phi^k)\vert
%&\leq \Vert J\Vert_{W^{1,1}(\R^2)}\Vert\phi^k\Vert_{L^p(\Omega)}\Vert\nabla\phi^k\Vert_{L^p(\Omega)}^{p-1}
%\\
%&\leq C\Vert J\Vert_{W^{1,1}(\R^2)}^p+\frac{\alpha}{4}\Vert\nabla\phi^k\Vert^p_{L^p(\Omega)},
%\end{align*}
%where we exploited  $\Vert\phi^k\Vert_{L^\infty(\Omega\times(0,T))}\leq 1$.
%This entails, recalling as usual the convexity of $F$,
This entails, in particular, that
\begin{equation}
\Vert \nabla \phi ^{k}\Vert _{L^{p}(\Omega )}\leq C\left( 1+\Vert \nabla \mu
^{k}\Vert _{L^{p}(\Omega )}\right) .  \label{Lp}
\end{equation}%
%
%
%
%
%
%
%
%
%
%
%
%
%
%
%
%
%
%
%
%
%
%
%
%
%
%
%
%
%
%
%
%
%
%
%
%
%
%
%
%
%
%
%
%
%
%
%
%
%for some $C$ depending on $p$, but independent of $k$.
%In particular, for $p=4$, it gives
%\begin{align}
%\Vert\nabla\phi^k\Vert_{L^4(\Omega)}\leq C(1+\Vert\nabla\mu^k\Vert_{L^4(\Omega)}),
%\label{L4}
%\end{align}
Finally, we highlight that all the constants $C$ in \eqref{mu-LinfH1}, %
\eqref{phit-mu-L2t} and \eqref{Fsec-nablaphi} are also independent of the velocity field $\mathbf{u}^{k}$ 
and the initial condition $\phi_{0}^{k}$. In fact, they only depend on $\alpha $, $J$ and $\Omega $.
\medskip

\noindent \textbf{Regularity. The case (i).}
By definition of $\phi _{0}^{k}$, it is easily seen that $\phi
_{0}^{k}\rightarrow \phi _{0}$ and $\nabla \phi _{0}^{k}\rightarrow \nabla
\phi $ almost everywhere in $\Omega $. In light of \eqref{stamp}, we also
have that $F^{\prime \prime }(\phi _{0}^{k})|\nabla \phi _{0}^{k}|\leq
F^{\prime \prime }(\phi _{0})|\nabla \phi _{0}|$ almost everywhere in $%
\Omega $. Since $F^{\prime \prime }(\phi _{0})\nabla \phi _{0}\in
L^{2}(\Omega ;\mathbb{R}^{2})$ by assumption, we simply deduce that $%
\int_{\Omega }|F^{\prime \prime }(\phi _{0}^{k})\nabla \phi
_{0}^{k}-F^{\prime \prime }(\phi _{0})\nabla \phi _{0}|^{2}\,\mathrm{d}%
x\rightarrow 0$, namely $F^{\prime \prime }(\phi _{0}^{k})\nabla \phi
_{0}^{k}\rightarrow F^{\prime \prime }(\phi _{0})\nabla \phi _{0}$ in $%
L^{2}(\Omega ;\mathbb{R}^{2})$. On the other hand, it is straightforward to
prove that $J\ast \phi _{0}^{k}\rightarrow J\ast \phi _{0}$ in $H^{1}(\Omega
) $. In addition, it is possible to show from \eqref{Reg-Appr-nch} and %
\eqref{K-nCH} that $\nabla \mu ^{k}(0)=F^{\prime \prime }(\phi
_{0}^{k})\nabla \phi _{0}^{k}-\nabla J\ast \phi _{0}^{k}$ in $\Omega $.
Therefore, we deduce that $\nabla \mu ^{k}(0)\rightarrow F^{\prime \prime
}(\phi _{0})\nabla \phi _{0}-\nabla J\ast \phi _{0}$ in $L^{2}(\Omega ;%
\mathbb{R}^{2})$. As an immediate consequence, we get
\begin{equation}
\Vert \nabla \mu ^{k}(0)\Vert _{L^{2}(\Omega )}\rightarrow \Vert F^{\prime
\prime }(\phi _{0})\nabla \phi _{0}-\nabla J\ast \phi _{0}\Vert
_{L^{2}(\Omega )},\quad \text{as }k\rightarrow \infty .  \label{nablamu0}
\end{equation}%
Next, recalling that $\mathbf{u}^{k}\rightarrow \mathbf{u}$ in $%
L^{4}(0,T;L_{\sigma }^{4})$ and the uniform estimate \eqref{nablamu-L2}, we
infer from \eqref{mu-LinfH1} and \eqref{phit-mu-L2t} that
\begin{equation}
\Vert \nabla \mu ^{k}\Vert _{L^{\infty }(0,T;L^{2}(\Omega ))}+\Vert \partial
_{t}\phi ^{k}\Vert _{L^{2}(0,T;L^{2}(\Omega ))}+\Vert \nabla \mu ^{k}\Vert
_{L^{2}(0,T;H^{1}(\Omega ))}\leq C.  \label{a1}
\end{equation}%
Thanks to \eqref{L1-est}, we obtain
\begin{equation}
\Vert \mu ^{k}\Vert _{L^{\infty }(0,T;H^{1}(\Omega ))}+\Vert \mu ^{k}\Vert
_{L^{2}(0,T;H^{2}(\Omega ))}\leq C.  \label{a2}
\end{equation}%
Concerning the concentration $\phi ^{k}$, we deduce from \eqref{Lp}, %
\eqref{a2} and the interpolation inequality $\Vert u\Vert
_{L^{q}(0,T;L^{p}(\Omega ))}\leq C\Vert u\Vert _{L^{\infty
}(0,T;L^{2}(\Omega ))}\Vert u\Vert _{L^{2}(0,T;H^{1}(\Omega ))}$, where $q=%
\frac{2p}{p-2}$ and $p\in (2,\infty )$, that
\begin{equation}
\Vert \phi ^{k}\Vert _{L^{\infty }(\Omega \times (0,T))}\leq 1,\quad \Vert
\phi ^{k}\Vert _{L^{\infty }(0,T;H^{1}(\Omega ))}+\Vert \phi ^{k}\Vert
_{L^{q}(0,T;W^{1,p}(\Omega ))}\leq C.  \label{a3}
\end{equation}%
In a similar way, by comparison in \eqref{K-nCH}, we are led to
\begin{equation}
\Vert F^{\prime }(\phi ^{k})\Vert _{L^{\infty }(0,T;H^{1}(\Omega ))}+\Vert
F^{\prime }(\phi ^{k})\Vert _{L^{q}(0,T;W^{1,p}(\Omega ))}\leq C.
\label{a4}
\end{equation}%
Furthermore, recalling \eqref{phit-dual}, we obtain from $\mathbf{u}\in
L^{4}(0,T;L_{\sigma }^{4}(\Omega ))$ and \eqref{a2} that
\begin{equation}
\Vert \partial _{t}\phi ^{k}\Vert _{L^{4}(0,T;H^{1}(\Omega )^{\prime })}\leq C.  \label{a5}
\end{equation}%
Also, by the assumption \ref{h3}, an application of the Trudinger-Moser
inequality and the estimate \eqref{a4} (cf. \cite[Theorem 5.2]{GGG2017})
entails that
\begin{equation}
\Vert F^{\prime \prime }(\phi ^{k})\Vert _{L^{\infty }(0,T;L^{p}(\Omega
))}\leq C_p,\quad \forall \, p\in \lbrack 2,\infty ).  \label{a6}
\end{equation}%
Notice that $\partial _{t}F^{\prime }(\phi ^{k})=F^{\prime \prime }(\phi
^{k})\partial _{t}\phi ^{k}$. Owing to this, for any $v\in H^{1}(\Omega )$,
we have
\begin{equation}
\left\vert \langle \partial _{t}F^{\prime }(\phi ^{k}),v\rangle \right\vert
\leq \Vert F^{\prime \prime }(\phi ^{k})\Vert _{L^{4}(\Omega )}\Vert
\partial _{t}\phi ^{k}\Vert _{L^{2}(\Omega )}\Vert v\Vert _{L^{4}(\Omega )},
\label{a7}
\end{equation}%
which, in turn, implies that
\begin{equation}
\Vert \partial _{t}F^{\prime }(\phi ^{k})\Vert _{L^{2}(0,T;H^{1}(\Omega
)^{\prime })}\leq C.  \label{a8}
\end{equation}%
Thus, in light of \eqref{mut} and \eqref{a1}, we immediately deduce that
\begin{equation}
\Vert \partial _{t}\mu ^{k}\Vert _{L^{2}(0,T;H^{1}(\Omega )^{\prime })}\leq C.  \label{a9}
\end{equation}%
Exploiting the above uniform estimates \eqref{a1}-\eqref{a9}, by a
compactness argument (simpler than the one for the existence of weak
solutions), we pass to the limit in \eqref{K-nCH} as $k\rightarrow \infty $
obtaining that the limit function $\phi $ is a strong solution to \eqref{nCH}
satisfying \eqref{3}. In particular, \eqref{nCH}$_{1}$ holds almost
everywhere in $\Omega \times (0,T)$, \eqref{nCH}$_{2}$ holds almost
everywhere in $\partial \Omega \times (0,T)$. Since the map $t\rightarrow
\{x\in \Omega :|\phi (x,t)|=1\}$ is continuous in $[0,T]$ and $F^{\prime
}(\phi )\in L^{\infty }(0,T;H^{1}(\Omega ))$, we observe that $|\{x\in
\Omega :|\phi (x,t)|=1\}|=0$ for all $t\in \lbrack 0,T]$. Besides, the
estimates \eqref{ST1}, \eqref{ST2} and \eqref{ST3} follows from %
\eqref{L1-est}, \eqref{mu-LinfH1}, \eqref{phit-mu-L2t}, \eqref{Lp}, %
\eqref{nablamu0}, \eqref{a7} and the lower semicontinuity of the norm with
respect to the weak convergence.

Finally, if we also assume $\mathbf{u}\in L^{\infty }(0,T;L_{\sigma
}^{2}(\Omega ))$ then, by comparison in \eqref{nCH} (cf. \eqref{phit-dual}), it
is easily seen that $\partial _{t}\phi \in L^{\infty }(0,T;H^{1}(\Omega
)^{\prime })$, which concludes the proof related to the case (i). \medskip

\noindent \textbf{Separation property and further regularity. The case (ii).}
We may first argue as in the proof of \cite[Theorem 4.1]{GGG2022} to
conclude with (\ref{del-SP}) through a direct argument (see Remark \ref%
{Rem-SP}). Then we can also modify that proof slightly upon eliminating the
cut-off function $\eta _{n}$, instead by testing (\ref{weaky}) with $v:=\phi
_{n}\left( t\right) =\left( \phi \left( t\right) -k_{n}\right) _{+},$ for
the increasing sequence $k_{n}:=1-\delta -\delta 2^{-n}\overset{n\rightarrow
\infty }{\rightarrow }1-\delta $, $1-2\delta <k_{n}<1-\delta ,$ where $%
0<\delta <\delta _{0}/2$ (this implies that $\phi _{n}\left( 0\right) =0,$
for all $n\in \mathbb{N}_{0}$). Moreover, the drift term becomes%
\begin{equation}
\mathcal{Z}:=\int_{\Omega }\phi \,\mathbf{u}\cdot \nabla \phi _{n}\mathrm{d}%
x=\int_{\Omega }\mathbf{u}\cdot \nabla \left( \phi _{n}\right) ^{2}\mathrm{d}%
x=0,  \label{Z-term}
\end{equation}%
so that one has\footnote{%
Here $A_{n}\left( t\right) :=\{x\in \Omega :\phi \left( x,t\right) \geq
k_{n}\}, \quad t\in \left[ 0,1\right] .$}
\begin{equation*}
\frac{1}{2}\Vert \phi _{n}(t)\Vert _{L^{2}(\Omega )}^{2}+F^{\prime \prime
}\left( 1-2\delta \right) \int_{0}^{t}\Vert \nabla \phi _{n}\Vert
_{L^{2}(\Omega )}^{2}\,\mathrm{d}s\leq \int_{0}^{t}\int_{A_{n}(s)}(\nabla
J\ast \phi )\cdot \nabla \phi _{n}\mathrm{d}x\,\mathrm{d}s,
\end{equation*}%
for all $t\in \lbrack 0,1]$, assuming $T\geq 1$ without loss of generality. This inequality allows us to make minor changes
in the arguments employed in \cite[Theorem 4.1]{GGG2022} to deduce that $%
\left\Vert \phi \right\Vert _{L^{\infty }\left( \left[ 0,1\right] \times
\Omega \right) }\leq 1-\delta $, and to exploit (\ref{del-SP}) to obtain %
\eqref{SP}.

Next, setting the difference quotient $\partial _{t}^{h}f(t)=h^{-1}\left(
f(t+h)-f(t)\right) $ for $0<t<T-h$, we write
\begin{equation*}
\partial _{t}^{h}\mu (t)=\partial _{t}^{h}\phi (t)\left(
\int_{0}^{1}F^{\prime \prime }(s\phi (t+h)+(1-s)\phi (t))\,\mathrm{d}%
s\right) -J\ast \partial _{t}^{h}\phi (t),\quad \text{a.e. $t$ in }(0,T).
\end{equation*}
In light of \eqref{SP}, it easily follows that $\Vert s\phi (\cdot
+h)-(1-s)\phi \Vert _{L^{\infty }(\Omega \times (0,T-h))}\leq 1-\delta $ for
all $s\in (0,1)$. Owing to this, and using the properties of $J$ (see \ref%
{J-ass}) and the basic inequality $\Vert \partial _{t}^{h}\phi \Vert
_{L^{2}(0,T-h;L^{2}(\Omega ))}\leq \Vert \partial _{t}\phi \Vert
_{L^{2}(0,T;L^{2}(\Omega ))}$, we deduce that
\begin{equation*}
\Vert \partial _{t}^{h}\mu \Vert _{L^{2}(0,T-h;L^{2}(\Omega ))}\leq C,
\end{equation*}%
where $C$ is independent of $h$. This entails that $\partial _{t}\mu \in
L^{2}(0,T;L^{2}(\Omega ))$. In turn, thanks to $\mu \in
L^{2}(0,T;H^{2}(\Omega ))$, we also obtain $\mu \in C([0,T];H^{1}(\Omega ))$%
. The proof is thus concluded.
\end{proof}

%We have the following
%\begin{thm}
%	solution
%\end{thm}
%NB passo per cutoff e regolarizzo, serve per il punto fisso la reglarità su $\phi$, soprattutto la continuity.

%NB per la stima finale non serve rifare la stima, uso il risultato del teorema, così non devo passare per la separazione di nuovo (posso dire alla fine che vale la separazione)

%NB ripetere la stima con mu, usare la separazione che col semigalerkin ovviamente vale, ho la regolarirtà e poi torno su alla stima di sopra (poi per il dopo, siccome non ho più il semigal, rifaccio con separazione e arrivo di nuovo al risultato, posso testare contro $\mu_t$)

\section{Proof of Theorem \protect\ref{MR:strong}: Existence of strong
solutions}

\label{proofmain} \setcounter{equation}{0}

First of all, we rewrite the nonlocal AGG model \eqref{syst2} in the
non-conservative form as
\begin{equation}
\begin{cases}
\rho (\phi )\partial _{t}\mathbf{u}+\rho (\phi )(\mathbf{u}\cdot \nabla )%
\mathbf{u}-\rho ^{\prime }(\phi )(\nabla \mu \cdot \nabla )\mathbf{u}-%
\mathrm{div}(\nu (\phi )D\mathbf{u})+\nabla \Pi =\mu \nabla \phi , \\
\mathrm{div}\,\mathbf{u}=0, \\
\partial _{t}\phi +\mathbf{u}\cdot \nabla \phi =\Delta \mu , \\
\mu =F^{\prime }(\phi )-J\ast \phi ,%
\end{cases}
\label{NL-AGG}
\end{equation}%
in $\Omega \times (0,T)$, which is endowed with the boundary and initial
conditions \eqref{bic}. \smallskip

\noindent \textbf{The approximate problem: the semi-Galerkin scheme.}
%We show the existence of a (local) solution in a semi-Galerkin scheme and then pass to the limit to obtain a global strong solution $(\uu,\pi,\phi)$ to \eqref{syst2}.
%\textbf{Definition of the semi-Galerkin scheme.}
Let us consider the family of eigenvalues $\{\lambda _{j}\}_{j=1}^{\infty }$
and and corresponding eigenfunctions $\{\mathbf{w}_{j}\}_{j=1}^{\infty }$ of
the Stokes operator $\mathbf{A}$. For any integer $m\geq 1$, let ${\mathbf{V}%
}_{m}$ denote the finite-dimensional subspaces of $L_{\sigma }^{2}(\Omega )$
defined as ${\mathbf{V}}_{m}=\text{span}\{\mathbf{w}_{1},\cdots ,\mathbf{w}%
_{m}\}$. The orthogonal projection on ${\mathbf{V}}_{m}$ with respect to the
inner product in $L_{\sigma }^{2}(\Omega )$ is denoted by $\mathbb{P}_{m}$.
Recalling that $\Omega $ is a $C^{2}$-domain, we have that $\mathbf{w}%
_{j}\in H_{0,\sigma }^{2}(\Omega )$ for all $j\in \mathbb{N}$. In addition,
the following inequalities hold
\begin{equation}
\Vert \mathbf{v}\Vert _{H^{1}(\Omega )}\leq C_{m}\Vert \mathbf{v}\Vert
_{L^{2}(\Omega )},\quad \Vert \mathbf{v}\Vert _{H^{2}(\Omega )}\leq
C_{m}\Vert \mathbf{v}\Vert _{L^{2}(\Omega )}\quad \forall \,\mathbf{v}\in {%
\mathbf{V}}_{m}.  \label{sob}
\end{equation}%
Let us fix $T>0$. For any $m\in \mathbb{N}$, we claim that there exists an
approximate solution $(\mathbf{u}_{m},\phi _{m})$ to system \eqref{syst2}-%
\eqref{bic} in the following sense:
\begin{equation}
\begin{cases}
\mathbf{u}_{m}\in C([0,T];{\mathbf{V}}_{m})\cap H^{1}(0,T;{\mathbf{V}}%
_{m}), \\
\phi _{m}\in L^{\infty }(\Omega \times (0,T)):\quad |\phi (x,t)|<1 \ \text{ 
a.e. in }\Omega \times (0,T), \\
\phi _{m}\in L^{\infty }(0,T;H^{1}(\Omega ))\cap L^{q}(0,T;W^{1,p}(\Omega
)),\quad q=\frac{2p}{p-2},\quad \forall \,p\in (2,\infty ), \\
\partial _{t}\phi _{m}\in L^{\infty }(0,T;H^{1}(\Omega )^{\prime })\cap
L^{2}(0,T;L^{2}(\Omega )), \\
\mu _{m}\in L^{\infty }(0,T;H^{1}(\Omega ))\cap L^{2}(0,T;H^{2}(\Omega
))\cap H^{1}(0,T;H^{1}(\Omega )^{\prime }), \\
F^{\prime }(\phi _{m})\in L^{\infty }(0,T;H^{1}(\Omega )),\quad F^{\prime
\prime }(\phi _{m})\in L^{\infty }(0,T;L^{p}(\Omega )),\quad \forall \,p\in
\lbrack 2,\infty ),
\end{cases}
\label{rg}
\end{equation}%
such that
\begin{equation}
\begin{split}
& \left( \rho (\phi _{m})\partial _{t}\mathbf{u}_{m},\mathbf{w}\right)
+\left( \rho (\phi _{m})(\mathbf{u}_{m}\cdot \nabla )\mathbf{u}_{m},\mathbf{w%
}\right) +\left( \nu (\phi _{m})D\mathbf{u}_{m},\nabla \mathbf{w}\right) \\
& \quad -\frac{\rho _{1}-\rho _{2}}{2}\left( (\nabla \mu _{m}\cdot \nabla )%
\mathbf{u}_{m},\mathbf{w}\right) =-\left( \phi _{m}\nabla \mu _{m},\mathbf{w}%
\right) ,
\end{split}
\label{vel}
\end{equation}%
for all $\mathbf{w}\in {\mathbf{V}}_{m}$, in $[0,T]$, and
\begin{equation}
\partial _{t}\phi _{m}+\mathbf{u}_{m}\cdot \nabla \phi _{m}=\Delta \mu
_{m},\quad \mu _{m}=F^{\prime }(\phi _{m})-J\ast \phi _{m},\quad \text{a.e.
in }\Omega \times (0,T).  \label{eqapprox}
\end{equation}%
In addition, the approximate solution $(\mathbf{u}_{m},\phi _{m})$ satisfies
the boundary and initial conditions
\begin{equation}
\begin{cases}
\mathbf{u}_{m}=\mathbf{0},\quad \partial _{\mathbf{n}}\mu _{m}=0\quad &
\text{on }\partial \Omega \times (0,T), \\
\mathbf{u}_{m}(\cdot ,0)=\mathbb{P}_{m}\mathbf{u}_{0},\quad \phi (\cdot
,0)=\phi _{0}\quad & \text{in }\Omega .%
\end{cases}
\label{init}
\end{equation}%
\medskip

\noindent \textbf{Existence of the approximate solutions.} We perform a
fixed point argument to determine the existence of the approximate solutions
satisfying \eqref{vel}-\eqref{init} as in \cite{AGG2d, AGG3d}. To this aim,
we suppose that $\mathbf{v}\in C([0,T];{\mathbf{V}}_m)$ is given. Then the
corresponding convective nonlocal Cahn-Hilliard system reads as
\begin{equation}
\partial_t\phi_m+\mathbf{v}\cdot \nabla\phi_m=\Delta\mu_m, \quad
\mu_m=F^\prime(\phi_m)-J\ast \phi_m \quad \text{in } \Omega\times(0,T),
\label{eq2}
\end{equation}
with boundary and initial conditions
\begin{equation}
\partial_\textbf{n}\mu_m=0\quad \text{on }\partial\Omega\times(0,T),\quad
\phi_m(\cdot, 0)=\phi_0\quad \text{in }\Omega\times(0,T).  \label{bound2}
\end{equation}
%Notice that, being $\vv\in C([0,T];\vv_m)$, we have, by Sobolev-embeddings and \eqref{sob}, that $\vv\in L^\infty(0,T;\textbf{L}^r(\Omega))$, for any $r\geq 2$.
Thanks to the case (i) of Theorem \ref{ExistCahn}, there exists a unique
solution $\phi_m$ to \eqref{eq2}-\eqref{bound2} such that
\begin{equation}  \label{reg}
\begin{cases}
\phi_m\in L^\infty(\Omega\times(0,T)):\quad \vert\phi\vert<1 \ \text{ 
a.e. in } \Omega\times(0,T), \\
 \phi_m \in L^\infty(0,T;H^1(\Omega)) \cap L^q(0,T;W^{1,p}(\Omega)),\quad q=%
\frac{2p}{p-2},\quad \forall \, p\in(2,\infty), \\
 \partial_t\phi_m\in L^\infty(0,T;H^1(\Omega)^\prime)\cap
L^2(0,T;L^2(\Omega)), \\
 \mu_m\in L^\infty(0,T;H^1(\Omega))\cap L^2(0,T; H^2(\Omega))\cap H^1(0,T;
H^1(\Omega)^\prime), \\
 F^\prime(\phi_m)\in L^\infty(0,T; H^1(\Omega)),\quad
F^{\prime\prime}(\phi_m)\in L^\infty(0,T; L^p(\Omega)),\quad \forall \,
p\in[2,\infty).
\end{cases}%
\end{equation}
Moreover, on account of \eqref{EE-nCH}, by repeating line by line the proof
of Theorem \ref{ExistCahn} (cf. energy estimates), we find
\begin{equation}  \label{nablamu-L2-m}
\int_0^T \| \nabla \mu_m(\tau)\|_{L^2(\Omega)}^2 \, \mathrm{d} \tau \leq C
+ \int_0^T \| \mathbf{v} (\tau)\|_{L^2(\Omega)}^2 \, \mathrm{d} \tau.
\end{equation}
where $C$ depends only on $\Omega$, $F$ and $J$, but is independent of $m$ as all the other constants $C$ in the sequel of this proof.

%
% We report the following estimate for the system \eqref{eq2}-\eqref{bound2}: multiplying \eqref{eq2}$_1$ by $\mu_{m}$ and exploiting the definition of $\mu_{m}$ we obtain the energy identity (the regularity is clearly enough to perform this rigorously)
%$$
%\ddt\mathcal{E}(\phi_m)+\Vert\nabla\mu_m\Vert^2+(\vv\cdot\nabla\phi_m,\mu_m)=0.
%$$
%  \begin{align}
%&\nonumber\mathcal{E}(\phi_m)+C\int_0^T\Vert\nabla\phi_m\Vert^2dt+C\int_0^T\Vert\nabla\mu_m\Vert^2dt\\&\leq Ce^{CT(1+\Vert\vv\Vert_{C([0,T];\vv_m)}^4)}\mathcal{E}(\phi_0)+CTe^{CT(1+\Vert\vv\Vert_{C([0,T];\vv_m)}^4)}(1+\Vert\vv\Vert_{C([0,T];\vv_m)}^4),\label{energy3}
%\end{align}
%where 	
%$$
%\mathcal{E}(\phi_m):=\int_\Omega F(\phi_{m})dx -\frac{1}{2}(J\star\phi_{m},\phi_{m}).
%$$
%Being $\vert\phi_m(x,t)\vert< 1\text{ almost everywhere in}\ \Omega\times(0,T)$, we have, exploiting the properties of $J$,
%$$
%\mathcal{E}(\phi_m)\geq -C,
%$$
%and
%$$
%\vert(\vv\cdot\nabla\phi_m,\mu_m)\vert=\vert(\vv\cdot\phi_m,\nabla\mu_m)\vert\leq \frac{1}{2}\Vert\nabla\mu_m\Vert^2+C\Vert\vv\Vert^2,
%$$
%therefore, by an application of Gronwall's Lemma, we infer
%\begin{equation}
%\max_{t\in[0,T]}\mathcal{E}(\phi_m(t))+\frac{1}{2}\int_0^T\Vert\nabla\mu_m(t)\Vert^2dt\leq \mathcal{E}(\phi_{0})+C\int_0^T\Vert\vv(t)\Vert^2dt,
%\label{energy3}
%\end{equation}
%implying,
%\begin{equation}
%\int_0^T\Vert\nabla\mu_m(t)\Vert^2dt\leq 2\mathcal{E}(\phi_{0})+C\int_0^T\Vert\vv(t)\Vert^2dt.
%\label{energy4}
%\end{equation}
We now make the ansatz
\begin{equation*}
\mathbf{u}_{m}(x,t)=\sum_{j=1}^{m}a_{j}^{m}(t)\mathbf{w}_{j}(x),\quad
\forall \,(x,t)\in \Omega \times \lbrack 0,T],
\end{equation*}%
as the solution to the Galerkin approximation of \eqref{vel}, that is,
\begin{equation}
\begin{split}
& \left( \rho (\phi _{m})\partial _{t}\mathbf{u}_{m},\mathbf{w}_{l}\right)
+\left( \rho (\phi _{m})(\mathbf{u}_{m}\cdot \nabla )\mathbf{u}_{m},\mathbf{w%
}_{l}\right) +\left( \nu (\phi _{m})D\mathbf{u}_{m},\nabla \mathbf{w}%
_{l}\right) \\
& \quad -\frac{\rho _{1}-\rho _{2}}{2}\left( (\nabla \mu _{m}\cdot \nabla )%
\mathbf{u}_{m},\mathbf{w}_{l}\right) =-\left( \phi _{m}\nabla \mu _{m},%
\mathbf{w}_{l}\right) ,\quad \forall \,l=1,\dots ,m.
\end{split}
\label{velgal}
\end{equation}%
satisfying the initial condition $\mathbf{u}_{m}(\cdot ,0)=\mathbb{P}_{m}\mathbf{u}%
_{0} $.

Arguing as in \cite[Section 4]{AGG2d}, we introduce $\mathbf{A}%
^{m}(t)=(a_{1}^{m}(t),\ldots ,a_{m}^{m}(t))^{T}$ and we observe that %
\eqref{velgal} is equivalent to the system of differential equations
\begin{equation*}
\mathbf{M}^{m}(t)\frac{\mathrm{d}}{\mathrm{d}t}\mathbf{A}^{m}+\mathbf{L}%
^{m}(t)\mathbf{A}^{m}=\mathbf{G}^{m}(t),
\end{equation*}%
where the matrices $\mathbf{M}^{m}(t),\mathbf{L}^{m}(t)$ and the vector $%
\mathbf{G}^{m}(t)$ are defined as follows:%
\begin{equation*}
\left( \mathbf{M}^{m}(t)\right) _{lj}=\int_{\Omega }\rho (\phi _{m}(t))%
\mathbf{w}_{j}\cdot \mathbf{w}_{l}\,\mathrm{d}x,
\end{equation*}%
\begin{align*}
\left( \mathbf{L}^{m}(t)\right) _{lj}& =\int_{\Omega }\rho (\phi _{m}(t))(%
\mathbf{v}(t)\cdot \nabla )\mathbf{w}_{j}\cdot \mathbf{w}_{l}+\nu (\phi
_{m}(t))D\mathbf{w}_{j}:\nabla \mathbf{w}_{l}\mathrm{d}x \\
& -\int_{\Omega }\left( \frac{\rho _{1}-\rho _{2}}{2}\right) (\nabla \mu
_{m}(t)\cdot \nabla )\mathbf{w}_{j}\cdot \mathbf{w}_{l},
\end{align*}%
\begin{equation*}
\left( \mathbf{G}(t)\right) _{l}=-\int_{\Omega }\phi _{m}(t)\nabla \mu
_{m}(t)\cdot \mathbf{w}_{l}\,\mathrm{d}x,
\end{equation*}%
as well as $\mathbf{A}^{m}(0)=\left( (\mathbf{u}_{0},\mathbf{w}_{1}),\ldots
,(\mathbf{u}_{0},\mathbf{w}_{m})\right) ^{T}$. The regularity properties in %
\eqref{reg} imply that both $\phi _{m}$ and $\mu _{m}$ belongs to $C_{%
\mathrm{w}}([0,T];H^{1}(\Omega ))\cap C([0,T];L^{p}(\Omega ))$ for any $p\in
\lbrack 1,\infty )$ (cf. \cite{STRAUSS}).
%$$\phi_m\in L^\infty(0,T;V)\cap H^1(0,T;H)\hookrightarrow C([0,T];H^{1/2}(\Omega))$$ and $$\mu_m\in L^2(0,T;H^2(\Omega))\cap H^1(0,T;V^\prime)\hookrightarrow C([0,T];H^{1/2}(\Omega)).$$
% we have $\rho\in C([0,T];H^{1/2}(\Omega))$, and analogously, being $\nu$ globally Lipschitz, $\nu(\phi_m)\in C(\overline{\Omega\times[0,T]})$.
% Moreover, we recall that $\vv\in C([0,T];\textbf{L}^2_\sigma)$.
In turn, since $\rho (\cdot )$ is a linear function and $\nu $ is globally
Lipschitz, $\rho (\phi _{m})$ and $\nu (\phi _{m})$ also belong to $%
C([0,T];L^{p}(\Omega ))$ for any $p\in \lbrack 1,\infty )$. As such, we
immediately observe that, for any $s,t\in \lbrack 0,T]$,
\begin{equation*}
\left\vert (\mathbf{M}^{m}(t))_{lj}-(\mathbf{M}^{m}(s))_{lj}\right\vert \leq
\Vert \mathbf{w}_{j}\Vert _{L^{\infty }(\Omega )}\Vert \mathbf{w}_{l}\Vert
_{L^{\infty }(\Omega )}\int_{\Omega }|\rho (\phi _{m}(t))-\rho (\phi
_{m}(s))|\,\mathrm{d}x\underset{s\rightarrow t}{\longrightarrow }{0}.
\end{equation*}%
Since ${\mathbf{V}}_{m}\subset H_{0,\sigma }^{2}(\Omega )$, we observe that
\begin{equation*}
(\mathbf{G}^{m}(t))_{l}-(\mathbf{G}^{m}(s))_{l}=-\int_{\Omega }\left( \phi
_{m}(t)-\phi _{m}(s)\right) \nabla \mu _{m}(t)\cdot \mathbf{w}_{l}\,\mathrm{d%
}x-\int_{\Omega }\left( \mu _{m}(t)-\mu _{m}(s)\right) \nabla \phi
_{m}(s)\cdot \mathbf{w}_{l}\,\mathrm{d}x.
\end{equation*}%
Then, recalling that $\phi _{m}$ and $\mu _{m}$ belongs to $C_{\mathrm{w}%
}([0,T];H^{1}(\Omega ))$, we infer that
\begin{align*}
|(\mathbf{G}^{m}(t))_{l}-(\mathbf{G}^{m}(s))_{l}|& \leq \Vert \mathbf{w}%
_{l}\Vert _{L^{\infty }(\Omega )}\Vert \nabla \mu _{m}(t)\Vert
_{L^{2}(\Omega )}\Vert \phi _{m}(t)-\phi _{m}(s)\Vert _{L^{2}(\Omega )} \\
& \quad +\Vert \mathbf{w}_{l}\Vert _{L^{\infty }(\Omega )}\Vert \nabla \phi
_{m}(s)\Vert _{L^{2}(\Omega )}\Vert \mu _{m}(t)-\mu _{m}(s)\Vert
_{L^{2}(\Omega )}\underset{s\rightarrow t}{\longrightarrow }{0}.
\end{align*}%
Furthermore, exploiting once again that ${\mathbf{V}}_{m}\subset H_{0,\sigma
}^{2}(\Omega )$, we notice that
\begin{align*}
& \left\vert \int_{\Omega }\rho (\phi _{m}(t))(\mathbf{v}(t)\cdot \nabla )%
\mathbf{w}_{j}\cdot \mathbf{w}_{l}\,\mathrm{d}x-\int_{\Omega }\rho (\phi
_{m}(s))(\mathbf{v}(s)\cdot \nabla )\mathbf{w}_{j}\cdot \mathbf{w}_{l}\,%
\mathrm{d}x\right\vert \\
& =\left\vert \int_{\Omega }\left( \rho (\phi _{m}(t))-\rho (\phi
_{m}(s))\right) (\mathbf{v}(t)\cdot \nabla )\mathbf{w}_{j}\cdot \mathbf{w}%
_{l}\,\mathrm{d}x+\int_{\Omega }\rho (\phi _{m}(s))((\mathbf{v}(t)-\mathbf{v}%
(s))\cdot \nabla )\mathbf{w}_{j}\cdot \mathbf{w}_{l}\,\mathrm{d}x\right\vert
\\
& \leq \Vert \rho (\phi _{m}(t))-\rho (\phi _{m}(s))\Vert _{L^{3}(\Omega
)}\Vert \mathbf{v}(t)\Vert _{L^{2}(\Omega )}\Vert \nabla \mathbf{w}_{j}\Vert
_{L^{6}(\Omega )}\Vert \mathbf{w}_{l}\Vert _{L^{\infty }(\Omega )} \\
& \quad +\Vert \rho (\phi _{m}(s))\Vert _{L^{6}(\Omega )}\Vert \mathbf{v}(t)-%
\mathbf{v}(s)\Vert _{L^{2}(\Omega )}\Vert \nabla \mathbf{w}_{j}\Vert
_{L^{3}(\Omega )}\Vert \mathbf{w}_{l}\Vert _{L^{\infty }(\Omega )}\underset{%
s\rightarrow t}{\longrightarrow }{0}
\end{align*}%
and
\begin{align*}
&\left\vert \int_{\Omega }\left( \nu (\phi _{m}(t))-\nu (\phi _{m}(s))\right)
D\mathbf{w}_{j}:\nabla \mathbf{w}_{l}\,\mathrm{d}x\right\vert \leq \Vert \nu
(\phi _{m}(t))-\nu (\phi _{m}(s))\Vert _{L^{2}(\Omega )}\Vert D\mathbf{w}%
_{j}\Vert _{L^{4}(\Omega )}\Vert \nabla \mathbf{w}_{l}\Vert _{L^{4}(\Omega )}%
\underset{s\rightarrow t}{\longrightarrow }{0}.
\end{align*}%
On the other hand, integrating by parts and exploiting the boundary
conditions, we have
\begin{equation*}
\int_{\Omega }(\nabla \mu _{m}(t)\cdot \nabla )\mathbf{w}_{j}\cdot \mathbf{w}%
_{l}\,\mathrm{d}x=-\int_{\Omega }\mu (t)\Delta \mathbf{w}_{j}\cdot \mathbf{w}%
_{l}\,\mathrm{d}x-\int_{\Omega }\mu (t)\nabla \mathbf{w}_{j}:\nabla \mathbf{w%
}_{l}\,\mathrm{d}x.
\end{equation*}%
Thus, we find
\begin{equation*}
\left\vert \int_{\Omega }(\nabla (\mu _{m}(t)-\mu _{m}(s))\cdot \nabla )%
\mathbf{w}_{j}\cdot \mathbf{w}_{l}\,\mathrm{d}x\right\vert \leq C\Vert
\mathbf{w}_{j}\Vert _{H^{2}(\Omega )}\Vert \mathbf{w}_{l}\Vert
_{H^{2}(\Omega )}\Vert \mu _{m}(s)-\mu _{m}(t)\Vert _{L^{2}(\Omega )}%
\underset{s\rightarrow t}{\longrightarrow }{0}.
\end{equation*}%
Thus, we derive that $\mathbf{M}^{m}$ and $\mathbf{L}^{m}$ belong to $%
C([0,T];\mathbb{R}^{m\times m})$ and $\mathbf{G}^{m}\in C([0,T];\mathbb{R}%
^{m})$. Furthermore, being $\rho $ strictly positive, we also have that $%
\mathbf{M}^{m}$ is positive definite and thus the inverse $(\mathbf{M}%
^{m})^{-1}\in C([0,T];\mathbb{R}^{m\times m})$. Thus, the existence and
uniqueness theorem for systems of linear ODEs guarantees that there exists a
unique solution $\mathbf{u}_{m}\in C^{1}([0,T];{\mathbf{V}}_{m})$.

Next, multiplying \eqref{velgal} by $a_{l}^{m}$ and summing over $l$, we
obtain
\begin{align*}
& \int_{\Omega }\rho (\phi _{m})\partial _{t}\left( \frac{|\mathbf{u}%
_{m}|^{2}}{2}\right) \,\mathrm{d}x+\int_{\Omega }\rho (\phi _{m})\mathbf{v}%
\cdot \nabla \left( \frac{|\mathbf{u}_{m}|^{2}}{2}\right) \,\mathrm{d}%
x+\int_{\Omega }\nu (\phi _{m})|D\mathbf{u}_{m}|^{2}\,\mathrm{d}x \\
& \quad -\int_{\Omega }\left( \frac{\rho _{1}-\rho _{2}}{2}\right) \nabla
\mu _{m}\cdot \nabla \left( \frac{|\mathbf{u}_{m}|^{2}}{2}\right) \,\mathrm{d%
}x=-\int_{\Omega }\phi _{m}\nabla \mu _{m}\cdot \mathbf{u}_{m}\,\mathrm{d}x.
\end{align*}%
Arguing exactly as in \cite[Section 4.2]{AGG2d} and exploiting \eqref{eqapprox}, we
deduce that
\begin{equation}
\frac{\mathrm{d}}{\mathrm{d}t}\int_{\Omega }\rho (\phi _{m})\frac{|\mathbf{u}%
_{m}|^{2}}{2}\,\mathrm{d}x+\int_{\Omega }\nu (\phi _{m})|D\mathbf{u}%
_{m}|^{2}\,\mathrm{d}x=-\int_{\Omega }\phi _{m}\nabla \mu _{m}\cdot \mathbf{u%
}_{m}\,\mathrm{d}x.
\end{equation}%
By the Poincar\'{e}-Korn (see \eqref{korn}) and the Young inequalities, as well as $|\phi _{m}|<1\text{ almost everywhere in}\ \Omega \times (0,T)$, we
infer that
\begin{equation}
\begin{split}
-\int_{\Omega }\phi _{m}\nabla \mu _{m}\cdot \mathbf{u}_{m}\,\mathrm{d}x&
=\int_{\Omega }\mu _{m}\nabla \phi _{m}\cdot \mathbf{u}_{m}\,\mathrm{d}x \\
& =\int_{\Omega }F^{\prime }(\phi _{m})\nabla \phi _{m}\cdot \mathbf{u}_{m}\,%
\mathrm{d}x-\int_{\Omega }J\ast \phi _{m}\nabla \phi _{m}\cdot \mathbf{u}%
_{m}\,\mathrm{d}x \\
& =\underbrace{\int_{\Omega }\nabla (F(\phi _{m}))\cdot \mathbf{u}_{m}\,%
\mathrm{d}x}_{=0}-\int_{\Omega }\phi _{m}\nabla J\ast \phi _{m}\cdot \mathbf{%
u}_{m}\,\mathrm{d}x \\
& \leq \Vert \phi _{m}\Vert _{L^{\infty }(\Omega )}\Vert \nabla J\Vert
_{L^{1}(\mathbb{R}^{2})}\Vert \phi _{m}\Vert _{L^{2}(\Omega )}\Vert \mathbf{u%
}_{m}\Vert _{L^{2}(\Omega )} \\
& \leq \sqrt{\frac{2}{\lambda _{1}}}\Vert \nabla J\Vert _{L^{1}(\mathbb{R}%
^{2})}|\Omega |^{\frac{1}{2}}\Vert D\mathbf{u}_{m}\Vert _{L^{2}(\Omega )} \\
& \leq \frac{\nu _{\ast }}{2}\Vert D\mathbf{u}_{m}\Vert ^{2}+\frac{\Vert
\nabla J\Vert _{L^{1}(\mathbb{R}^{2})}^{2}|\Omega |}{\nu _{\ast }\lambda _{1}%
}.
\end{split}
\label{RHS-NS}
\end{equation}%
Here we have used that $\nabla F(\phi _{m})=F^{\prime }(\phi _{m})\nabla
\phi _{m}$ almost everywhere in $\Omega \times (0,T)$ (see Remark \ref{R3}).
%Then by Poincaré's, Korn's (see \eqref{korn}), H\"older's and Young's inequalities, together with well known Sobolev embeddings, and being $\vert\phi_m(x,t)\vert< 1\text{ almost everywhere in}\ \Omega\times(0,T)$,
%$$
%\left\vert-\int_\Omega \phi_m\nabla\mu_m\cdot \uu_m dx\right\vert\leq \Vert\phi_m\Vert_{L^\infty(\Omega)}\Vert\nabla\mu_m\Vert\Vert\uu_m\Vert \leq \frac{\nu_*}{2}\Vert D\uu_m\Vert^2+
%\frac{1}{\nu_*\lambda_1}\Vert\nabla\mu_m\Vert^2.
%$$
We are thus led to the differential inequality
\begin{equation*}
\frac{\mathrm{d}}{\mathrm{d}t}\int_{\Omega }\rho (\phi _{m})\frac{|\mathbf{u}%
_{m}|^{2}}{2} \, \d x+\frac{\nu _{\ast }}{2}\Vert D\mathbf{u}_{m}\Vert ^{2}\leq
\frac{\Vert \nabla J\Vert _{L^{1}(\mathbb{R}^{2})}^{2}|\Omega |}{\nu _{\ast
}\lambda _{1}}.
\end{equation*}%
Integrating the above inequality in time, with $s\in \lbrack 0,T]$ and
exploiting the upper and lower bounds of $\rho $, we get
\begin{equation}
\max_{t\in \lbrack 0,T]}\Vert \mathbf{u}_{m}(t)\Vert _{L^{2}(\Omega
)}^{2}\leq \frac{\rho ^{\ast }}{\rho _{\ast }}\Vert \mathbf{u}_{0}\Vert
_{L^{2}(\Omega )}^{2}+\frac{2\Vert \nabla J\Vert _{L^{1}(\mathbb{R}%
^{2})}^{2}|\Omega |T}{\nu _{\ast }\lambda _{1}}=:M^{2}.  \label{u_m-1est}
\end{equation}%
Let us now introduce the closed ball
\begin{equation*}
X:=\left\{ \mathbf{u}\in C([0,T];{\mathbf{V}}_{m}):\Vert \mathbf{u}\Vert
_{C([0,T];{\mathbf{V}}_{m})}\leq M\right\} ,
\end{equation*}%
and define the map
\begin{equation*}
\mathcal{S}:X\rightarrow X,\quad \mathcal{S}(\mathbf{v}):=\mathbf{u}_{m}.
\end{equation*}%
We need now to show that $\mathcal{S}$ is compact. To this aim, we control
the time derivative of $\mathbf{u}_{m}$. In particular, multiplying %
\eqref{velgal} by $\mathrm{d} \left( a_{l}^{m}\right) /\mathrm{d}t$ and summing over $l$, we
have
\begin{align*}
\rho _{\ast }\Vert \partial _{t}\mathbf{u}_{m}\Vert _{L^{2}(\Omega )}^{2}&
\leq -(\rho (\phi _{m})(\mathbf{v}\cdot \nabla )\mathbf{u}_{m},\partial _{t}%
\mathbf{u}_{m})-(\nu (\phi _{m})D\mathbf{u}_{m},\nabla \partial _{t}\mathbf{u%
}_{m}) \\
& \quad +\frac{\rho _{1}-\rho _{2}}{2}((\nabla \mu _{m}\cdot \nabla )\mathbf{%
u}_{m},\partial _{t}\mathbf{u}_{m})-(\phi _{m}\nabla J\ast \phi
_{m},\partial _{t}\mathbf{u}_{m}).
\end{align*}%
Here, we have used that $-(\phi \nabla \mu _{m},\partial _{t}\mathbf{u}%
_{m})=-(\phi _{m}\nabla J\ast \phi _{m},\partial _{t}\mathbf{u}_{m})$ (cf. %
\eqref{RHS-NS}). By exploiting \eqref{sob} and the global bound $|\phi
_{m}|\leq 1\text{ almost everywhere in}\ \Omega \times (0,T)$, we
obtain
\begin{align*}
\rho _{\ast }\Vert \partial _{t}\mathbf{u}_{m}\Vert _{L^{2}(\Omega )}^{2}&
\leq \rho ^{\ast }\Vert \mathbf{v}\Vert _{L^{4}(\Omega )}\Vert \nabla
\mathbf{u}_{m}\Vert _{L^{4}(\Omega )}\Vert \partial _{t}\mathbf{u}_{m}\Vert
_{L^{2}(\Omega )}+\nu ^{\ast }\Vert D\mathbf{u}_{m}\Vert _{L^{2}(\Omega
)}\Vert \nabla \partial _{t}\mathbf{u}_{m}\Vert _{L^{2}(\Omega )} \\
& \quad + \left| \frac{\rho _{1}-\rho _{2}}{2}\right|\Vert \nabla \mu _{m}\Vert
_{L^{2}(\Omega )}\Vert \nabla \mathbf{u}_{m}\Vert _{L^{4}(\Omega )}\Vert
\partial _{t}\mathbf{u}_{m}\Vert _{L^{4}(\Omega )} \\
& \quad +\Vert \phi _{m}\Vert _{L^{\infty }(\Omega )}\Vert \nabla J\ast \phi
_{m}\Vert _{L^{2}(\Omega )}\Vert \partial _{t}\mathbf{u}_{m}\Vert
_{L^{2}(\Omega )} \\
& \leq C\Vert \nabla \mathbf{v}\Vert _{L^{2}(\Omega )}\Vert \mathbf{u}%
_{m}\Vert _{H^{2}(\Omega )}\Vert \partial _{t}\mathbf{u}_{m}\Vert
_{L^{2}(\Omega )}+C\Vert D\mathbf{u}_{m}\Vert _{L^{2}(\Omega )}\Vert \nabla
\partial _{t}\mathbf{u}_{m}\Vert _{L^{2}(\Omega )} \\
& \quad +C\Vert \nabla \mu _{m}\Vert _{L^{2}(\Omega )}\Vert \mathbf{u}%
_{m}\Vert _{H^{2}(\Omega )}\Vert \nabla \partial _{t}\mathbf{u}_{m}\Vert
_{L^{2}(\Omega )}+\Vert \nabla J\Vert _{L^{1}(\mathbb{R}^{2})}\Vert \phi
_{m}\Vert _{L^{2}(\Omega )}\Vert \partial _{t}\mathbf{u}_{m}\Vert
_{L^{2}(\Omega )} \\
& \leq C_{m}\Vert \mathbf{v}\Vert _{L^{2}(\Omega )}\Vert \mathbf{u}_{m}\Vert
_{L^{2}(\Omega )}\Vert \partial _{t}\mathbf{u}_{m}\Vert _{L^{2}(\Omega
)}+C_{m}\Vert \mathbf{u}_{m}\Vert _{L^{2}(\Omega )}\Vert \partial _{t}%
\mathbf{u}_{m}\Vert _{L^{2}(\Omega )} \\
& \quad +C_{m}\Vert \nabla \mu _{m}\Vert _{L^{2}(\Omega )}\Vert \mathbf{u}%
_{m}\Vert _{L^{2}(\Omega )}\Vert \partial _{t}\mathbf{u}_{m}\Vert
_{L^{2}(\Omega )}+\Vert \nabla J\Vert _{L^{1}(\mathbb{R}^{2})}|\Omega |^{%
\frac{1}{2}}\Vert \partial _{t}\mathbf{u}_{m}\Vert _{L^{2}(\Omega )} \\
& \leq C_{m}\Vert \partial _{t}\mathbf{u}_{m}\Vert _{L^{2}(\Omega )}\left(
M^{2}+M\left( 1+\Vert \nabla \mu _{m}\Vert _{L^{2}(\Omega )}\right) +\Vert
\nabla J\Vert _{L^{1}(\mathbb{R}^{2})}|\Omega |^{\frac{1}{2}}\right) \\
& \leq \frac{\rho _{\ast }}{2}\Vert \partial _{t}\mathbf{u}_{m}\Vert
_{L^{2}(\Omega )}^{2}+C_{m}\left( M^{4}+M^{2}\left( 1+\Vert \nabla \mu
_{m}\Vert _{L^{2}(\Omega )}^{2}\right) +\Vert \nabla J\Vert _{L^{1}(\mathbb{R%
}^{2})}^{2}|\Omega |\right) .
\end{align*}%
Then, integrating over $[0,T]$ and using \eqref{nablamu-L2-m}, we deduce
that
\begin{align*}
\int_{0}^{T}\Vert \partial _{t}\mathbf{u}_{m}(\tau )\Vert ^{2}\,\mathrm{d}%
\tau & \leq \frac{2}{\rho _{\ast }}\left[ C_{m}(M^{2}+M^{4})T+C_{m}M^{2}%
\left( C+\Vert \mathbf{v}\Vert _{L^{2}(0,T;L^{2}(\Omega ))}^{2}\right)
+C_{m}\Vert \nabla J\Vert _{L^{1}(\mathbb{R}^{2})}^{2}|\Omega |T\right] \\
& \leq \frac{2}{\rho _{\ast }}\left[ C_{m}(M^{2}+M^{4})T+C_{m}M^{2}\left(
C+M^{2}T\right) +C_{m}\Vert \nabla J\Vert _{L^{1}(\mathbb{R}%
^{2})}^{2}|\Omega |T\right] =:\widetilde{M}^{2},
\end{align*}%
namely
\begin{equation}
\Vert \partial _{t}\mathbf{u}_{m}\Vert _{L^{2}(0,T;{\mathbf{V}}%
_{m})}^{2}\leq \widetilde{M},  \label{tildeu}
\end{equation}%
%
%
%
%
%
%
%
%
%
%
%
%
%
%
%
%
%
%
%
%
%
%
%
%
%
%
%
%
%
%
%
%
%
%
%
%
%
%
%
%
%
%
%
%
%
%with $\widetilde{C}$ depending only on $\rho_*,\rho^*,\nu_*,M,T,\Vert\uu_0\Vert, \mathcal{E}(\phi_0),\Omega,m$.
Recalling that ${\mathbf{V}}_{m}$ is finite dimensional, the Aubin-Lions
Lemma entails
$
C([0,T];{\mathbf{V}}_{m})\cap H^{1}(0,T;{\mathbf{V}}_{m})\overset{c}{%
\hookrightarrow }C([0,T];{\mathbf{V}}_{m}).
$
Therefore, since $\mathcal{S}:X\rightarrow Y$, where $Y=\{\mathbf{u}\in
X:\Vert \partial _{t}\mathbf{u}_{m}\Vert _{L^{2}(0,T;{\mathbf{V}}%
_{m})}^{2}\leq \widetilde{M}\}$, it follows that the map $\mathcal{S}$ is
compact $($more precisely, $\overline{\mathcal{S}(X)}$ is compact
%sottoinsieme chiuso di palla compatta
in $C([0,T];{\mathbf{V}}_{m}))$.

In order to complete our fixed point argument, we are left to show that $\mathcal{S}%
:X\rightarrow X$ is continuous. To this aim, we consider a sequence $\{%
\mathbf{v}_{n}\}_{n=1}^{\infty }\subset X$ such that $\mathbf{v}%
_{n}\rightarrow \mathbf{v}^{\star }$ in $C([0,T];{\mathbf{V}}_{m})$;
consequently, there exists a sequence $\{(\phi _{n},\mu
_{n})\}_{n=1}^{\infty }$ and $(\phi ^{\star },\mu ^{\star })$ that solve the
convective nonlocal Cahn-Hilliard equation \eqref{eq2}-\eqref{bound2}, where
$\mathbf{v}$ is replaced by $\mathbf{v}_{n}$ and $\mathbf{\mathit{v}}^{\star
}$, respectively. Following the uniqueness argument performed in the proof
of Theorem \ref{ExistCahn}, we obtain
\begin{align*}
& \frac{1}{2}\frac{\mathrm{d}}{\mathrm{d}t}
\left\Vert \phi _{n}-\phi ^{\star
}\right\Vert _{\ast }^{2}+\frac{3\alpha }{4}
\left\Vert \phi _{n}-\phi ^{\star }\right\Vert
_{L^{2}(\Omega )}^{2} \\
& \quad \leq C \left\Vert \phi _{n}-\phi ^{\star }\right\Vert _{\ast }^{2}+
\left( \phi_{n}(\mathbf{v}_{n}-\mathbf{v}^{\star }),\nabla \mathcal{N}(\phi _{n}-\phi
^{\star })\right) +(\mathbf{v}^{\star }(\phi _{n}-\phi ^{\star }),\nabla
\mathcal{N}(\phi _{n}-\phi ^{\star })),
\end{align*}%
Here we have used that $\overline{\phi _{n}}=\overline{\phi ^{\star }}=%
\overline{\phi _{0}}$. In light of $\mathbf{v}^{\star }\in C([0,T];{\mathbf{V%
}}_{m})$ and \eqref{sob}, we notice that
\begin{align*}
|(\mathbf{v}^{\star }(\phi _{n}-\phi ^{\star }),\nabla \mathcal{N}(\phi
_{n}-\phi ^{\star }))|& \leq C\Vert \mathbf{v}^{\star }\Vert _{L^{\infty
}(\Omega )}\Vert \phi \Vert _{L^{2}(\Omega )}
\left\Vert \nabla \mathcal{N}(\phi
_{n}-\phi ^{\star })\right\Vert _{L^{2}(\Omega )} \\
& \leq \frac{\alpha }{4}\Vert \phi \Vert _{L^{2}(\Omega
)}^{2}+C_{m}M^{2} \left\Vert \phi _{n}-\phi ^{\star }\right\Vert _{\ast }^{2}.
\end{align*}%
Since $|\phi _{n} |<1\text{ almost everywhere in}\ \Omega \times (0,T)$,
we also find
\begin{align*}
|\left( \phi _{n}(\mathbf{v}_{n}-\mathbf{v}^{\star }),\nabla \mathcal{N}%
(\phi _{n}-\phi ^{\star })\right) |
& \leq \Vert \phi _{n}\Vert _{L^{\infty
}(\Omega )}\Vert {\mathbf{v}_{n}-\mathbf{v}^{\star }}\Vert
_{L^{2}(\Omega )}
\left \Vert \nabla \mathcal{N}(\phi _{n}-\phi ^{\star }) \right\Vert
_{L^{2}(\Omega )} \\
& \leq C\Vert \phi _{n}-\phi ^{\star }\left\Vert _{\ast }^{2}+C\Vert {\mathbf{v}_{n}-\mathbf{v}^{\star }}\right\Vert _{L^{2}(\Omega )}^{2}.
\end{align*}%
Thus, we obtain
\begin{equation*}
\frac{1}{2}\frac{\mathrm{d}}{\mathrm{d}t}
\left\Vert \phi _{n}-\phi ^{\star }\right\Vert
_{\ast }^{2}+\frac{\alpha }{2}
\left\Vert \phi _{n}-\phi ^{\star }\right\Vert
_{L^{2}(\Omega )}^{2}
\leq C\left\Vert \phi _{n}-\phi ^{\star }\right\Vert _{\ast
}^{2}+C\left\Vert {\mathbf{v}_{n}-\mathbf{v}^{\star }}\right\Vert _{L^{2}(\Omega )}^{2}
\end{equation*}%
and the Gronwall Lemma yields
\begin{equation}
\left\Vert \phi _{n}-\phi ^{\star }\right\Vert _{L^{\infty }(0,T;H^{1}(\Omega )^{\prime})}^{2}
+\left\Vert \phi _{n}-\phi ^{\star }\right\Vert _{L^{2}(0,T;L^{2}(\Omega
))}^{2}\leq C\mathrm{e}^{CT}\left( T+T^{2}\right) \left\Vert {\mathbf{v}_{n}-%
\mathbf{v}^{\star }}\right\Vert _{C([0,T];{\mathbf{V}}_{m})}^{2}\underset{%
n\rightarrow \infty }{\longrightarrow }{0}.  \label{conv-cont}
\end{equation}%
On the other hand, recalling that $\{\mathbf{v}_{n}\}_{n}$ and $\mathbf{v}%
^{\star }$ belong to $X$, by the regularity (i) in Theorem \ref{ExistCahn}
(more precisely, \eqref{ST1}-\eqref{ST2}) we infer that%
\begin{equation}
\Vert \partial _{t}\phi _{n}\Vert _{L^{\infty }(0,T;H^{1}(\Omega )^{\prime
})}+\Vert \partial _{t}\phi _{n}\Vert _{L^{2}(0,T;L^{2}(\Omega ))}+\Vert \mu
_{n}\Vert _{L^{\infty }(0,T;H^{1}(\Omega ))}+\Vert \nabla \mu _{n}\Vert
_{L^{2}(0,T;H^{1}(\Omega ))}\leq C,  \label{e1}
\end{equation}%
\begin{equation}
\Vert \partial _{t}\phi ^{\star }\Vert _{L^{\infty }(0,T;H^{1}(\Omega
)^{\prime })}+\Vert \partial _{t}\phi ^{\star }\Vert _{L^{2}(0,T;L^{2}(\Omega
))}+\Vert \mu ^{\star }\Vert _{L^{\infty }(0,T;H^{1}(\Omega ))}+\Vert \nabla
\mu ^{\star }\Vert _{L^{2}(0,T;H^{1}(\Omega ))}\leq C.  \label{e2}
\end{equation}%
On account of the estimates \eqref{L1-est}, %
\eqref{Lp}, by repeating the argument used to obtain \eqref{a4}, %
\eqref{a6} and \eqref{a9}, we find that
\begin{equation}
\Vert \phi _{n}\Vert _{L^{\infty }(0,T;H^{1}(\Omega ))}+\Vert \partial
_{t}\mu _{n}\Vert _{L^{2}(0,T;H^{1}(\Omega )^{\prime })}+\Vert F^{\prime
}(\phi _{n})\Vert _{L^{\infty }(0,T;H^{1}(\Omega ))}+||F^{\prime \prime }(\phi _{n})\Vert _{L^{\infty }(0,T;L^{p}(\Omega ))}\leq C,  \label{e3}
\end{equation}%
\begin{equation}
\Vert \phi ^{\star }\Vert _{L^{\infty }(0,T;H^{1}(\Omega ))}+\Vert \partial
_{t}\mu ^{\star }\Vert _{L^{2}(0,T;H^{1}(\Omega )^{\prime })}+\Vert
F^{\prime }(\phi ^{\star })\Vert _{L^{\infty }(0,T;H^{1}(\Omega
))}+||F^{\prime \prime }(\phi ^{\star })\Vert _{L^{\infty }(0,T;L^{p}(\Omega
))}\leq C,  \label{e4}
\end{equation}%
for any $p\in \lbrack 2,\infty )$. Here, the $C$ depends
on $p$, but it is independent of $n$. Then, we first observe from Lebesgue's
interpolation, the global bound in $L^{\infty }(\Omega \times (0,T))$ of $%
\phi _{n}$ and $\phi ^{\star }$, and \eqref{conv-cont} that
\begin{equation}
\Vert \phi _{n}-\phi ^{\ast }\Vert _{L^{4}(0,T;L^{4}(\Omega ))}\underset{%
n\rightarrow \infty }{\longrightarrow }{0}.  \label{L4-n-star}
\end{equation}%
Furthermore, in light of the above estimates, the Aubin-Lions lemma
ensures that (up to subsequences) $\mu _{n}-\mu ^{\star }\rightarrow \mu
^{\infty }$ as $n\rightarrow \infty $ in $L^{2}(0,T;L^{2}(\Omega ))$. We
claim that $\mu ^{\infty }\equiv 0$. In fact, since $\phi _{n}\rightarrow
\phi ^{\star }$ (up to a subsequence) almost everywhere in $\Omega \times
(0,T)$, we deuce from \eqref{e3} and \eqref{e4} that $F^{\prime }(\phi
_{n})\rightharpoonup F^{\prime }(\phi ^{\star })$ weakly in $L^{2}(\Omega
\times (0,T))$. Also, it is easily seen that $J\ast \phi _{n}\rightarrow
J\ast \phi ^{\star }$ in $L^{2}(\Omega \times (0,T))$. Thus, we immediately
infer that $\mu ^{\infty }\equiv 0$. %$$
%\Vert\phi_n-\widetilde{\phi}\Vert_{H^{1/2}(\Omega)}\leq \Vert\phi_n-\widetilde{\phi}\Vert^{1/2} \Vert\phi_n-\widetilde{\phi}\Vert_V^{1/2}\leq C\Vert\phi_n-\widetilde{\phi}\Vert^{1/2},
%$$
%implying, by \eqref{conv},
%\begin{align}
%\Vert\phi_n-\widetilde{\phi}\Vert_{L^4(0,T;H^{1/2}(\Omega))}\underset{n\rightarrow \infty}{\rightarrow}{0}.
%\label{conv2}
%\end{align}
%This entails, by the embedding $H^{1/2}(\Omega)\hookrightarrow L^4(\Omega)$,
%\begin{align}
%\Vert\phi_n-\widetilde{\phi}\Vert_{L^4(0,T;L^4(\Omega))}\underset{n\rightarrow \infty}{\rightarrow}{0}.
%\label{conv3}
%\end{align}
%Then notice that, being $\Fsec$ convex, by \eqref{e3}-\eqref{e4},
%$$
%\Vert F^\prime(\phi_n)-F^\prime(\widetilde{\phi})\Vert_{L^2(\Omega)}
%\leq \left(\Vert \Fsec(\phi_n)\Vert_{L^4(\Omega)}+\Vert \Fsec(\widetilde{\phi})\Vert_{L^4(\Omega)}\right)\Vert\phi_n-\widetilde{\phi}\Vert_{L^4(\Omega)}\leq C\Vert\phi_n-\widetilde{\phi}\Vert_{L^4(\Omega)},
%$$
%implying, by \eqref{conv3},
%\begin{align}
%\Vert F^\prime(\phi_n)-F^\prime(\widetilde{\phi})\Vert_{L^4(0,T;H)}\underset{n\rightarrow \infty}{\rightarrow}{0}.
%\label{conv4}
%\end{align}
%Therefore, by the regularity of $J$, \eqref{conv2} and \eqref{conv4} we immediately infer $\mu^\star=0$, i.e.,
More precisely, we have
\begin{equation}
\Vert \mu _{n}-\mu ^{\star }\Vert _{L^{2}(0,T;L^{2}(\Omega ))}\underset{%
n\rightarrow \infty }{\longrightarrow }{0}.  \label{conv5}
\end{equation}

We now define $\mathbf{u}_{n}=\mathcal{S}(\mathbf{v}_{n})\in Y$, for any $%
n\in \mathbb{N}$, and $\mathbf{u}^{\star }=\mathcal{S}(\mathbf{v}^{\star
})\in Y$. We set $\mathbf{u}=\mathbf{u}_{n}-\mathbf{u}^{\star }$, $%
\Phi =\phi _{n}-\phi ^{\star }$, $\mathbf{V}=\mathbf{v}_{n}-\mathbf{v}%
^{\star }$, $\Theta =\mu _{n}-\mu ^{\star }$, and we observe that
\begin{align*}
& \left( \rho (\phi _{n})\partial _{t}\mathbf{u},\mathbf{w}\right) +\left(
(\rho (\phi _{n})-\rho (\phi ^{\star }))\partial _{t}\mathbf{u}^{\star },%
\mathbf{w}\right) +(\rho (\phi _{n})(\mathbf{v}_{n}\cdot \nabla )\mathbf{u}%
_{n}-\rho (\phi ^{\star })(\mathbf{v}^{\star }\cdot \nabla )\mathbf{u}%
^{\star },\mathbf{w})+\left( \nu (\phi _{n})D\mathbf{u},\nabla \mathbf{w}%
\right) \\
& +\left( (\nu (\phi _{n})-\nu (\phi ^{\star }))D\mathbf{u}^{\star },\nabla
\mathbf{w}\right) -\frac{\rho _{1}-\rho _{2}}{2}((\nabla \mu _{n}\cdot
\nabla )\mathbf{u}_{n}-(\nabla \mu ^{\star }\cdot \nabla )\mathbf{u}^{\star
},\mathbf{w})  =(\mu _{n}\nabla \phi _{n}-\mu ^{\star }\nabla \phi ^{\star },\mathbf{w}),
\end{align*}%
for all $\mathbf{w}\in {\mathbf{V}}_{m}$ and in $[0,T]$.
Choosing then $\mathbf{w}=\mathbf{u}$, we obtain
\begin{align*}
& \frac{1}{2}\frac{\mathrm{d}}{\mathrm{d}t}\int_{\Omega }\rho (\phi _{n})|%
\mathbf{u}|^{2}\,\mathrm{d}x+\int_{\Omega }\nu (\phi _{n})|D\mathbf{u}|^{2}\,%
\mathrm{d}x \\
& =\frac{\rho _{1}-\rho _{2}}{4}\int_{\Omega }\partial _{t}\phi _{n}|\mathbf{%
u}|^{2}\,\mathrm{d}x-\frac{\rho _{1}-\rho _{2}}{2}\int_{\Omega }\Phi
(\partial _{t}\mathbf{u}^{\star }\cdot \mathbf{u})\,\mathrm{d}x \\
& \quad -\int_{\Omega }(\rho (\phi _{n})(\mathbf{v}_{n}\cdot \nabla )\mathbf{%
u}_{n}-\rho (\phi ^{\star })(\mathbf{v}^{\star }\cdot \nabla )\mathbf{u}%
^{\star })\cdot \mathbf{u}\,\mathrm{d}x-\int_{\Omega }(\nu (\phi _{n})-\nu
(\phi ^{\star }))D\mathbf{u}^{\star }:\nabla \mathbf{u}\,\mathrm{d}x \\
& \quad +\frac{\rho _{1}-\rho _{2}}{2}\int_{\Omega }((\nabla \mu _{n}\cdot
\nabla )\mathbf{u}_{n}-(\nabla \mu ^{\star }\cdot \nabla )\mathbf{u}^{\star
})\cdot \mathbf{u}\,\mathrm{d}x+\int_{\Omega }(\mu _{n}\nabla \phi _{n}-\mu
^{\star }\nabla \phi ^{\star })\cdot \mathbf{u}\,\mathrm{d}x.
\end{align*}%
Thanks to the embedding $H_{0,\sigma }^{1}(\Omega )\hookrightarrow
L^{4}(\Omega ;\mathbb{R}^{2})$, by exploiting \eqref{sob}, we have
\begin{equation*}
\left\vert \frac{\rho _{1}-\rho _{2}}{4}\int_{\Omega }\partial _{t}\phi _{n}|%
\mathbf{u}|^{2}\,\mathrm{d}x\right\vert \leq C\Vert \partial _{t}\phi
_{n}\Vert _{L^{2}(\Omega )}\Vert \mathbf{u}\Vert _{L^{4}(\Omega )}^{2}\leq
C_{m}\Vert \partial _{t}\phi _{n}\Vert _{L^{2}(\Omega )}\Vert \mathbf{u}%
\Vert _{L^{2}(\Omega )}^{2},
\end{equation*}%
and
\begin{align*}
\left\vert \frac{\rho _{1}-\rho _{2}}{2}\int_{\Omega }\Phi (\partial _{t}%
\mathbf{u}^{\star }\cdot \mathbf{u})\,\mathrm{d}x\right\vert & \leq C\Vert
\Phi \Vert _{L^{4}(\Omega )}\Vert \partial _{t}\mathbf{u}^{\star }\Vert
_{L^{2}(\Omega )}\Vert \mathbf{u}\Vert _{L^{4}(\Omega )} \\
& \leq C_{m}\Vert \partial _{t}\mathbf{u}^{\star }\Vert _{L^{2}(\Omega
)}^{2}\Vert \mathbf{u}\Vert _{L^{2}(\Omega )}^{2}+C_{m}\Vert \Phi \Vert
_{L^{4}(\Omega )}^{2}.
\end{align*}%
Similarly, recalling that $\mathbf{v}_{n},\mathbf{v}^{\star },\mathbf{u}_{n},%
\mathbf{u}^{\star }\in X$,
\begin{align*}
& -\int_{\Omega }(\rho (\phi _{n})(\mathbf{v}_{n}\cdot \nabla )\mathbf{u}%
_{n}-\rho (\phi ^{\star })(\mathbf{v}^{\star }\cdot \nabla )\mathbf{u}%
^{\star })\cdot \mathbf{u}\,\mathrm{d}x \\
& =-\frac{\rho _{1}-\rho _{2}}{2}\int_{\Omega }\Phi ((\mathbf{v}_{n}\cdot
\nabla )\mathbf{u}_{n})\cdot \mathbf{u}\,\mathrm{d}x-\int_{\Omega }\rho
(\phi ^{\star })(\mathbf{v}\cdot \nabla )\mathbf{u}_{n})\cdot \mathbf{u}\,%
\mathrm{d}x  -\int_{\Omega }\rho (\phi ^{\star })(\mathbf{v}^{\star }\cdot \nabla )%
\mathbf{u})\cdot \mathbf{u}\,\mathrm{d}x \\
& \leq C\Vert \Phi \Vert _{L^{4}(\Omega )}\Vert \mathbf{v}_{n}\Vert
_{L^{\infty }(\Omega )}\Vert \nabla \mathbf{u}_{n}\Vert _{L^{2}(\Omega
)}\Vert \mathbf{u}\Vert _{L^{4}(\Omega )}+C\Vert \mathbf{v}\Vert
_{L^{4}(\Omega )}\Vert \nabla \mathbf{u}_{n}\Vert _{L^{2}(\Omega )}\Vert
\mathbf{u}\Vert _{L^{4}(\Omega )} \\
& \quad +C\Vert \mathbf{v}^{\star }\Vert _{L^{4}(\Omega )}\Vert \nabla
\mathbf{u}\Vert _{L^{2}(\Omega )}\Vert \mathbf{u}\Vert _{L^{4}(\Omega )} \\
& \leq C_{m}M^{2}\Vert \Phi \Vert _{L^{4}(\Omega )}\ \Vert \mathbf{u}\Vert
_{L^{2}(\Omega )}+C_{m}M\Vert \mathbf{v}\Vert _{L^{2}(\Omega )}\Vert \mathbf{%
u}\Vert _{L^{2}(\Omega )}+C_{m}M\Vert \mathbf{u}\Vert _{L^{2}(\Omega )}^{2}
\\
& \leq C_{m}\left( 1+M^{2}\right) \Vert \mathbf{u}\Vert _{L^{2}(\Omega
)}^{2}+C_{m}M^{2}\left( \Vert \Phi \Vert _{L^{4}(\Omega )}^{2}+\Vert \mathbf{%
V}\Vert _{L^{2}(\Omega )}^{2}\right) .
\end{align*}%
Since $\nu \in W^{1,\infty }(\mathbb{R})$, we get%
\begin{align*}
-\int_{\Omega }(\nu (\phi _{n})-\nu (\phi ^{\star }))D\mathbf{u}^{\star }
:\nabla \mathbf{u}\,\mathrm{d}x &\leq C\Vert \Phi \Vert _{L^{4}(\Omega )}\Vert
D\mathbf{u}^{\star }\Vert _{L^{2}(\Omega )}\Vert \nabla \mathbf{u}\Vert
_{L^{4}(\Omega )} \\
& \leq C_{m}M^{2}\Vert \mathbf{u}\Vert _{L^{2}(\Omega )}^{2}+C_{m}\Vert \Phi
\Vert _{L^{4}(\Omega )}^{2}.
\end{align*}%
On the other hand, observing that
\begin{align*}
& \frac{\rho _{1}-\rho _{2}}{2}\int_{\Omega }((\nabla \mu _{n}\cdot \nabla )%
\mathbf{u}_{n}-(\nabla \mu ^{\star }\cdot \nabla )\mathbf{u}^{\star })\cdot
\mathbf{u}\,\mathrm{d}x \\
& =\frac{\rho _{1}-\rho _{2}}{2}\int_{\Omega }((\nabla \mu _{n}\cdot \nabla )%
\mathbf{u}+(\nabla \Theta \cdot \nabla )\mathbf{u}^{\star })\cdot \mathbf{u}%
\,\mathrm{d}x \\
& =-\frac{\rho _{1}-\rho _{2}}{2}\left( (\mu _{n}\nabla \mathbf{u},\nabla
\mathbf{u})+(\mu _{n}\mathbf{u},\Delta \mathbf{u})\right) -\frac{\rho
_{1}-\rho _{2}}{2}\left( (\Theta \nabla \mathbf{u}^{\star },\nabla \mathbf{u}%
)+(\Theta \mathbf{u},\Delta \mathbf{u}^{\star })\right) ,
\end{align*}%
%
%
%
%
%
%
%
%
%
%
%
%
%
%
%
%
%
%
%
%
%
%
%
%
%
%
%
%
%
%
%
%
%
%
%
%
%
%
%
%
%
%
%
%
%
%
%
%
%
%
%
%
%
%
%
%
%
%
%
%
%
%
%
%
%
%
%
%
%
%
%
%but by the same argument as in \eqref{m} we can write
%\begin{align*}
%\frac{\rho_1-\rho_2}{2}\int_\Omega((\nabla\mu_n\cdot\nabla)\uu+(\nabla{\mu}\cdot\nabla)\widetilde{\uu})\cdot\uu dx=&-\frac{\rho_1-\rho_2}{2}\left((\mu_n\nabla\uu,\nabla\uu)+(\mu_n\uu,\Delta\uu)\right)\\&-\frac{\rho_1-\rho_2}{2}\left((\mu\nabla\widetilde{\uu},\nabla\uu)+(\mu\uu,\Delta\widetilde{\uu})\right),
%\end{align*}
we infer from \eqref{sob} and \eqref{e3} that
\begin{align*}
& \left\vert \frac{\rho _{1}-\rho _{2}}{2}\int_{\Omega }((\nabla \mu
_{n}\cdot \nabla )\mathbf{u}_{n}-(\nabla \mu ^{\star }\cdot \nabla )\mathbf{u%
}^{\star })\cdot \mathbf{u}\,\mathrm{d}x\right\vert \\
& \leq C\Vert \mu _{n}\Vert _{L^{2}(\Omega )}\Vert \nabla \mathbf{u}\Vert
_{L^{4}(\Omega )}^{2}+C\Vert \mu _{n}\Vert _{L^{2}(\Omega )}\Vert \mathbf{u}%
\Vert _{L^{\infty }}\Vert \Delta \mathbf{u}\Vert _{L^{2}(\Omega )} \\
& \quad +C\Vert \Theta \Vert _{L^{2}(\Omega )}\Vert \nabla \mathbf{u}^{\star
}\Vert _{L^{4}(\Omega )}\Vert \nabla \mathbf{u}\Vert _{L^{4}(\Omega
)}+C\Vert \Theta \Vert _{L^{2}(\Omega )}\Vert \mathbf{u}\Vert _{L^{\infty
}(\Omega )}\Vert \Delta \mathbf{u}^{\star }\Vert _{L^{2}(\Omega )} \\
& \leq C_{m}\left( 1+M^{2}\right) \Vert \mathbf{u}\Vert _{L^{2}(\Omega
)}^{2}+C_{m}\Vert \Theta \Vert _{L^{2}(\Omega )}^{2}.
\end{align*}%
Finally, using again \eqref{sob} and \eqref{e3}-\eqref{e4}, we get
\begin{align*}
\int_{\Omega }(\mu _{n}\nabla \phi _{n}-\mu ^{\star }\nabla \phi ^{\star
})\cdot \mathbf{u}\,\mathrm{d}x& =\int_{\Omega }(-\Phi \nabla \mu
_{n}+\Theta \nabla \phi ^{\star })\cdot \mathbf{u}\,\mathrm{d}x \\
& \leq \Vert \nabla \mu _{n}\Vert _{L^{2}(\Omega )}\Vert \Phi \Vert
_{L^{4}(\Omega )}\Vert \mathbf{u}\Vert _{L^{4}(\Omega )}+\Vert \Theta \Vert
_{L^{2}(\Omega )}\Vert \nabla \phi ^{\star }\Vert _{L^{2}(\Omega )}\Vert
\mathbf{u}\Vert _{L^{\infty }(\Omega )} \\
& \leq C_{m}\Vert \mathbf{u}\Vert _{L^{2}(\Omega )}^{2}+C_{m}
\left(\Vert \Theta
\Vert _{L^{2}(\Omega )}^{2}+\Vert \Phi \Vert _{L^{4}(\Omega )}^{2}\right).
\end{align*}%
Combining the above inequalities and recalling that $\rho \geq \rho _{\ast }$%
, we are thus led to the differential inequality
\begin{equation*}
\frac{\mathrm{d}}{\mathrm{d}t}\int_{\Omega }\rho (\phi _{n})|\mathbf{U}%
|^{2}\,\mathrm{d}x\leq H_{1}(t)\int_{\Omega }\rho (\phi _{n})|\mathbf{%
\mathit{U}}|^{2}\,\mathrm{d}x+H_{2}(t),
\end{equation*}%
where
\begin{equation*}
H_{1} :=C_{m}\left( 1+\Vert \partial _{t}\phi _{n}\Vert _{L^{2}(\Omega
)}^{2}+\Vert \partial _{t}\mathbf{u}^{\star }\Vert _{L^{2}(\Omega
)}^{2}\right) ,\quad
H_{2} :=C_{m}\left( 1+\Vert \Phi \Vert _{L^{4}(\Omega )}^{2}+\Vert \Theta
\Vert _{L^{2}(\Omega )}^{2}+\Vert \mathbf{v}\Vert _{L^{2}(\Omega
)}^{2}\right) .
\end{equation*}%
Hence, the Gronwall lemma entails
\begin{equation}
\max_{t\in \lbrack 0,T]}\Vert \mathbf{u}(t)\Vert _{L^{2}(\Omega )}^{2}\leq
\frac{1}{\rho _{\ast }}\mathrm{e}^{\int_{0}^{T}H_{1}(\tau )\,\mathrm{d}\tau
}\int_{0}^{T}H_{2}(\tau )\,\mathrm{d}\tau .  \label{Gron-cont}
\end{equation}%
Note that $H_{1}\in L^{1}(0,T)$ and $H_{2}\in L^{1}(0,T)$, thanks to \eqref{tildeu} and \eqref{e1}-\eqref{e4}.
In addition, in light of $%
\mathbf{v}_{n}\rightarrow \widetilde{\mathbf{v}}$ in $C([0,T];{\mathbf{V}}%
_{m})$ and \eqref{L4-n-star}-\eqref{conv5}, we deduce that from %
\eqref{Gron-cont} that $\mathbf{u}_{n}\rightarrow \mathbf{u}^{\star }$ in $%
C([0,T];{\mathbf{V}}_{m})$, implying that the map $\mathcal{S}$ is
continuous.

In conclusion, we can apply the Schauder fixed point
theorem to $\mathcal{S}$. This gives the
existence of an approximate solution $(\mathbf{u}_{m},\phi _{m})$ in $[0,T]$
satisfying \eqref{rg}-\eqref{init}. \medskip

\noindent \textbf{Uniform estimates independent of the approximation
parameter.} Integrating \eqref{eqapprox}$_{1}$ over $\Omega $, we find
\begin{equation*}
\overline{\phi _{m}}(t)=\frac{1}{|\Omega |}\int_{\Omega }\phi _{m}(t)\,%
\mathrm{d}x=\frac{1}{|\Omega |}\int_{\Omega }\phi _{0}\,\mathrm{d}x,\quad
\forall \,t\in \lbrack 0,T].
\end{equation*}%
Taking $\mathbf{w}=\mathbf{u}_{m}$ in \eqref{vel} and arguing as above, we
get
\begin{equation*}
\frac{\mathrm{d}}{\mathrm{d}t}\int_{\Omega }\frac{1}{2}\rho (\phi _{m})|%
\mathbf{u}_{m}|^{2}\,\mathrm{d}x+\int_{\Omega }\nu (\phi _{m})|D\mathbf{u}%
_{m}|^{2}\,\mathrm{d}x=\int_{\Omega }\mu _{m}\nabla \phi _{m}\cdot \mathbf{u}%
_{m}\,\mathrm{d}x.
\end{equation*}%
Let us recall that $\phi _{m}$ satisfies the energy identity \eqref{EE-nCH},
i.e.,
\begin{equation*}
\mathcal{E}_{\mathrm{nloc}}(\phi _{m}(t))+\int_{0}^{t}\Vert \nabla \mu
_{m}(\tau )\Vert _{L^{2}(\Omega )}^{2}\,\mathrm{d}\tau
+\int_{0}^{t}\int_{\Omega }\phi _{m}\,\mathbf{u}_{m}\cdot \nabla \mu _{m}\,%
\mathrm{d}x\,\mathrm{d}\tau =\mathcal{E}_{\mathrm{nloc}}((\phi _{0}),\quad
\forall \,t\in \lbrack 0,T].
\end{equation*}%
Therefore, we have
\begin{equation}
\frac{\mathrm{d}}{\mathrm{d}t}{E}(\mathbf{u}_{m},\phi _{m})+\int_{\Omega
}\nu (\phi _{m})|D\mathbf{u}_{m}|^{2}\,\mathrm{d}x+\int_{\Omega }|\nabla \mu
_{m}|^{2}\,\mathrm{d}x=0,  \label{ene}
\end{equation}%
where
\begin{equation*}
{E}(\mathbf{u}_{m},\phi _{m})=\int_{\Omega }\frac{1}{2}\rho (\phi _{m})|%
\mathbf{u}_{m}|^{2}\,\mathrm{d}x+\mathcal{E}_{\mathrm{nloc}}(\phi _{m}).
\end{equation*}%
Notice that, being $|\phi _{m}|<1\text{ almost everywhere in}\ \Omega
\times (0,T)$, $\mathcal{E}_{\mathrm{nloc}}(\phi _{m})\geq -C_{e}$ almost
everywhere in $(0,T)$, where $C_{e}$ is independent of $m$. Then, we can
define $\widehat{E}(\mathbf{u}_{m},\phi _{m})=E(\mathbf{u}_{m},\phi
_{m})+C_{e}\geq 0$. We now integrate \eqref{ene} with respect to time in $[0,T]$ and we obtain
\begin{equation}
\widehat{E}(\mathbf{u}_{m}(t),\phi _{m}(t))+\int_{0}^{t}\int_{\Omega }\nu
(\phi _{m})|D\mathbf{u}_{m}(\tau )|^{2}\,\mathrm{d}x\,\mathrm{d}\tau
+\int_{0}^{t}\int_{\Omega }|\nabla \mu _{m}(\tau )|^{2}\,\mathrm{d}x\,%
\mathrm{d}\tau =\widehat{E}(\mathbb{P}_{m}\mathbf{u}_{0},\phi _{0}).
\label{eneg1}
\end{equation}%
By the properties of $\mathbb{P}_{m}$, we immediately deduce that
\begin{equation*}
\widehat{E}(\mathbb{P}_{m}\mathbf{u}_{0},\phi _{0})\leq C_{e}+\frac{\rho
_{\ast }}{2}\Vert \mathbf{u}_{0}\Vert _{L^{2}(\Omega )}^{2}+\mathcal{E}_{%
\mathrm{nloc}}(\phi _{0}).
\end{equation*}%
Therefore, we conclude that
\begin{align}
\Vert \mathbf{u}_{m}\Vert _{L^{\infty }(0,T;L^{2}(\Omega ))}+\Vert \mathbf{%
u}_{m}\Vert _{L^{2}(0,T;H^{1}(\Omega ))}\leq C_{E},  \quad 
 \Vert \nabla \mu _{m}\Vert _{L^{2}(0,T;H)}\leq C_{E},  \label{mu1}
\end{align}%
where $C_{E}$ is independent of $m$.
Owing to \eqref{mu1}, the two-dimensional continuous embedding of $L^{\infty
}(0,T;L_{\sigma }^{2}(\Omega ))\cap L^{2}(0,T;H_{0,\sigma }^{1}(\Omega ))$
into $L^{4}(0,T;L_{\sigma }^{4}(\Omega ))$ and the assumptions on $\phi _{0}$%
, we can apply (i) of Theorem \ref{ExistCahn}. In
particular, \eqref{ST1}-\eqref{ST3} entails that
\begin{equation}
\begin{cases}
 \Vert \phi _{m}\Vert _{L^{\infty }(0,T;H^{1}(\Omega ))}+\Vert F^{\prime
}(\phi _{m})\Vert _{L^{\infty }(0,T;H^{1}(\Omega ))}\leq C, \\
 \Vert \phi _{m}\Vert _{L^{q}(0,T;W^{1,p}(\Omega ))}\leq C_{p},\quad q=%
\frac{2p}{p-2},\quad \forall\,p\in (2,\infty ), \\
 \Vert \partial _{t}\phi _{m}\Vert _{L^{\infty }(0,T;H^{1}(\Omega )^{\prime
})}+\Vert \partial _{t}\phi _{m}\Vert _{L^{2}(0,T;L^{2}(\Omega ))}\leq C, \\
 \Vert \mu _{m}\Vert _{L^{\infty }(0,T;H^{1}(\Omega ))}+\Vert \mu _{m}\Vert
_{L^{2}(0,T;H^{2}(\Omega ))}+\Vert \mu _{m}\Vert _{H^{1}(0,T;H^{1}(\Omega
)^{\prime })}\leq C, \\
 ||F^{\prime \prime }(\phi _{m})\Vert _{L^{\infty }(0,T;L^{p}(\Omega
))}\leq C_{p},\quad \forall\, p\in \lbrack 2,\infty ).
\end{cases}
\label{5}
\end{equation}%
Next, taking $\mathbf{w}=\partial _{t}\mathbf{u}_{m}$ in \eqref{vel}, we find
\begin{equation}
\begin{split}
& \frac{1}{2}\frac{\mathrm{d}}{\mathrm{d}t}\int_{\Omega }\nu (\phi _{m})|D%
\mathbf{u}_{m}|^{2}\,\mathrm{d}x+\int_{\Omega }\rho (\phi _{m})|\partial _{t}%
\mathbf{u}_{m}|^{2}\,\mathrm{d}x \\
& =-\int_{\Omega }\rho (\phi _{m})((\mathbf{u}_{m}\cdot \nabla )\mathbf{u}%
_{m})\cdot \partial _{t}\mathbf{u}_{m}\,\mathrm{d}x+\int_{\Omega }\nu
^{\prime }(\phi _{m})\partial _{t}\phi _{m}|D\mathbf{u}_{m}|^{2}\,\mathrm{d}x
\\
& \quad +\frac{\rho _{1}-\rho _{2}}{2}\int_{\Omega }((\nabla \mu _{m}\cdot
\nabla )\mathbf{u}_{m})\cdot \partial _{t}\mathbf{u}_{m}\,\mathrm{d}%
x+\int_{\Omega }\mu _{m}\nabla \phi _{m}\cdot \partial _{t}\mathbf{u}_{m}\,%
\mathrm{d}x.
\end{split}
\label{dtu}
\end{equation}%
In addition, following \cite{GMT2019, AGG2d}, we can choose $\mathbf{w}=\mathbf{A}\mathbf{u}_{m}$ in \eqref{vel}, obtaining
\begin{align*}
-\frac{1}{2}(\nu (\phi _{m})\Delta \mathbf{u}_{m},\mathbf{A}\mathbf{u}_{m})&
=-(\rho (\phi _{m})\partial _{t}\mathbf{u}_{m},\mathbf{A}\mathbf{u}%
_{m})-(\rho (\phi _{m})(\mathbf{u}_{m}\cdot \nabla )\mathbf{A}\mathbf{u}%
_{m})+\frac{\rho _{1}-\rho _{2}}{2}((\nabla \mu _{m}\cdot \nabla )\mathbf{u}%
_{m},\mathbf{A}\mathbf{u}_{m}) \\
& \quad +(\mu _{m}\nabla \phi _{m},\mathbf{A}\mathbf{u}_{m})+(\nu ^{\prime
}(\phi _{m})D\mathbf{u}_{m}\nabla \phi _{m},\mathbf{A}\mathbf{u}_{m}).
\end{align*}%
By the regularity theory of the Stokes operator, there exists $\pi _{m}\in
C([0,T];H^{1})$ such that $-\Delta \mathbf{u}_{m}+\nabla \pi _{m}=\mathbf{A}%
\mathbf{u}_{m}$ almost everywhere in $\Omega \times (0,T)$. Furthermore,
Lemma \ref{press} implies that
\begin{equation*}
\Vert \pi _{m}\Vert _{L^{4}(\Omega )}\leq C\Vert \nabla \mathbf{u}_{m}\Vert
_{L^{2}(\Omega )}^{\frac{1}{2}}\Vert \mathbf{A}\mathbf{u}_{m}\Vert
_{L^{2}(\Omega )}^{\frac{1}{2}}.
\end{equation*}%
Since $(\nu (\phi _{m})\nabla \pi _{m},%
\mathbf{A}\mathbf{u}_{m})=-(\nu ^{\prime }(\phi _{m})\pi _{m}\nabla \phi
_{m},\mathbf{A}\mathbf{u}_{m}),$ we arrive at
\begin{equation}
\begin{split}
\frac{1}{2}(\nu (\phi _{m})\mathbf{A}\mathbf{u}_{m},\mathbf{A}\mathbf{u}%
_{m})& =-(\rho (\phi _{m})\partial _{t}\mathbf{u}_{m},\mathbf{A}\mathbf{u}%
_{m})-(\rho (\phi _{m})(\mathbf{u}_{m}\cdot \nabla )\mathbf{u}_{m},\mathbf{A}%
\mathbf{u}_{m}) \\
& \quad +\frac{\rho _{1}-\rho _{2}}{2}((\nabla \mu _{m}\cdot \nabla )\mathbf{%
u}_{m},\mathbf{A}\mathbf{u}_{m})+(\mu _{m}\nabla \phi _{m},\mathbf{A}\mathbf{%
u}_{m}) \\
& \quad +(\nu ^{\prime }(\phi _{m})D\mathbf{u}_{m}\nabla \phi _{m},\mathbf{A}%
\mathbf{u}_{m})-\frac{1}{2}(\nu ^{\prime }(\phi _{m})\pi _{m}\nabla \phi
_{m},\mathbf{A}\mathbf{u}_{m}).
\end{split}
\label{Au}
\end{equation}%
Let us now estimate the terms on the right-hand side in \eqref{dtu}
and \eqref{Au}. Set $\omega _{1}$ a positive constant whose value will be
determined later on. By using \eqref{LADY} and \eqref{mu1}, we have
\begin{align*}
\left\vert \int_{\Omega }\rho (\phi _{m})((\mathbf{u}_{m}\cdot \nabla )%
\mathbf{u}_{m})\cdot \partial _{t}\mathbf{u}_{m}\,\mathrm{d}x\right\vert &
\leq \rho ^{\ast }\Vert \mathbf{u}_{m}\Vert _{L^{4}(\Omega )}\Vert \nabla
\mathbf{u}_{m}\Vert _{L^{4}(\Omega )}\Vert \partial _{t}\mathbf{u}_{m}\Vert
_{L^{2}(\Omega )} \\
& \leq \frac{\rho _{\ast }}{8}\Vert \partial _{t}\mathbf{u}_{m}\Vert
_{L^{2}(\Omega )}^{2}+\frac{\nu _{\ast }\omega _{1}}{32}\Vert \mathbf{A}%
\mathbf{u}_{m}\Vert _{L^{2}(\Omega )}^{2}+C\Vert D\mathbf{u}_{m}\Vert
_{L^{2}(\Omega )}^{4}
\end{align*}%
and
\begin{align*}
\left\vert \int_{\Omega }\nu ^{\prime }(\phi _{m})\partial _{t}\phi _{m}|D%
\mathbf{u}_{m}|^{2}\,\mathrm{d}x\right\vert & \leq C\Vert \partial _{t}\phi
_{m}\Vert _{L^{2}(\Omega )}\Vert D\mathbf{u}_{m}\Vert _{L^{4}(\Omega )}^{2}
\\
& \leq \frac{\nu _{\ast }\omega _{1}}{32}\Vert \mathbf{A}\mathbf{u}_{m}\Vert
_{L^{2}(\Omega )}^{2}+C\Vert \partial _{t}\phi _{m}\Vert _{L^{2}(\Omega
)}^{2}\Vert D\mathbf{u}_{m}\Vert _{L^{2}(\Omega )}^{2}.
\end{align*}%
Exploiting \eqref{LADY} once again, together with \eqref{5}, we obtain
\begin{align*}
&\left\vert \frac{\rho _{1}-\rho _{2}}{2}\int_{\Omega }((\nabla \mu
_{m}\cdot \nabla )\mathbf{u}_{m})\cdot \partial _{t}\mathbf{u}_{m}\,\mathrm{d%
}x\right\vert\\
& \leq C\Vert \nabla \mu _{m}\Vert _{L^{4}(\Omega )}\Vert \nabla \mathbf{u}%
_{m}\Vert _{L^{4}(\Omega )}\Vert \partial _{t}\mathbf{u}_{m}\Vert
_{L^{2}(\Omega )} \\
& \leq C\Vert \nabla \mu _{m}\Vert _{L^{2}(\Omega )}^{\frac{1}{2}}\Vert
\nabla \mu _{m}\Vert _{H^{1}(\Omega )}^{\frac{1}{2}}\Vert D\mathbf{u}%
_{m}\Vert _{L^{2}(\Omega )}^{\frac{1}{2}}\Vert \mathbf{A}\mathbf{u}_{m}\Vert
_{L^{2}(\Omega )}^{\frac{1}{2}}\Vert \partial _{t}\mathbf{u}_{m}\Vert
_{L^{2}(\Omega )} \\
& \leq \frac{\rho _{\ast }}{8}\Vert \partial _{t}\mathbf{u}_{m}\Vert
_{L^{2}(\Omega )}^{2}+\frac{\nu _{\ast }\omega _{1}}{32}\Vert \mathbf{A}%
\mathbf{u}_{m}\Vert _{L^{2}(\Omega )}^{2}+C\Vert \nabla \mu _{m}\Vert
_{H^{1}(\Omega )}^{2}\Vert D\mathbf{u}_{m}\Vert _{L^{2}(\Omega )}^{2}
\end{align*}%
and
\begin{equation*}
\left\vert \int_{\Omega }\mu _{m}\nabla \phi _{m}\cdot \partial _{t}\mathbf{u%
}_{m}\,\mathrm{d}x\right\vert =\left\vert \int_{\Omega }\phi _{m}\nabla \mu
_{m}\cdot \partial _{t}\mathbf{u}_{m}\,\mathrm{d}x\right\vert \leq \frac{%
\rho _{\ast }}{8}\Vert \partial _{t}\mathbf{u}_{m}\Vert ^{2}+C\Vert \nabla
\mu _{m}\Vert _{L^{2}(\Omega )}^{2}.
\end{equation*}%
Arguing as in the proof of \cite[Section 4]{AGG2d} related to the terms $%
I_{8}$ and $I_{9}$, we find
\begin{equation*}
\left\vert (\rho (\phi _{m})\partial _{t}\mathbf{u}_{m},\mathbf{A}\mathbf{u}%
_{m})\right\vert \leq \frac{2\omega _{1}(\rho ^{\ast })^{2}}{\rho _{\ast }}%
\Vert \mathbf{A}\mathbf{u}_{m}\Vert _{L^{2}(\Omega )}^{2}+\frac{\rho _{\ast }%
}{8\omega _{1}}\Vert \partial _{t}\mathbf{u}_{m}\Vert _{L^{2}(\Omega )}^{2}
\end{equation*}%
and
\begin{equation*}
\left\vert (\rho (\phi _{m})(\mathbf{u}_{m}\cdot \nabla )\mathbf{u}_{m},%
\mathbf{A}\mathbf{u}_{m})\right\vert \leq \frac{\nu _{\ast }}{32}\Vert
\mathbf{A}\mathbf{u}_{m}\Vert _{L^{2}(\Omega )}^{2}+C\Vert D\mathbf{u}%
_{m}\Vert _{L^{2}(\Omega )}^{4}.
\end{equation*}%
Proceeding as above, we get
\begin{align*}
\left\vert \frac{\rho _{1}-\rho _{2}}{2}((\nabla \mu _{m}\cdot \nabla )%
\mathbf{u}_{m},\mathbf{A}\mathbf{u}_{m})\right\vert & \leq C\Vert \nabla \mu
_{m}\Vert _{L^{4}(\Omega )}\Vert \nabla \mathbf{u}_{m}\Vert _{L^{4}(\Omega
)}\Vert \mathbf{A}\mathbf{u}_{m}\Vert _{L^{2}(\Omega )} \\
& \leq C\Vert \nabla \mu _{m}\Vert _{L^{2}(\Omega )}^{\frac{1}{2}}\Vert
\nabla \mu _{m}\Vert _{H^{1}(\Omega )}^{\frac{1}{2}}\Vert D\mathbf{\mathit{u}%
}_{m}\Vert _{L^{2}(\Omega )}^{\frac{1}{2}}\Vert \mathbf{A}\mathbf{u}%
_{m}\Vert _{L^{2}(\Omega )}^{\frac{3}{2}} \\
& \leq \frac{\nu _{\ast }}{32}\Vert \mathbf{A}\mathbf{u}_{m}\Vert
_{L^{2}(\Omega )}^{2}+C\Vert \nabla \mu _{m}\Vert _{H^{1}(\Omega )}^{2}\Vert
D\mathbf{u}_{m}\Vert _{L^{2}(\Omega )}^{2}
\end{align*}%
and
\begin{equation*}
\left\vert (\mu _{m}\nabla \phi _{m},\mathbf{A}\mathbf{u}_{m})\right\vert
=\left\vert (\phi _{m}\nabla \mu _{m},\mathbf{A}\mathbf{u}_{m})\right\vert
\leq \frac{\nu _{\ast }}{32}\Vert \mathbf{A}\mathbf{u}_{m}\Vert
_{L^{2}(\Omega )}^{2}+C\Vert \nabla \mu _{m}\Vert _{L^{2}(\Omega )}^{2}.
\end{equation*}%
By $\nu ^{\prime }\in W^{1,\infty }(\mathbb{R})$ and \eqref{5}, it follows
that
\begin{align*}
\left\vert (\nu ^{\prime }(\phi _{m})D\mathbf{u}_{m}\nabla \phi _{m},\mathbf{%
A}\mathbf{u}_{m})\right\vert & \leq C\Vert D\mathbf{u}_{m}\Vert
_{L^{4}(\Omega )}\Vert \nabla \phi _{m}\Vert _{L^{4}(\Omega )}\Vert \mathbf{A%
}\mathbf{u}_{m}\Vert _{L^{2}(\Omega )} \\
& \leq C\Vert D\mathbf{u}_{m}\Vert _{L^{2}(\Omega )}^{\frac{1}{2}}\Vert
\mathbf{A}\mathbf{u}_{m}\Vert _{L^{2}(\Omega )}^{\frac{3}{2}}\Vert \nabla
\phi _{m}\Vert _{L^{4}(\Omega )} \\
& \leq \frac{\nu _{\ast }}{32}\Vert \mathbf{A}\mathbf{u}_{m}\Vert
_{L^{2}(\Omega )}^{2}+C\Vert \nabla \phi _{m}\Vert _{L^{4}(\Omega
)}^{4}\Vert D\mathbf{u}_{m}\Vert _{L^{2}(\Omega )}^{2}.
\end{align*}%
Lastly, by \eqref{LADY} and \eqref{5}, we infer that
\begin{align*}
\left\vert (\nu ^{\prime }(\phi _{m})\pi _{m}\nabla \phi _{m},\mathbf{A}%
\mathbf{u}_{m})\right\vert & \leq C\Vert \pi _{m}\Vert _{L^{4}(\Omega
)}\Vert \nabla \phi _{m}\Vert _{L^{4}(\Omega )}\Vert \mathbf{A}\mathbf{u}%
_{m}\Vert _{L^{2}(\Omega )} \\
& \leq C\Vert \nabla \mathbf{u}_{m}\Vert _{L^{2}(\Omega )}^{\frac{1}{2}%
}\Vert \mathbf{A}\mathbf{u}_{m}\Vert _{L^{2}(\Omega )}^{\frac{3}{2}}\Vert
\nabla \phi _{m}\Vert _{L^{4}(\Omega )} \\
& \leq \frac{\nu _{\ast }}{32}\Vert \mathbf{A}\mathbf{u}_{m}\Vert
_{L^{2}(\Omega )}^{2}+C\Vert \nabla \phi _{m}\Vert _{L^{4}(\Omega
)}^{4}\Vert D\mathbf{u}_{m}\Vert _{L^{2}(\Omega )}^{2}.
\end{align*}%
Adding up \eqref{dtu} with \eqref{Au} multiplied by $\omega _{1}$, and taking into
account the previous estimates, we end up with
\begin{equation*}
\frac{\mathrm{d}}{\mathrm{d}t}H_{m}+\frac{\rho _{\ast }}{2}\Vert \partial
_{t}\mathbf{u}_{m}\Vert _{L^{2}(\Omega )}^{2}+\left( \frac{\nu _{\ast
}\omega _{1}}{4}-\frac{2\omega _{1}^{2}(\rho ^{\ast })^{2}}{\rho _{\ast }}%
\right) \Vert \mathbf{A}\mathbf{u}_{m}\Vert _{L^{2}(\Omega )}^{2}\leq
D_{m}H_{m}+Q_{m},
\end{equation*}
where
\begin{align*}
& H_{m}(t):=\frac{1}{2}\int_{\Omega }\nu (\phi _{m}(t))|D\mathbf{u}%
_{m}(t)|^{2}\,\mathrm{d}x, \\
& D_{m}(t):=C\left( 1+\Vert D\mathbf{u}_{m}(t)\Vert _{L^{2}(\Omega
)}^{2}+\Vert \partial _{t}\phi _{m}(t)\Vert _{L^{2}(\Omega )}^{2}+\Vert
\nabla \mu _{m}(t)\Vert _{H^{1}(\Omega )}^{2}+\Vert \nabla \phi _{m}(t)\Vert
_{L^{4}(\Omega )}^{4}\right) , \\
& Q_{m}(t):=C\Vert \nabla \mu _{m}(t)\Vert _{L^{2}(\Omega )}^{2}.
\end{align*}%
In turn, setting $\omega _{1}=\frac{\nu _{\ast }\rho _{\ast }}{16(\rho
^{\ast })^{2}}>0$,
\begin{equation}
\frac{\mathrm{d}}{\mathrm{d}t}H_{m}+\frac{\rho _{\ast }}{2}\Vert \partial
_{t}\mathbf{u}_{m}(t)\Vert _{L^{2}(\Omega )}^{2}+\frac{\nu _{\ast }\omega
_{1}}{8}\Vert \mathbf{A}\mathbf{u}_{m}(t)\Vert _{L^{2}(\Omega )}^{2}\leq
D_{m}H_{m}+Q_{m}.  \label{dHm}
\end{equation}%
Observe now that, by \eqref{mu1} and \eqref{5}, $D_{m}\in L^{1}(0,T)$ and $Q_{m}\in
L^{1}(0,T)$. Thus, an application of the Gronwall lemma gives
\begin{equation}
H_{m}(t)\leq \left( H_{m}(0)+\int_{0}^{T}Q_{m}(\tau )\,\mathrm{d}\tau
\right) \mathrm{exp}\left( \int_{0}^{T}D_{m}(\tau )\,\mathrm{d}\tau \right),\quad \forall
\,t\in \lbrack 0,T].  \label{Hm1}
\end{equation}%
By the properties of the projector $\mathbb{P}_{m}$ and \eqref{mu1} and %
\eqref{5}, we observe that
\begin{equation*}
H_{m}(0)\leq C\Vert \mathbf{u}_{0}\Vert _{H_{0,\sigma }^{1}(\Omega
)}^{2},\quad \int_{0}^{T}Q_{m}(\tau )\,\mathrm{d}\tau \leq C,\quad
\int_{0}^{T}D_{m}(\tau )\,\mathrm{d}\tau \leq C.
\end{equation*}%
Thus, we conclude from \eqref{dHm} and \eqref{Hm1} that
\begin{equation}
\label{Um}
\Vert \mathbf{u}_{m}\Vert _{L^{\infty }(0,T;H_{0,\sigma }^{1}(\Omega
))}+\Vert \partial _{t}\mathbf{u}_{m}\Vert _{L^{2}(0,T;L_{\sigma
}^{2}(\Omega ))}+\Vert \mathbf{u}_{m}\Vert _{L^{2}(0,T;H_{0,\sigma
}^{2}(\Omega ))}\leq C.
\end{equation}%

\noindent \textbf{Passage to the limit and existence of global strong
solutions.} Thanks to the estimates \eqref{mu1}, \eqref{5} and \eqref{Um}
(which are uniform with respect to the parameter $m$), we deduce the
following convergences (up to subsequences)
\begin{equation}
\begin{split}
& \mathbf{u}_{m}\rightharpoonup \mathbf{u}\quad \text{weakly$^\star$ in }%
L^{\infty }(0,T;H_{0,\sigma }^{1}(\Omega )), \\
& \mathbf{u}_{m}\rightharpoonup \mathbf{u}\quad \text{weakly in }%
L^{2}(0,T;H_{0,\sigma }^{2}(\Omega ))\cap H^{1}(0,T;L_{\sigma }^{2}(\Omega
)), \\
& \phi _{m}\rightharpoonup \phi \quad \text{weakly$^\star$ in }L^{\infty
}(0,T;H^{1}(\Omega ))\cap L^{\infty }(\Omega \times (0,T)), \\
& \phi _{m}\rightharpoonup \phi \quad \text{weakly in }L^{q}(0,T;W^{1,p}(%
\Omega )),\quad q=\frac{2p}{p-2},\quad \forall\,p\in (2,\infty ), \\
& \phi _{m}\rightharpoonup \phi \quad \text{weakly in }
H^{1}(0,T;L^{2}(\Omega )\cap W^{1,\infty }(0,T;H^{1}(\Omega )^{\prime }), \\
& \mu _{m}\rightharpoonup \mu \quad \text{weakly$^\star$ in }L^{\infty
}(0,T;H^{1}(\Omega )), \\
& \mu _{m}\rightharpoonup \mu \quad \text{weakly in }L^{2}(0,T;H^{2}(\Omega
))\cap H^{1}(0,T;H^{1}(\Omega )^{\prime }).
\end{split}
\label{conv1}
\end{equation}%
By means of Aubin-Lions Lemma, we have the following strong convergences,
\begin{equation}
\begin{split}
& \mathbf{u}_{m}\rightarrow \mathbf{u}\quad \text{strongly in }%
L^{2}(0,T;H_{0,\sigma }^{1}(\Omega )), \\
& \phi _{m}\rightarrow \phi \quad \text{strongly in }C([0,T];L^{p}(\Omega
)),\quad \forall \,p\in \lbrack 2,\infty ), \\
& \mu _{m}\rightarrow \mu \quad \text{strongly in }L^{2}(0,T;H^{1}(\Omega )).
\end{split}
\label{convtris}
\end{equation}%
As an immediate consequence, we infer that
\begin{equation}
\begin{split}
& \rho (\phi _{m})\rightarrow \rho (\phi )\quad \text{strongly in }%
C([0,T];L^{p}(\Omega )),\quad \forall \,p\in \lbrack 2,\infty ), \\
& \nu (\phi _{m})\rightarrow \nu (\phi )\quad \text{strongly in }%
C([0,T];L^{p}(\Omega )),\quad \forall \,p\in \lbrack 2,\infty ).
\end{split}
\label{convbis}
\end{equation}%
On the other hand, we only know so far that $\phi \in L^{\infty }(\Omega
\times (0,T))$ is such that $\Vert \phi \Vert _{L^{\infty }(\Omega \times
(0,T))}\leq 1$. But, due to the convergence (up to a subsequence) $\phi
_{m}\rightarrow \phi $ almost everywhere in $\Omega \times (0,T)$ and %
\eqref{5}, the Fatou lemma entails that%
$F^{\prime }(\phi )^{2} \in L^2(0,T;L^2(\Omega))$.
In turn, this gives that $|\phi|<1$ almost everywhere in $\Omega
\times (0,T)$. Owing to this, it is possible to show that
\begin{equation}
F^{\prime }(\phi _{m})\rightharpoonup F^{\prime }(\phi )\quad \text{%
weakly$^\star$ in }L^{\infty }(0,T;H^{1}(\Omega )).
\end{equation}%
The above properties are sufficient to show the convergence of the nonlinear
terms in \eqref{vel}-\eqref{eqapprox}. Then, in a standard way, we pass to
the limit as $m\rightarrow \infty $ in \eqref{vel}-\eqref{eqapprox}.
Reasoning now as in \cite{AGG2d}, we infer the existence of a pressure $\Pi
\in L^{2}(0,T;H_{(0)}^{1}(\Omega ))$, such that
\begin{equation*}
\nabla \Pi =-\rho (\phi )\partial _{t}\mathbf{u}-\rho (\phi )(\mathbf{u}%
\cdot \nabla )\mathbf{u}+\rho ^{\prime }(\phi )(\nabla \mu \cdot \nabla )%
\mathbf{u}+\text{div}(\nu (\phi )D\mathbf{u})+\mu \nabla \phi ,
\end{equation*}%
almost everywhere in $\Omega \times (0,T)$.

Concerning the separation property, thanks to the regularity \eqref{regg} on
$[0,T]$, we infer from Remark \ref{Rem-SP} (cf. also Theorem \ref{ExistCahn}%
) that, for any $0<\tau \leq T$, there exists $\delta =\delta (\tau )\in (0,1)$
such that it holds
\begin{equation}
\sup_{t\in \lbrack \tau ,T]}\Vert \phi (t)\Vert _{L^{\infty }(\Omega )}\leq
1-\delta .  \label{del-SP-2}
\end{equation}%
Instead, if we additionally assume $\Vert \phi _{0}\Vert \leq 1-\delta _{0},$
for some $\delta _{0}\in (0,1)$, then an application of Theorem \ref%
{ExistCahn}, case (ii) implies that there exists $\delta ^{\star }>0$
(depending also on $\delta _{0}$) such that the solution $\phi$ satisfies \eqref{SP-Star}.
%\begin{equation}
%\sup_{t\in \lbrack 0,T]}\Vert \phi (t)\Vert _{L^{\infty }(\Omega )}\leq
%1-\delta ^{\star }.  \label{del-SP-3}
%\end{equation}

In order to complete the proof of the existence, we are left to to discuss
the \textit{globality} of the solution $(\mathbf{u},\Pi ,\phi )$. In fact,
we have only shown so far the existence of a solution $(\mathbf{u},\Pi ,\phi
)$ to \eqref{syst2}-\eqref{bic} defined on a given time interval $[0,T]$ for
any fixed $T>0$. Nevertheless, we can easily construct a global
solution $(\mathbf{u},\Pi ,\phi )$ to \eqref{syst2}-\eqref{bic} defined on
the time interval $[0,\infty )$ and satisfying (i)-(iv). Indeed, we first
consider the solution $(\mathbf{u}_{1},\Pi _{1},\phi _{1})$ defined on $%
[0,1] $ originating from $(\mathbf{u}_{0},\phi _{0})$. Next, we notice that $%
\mathbf{u}_{1}(1)\in H_{0,\sigma }^{1}(\Omega )$ and $\phi _{1}(1)\in
H^{1}(\Omega )\cap L^{\infty }(\Omega )$ with $|\overline{\phi _{1}(1)}|<1$.
In addition, in light of \eqref{del-SP-2}, $\Vert \phi _{1}(1)\Vert
_{L^{\infty }(\Omega )}\leq 1-\delta ,$ for some $\delta >0$. Thanks to the
proof above, for any $n\in \mathbb{N}$ with $n\geq 2$, there exists a
solution $(\mathbf{u}_{n},\Pi _{n},\phi _{n})$ to \eqref{syst2}-\eqref{bic}
defined in the time interval $[1,n]$. In particular, by the uniqueness
property proven in the (next) Section \ref{Uniqueness}, we have that
$\mathbf{u}_{n}=\mathbf{u}_{k},$ $\Pi _{n}=\Pi _{k},$ $\phi _{n}=\phi
_{k}$ in $[1,n],$
provided that $n<k$. Therefore, we obtain the solution $(\mathbf{u},\Pi
,\phi )$ defined in $[0,\infty )$ by setting
\begin{equation}
(\mathbf{u}(t),\Pi (t),\phi (t))=%
\begin{cases}
(\mathbf{u}_{1}(t),\Pi _{1}(t),\phi _{1}(t)),\quad & t\in \lbrack 0,1], \\
(\mathbf{u}_{n}(t),\Pi _{n}(t),\phi _{n}(t)),\quad & t\in \lbrack 1,n],\quad
\forall \,n\geq 2.%
\end{cases}%
\end{equation}%
Finally, we observe that $(\mathbf{u},\Pi ,\phi )$ satisfies the energy
equality
\begin{equation*}
E(\mathbf{u}(t),\phi (t))+\int_{\tau }^{t}\left\Vert \sqrt{\nu (\phi (s))}|D%
\mathbf{u}(s)\right\Vert _{L^{2}(\Omega )}^{2}+\Vert \nabla \mu (s)\Vert
_{L^{2}(\Omega ))}^{2}\,\mathrm{d}s=E(\mathbf{u}(\tau ),\phi (\tau ))
\end{equation*}%
for every $0<\tau \leq t<\infty $, which clearly follows from the regularity
in each interval $[0,T]$ (cf. \eqref{conv1}-\eqref{convbis}). This implies
that $\mathbf{u}\in L^{\infty }(0,\infty ;L_{\sigma }^{2}(\Omega ))\cap
L^{2}(0,\infty ;H_{0,\sigma }^{1}(\Omega ))$ and $\nabla \mu \in
L^{2}(0,\infty ;L^{2}(\Omega ;\mathbb{R}^{2}))$. By interpolation, it
follows that $\mathbf{u}\in L^{4}(0,\infty ;L_{\sigma }^{4}(\Omega ))$.
Then, in light of Theorem \ref{ExistCahn}, we deduce from the estimates %
\eqref{ST1}-\eqref{ST2} as $T\rightarrow \infty $ that $\nabla \mu \in
L^{\infty }(0,\infty ;L^{2}(\Omega ;\R^{2}))$ and $\partial _{t}\phi \in
L^{2}(0,\infty ;L^{2}(\Omega ))$. By \eqref{ST3}, we also infer that
\begin{align*}
& \phi \in L^{\infty }(0,\infty ;H^{1}(\Omega ))\cap L_{\mathrm{uloc}%
}^{q}([0,\infty );W^{1,p}(\Omega )),\quad q=\frac{2p}{p-2},\quad p\in
(2,\infty ), \\
& \partial _{t}\phi \in L^{\infty }(0,\infty ;H^{1}(\Omega )^{\prime
}),\quad F^{\prime }(\phi )\in L^{\infty }(0,\infty ;H^{1}(\Omega )),\quad
F^{\prime \prime }(\phi )\in L^{\infty }(0,\infty ;L^{p}(\Omega )),\quad
p\in \lbrack 2,\infty ), \\
& \mu \in BC_{\mathrm{w}}([0,\infty );H^{1}(\Omega ))\cap L_{\mathrm{uloc}%
}^{2}([0,\infty );H^{2}(\Omega ))\cap H_{\mathrm{uloc}}^{1}([0,\infty
);H^{1}(\Omega )^{\prime }).
\end{align*}%
Moreover, by Remark \eqref{Rem-SP}, there exists $\delta >0$ such that
$
\sup_{t\in \lbrack \tau ,\infty )}\Vert \phi (t)\Vert _{L^{\infty }(\Omega
)}\leq 1-\delta $. 
On the other hand, recalling that $\mathbf{u}$ is the limit of the
approximate solutions $\mathbf{u}_{m}$ in $[0,n]$ for each $n\in
\mathbb{N}$ and that each $\mathbf{u}_{m}$ satisfies \eqref{dHm}, it is
easily seen from the uniform Gronwall lemma that
\begin{equation*}
\mathbf{u}\in L^{\infty }(0,\infty ;H_{0,\sigma }^{1}(\Omega ))\cap H_{%
\mathrm{uloc}}^{1}([0,\infty );L_{\sigma }^{2}(\Omega ))\cap L_{\mathrm{uloc}%
}^{2}([0,\infty );H_{0,\sigma }^{2}(\Omega )),
\end{equation*}%
and, in turn, $\Pi \in L_{\mathrm{uloc}}^{2}([0,\infty );H_{(0)}^{1}(\Omega
))$. The proof of the existence of global strong solutions is thus concluded.

%Moreover, by a classical argument (see, e.g., \cite[Sec.5.4]{AGG2d}), we notice that the uniform-in-time estimates \eqref{ui}, \eqref{5} and \eqref{u2} entail that the solution does not blow-up as $T$ increases, therefore, we can extend the strong solution for any time $T$, i.e., the maximal time is $T_*=\infty$ and the strong solution is actually global.

\section{Proof of Theorem \protect\ref{MR:strong}: Continuous dependence
estimate for ``separated" strong solutions}

\label{Uniqueness} \setcounter{equation}{0}

Consider two sets of initial data $(\mathbf{u}_{0}^{1},\phi _{0}^{1}$) and $(%
\mathbf{u}_{0}^{2},\phi _{0}^{2})$ satisfying the assumptions of Theorem \ref%
{MR:strong}. In particular, we consider \textquotedblleft separated" initial
data, i.e. $\Vert \phi _{0}^{i}\Vert _{L^{\infty }(\Omega )}<1$ for $i=1,2$.
We denote by $(\mathbf{u}_{j},\Pi _{j},\phi _{j})$, $j=1,2$,
the strong solutions to \eqref{syst2}-\eqref{bic}
originating from ($\mathbf{u}_{0}^{j},\phi _{0}^{j}$). Clearly both the solutions satisfy %
\eqref{regg} and the statement (iv) of Theorem \ref{MR:strong}. Let us set $%
\mathbf{u}=\mathbf{u}_{1}-\mathbf{u}_{2}$, $P=\Pi _{1}-\Pi _{2}$, $\Phi
=\phi _{1}-\phi _{2}$ and $\Theta =F^{\prime }(\phi
_{1})-F^{\prime }(\phi _{2})-J\ast \Phi $. These functions satisfy the system
\begin{equation}
\begin{split}
&\rho (\phi _{1})\partial _{t}\mathbf{u}+(\rho (\phi _{1})-\rho (\phi
_{2})\partial _{t}\mathbf{u}_{2}+\rho (\phi _{1})(\mathbf{u}_{1}\cdot \nabla
)\mathbf{u}+\rho (\phi _{1})(\mathbf{u}\cdot \nabla )\mathbf{u}_{2}+(\rho
(\phi _{1})-\rho (\phi _{2}))(\mathbf{u}_{2}\cdot \nabla )\mathbf{u}_{2} \\
&\quad -\frac{\rho _{1}-\rho _{2}}{2}(\nabla \mu _{1}\cdot \nabla )\mathbf{u}-\frac{%
\rho _{1}-\rho _{2}}{2}(\nabla \Theta \cdot \nabla )\mathbf{u}_{2}-\mathrm{%
div}\,(\nu (\phi _{1})D\mathbf{u})-\mathrm{div}\,(\nu ((\phi _{1})-\nu (\phi
_{2}))D\mathbf{u}_{2})+\nabla P \\
&=\mu _{1}\nabla \Phi +\Theta \nabla \phi _{2}, \\[5pt]
&\partial _{t}\Phi +\mathbf{u}_{1}\cdot \nabla \Phi +\mathbf{u}\cdot \nabla
\phi _{2}=\Delta \Theta ,%
\end{split}
\label{phi_s}
\end{equation}
almost everywhere in $\Omega \times (0,T)$. We observe that
\begin{equation}
-\int_{\Omega }\partial _{t}\rho (\phi _{1})\frac{|\mathbf{u}|^{2}}{2}\,%
\mathrm{d}x+\int_{\Omega }\rho (\phi _{1})(\mathbf{u}_{1}\cdot \nabla
\mathbf{u})\cdot \mathbf{u}\,\mathrm{d}x-\frac{\rho _{1}-\rho _{2}}{2}%
\int_{\Omega }(\nabla \mu _{1}\cdot \nabla )\mathbf{u}\cdot \mathbf{u}\,%
\mathrm{d}x=0,  \label{U:rel1}
\end{equation}%
\begin{equation}
\int_{\Omega }(\nabla \Theta \cdot \nabla )\mathbf{u}_{2}\cdot \mathbf{u}\,%
\mathrm{d}x=-\int_{\Omega }\Theta \Delta \mathbf{u}_{2}\cdot \mathbf{u}\,%
\mathrm{d}x-\int_{\Omega }\Theta \nabla \mathbf{u}_{2}:\nabla \mathbf{u}\,%
\mathrm{d}x,  \label{U:rel2}
\end{equation}%
and
\begin{equation}
\int_{\Omega }\left( \mu _{1}\nabla \Phi +\Theta \nabla \phi _{2}\right)
\cdot \mathbf{u}\,\mathrm{d}x=-\int_{\Omega }\Phi (\nabla J\ast \phi
_{1})\cdot \mathbf{u}\,\mathrm{d}x-\int_{\Omega }\phi _{2}(\nabla J\ast \Phi
)\cdot \mathbf{u}\,\mathrm{d}x.  \label{U:rel3}
\end{equation}%
Multiplying \eqref{phi_s}$_{1}$ by $\mathbf{u}$ and integrating over $\Omega
$, we find (cf. \cite[Equation 6.3]{AGG2d})
\begin{equation}
\begin{split}
& \frac{\mathrm{d}}{\mathrm{d}t}\int_{\Omega }\frac{\rho (\phi _{1})}{2}|%
\mathbf{u}|^{2}\,\mathrm{d}x+\int_{\Omega }\nu (\phi _{1})|D\mathbf{U}|^{2}\,%
\mathrm{d}x \\
& =-\int_{\Omega }(\rho (\phi _{1})-\rho (\phi _{2})\partial _{t}\mathbf{u}%
_{2}\cdot \mathbf{u}\,\mathrm{d}x-\int_{\Omega }\rho (\phi _{1})(\mathbf{u}%
\cdot \nabla )\mathbf{u}_{2}\cdot \mathbf{u}\,\mathrm{d}x-\int_{\Omega
}(\rho (\phi _{1})-\rho (\phi _{2}))(\mathbf{u}_{2}\cdot \nabla )\mathbf{u}%
_{2}\cdot \mathbf{u}\,\mathrm{d}x \\
& \quad -\int_{\Omega }(\nu (\phi _{1})-\nu (\phi _{2}))D\mathbf{u}%
_{2}:\nabla \mathbf{u}\,\mathrm{d}x-\frac{\rho _{1}-\rho _{2}}{2}%
\int_{\Omega }\Theta \Delta \mathbf{u}_{2}\cdot \mathbf{u}\,\mathrm{d}x-%
\frac{\rho _{1}-\rho _{2}}{2}\int_{\Omega }\Theta \nabla \mathbf{u}%
_{2}:\nabla \mathbf{u}\,\mathrm{d}x \\
& \quad -\int_{\Omega }\Phi (\nabla J\ast \phi _{1})\cdot \mathbf{u}\,%
\mathrm{d}x-\int_{\Omega }\phi _{2}(\nabla J\ast \Phi )\cdot \mathbf{U}\,%
\mathrm{d}x.
\end{split}
\label{U-uni}
\end{equation}%
By the strict convexity of $F$, we notice that
\begin{align*}
\int_{\Omega }\nabla \Theta \cdot \nabla \Phi \,\mathrm{d}x& =\int_{\Omega
}F^{\prime \prime }(\phi _{1})|\nabla \Phi |^{2}\,\mathrm{d}x+\int_{\Omega
}(F^{\prime \prime }(\phi _{1})-F^{\prime \prime }(\phi _{2}))\nabla \phi
_{2}\cdot \nabla \phi \,\mathrm{d}x-\int_{\Omega }\nabla J\ast \Phi \cdot
\nabla \Phi \,\mathrm{d}x \\
& \geq \alpha \Vert \nabla \Phi \Vert _{L^{2}(\Omega )}^{2}+\int_{\Omega
}(F^{\prime \prime }(\phi _{1})-F^{\prime \prime }(\phi _{2}))\nabla \phi
_{2}\cdot \nabla \phi \,\mathrm{d}x-\int_{\Omega }\nabla J\ast \Phi \cdot
\nabla \Phi \,\mathrm{d}x.
\end{align*}%
Then, multiplying \eqref{phi_s}$_{2}$ by $\Phi $ and integrating over $%
\Omega $, we obtain
\begin{equation}
\begin{split}
& \frac{1}{2}\frac{\mathrm{d}}{\mathrm{d}t}\Vert \Phi \Vert _{L^{2}(\Omega
)}^{2}+\alpha \Vert \nabla \Phi \Vert _{L^{2}(\Omega )}^{2} \\
& =\int_{\Omega }\phi _{2}(\mathbf{u}\cdot \nabla \Phi )\,\mathrm{d}%
x+\int_{\Omega }\nabla J\ast \Phi \cdot \nabla \Phi \,\mathrm{d}%
x-\int_{\Omega }(F^{\prime \prime }(\phi _{1})-F^{\prime \prime }(\phi
_{2}))\nabla \phi _{2}\cdot \nabla \phi \,\mathrm{d}x.
\end{split}
\label{Phi-uni}
\end{equation}%
Adding together \eqref{U-uni} and \eqref{Phi-uni} and exploiting the
hypothesis \ref{nu}, we arrive at
\begin{equation}
\begin{split}
& \frac{\mathrm{d}}{\mathrm{d}t}\left( \int_{\Omega }\frac{\rho (\phi _{1})}{%
2}|\mathbf{u}|^{2}\,\mathrm{d}x+\frac{1}{2}\Vert \Phi \Vert _{L^{2}(\Omega
)}^{2}\right) +\nu _{\ast }\Vert D\mathbf{u}\Vert _{L^{2}(\Omega
)}^{2}+\alpha \Vert \nabla \Phi \Vert _{L^{2}(\Omega )}^{2} \\
& \leq -\int_{\Omega }(\rho (\phi _{1})-\rho (\phi _{2})\partial _{t}\mathbf{%
u}_{2}\cdot \mathbf{u}\,\mathrm{d}x-\int_{\Omega }\rho (\phi _{1})(\mathbf{u}%
\cdot \nabla )\mathbf{u}_{2}\cdot \mathbf{u}\,\mathrm{d}x-\int_{\Omega
}(\rho (\phi _{1})-\rho (\phi _{2}))(\mathbf{u}_{2}\cdot \nabla )\mathbf{u}%
_{2}\cdot \mathbf{u}\,\mathrm{d}x \\
& \quad -\int_{\Omega }(\nu (\phi _{1})-\nu (\phi _{2}))D\mathbf{u}%
_{2}:\nabla \mathbf{u}\,\mathrm{d}x-\frac{\rho _{1}-\rho _{2}}{2}%
\int_{\Omega }\Theta \Delta \mathbf{u}_{2}\cdot \mathbf{u}\,\mathrm{d}x-%
\frac{\rho _{1}-\rho _{2}}{2}\int_{\Omega }\Theta \nabla \mathbf{u}%
_{2}:\nabla \mathbf{u}\,\mathrm{d}x \\
& \quad -\int_{\Omega }\Phi (\nabla J\ast \phi _{1})\cdot \mathbf{u}\,%
\mathrm{d}x-\int_{\Omega }\phi _{2}(\nabla J\ast \Phi )\cdot \mathbf{U}\,%
\mathrm{d}x+\int_{\Omega }\phi _{2}(\mathbf{u}\cdot \nabla \Phi )\,\mathrm{d}%
x \\
& \quad +\int_{\Omega }\nabla J\ast \Phi \cdot \nabla \Phi \,\mathrm{d}%
x-\int_{\Omega }(F^{\prime \prime }(\phi _{1})-F^{\prime \prime }(\phi
_{2}))\nabla \phi _{2}\cdot \nabla \phi \,\mathrm{d}x.
\end{split}
\label{UNI-1}
\end{equation}%
By using \eqref{RHO-Jtilde}, \eqref{LADY} and \eqref{korn}, we obtain
\begin{align*}
\left\vert \int_{\Omega }(\rho (\phi _{1})-\rho (\phi _{2})\partial _{t}%
\mathbf{u}_{2}\cdot \mathbf{u}\,\mathrm{d}x\right\vert & \leq C\Vert \Phi
\Vert _{L^{4}(\Omega )}\Vert \partial _{t}\mathbf{u}_{2}\Vert _{L^{2}(\Omega
)}\Vert \mathbf{u}\Vert _{L^{4}(\Omega )} \\
& \leq C\Vert \partial _{t}\mathbf{u}_{2}\Vert _{L^{2}(\Omega )}\Vert \Phi
\Vert _{L^{2}(\Omega )}^{\frac{1}{2}}\left( \Vert \Phi \Vert _{L^{2}(\Omega
)}^{\frac{1}{2}}+\Vert \nabla \Phi \Vert _{L^{2}(\Omega )}^{\frac{1}{2}%
}\right) \Vert \mathbf{u}\Vert _{L^{2}(\Omega )}^{\frac{1}{2}}\Vert D\mathbf{%
u}\Vert _{L^{2}(\Omega )}^{\frac{1}{2}} \\
& \leq \frac{\nu _{\ast }}{12}\Vert D\mathbf{u}\Vert _{L^{2}(\Omega )}^{2}+%
\frac{\alpha }{10}\Vert \nabla \Phi \Vert _{L^{2}(\Omega )}^{2} \\
& \quad +C\left( 1+\Vert \partial _{t}\mathbf{u}_{2}\Vert _{L^{2}(\Omega
)}^{2}\right) \left( \Vert \Phi \Vert _{L^{2}(\Omega )}^{2}+\Vert \mathbf{u}%
\Vert _{L^{2}(\Omega )}^{2}\right) .
\end{align*}%
In a similar way, by \eqref{regg} and \eqref{LADY}, we have
\begin{align*}
\left\vert \int_{\Omega }\rho (\phi _{1})(\mathbf{u}\cdot \nabla )\mathbf{u}%
_{2}\cdot \mathbf{u}\,\mathrm{d}x\right\vert & \leq C\Vert \mathbf{u}\Vert
_{L^{4}(\Omega )}\Vert \nabla \mathbf{u}_{2}\Vert _{L^{2}(\Omega )}\Vert
\mathbf{u}\Vert _{L^{4}(\Omega )} \\
& \leq \frac{\nu _{\ast }}{12}\Vert D\mathbf{u}\Vert _{L^{2}(\Omega
)}^{2}+C\Vert \mathbf{u}\Vert _{L^{2}(\Omega )}^{2}.
\end{align*}%
Thanks to \eqref{RHO-Jtilde}, \eqref{regg} and \eqref{W14bis}, it follows
that
\begin{align*}
\left\vert \int_{\Omega }(\rho (\phi _{1})-\rho (\phi _{2}))(\mathbf{u}%
_{2}\cdot \nabla )\mathbf{u}_{2}\cdot \mathbf{u}\,\mathrm{d}x\right\vert &
\leq C\Vert \Phi \Vert _{L^{4}(\Omega )}\Vert \mathbf{u}_{2}\Vert
_{L^{\infty }(\Omega )}\Vert \nabla \mathbf{u}_{2}\Vert _{L^{2}(\Omega
)}\Vert \mathbf{u}\Vert _{L^{4}(\Omega )} \\
& \leq C(\Vert \Phi \Vert _{L^{2}(\Omega )}+\Vert \nabla \Phi \Vert
_{L^{2}(\Omega )})\Vert \mathbf{u}_{2}\Vert _{H^{2}(\Omega )}^{\frac{1}{2}%
}\Vert \mathbf{u}\Vert _{L^{2}(\Omega )}^{\frac{1}{2}}\Vert D\mathbf{u}\Vert
_{L^{2}(\Omega )}^{\frac{1}{2}} \\
& \leq \frac{\nu _{\ast }}{12}\Vert D\mathbf{u}\Vert _{L^{2}(\Omega )}^{2}+%
\frac{\alpha }{10}\Vert \nabla \Phi \Vert _{L^{2}(\Omega )}^{2} \\
& \quad +C\Vert \mathbf{u}_{2}\Vert _{H^{2}(\Omega )}^{2}\Vert \mathbf{u}%
\Vert _{L^{2}(\Omega )}^{2}+C\Vert \Phi \Vert _{L^{2}(\Omega )}^{2}.
\end{align*}%
Next, recalling that the separation property of $\phi _{1}$ and $\phi _{2}$
implies that $|F^{\prime }(\phi _{1})-F^{\prime }(\phi _{2})|\leq C|\phi |$
almost everywhere in $\Omega \times (0,T)$ for some universal constant $C$
(depending only on the norms of the initial data), and by using the
assumption on $J$, we infer that
\begin{align*}
\left\vert \frac{\rho _{1}-\rho _{2}}{2}\int_{\Omega }\Theta \Delta \mathbf{u%
}_{2}\cdot \mathbf{u}\,\mathrm{d}x\right\vert & =\frac{\rho _{1}-\rho _{2}}{2%
}\left\vert \int_{\Omega }(F^{\prime }(\phi _{1})-F^{\prime }(\phi
_{2}))\Delta \mathbf{u}_{2}\cdot \mathbf{u}\,\mathrm{d}x-\int_{\Omega
}(J\ast \Phi )\Delta \mathbf{u}_{2}\cdot \mathbf{u}\,\mathrm{d}x\right\vert
\\
& \leq C\Vert \Phi \Vert _{L^{4}(\Omega )}\Vert \Delta \mathbf{u}_{2}\Vert
_{L^{2}(\Omega )}\Vert \mathbf{u}\Vert _{L^{4}(\Omega )} \\
& \leq C\Vert \Phi \Vert _{L^{2}(\Omega )}^{\frac{1}{2}}\left( \Vert \Phi
\Vert _{L^{2}(\Omega )}^{\frac{1}{2}}+\Vert \nabla \Phi \Vert _{L^{2}(\Omega
)}^{\frac{1}{2}}\right) \Vert \Delta \mathbf{u}_{2}\Vert _{L^{2}(\Omega
)}\Vert \mathbf{u}\Vert _{L^{2}(\Omega )}^{\frac{1}{2}}\Vert D\mathbf{u}%
\Vert _{L^{2}(\Omega )}^{\frac{1}{2}} \\
& \leq \frac{\nu _{\ast }}{12}\Vert D\mathbf{u}\Vert _{L^{2}(\Omega )}^{2}+%
\frac{\alpha }{10}\Vert \nabla \Phi \Vert _{L^{2}(\Omega )}^{2} \\
& \quad +C\left( 1+\Vert \Delta \mathbf{u}_{2}\Vert _{L^{2}(\Omega
)}^{2}\right) \left( \Vert \Phi \Vert _{L^{2}(\Omega )}^{2}+\Vert \mathbf{u}%
\Vert _{L^{2}(\Omega )}^{2}\right) .
\end{align*}%
Similarly, we find
\begin{align*}
\left\vert \frac{\rho _{1}-\rho _{2}}{2}\int_{\Omega }\Theta \nabla \mathbf{u%
}_{2}:\nabla \mathbf{u}\,\mathrm{d}x\right\vert & =\frac{\rho _{1}-\rho _{2}%
}{2}\left\vert \int_{\Omega }(F^{\prime }(\phi _{1})-F^{\prime }(\phi
_{2}))\nabla \mathbf{u}_{2}:\nabla \mathbf{u}\,\mathrm{d}x-\int_{\Omega
}(J\ast \Phi )\nabla \mathbf{u}_{2}:\nabla \mathbf{u}\,\mathrm{d}x\right\vert
\\
& \leq C\Vert \Phi \Vert _{L^{4}(\Omega )}\Vert \nabla \mathbf{u}_{2}\Vert
_{L^{4}(\Omega )}\Vert \nabla \mathbf{u}\Vert _{L^{2}(\Omega )} \\
& \leq C\Vert \Phi \Vert _{L^{2}(\Omega )}^{\frac{1}{2}}\left( \Vert \Phi
\Vert _{L^{2}(\Omega )}^{\frac{1}{2}}+\Vert \nabla \Phi \Vert _{L^{2}(\Omega
)}^{\frac{1}{2}}\right) \Vert \nabla \mathbf{u}_{2}\Vert _{L^{4}(\Omega
)}\Vert \nabla \mathbf{u}\Vert _{L^{2}(\Omega )} \\
& \leq \frac{\nu _{\ast }}{12}\Vert D\mathbf{u}\Vert _{L^{2}(\Omega )}^{2}+%
\frac{\alpha }{10}\Vert \nabla \Phi \Vert _{L^{2}(\Omega )}^{2}+C\left(
1+\Vert \nabla \mathbf{u}_{2}\Vert _{L^{4}(\Omega )}^{4}\right) \Vert \Phi
\Vert _{L^{2}(\Omega )}^{2}
\end{align*}%
and
\begin{align*}
\left\vert \int_{\Omega }(\nu (\phi _{1})-\nu (\phi _{2}))D\mathbf{u}%
_{2}:\nabla \mathbf{u}\,\mathrm{d}x\right\vert & \leq C\Vert \Phi \Vert
_{L^{4}(\Omega )}\Vert D\mathbf{u}_{2}\Vert _{L^{4}(\Omega )}\Vert \nabla
\mathbf{u}\Vert _{L^{2}(\Omega )} \\
& \leq \frac{\nu _{\ast }}{12}\Vert D\mathbf{u}\Vert _{L^{2}(\Omega )}^{2}+%
\frac{\alpha }{10}\Vert \nabla \Phi \Vert _{L^{2}(\Omega )}^{2}+C\left(
1+\Vert \nabla \mathbf{u}_{2}\Vert _{L^{4}(\Omega )}^{4}\right) \Vert \Phi
\Vert _{L^{2}(\Omega )}^{2}.
\end{align*}%
Due to the separation property and \ref{J-ass}, we also have
\begin{align*}
\left\vert \int_{\Omega }\Phi (\nabla J\ast \phi _{1})\cdot \mathbf{u}\,%
\mathrm{d}x+\int_{\Omega }\phi _{1}(\nabla J\ast \Phi )\cdot \mathbf{u}%
\mathrm{d}x\right\vert & \leq C\left( \Vert \phi _{1}\Vert _{L^{\infty
}(\Omega )}+\Vert \phi _{2}\Vert _{L^{\infty }(\Omega )}\right) \Vert \Phi
\Vert _{L^{2}(\Omega )}\Vert \mathbf{u}\Vert _{L^{2}(\Omega )} \\
& \leq C\left( \Vert \Phi \Vert _{L^{2}(\Omega )}^{2}+\Vert \mathbf{u}\Vert
_{L^{2}(\Omega )}^{2}\right),
\end{align*}%
\begin{equation*}
\left\vert \int_{\Omega }\phi _{2}(\mathbf{u}\cdot \nabla \Phi )\,\mathrm{d}%
x\right\vert \leq \Vert \phi _{2}\Vert _{L^{\infty }(\Omega )}\Vert \mathbf{u%
}\Vert _{L^{2}(\Omega )}\Vert \nabla \Phi \Vert _{L^{2}(\Omega )}\leq \frac{%
\alpha }{10}\Vert \nabla \Phi \Vert _{L^{2}(\Omega )}^{2}+C\Vert \mathbf{u}%
\Vert _{L^{2}(\Omega )}^{2},
\end{equation*}%
and
\begin{equation*}
\left\vert \int_{\Omega }\nabla J\ast \Phi \cdot \nabla \Phi \,\mathrm{d}%
x\right\vert \leq \frac{\alpha }{10}\Vert \nabla \Phi \Vert _{L^{2}(\Omega
)}^{2}+C\Vert \Phi \Vert _{L^{2}(\Omega )}^{2}.
\end{equation*}%
Lastly, using again the separation property, we observe that $|F^{\prime
\prime }(\phi _{1})-F^{\prime \prime }(\phi _{2})|\leq C|\phi |$ almost
everywhere in $\Omega \times (0,T)$ for some universal constant $C$.
Combining this fact and \eqref{LADY}, we get
\begin{align*}
\left\vert \int_{\Omega }(F^{\prime \prime }(\phi _{1})-F^{\prime \prime
}(\phi _{2}))\nabla \phi _{2}\cdot \nabla \phi \,\mathrm{d}x\right\vert &
\leq C\Vert \Phi \Vert _{L^{4}(\Omega )}\Vert \nabla \phi _{2}\Vert
_{L^{4}(\Omega )}\Vert \nabla \Phi \Vert _{L^{2}(\Omega )} \\
& \leq C\Vert \Phi \Vert _{L^{2}(\Omega )}^{\frac{1}{2}}\left( \Vert \Phi
\Vert _{L^{2}(\Omega )}^{\frac{1}{2}}+\Vert \nabla \Phi \Vert _{L^{2}(\Omega
)}^{\frac{1}{2}}\right) \Vert \nabla \phi _{2}\Vert _{L^{4}(\Omega )}\Vert
\nabla \Phi \Vert _{L^{2}(\Omega )} \\
& \leq \frac{\alpha }{10}\Vert \nabla \Phi \Vert _{L^{2}(\Omega
)}^{2}+C\left( 1+\Vert \nabla \phi _{2}\Vert _{L^{4}(\Omega )}^{4}\right)
\Vert \Phi \Vert _{L^{2}(\Omega )}^{2}.
\end{align*}%
Therefore, adding up the above estimates, we deduce that
\begin{equation*}
\frac{\mathrm{d}}{\mathrm{d}t}\left( \int_{\Omega }\frac{\rho (\phi _{1})}{2}%
|\mathbf{u}|^{2}\,\mathrm{d}x+\frac{1}{2}\Vert \Phi \Vert _{L^{2}(\Omega
)}^{2}\right) \leq K\left( \int_{\Omega }\frac{\rho (\phi _{1})}{2}|%
\mathbf{u}|^{2}\,\mathrm{d}x+\frac{1}{2}\Vert \Phi \Vert _{L^{2}(\Omega
)}^{2}\right),
\end{equation*}%
where $K$ is defined by \eqref{contdepconst}. Note that $K\in L^{1}(0,T)$
owing to \eqref{regg}. In conclusion, an application of the Gronwall lemma gives uniqueness of strong
solutions as well as \eqref{contdep}.
%\begin{equation*}
%K(t)=C\left( 1+\Vert \partial _{t}\mathbf{u}_{2}(t)\Vert _{L^{2}(\Omega
%)}^{2}+\Vert \nabla \mathbf{u}_{2}(t)\Vert _{L^{4}(\Omega )}^{4}+\Vert
%\mathbf{u}_{2}(t)\Vert _{H^{2}(\Omega )}^{2}+\Vert \nabla \phi _{2}(t)\Vert
%_{L^{4}(\Omega )}^{4}\right) .
%\end{equation*}%
%Here, the constant $C$ depends on the norms of both the initial data.

\section{Proof of Theorem \protect\ref{MR:weak}: Propagation of regularity
for weak solutions}

Let $(\mathbf{u},\phi )$ be a weak solution on $[0,T]$ satisfying (i)-(iv)
as ensured by Theorem \ref{R:weak-sol} and let $\tau \in (0,T)$ be fixed.
Since $F^{\prime }(\phi )\in L^{2}(0,T;H^{1}(\Omega ))$, exploiting the
conservation of mass and \eqref{regg-weak}, there exists $\tau _{1}\in
(0,\tau )$ such that
\begin{equation*}
\phi (\tau _{1})\in H^{1}(\Omega ),\quad \left\vert \overline{\phi (\tau
_{1})}\right\vert <1,\quad \text{and}\quad F^{\prime }(\phi (\tau _{1}))\in
H^{1}(\Omega ).
\end{equation*}%
Recalling now that $C_{\mathrm{w}}([0,T];L_{\sigma }^{2})\cap
L^{2}(0,T;H_{0,\sigma }^{1}(\Omega ))\hookrightarrow L^{4}(0,T;L_{\sigma
}^{4}(\Omega ))$, an application of Theorem \ref{ExistCahn} (see also Remark %
\ref{R3}) entails that
\begin{equation}
\begin{cases}
 \phi \in L^{\infty }(\tau _{1},T;L^{\infty }(\Omega )):\quad |\phi
(x,t)|<1\ \text{ for a.a. } x\in\Omega ,\,\forall \,t\in \lbrack \tau _{1},T], \\
 \phi \in L^{\infty }(\tau _{1},T;H^{1}(\Omega ))\cap L^{q}(\tau
_{1},T;W^{1,p}(\Omega )),\quad q=\frac{2p}{p-2},\quad \forall \,p\in
(2,\infty ), \\
 \partial _{t}\phi \in L^{\infty }(\tau _{1},T;H^{1}(\Omega )^{\prime
})\cap L^{2}(\tau _{1},T;L^{2}(\Omega )), \\
 \mu \in L^{\infty }(\tau _{1},T;H^{1}(\Omega ))\cap L^{2}(\tau
_{1},T;H^{2}(\Omega ))\cap H^{1}(\tau _{1},T;H^{1}(\Omega )^{\prime }), \\
 F^{\prime }(\phi )\in L^{\infty }(\tau _{1},T;H^{1}(\Omega )),\quad
F^{\prime \prime }(\phi )\in L^{\infty }(\tau _{1},T;L^{p}(\Omega )),\quad
\forall \,p\in \lbrack 2,\infty ),
\end{cases}
\label{W:reg1}
\end{equation}%
which satisfies
\begin{equation}
\partial _{t}\phi +\mathbf{u}\cdot \nabla \phi =\Delta \mu ,\quad \mu
=F^{\prime }(\phi )-J\ast \phi ,\quad \text{ a.e. in } \Omega \times (\tau
_{1},T),  \label{W:problem}
\end{equation}%
as well as
\begin{equation}
\mathcal{E}(\phi (t))+\int_{\tau_1}^{t}\Vert \nabla \mu (\tau )\Vert
_{L^{2}(\Omega )}^{2}\,\mathrm{d}\tau +\int_{\tau_1}^{t}\int_{\Omega }\phi \,%
\mathbf{u}\cdot \nabla \mu \,\mathrm{d}x\,\mathrm{d}\tau =\mathcal{E}(\phi
_{\tau_1}),\quad \forall \,t\in \lbrack \tau _{1},T].  \label{W:en-id}
\end{equation}%
In addition, there exists $\tau _{2}\in (\tau _{1},\tau )$ such that
\begin{equation}
\sup_{t\in \lbrack \tau _{2},T]}\Vert \phi (t)\Vert _{L^{\infty }(\Omega
)}\leq 1-\delta .  \label{W:sp}
\end{equation}%
Furthermore, we also have
\begin{equation}
\partial _{t}\mu \in L^{2}(\tau _{2},T;L^{2}(\Omega ))\quad \text{and}\quad
\mu \in C([\tau _{2},T];H^{1}(\Omega )).  \label{W:reg2}
\end{equation}%
It is worth pointing out that the uniqueness of weak solutions in Theorem %
\ref{ExistCahn} guarantees that $\phi $ and the solution originating from $%
\phi (\tau _{1})$ coincide.

Next, in light of the above propagation of regularity of the concentration,
we improve the regularity of $\partial _{t}(\rho (\phi )\mathbf{u})$. To
this end, we first recall from \eqref{weak-NS} that
\begin{equation}
\langle \partial _{t}(\rho (\phi )\mathbf{u}),\mathbf{v}\rangle
_{H_{0,\sigma }^{2}(\Omega )}=(\mathbf{f},\mathbf{v}),\quad \forall \,%
\mathbf{v}\in H_{0,\sigma }^{2}(\Omega ),  \label{W:mom1}
\end{equation}%
for almost any $t\in (\tau _{1},T)$, where
\begin{equation*}
(\mathbf{f},\mathbf{v})=(\rho (\phi )\mathbf{u}\otimes \mathbf{u},\nabla
\mathbf{v})-(\nu (\phi )D\mathbf{u},\nabla \mathbf{v})-\frac{\rho _{1}-\rho
_{2}}{2}(\mathbf{u}\otimes \nabla \mu ,\nabla \mathbf{v})+(\mu \nabla \phi ,%
\mathbf{v}).
\end{equation*}%
Thanks to \eqref{W:reg1}, we find that
\begin{align*}
|(\mathbf{f},\mathbf{v})|& \leq C\left( \Vert \mathbf{u}\Vert _{L^{4}(\Omega
)}^{2}+\Vert D\mathbf{u}\Vert _{L^{2}(\Omega )}+\Vert \nabla \mu \Vert
_{L^{4}(\Omega )}\Vert \mathbf{u}\Vert _{L^{4}(\Omega )}+\Vert \mu \Vert
_{L^{4}(\Omega )}\Vert \nabla \phi \Vert _{L^{2}(\Omega )}\right) \Vert
\mathbf{v}\Vert _{H_{0,\sigma }^{1}(\Omega )} \\
& \leq C\left( 1+\Vert \nabla \mathbf{u}\Vert _{L^{2}(\Omega )}+\Vert \nabla
\mu \Vert _{H^{1}(\Omega )}\right) \Vert \mathbf{v}\Vert _{H_{0,\sigma
}^{1}(\Omega )},
\end{align*}%
for any $\mathbf{v}\in H_{0,\sigma }^{2}(\Omega )$ and almost any $t\in
(\tau _{1},T)$. Since $H_{0,\sigma }^{2}(\Omega )$ is dense in $H_{0,\sigma
}^{1}(\Omega )$, the functional $\mathbf{f}$ has a unique extension to $%
H_{0,\sigma }^{1}(\Omega )$. As a result, we deduce that $\mathbf{f}\in
L^{2}(\tau _{1},T;H_{0,\sigma }^{1}(\Omega )^{\prime })$. By definition of
the weak time derivative, this clearly entails that $\partial _{t}(\rho
(\phi )\mathbf{u})\in L^{2}(\tau _{1},T;H_{0,\sigma }^{1}(\Omega )^{\prime
}) $ and
\begin{equation}
\langle \partial _{t}(\rho (\phi )\mathbf{u}),\mathbf{v}\rangle
_{H_{0,\sigma }^{1}(\Omega )}=(\mathbf{f},\mathbf{v}),\quad \forall \,%
\mathbf{v}\in H_{0,\sigma }^{1}(\Omega ),  \label{W:mom2}
\end{equation}%
almost everywhere in $(\tau _{1},T)$. As a consequence, we can apply \cite[%
Lemma 5.3]{Frigeri} which gives that the chain rule
\begin{equation*}
\langle \partial _{t}(\rho (\phi )\mathbf{u}),\mathbf{u}\rangle
_{H_{0,\sigma }^{1}(\Omega )^{\prime }}=\frac{1}{2}\frac{\mathrm{d}}{\mathrm{%
d}t}\int_{\Omega }\rho (\phi )|\mathbf{u}|^{2}\,\mathrm{d}x+\frac{1}{2}%
\int_{\Omega }\partial _{t}\rho (\phi )|\mathbf{u}|^{2}\,\mathrm{d}x
\end{equation*}%
holds almost everywhere in $(\tau _{1},T)$. Then, since $\mathbf{u}\in
L^{2}(0,T;H_{0,\sigma }^{1}(\Omega ))$ is now allowed as a test function in %
\eqref{W:mom2}, we obtain%
\begin{align*}
\frac{1}{2}\frac{\mathrm{d}}{\mathrm{d}t}\int_{\Omega }\rho (\phi )|\mathbf{u%
}|^{2}\,\mathrm{d}x+\frac{1}{2}\int_{\Omega }\partial _{t}\rho (\phi )|%
\mathbf{u}|^{2}\,\mathrm{d}x-\int_{\Omega }\rho (\phi )\mathbf{u}\otimes
\mathbf{u}& :\nabla \mathbf{u}\,\mathrm{d}x \\
+\int_{\Omega }\nu (\phi )|D\mathbf{u}|^{2}\,\mathrm{d}x+\frac{\rho
_{1}-\rho _{2}}{2}\int_{\Omega }\mathbf{u}\cdot \left( \nabla \mu \cdot
\nabla \right) \mathbf{u}\,\mathrm{d}x& =\int_{\Omega }\mu \nabla \phi \cdot
\mathbf{u}\,\mathrm{d}x,
\end{align*}%
almost everywhere in $(\tau _{1},T)$. Thanks to \eqref{W:problem}, we
observe that
\begin{equation*}
\frac{\rho _{1}-\rho _{2}}{2}\int_{\Omega }\partial _{t}\phi \,\frac{|%
\mathbf{u}|^{2}}{2}\,\mathrm{d}x-\int_{\Omega }\rho (\phi )\mathbf{u}\cdot
\nabla \left( \frac{|\mathbf{u}|^{2}}{2}\right) \,\mathrm{d}x+\frac{\rho
_{1}-\rho _{2}}{2}\int_{\Omega }\nabla \mu \cdot \nabla \left( \frac{|%
\mathbf{u}|^{2}}{2}\right) \,\mathrm{d}x=0.
\end{equation*}%
Thus, after an integration in time, we reach
\begin{align*}
& \frac{1}{2}\int_{\Omega }\rho (\phi (t))|\mathbf{u}(t)|^{2}\,\mathrm{d}%
x+\int_{0}^{t}\left\Vert \sqrt{\nu (\phi (s))}D\mathbf{u}(s)\right\Vert
_{L^{2}(\Omega )}^{2}\,\mathrm{d}s-\int_{0}^{t}\int_{\Omega }\mu \nabla \phi
\cdot \mathbf{u}\,\mathrm{d}x\,\mathrm{d}s \\
& =\frac{1}{2}\int_{\Omega }\rho (\phi (\tau _{1}))|\mathbf{u}(\tau
_{1})|^{2}\,\mathrm{d}x,
\end{align*}%
for all $t\in \lbrack \tau _{1},T]$. In light of \eqref{W:en-id}, we find
the energy identity
\begin{equation*}
E(\mathbf{u}(t),\phi (t))+\int_{\tau_1}^{t}\left\Vert \sqrt{\nu (\phi (s))}D%
\mathbf{u}(s)\right\Vert _{L^{2}(\Omega )}^{2}+\Vert \nabla \mu (s)\Vert
_{L^{2}(\Omega )}^{2}\,\mathrm{d}s=E(\mathbf{u}(\tau _{1}),\phi (\tau
_{1})),\quad \forall \,\tau _{1}\leq t\leq T.
\end{equation*}

Next, owing to the energy identity, and exploiting \eqref{W:reg1} and %
\eqref{W:sp}, there exists $\tau_3 \in (\tau_2, \tau)$, such that
\begin{align}  \label{W:tau3}
\mathbf{u}(\tau_3)\in H^1_{0,\sigma}(\Omega),\quad \phi(\tau_3)\in
H^1(\Omega),\quad F^\prime(\phi(\tau_3))\in H^1(\Omega),
\quad \Vert\phi(\tau_3)\Vert_{L^\infty(\Omega)}\leq 1-\delta.
\end{align}
%	\begin{remark}
%		Notice that the strict separation of $\phi(\tau_2)$ justifies the two step procedure: if we had simply considered the time $\tau_1$, we would have obtained the regularity \eqref{reg1}, but $\phi(\tau_1)$ would not have been separated, preventing the following uniqueness argument from working.
%	\end{remark}
An application of Theorem \ref{MR:strong} yields the existence of a unique
global strong solution $\left( \widetilde{\mathbf{u}}, \widetilde{\Pi},
\widetilde{\phi}\right)$ to \eqref{syst2}-\eqref{bic} on $[\tau_3, \infty)$
departing from the initial datum $(\mathbf{u}(\tau_3),\phi(\tau_3))$. Our
aim is to show that actually $\left( \widetilde{\mathbf{u}}(t),\widetilde{%
\phi}(t)\right)$ coincides with $(\mathbf{u}(t),\phi(t))$ on $[\tau_3,T]$.
To achieve it, we argue similarly to the proof of the uniqueness for strong
``separated" solutions given in Section \ref{Uniqueness}. In particular, we
will only show the main differences. For the clarity of presentation, we set
$(\mathbf{u}_1(t),\phi_1(t))=(\mathbf{u}(t+\tau_3),\phi(t+\tau_3))$ for $%
t\in [0,T-\tau_3]$ and $(\mathbf{u}_2(t),\phi_2(t))=(\widetilde{\mathbf{u}}%
(t),\widetilde{\phi}(t))$ for $t\in [0,\infty)$. The initial data become $(%
\mathbf{u}_1(0),\phi_1(0))=(\mathbf{u}_2(0),\phi_2(0))=(\mathbf{u}%
(\tau_3),\phi(\tau_3))$. Furthermore, we recall that
\begin{equation}  \label{W:reg:phi_1}
\begin{cases}
\mathbf{u}_1 \in C_{\mathrm{w}}([0,T-\tau_3]; L^2_{\sigma})\cap
L^2(0,T-\tau_3; H^1_{0,\sigma}(\Omega)), \quad \partial_t(\rho(\phi_1)%
\mathbf{u}_1 ) \in L^{2}(0,T-\tau_3;H^1_{0,\sigma}(\Omega)^{\prime}), \\
\phi_1\in L^\infty(0,T-\tau_3;H^1(\Omega)) \cap L^q(
0,T-\tau_3;W^{1,p}(\Omega)),\quad q=\frac{2p}{p-2},\quad \forall \,
p\in(2,\infty), \\
\partial_t\phi_1\in L^\infty( 0,T-\tau_3;H^1(\Omega)^\prime)\cap
L^2(0,T-\tau_3; L^2(\Omega)), \\
\mu_1\in L^\infty(0,T-\tau_3; H^1(\Omega)) \cap L^2(0,T-\tau_3;H^2(\Omega))
\cap H^1(0,T-\tau_3; H^1(\Omega)^\prime), \\
 F^\prime(\phi_1)\in L^\infty(0,T-\tau_3; H^1(\Omega)).
\end{cases}%
\end{equation}
Thanks to \eqref{W:problem} and \eqref{W:reg:phi_1}, \eqref{W:mom2} can be
rewritten as follows
\begin{equation}  \label{W:mom3}
\begin{split}
&\langle \partial_t(\rho(\phi)\mathbf{u}),\mathbf{v} \rangle_{H^1_{0,%
\sigma}(\Omega)} -\int_\Omega \partial_t \rho(\phi_1) \, \mathbf{u}_1\cdot
\mathbf{w} \, \mathrm{d} x + \int_\Omega \left( \rho(\phi_1) \mathbf{u}_1
\cdot \nabla\right) \mathbf{u}_1 \cdot \mathbf{w} \, \mathrm{d} x \\
&\quad - \int_\Omega \left( \rho^{\prime}(\phi_1)\nabla \mu_1 \cdot \nabla
\right) \mathbf{u}_1 \cdot \mathbf{w} \, \mathrm{d} x +\int_\Omega
\nu(\phi_1) D \mathbf{u}_1 : D \mathbf{w} \, \mathrm{d} x =- \int_\Omega
\phi_1 (\nabla J\ast \phi_1) \cdot \mathbf{w} \, \mathrm{d} x,
\end{split}%
\end{equation}
for any $\mathbf{w} \in H^1_{0,\sigma}(\Omega)$ and almost any $t\in
(\tau_1,T)$. At the same time, the pair $(\mathbf{u}_2,\phi_2)$ satisfies %
\eqref{regg} as well as
\begin{equation}  \label{W:mom4}
\begin{split}
&\rho(\phi_2 ) \partial_t \mathbf{u}_2+\rho(\phi_2 )(\mathbf{u}_2
\cdot\nabla)\mathbf{u}_2 -\rho^\prime(\phi_2 )(\nabla\mu_2 \cdot\nabla)%
\mathbf{u}_2 -\mathrm{div}\, (\nu(\phi_2 )D\mathbf{u}_2 ) +\nabla
\Pi_2=\mu_2 \nabla\phi_2, \\[5pt]
&\partial_t \phi_2 +\mathbf{u}_2 \cdot \nabla\phi_2 =\Delta\mu_2,\quad
\mu_2= F^{\prime}(\phi_2)-J\ast \phi_2,
\end{split}
\end{equation}
almost everywhere in $\Omega \times (0,T-\tau_3)$. Moreover, it is essential
to notice that both $\phi_1$ and $\phi_2$ are strictly separated since the
initial concentration $\phi(\tau_2)$ is strictly separated, i.e.
$\| \phi_i(t)\|_{L^\infty(\Omega)}\leq 1-\delta$, for all $t\in[0,T-\tau_3]$, 
$i=1,2$, for some $\delta \in (0,1)$.

We now set $(\mathbf{u},\Phi )=(\mathbf{u}_{1}-\mathbf{u}_{2},\phi _{1}-\phi
_{2})$ in $[0,T-\tau _{3}]$. It follows from \eqref{W:mom3} and %
\eqref{W:mom4} that this pair satisfies the weak formulation:
\begin{equation}
\begin{split}
& \langle \partial _{t}(\rho (\phi _{1})\mathbf{u}),\mathbf{w}\rangle
_{H_{0,\sigma }^{1}(\Omega )}-\left( \partial _{t}\rho (\phi _{1})\,\mathbf{u%
}_{1},\mathbf{w}\right) +\left( \partial _{t}(\rho (\phi _{1})\mathbf{u}%
_{2}),\mathbf{w}\right) -\left( \rho (\phi _{2})\partial _{t}\mathbf{u}_{2},%
\mathbf{w}\right) \\
& \quad +\left( \rho (\phi _{1})(\mathbf{u}_{1}\cdot \nabla )\mathbf{u},%
\mathbf{w}\right) +\left( \rho (\phi _{1})(\mathbf{u}\cdot \nabla )\mathbf{u}%
_{2},\mathbf{w}\right) +\left( (\rho (\phi _{1})-\rho (\phi _{2}))(\mathbf{u}%
_{2}\cdot \nabla )\mathbf{u}_{2},\mathbf{w}\right) \\
& \quad -\frac{\rho _{1}-\rho _{2}}{2}\left( (\nabla \mu _{1}\cdot \nabla )%
\mathbf{u},\mathbf{w}\right) -\frac{\rho _{1}-\rho _{2}}{2}\left( (\nabla
\Theta \cdot \nabla )\mathbf{u}_{2},\mathbf{w}\right) +(\nu (\phi _{1})D%
\mathbf{u},\nabla \mathbf{w}) \\
& \quad +\left( (\nu (\phi _{1})-\nu (\phi _{2}))D\mathbf{u}_{2},\nabla
\mathbf{w}\right) =(\mu _{1}\nabla \Phi ,\mathbf{w})+(\Theta \nabla \phi
_{2},\mathbf{w}),
\end{split}
\label{vel_s2}
\end{equation}%
for any $\mathbf{w}\in H_{0,\sigma }^{1}(\Omega )$, in $(0,T-\tau _{3})$, and
\begin{equation}
\partial _{t}\Phi +\mathbf{u}_{1}\cdot \nabla \Phi +\mathbf{u}\cdot \nabla
\phi _{2}=\Delta \Theta ,\quad \Theta =F^{\prime }(\phi _{1})-F^{\prime
}(\phi _{2})-J\ast \Phi \quad \text{a.e. in }\Omega \times (0,T-\tau _{3}).
\label{phi_s2}
\end{equation}%
%
%
%
%
%
%
%
%
%
%
%
%
%
%
%
%
%
%
%
%
%
%
%
%
%
%
%
%
%
%
%
%
%
%
%
%
%
%
%
%
%
%
%We have used in \eqref{vel_s2} the following chain rule
%\begin{equation*}
%\begin{split}
%\langle \partial_t \left( (\rho(\phi_1)-\rho(\phi_2) \uu_2 )\right), \ww \rangle_{H^1_{0,\sigma}(\Omega)}
%&= \int_\Omega \partial_t \left( (\rho(\phi_1)-\rho(\phi_2) \uu_2 )\right) \cdot \ww \, \d x
%\\
%&=-\frac{\rho_1+\rho_2}{2} \left( \partial_t \Phi \, \uu_2, \ww \right)
%-\frac{\rho_1+\rho_2}{2} \left( \Phi \, \partial_t \uu_2, \ww \right),
%\end{split}
%\end{equation*}
%for any $\ww \in H^1_{0,\sigma}(\Omega)$ and for almost any $t\in (0,T-\tau_3)$, which follows from $\phi_i \in L^\infty(\Omega \times (0,T-\tau_3))\cap H^1(0,T-\tau_3; L^2(\Omega))$, for $i=1,2$, and $\uu_2 \in L^2(0,T-\tau_3;H^2(\Omega))\cap H^1(0,T-\tau_3;L_\sigma^2(\Omega))$.
%	Concerning the remaining term, we need to exploit the higher regularity on the velocity $\textbf{u}_2$ ensured by the strong solution: in particular, being $\textbf{u}_2\in H^1(0,T;\textbf{H}_\sigma)$ and $\phi_i\in L^2(0,T;H)$, $i=1,2$, we can apply the Leibniz rule to get
%	\begin{align*}
%	&-\partial_t(({\rho}(\phi_1)-{\rho}(\phi_2))\textbf{u}_2)=\frac{\rho_1-\rho_2}{2}\partial_t\phi_2\textbf{u}_2+\rho(\phi_2)\partial_t\textbf{u}_2-\frac{\rho_1-\rho_2}{2}\partial_t\phi_1\textbf{u}_2-\rho(\phi_1)\partial_t\textbf{u}_2\\&=-\frac{\rho_1-\rho_2}{2}\partial_t\phi\textbf{u}_2+(\rho(\phi_2)-\rho(\phi_1))\partial_t\textbf{u}_2\in \textbf{L}^2(\Omega)\quad\text{ for a.a.}\ t\in(0,T).
%%	\end{align*}
As next step, we take $\mathbf{w}=\mathbf{u}$ in \eqref{vel_s2} and apply
the chain rule formula in \cite[Lemma 5.3]{Frigeri} with $\widehat{\rho }%
=\rho (\phi _{1})$ and $\mathbf{u}$ on the interval $(0,T-\tau _{3})$.
Clearly, we have $\widehat{\rho }\in H^{1}(0,T-\tau _{3};L^{2}(\Omega ))$
and $\mathbf{u}\in L^{\infty }(0,T-\tau _{3};L_{\sigma }^{2}(\Omega ))\cap
L^{2}(0,T-\tau _{3};H_{0,\sigma }^{1}(\Omega ))$. We now claim that $%
\partial _{t}(\widehat{\rho }\mathbf{u})\in L^{2}(0,T;H_{0,\sigma
}^{1}(\Omega )^{\prime })$. In fact, by definition, we have
\begin{equation*}
\partial _{t}(\widehat{\rho }\mathbf{u})=\partial _{t}({\rho }(\phi _{1})%
\mathbf{u}_{1})-\partial _{t}({\rho }(\phi _{1})\mathbf{u}_{2}),
\end{equation*}%
where $\partial _{t}({\rho }(\phi _{1})\mathbf{u}_{1})\in
L^{2}(0,T;H_{0,\sigma }^{1}(\Omega )^{\prime })$ by the first part of the
proof, and $\partial _{t}({\rho }(\phi _{1})\mathbf{u}_{2})\in
L^{2}(0,T;H_{0,\sigma }^{1}(\Omega )^{\prime })$ by \eqref{W:reg:phi_1} and $%
\mathbf{u}_{2}\in L^{2}(0,\infty ;H_{0,\sigma }^{2}(\Omega ))\cap
H^{1}(0,\infty ;L_{\sigma }^{2}(\Omega ))$. Therefore, by using \cite[Lemma
5.3]{Frigeri}, the chain rule
\begin{equation*}
\langle \partial _{t}(\rho (\phi _{1})\mathbf{u}),\mathbf{u}\rangle
_{H_{0,\sigma }^{1}(\Omega )}=\frac{1}{2}\frac{\mathrm{d}}{\mathrm{d}t}%
\int_{\Omega }\rho (\phi )_{1}|\mathbf{u}|^{2}\,\mathrm{d}x+\frac{1}{2}%
\int_{\Omega }\partial _{t}\rho (\phi _{1})|\mathbf{u}|^{2}\,\mathrm{d}x
\end{equation*}%
holds almost everywhere in $(0,T-\tau _{3})$. Also, we observe that
\begin{equation*}
-\left( \partial _{t}\rho (\phi _{1})\mathbf{u}_{1},\mathbf{u}\right)
+\left( \partial _{t}(\rho (\phi _{1})\mathbf{u}_{2}),\mathbf{u}\right)
=-\int_{\Omega }\partial _{t}\rho (\phi _{1})|\mathbf{u}|^{2}\,\mathrm{d}%
x+(\rho (\phi _{1})\partial _{t}\mathbf{u}_{2},\mathbf{u}).
\end{equation*}%
Thus, exploiting the above relations and \eqref{U:rel1}-\eqref{U:rel3}, we
find (cf. \eqref{U-uni})
\begin{equation}
\begin{split}
& \frac{\mathrm{d}}{\mathrm{d}t}\int_{\Omega }\frac{\rho (\phi _{1})}{2}|%
\mathbf{u}|^{2}\,\mathrm{d}x+\int_{\Omega }\nu (\phi _{1})|D\mathbf{u}|^{2}\,%
\mathrm{d}x \\
& =-\int_{\Omega }(\rho (\phi _{1})-\rho (\phi _{2})\partial _{t}\mathbf{u}%
_{2}\cdot \mathbf{u}\,\mathrm{d}x-\int_{\Omega }\rho (\phi _{1})(\mathbf{u}%
\cdot \nabla )\mathbf{u}_{2}\cdot \mathbf{u}\,\mathrm{d}x-\int_{\Omega
}(\rho (\phi _{1})-\rho (\phi _{2}))(\mathbf{u}_{2}\cdot \nabla )\mathbf{u}%
_{2}\cdot \mathbf{u}\,\mathrm{d}x \\
& \quad -\int_{\Omega }(\nu (\phi _{1})-\nu (\phi _{2}))D\mathbf{u}%
_{2}:\nabla \mathbf{u}\,\mathrm{d}x-\frac{\rho _{1}-\rho _{2}}{2}%
\int_{\Omega }\Theta \Delta \mathbf{u}_{2}\cdot \mathbf{u}\,\mathrm{d}x-%
\frac{\rho _{1}-\rho _{2}}{2}\int_{\Omega }\Theta \nabla \mathbf{u}%
_{2}:\nabla \mathbf{u}\,\mathrm{d}x \\
& \quad -\int_{\Omega }\Phi (\nabla J\ast \phi _{1})\cdot \mathbf{u}\,%
\mathrm{d}x-\int_{\Omega }\phi _{2}(\nabla J\ast \Phi )\cdot \mathbf{u}\,%
\mathrm{d}x.
\end{split}
\label{W:uni1}
\end{equation}%
The rest of the argument follows by repeating line by line the proof of the
continuous dependence estimate for \textquotedblleft separated" strong
solutionsx given in Section \ref{Uniqueness}. As a
result, we obtain the following differential inequality
\begin{equation*}
\frac{\mathrm{d}}{\mathrm{d}t}\left( \int_{\Omega }\frac{\rho (\phi _{1})}{2}%
|\mathbf{u}|^{2}\,\mathrm{d}x+\frac{1}{2}\Vert \Phi \Vert _{L^{2}(\Omega
)}^{2}\right) \leq K(t)\left( \int_{\Omega }\frac{\rho (\phi
_{1})}{2}|\mathbf{u}|^{2}\,\mathrm{d}x+\frac{1}{2}\Vert \Phi \Vert
_{L^{2}(\Omega )}^{2}\right)
\end{equation*}%
almost everywhere in $(0,T-\tau _{3})$, where $K$ is defined as in \eqref{contdepconst}.

By the regularity of the strong solution $\mathbf{u}_{2}$, it immediately
follows that $\widetilde{K}\in L^{1}(0,T-\tau _{3})$. Then, we conclude from
the Gronwall lemma that $(\mathbf{u}_{1}(t),\phi _{1}(t))=(\mathbf{u}%
_{2}(t),\phi _{2}(t))$ on $[0,T-\tau _{3}]$, and thereby $(\mathbf{u}%
(t),\phi (t))=(\widetilde{\mathbf{u}}(t),\widetilde{\phi }(t))$ on $[\tau
_{3},T]$. So, setting $\Pi (t)=\widetilde{\Pi }(t)$ on $[\tau _{3},\infty )$%
, we have that $(\mathbf{u},\Pi ,\phi )$ is a \textquotedblleft separated"
strong solution on $[\tau ,\infty )\subset \lbrack \tau _{3},\infty )$.
\medskip

{\color{black} In the last part, we demonstrate that any weak solution
converges to an equilibrium, i.e., a minimum of the nonlocal Helmholtz free
energy \eqref{nonloc-en}. To this end, we first observe from the previous
part and Theorem \ref{ExistCahn}-(ii) that
\begin{equation}
\begin{cases}
\mathbf{u}\in BUC([1,\infty );H_{0,\sigma }^{1}(\Omega ))\cap L_{\mathrm{%
uloc}}^{2}([1,\infty );H_{0,\sigma }^{2}(\Omega ))\cap H_{\mathrm{uloc}%
}^{1}([1,\infty );L_{\sigma }^{2}(\Omega )), \\
\phi \in L^{\infty }(1,\infty ;L^{\infty }(\Omega ))\text{ such that }%
\sup_{t\in \lbrack 1,+\infty )}\Vert \phi (t)\Vert _{L^{\infty }(\Omega
)}\leq 1-\delta , \\
\phi \in C_{\mathrm{w}}([1,\infty );H^{1}(\Omega ))\cap L_{\mathrm{uloc}%
}^{q}([1,\infty );W^{1,p}(\Omega )),\quad q=\frac{2p}{p-2},\quad p\in
(2,\infty ), \\
\partial _{t}\phi \in L^{\infty }(1,\infty ;H^{1}(\Omega )^{\prime })\cap
L^{2}(1,\infty ;L^{2}(\Omega )), \\
\mu \in BUC([1,\infty );H^{1}(\Omega ))\cap L_{\mathrm{uloc}%
}^{2}([1,\infty );H^{2}(\Omega ))\cap H_{\mathrm{uloc}}^{1}([1,\infty
);L^{2}(\Omega )).
\end{cases}
\label{CE:reg}
\end{equation}%
In addition, the energy identity
\begin{equation}
E(\mathbf{u}(t),\phi (t))+\int_{1}^{t}\left\Vert \sqrt{\nu (\phi (s))}D%
\mathbf{u}(s)\right\Vert _{L^{2}(\Omega )}^{2}+\Vert \nabla \mu (s)\Vert
_{L^{2}(\Omega ))}^{2}\,\mathrm{d}s=E(\mathbf{u}(1),\phi (1))  \label{CE:ee}
\end{equation}%
holds for every $t\geq 1$. Thanks to the separation property, the
classical theory for second-order parabolic semilinear equations (cf. \cite[%
Corollary 5.6]{GGG2017} and the references therein) entails that there
exists $\gamma \in (0,1)$ such that
\begin{equation}
\phi \in BUC([2,\infty ),C^{\gamma }(\overline{\Omega })).
\label{Holder-reg}
\end{equation}%
Now we define the $\omega $-limit set of $(\mathbf{u},\phi )$ as
\begin{equation*}
\omega (\mathbf{u},\phi )=\left\{ (\mathbf{v},\Phi )\in L_{\sigma
}^{2}(\Omega )\times L^{\infty }(\Omega ):\exists \,t_{n}\nearrow \infty
\text{ such that }(\mathbf{u}(t_{n}),\phi (t_{n}))\rightarrow (\mathbf{v}%
,\Phi )\text{ in }L_{\sigma }^{2}(\Omega )\times L^{\infty }(\Omega
)\right\} .
\end{equation*}%
In light of \eqref{CE:reg} and \eqref{Holder-reg}, it follows that $\omega (%
\mathbf{u},\phi )$ is non-empty, compact and connected in $L_{\sigma
}^{2}(\Omega )\times L^{\infty }(\Omega )$. Also, we observe that any $(%
\mathbf{v},\Phi )\in \omega (\mathbf{u},\phi )$ is such that $\Vert \Phi \Vert
_{C(\overline{\Omega })}\leq 1-\delta $. }

Next, we claim that
\begin{equation}
\omega (\mathbf{u},\phi )\subset \left\{ (\mathbf{0},\phi _{\infty }):\phi
_{\infty }\in C^{\gamma }(\overline{\Omega })\text{ solves \eqref{Stat-CH}}%
\right\} .  \label{Omega-car}
\end{equation}%
Arguing as in \cite[Section 3]{AGG2022}, subtracting the Helmholtz free
energy equation (cf. \eqref{EE-nCH})
\begin{equation}
\mathcal{E}_{\mathrm{nloc}}(\phi (t))+\int_{1}^{t}\Vert \nabla \mu (s)\Vert
_{L^{2}(\Omega )}^{2}\,\mathrm{d}\tau +\int_{1}^{t}\int_{\Omega }\phi \,%
\mathbf{u}\cdot \nabla \mu \,\mathrm{d}x\,\mathrm{d}s=\mathcal{E}_{\mathrm{%
nloc}}(\phi (1)),\quad \forall \,t\in \lbrack 1,\infty ),  \label{Hee}
\end{equation}%
from \eqref{CE:ee} we have, for all $t\in (1,\infty )$,
\begin{equation}
E_{\mathrm{kin}}(\mathbf{u}(t),\phi (t))+\int_{1}^{t}\left\Vert \sqrt{\nu
(\phi (s))}D\mathbf{u}(s)\right\Vert _{L^{2}(\Omega )}^{2}\,\mathrm{d}s=E_{%
\mathrm{kin}}(\mathbf{u}(1),\phi (1))+\int_{1}^{t}\int_{\Omega }\phi \,%
\mathbf{u}\cdot \nabla \mu \,\mathrm{d}x\,\mathrm{d}\tau .  \label{CE:ee-kin}
\end{equation}%
Let us set $\varepsilon >0$. We observe from \eqref{CE:ee} that $\mathbf{u}%
\in L^{2}(0,\infty ;H_{0,\sigma }^{1}(\Omega ))$ and $\nabla \mu \in
L^{2}(0,\infty ;L^{2}(\Omega ;\mathbb{R}^{2}))$, there exists $T>0$ such
that $\Vert \mathbf{u}(T)\Vert _{L^{2}(\Omega )}\leq \varepsilon $ and $%
\Vert \nabla \mu \Vert _{L^{2}((T,\infty ;L^{2}(\Omega ))}\leq \varepsilon $%
. Then, we infer that
\begin{align*}
\max_{t\in \lbrack T,\infty )}\int_{\Omega }\frac{1}{2}\rho (\phi (t))|%
\mathbf{u}(t)|^{2}\,\mathrm{d}x& +\nu _{\ast }\int_{T}^{\infty }\Vert D%
\mathbf{u}(s)\Vert _{L^{2}(\Omega )}^{2}\,\mathrm{d}s \\
& \leq \int_{\Omega }\frac{1}{2}\rho (\phi (T))|\mathbf{u}(T)|^{2}\,\mathrm{d%
}x+\int_{T}^{\infty }\Vert \phi (s)\Vert _{L^{\infty }(\Omega )}\Vert
\mathbf{u}(s)\Vert _{L^{2}(\Omega )}\Vert \nabla \mu (s)\Vert _{L^{2}(\Omega
)}\,\mathrm{d}s \\
& \leq \frac{\rho ^{\ast }}{2}\Vert \mathbf{u}(T)\Vert _{L^{2}(\Omega )}^{2}+%
\frac{\nu _{\ast }}{2}\int_{T}^{\infty }\Vert D\mathbf{u}(s)\Vert
_{L^{2}(\Omega )}^{2}\,\mathrm{d}s+C\int_{T}^{\infty }\Vert \nabla \mu
(s)\Vert _{L^{2}(\Omega )}^{2}\,\mathrm{d}s \\
& \leq \frac{\rho ^{\ast }}{2}\varepsilon ^{2}+\frac{\nu _{\ast }}{2}%
\int_{T}^{\infty }\Vert D\mathbf{u}(s)\Vert _{L^{2}(\Omega )}^{2}\,\mathrm{d}%
s+C\varepsilon ^{2},
\end{align*}%
which gives that
\begin{equation*}
\max_{t\in \lbrack T,\infty )}\Vert \mathbf{u}(t)\Vert _{L^{2}(\Omega )}\leq
2C\varepsilon ,
\end{equation*}%
where $C$ is independent of $T$ and $\varepsilon $. Thus, $\mathbf{u}%
(t)\rightarrow \mathbf{0}$ as $t\nearrow \infty $.

Let us now consider $t_n \nearrow\infty$ and let $(\mathbf{u}%
(t_n),\phi(t_n)\rightarrow (\mathbf{0},\phi_\infty)$ in $L_\sigma^2(\Omega)%
\times L^\infty(\Omega)$ as $n\rightarrow \infty$. We now set $(\mathbf{u}%
_n(t),\phi_n(t))=(\mathbf{u}(t+t_n),\phi(t+t_n))$ for $t \in [1,\infty)$.
Clearly, $\mathbf{u}_n(t)\rightarrow \mathbf{0}$ in $L^2_\sigma(\Omega)$ as $%
n\rightarrow \infty$. Also, since $\mathbf{u}_n$ is uniformly bounded in $L^\infty(0,\infty; L^2_\sigma(\Omega))\cap L^2(0,\infty;
H^1_\sigma(\Omega))$, and exploiting Theorem \ref{ExistCahn}, \eqref{CE:reg}%
, \eqref{Holder-reg} and \eqref{Hee}, it is easy to deduce that
\begin{align*}
\| \phi_n\|_{L^\infty(0,T; C(\overline{\Omega}))}\leq 1-\delta, \quad \|
\phi_n\|_{L^2(0,T; H^1(\Omega))}\leq C, \quad \| \partial_t
\phi_n\|_{L^2(0,T; H^1(\Omega)^{\prime})}\leq C, \quad \| \mu_n\|_{L^2(0,T;
H^1(\Omega))}\leq C
\end{align*}
for some $C$ independent of $n$ and for any $T>0$, where $%
\mu_n(t)=\mu(t+t_n)$. Then, $\phi_n$ converges to $\phi^{\prime}$ strongly
in $L^2(0,T; L^2(\Omega))$ for any $T>0$ and $\mu_n$ converges to $%
\mu^{\prime}$ weakly in $L^2(0,T; H^1(\Omega))$ for any $T>0$. It follows
that $\phi^{\prime}$ is a weak solution to \eqref{nCH} in the sense of
Theorem \ref{ExistCahn} with chemical potential $\mu^{\prime}$,
divergence-free drift $\mathbf{v}=\mathbf{0}$, and initial datum $%
\phi^{\prime}(0)= \phi_\infty$. In addition, we have $E_{\mathrm{nloc}%
}(\phi_n(t))\rightarrow E_{\mathrm{nloc}}(\phi^{\prime}(t))$ for almost
every $t \in [1,\infty)$ as $n\rightarrow \infty$. However, since $\mathbf{u}
\in L^2(0,\infty;H^1_{0,\sigma}(\Omega))$ and $\nabla \mu \in L^2(0,\infty;
L^2(\Omega;\mathbb{R}^2))$, $\phi \mathbf{u}\cdot \nabla \mu \in
L^1(0,\infty; L^1(\Omega))$. Then, the limit $E_\infty:= \lim_{t\rightarrow
\infty}E_{\mathrm{nloc}}(\phi(t))$ exists and is unique. Therefore, we
infer that $E_{\mathrm{nloc}}(\phi^{\prime}(t))= E_\infty$ almost everywhere
in $[1,\infty)$. We conclude from the energy equality of $\phi^{\prime}$
that $\nabla \mu^{\prime}=0$ for almost every $t \in [1,\infty)$, and
thereby $\partial_t \phi^{\prime}(t)=0$ for almost every $t \in [1,\infty)$.
As such, $\phi^{\prime}(t) \equiv \phi_\infty$ for all $t \in [1,\infty)$
and
\begin{equation*}
F^{\prime}(\phi_\infty)-J\ast \phi_\infty=\mu_\infty \quad \text{ in }%
\Omega, \quad \text{for some } \mu_\infty \in \mathbb{R}.
\end{equation*}
This proves \eqref{Omega-car}. We are left to show that the whole weak solution converges to $(\mathbf{0}, \phi_\infty)$ as $t$ goes to $+\infty$.
We know that, thanks to \eqref{CE:ee}, the limit energy $E_\infty$ is constant on $\omega (\mathbf{u},\phi )$. Thus, we deduce from \eqref{ENERGYID} that,
for all $t>0$,
\begin{equation*}
E_\infty +\int_{t}^{+\infty }\left\Vert \sqrt{\nu (\phi (s))}D%
\mathbf{u}(s)\right\Vert _{L^{2}(\Omega )}^{2}+\Vert \nabla \mu (s)\Vert
_{L^{2}(\Omega ))}^{2}\,\mathrm{d}s=E(\mathbf{u}(t),\phi (t)) 
\end{equation*}
from which we deduce (see \eqref{TOTALEN})
\begin{equation}
\begin{split}
\int_{t}^{+\infty }\Vert \nabla \mu (s)\Vert_{L^{2}(\Omega ))}^{2}\,\mathrm{d}s&\leq E(\mathbf{u}(t),\phi (t)) - E_\infty \\
&= E(\mathbf{u}(t),\phi (t)) - E_{\mathrm{nloc}}(\phi_\infty)\\
&=\frac{1}{2}\int_{\Omega }\rho (\phi )|\mathbf{u}|^{2} + E_{\mathrm{nloc}}(\phi(t)) - E_{\mathrm{nloc}}(\phi_\infty).
\end{split}
\label{ENERGYINEQ}
\end{equation}
To conclude, we now need the real analyticity of the potential $F$ in order
to apply a suitable version of the {\L}ojasiewicz-Simon inequality (see, for instance \cite[Lemma 2.20]{GG}).
This amounts to say that there is $\theta\in (0,1/2]$ and $T_0>0$ sufficiently large such that
\begin{equation}
\vert E_{\mathrm{nloc}}(\phi (t)) - E_{\mathrm{nloc}}(\phi_\infty)\vert^{1-\theta} \leq C\Vert \mu - \overline{\mu}\Vert_{L^{2}(\Omega ))} 
\leq C\Vert \nabla\mu\Vert_{L^{2}(\Omega ))},
\quad \forall\,t\geq T_0,
\label{LJ1}
\end{equation}
for some $C>0$. Therefore, we get from \eqref{ENERGYINEQ} and \eqref{LJ1} that (see also \eqref{RHO-Jtilde})
\begin{equation*}
\vert E(\mathbf{u}(t),\phi (t)) - E_\infty\vert^{1-\theta} \leq C (\Vert \mathbf{u} \Vert_{L^{2}(\Omega ))} + \Vert \nabla\mu\Vert_{L^{2}(\Omega ))}), \quad\forall\, t\geq T_0. 
\end{equation*}
We can now argue as in \cite[Sec.~6]{AGG2022} to infer that  
$(\mathbf{u}(t),\phi (t))$ converges to $(\mathbf{0},\phi _{\infty})$ in $L_{\sigma }^{2}(\Omega )\times L^{\infty }(\Omega )$ as $t \to +\infty$.

\section{Proof of Theorem \protect\ref{Uniq:strong:H}: Improved continuous
dependence estimate for matched densities}

\label{Uniqueness:model H}

We consider two sets of initial data $(\mathbf{u}_{0}^{1},\phi _{0}^{1})$
and $(\mathbf{u}_{0}^{2},\phi _{0}^{2})$ which satisfy the assumptions of
Theorem \ref{MR:strong}, respectively, with constant density $\rho =\rho
_{1}=\rho _{2}>0$ (i.e., we consider the nonlocal Model H). We denote by $(%
\mathbf{u}_{1},\Pi _{1},\phi _{1})$ and $(\mathbf{u}_{2},\Pi _{2},\phi _{2})$
the corresponding strong solutions provided by Theorem \ref{MR:strong}. Let
us set
$\mathbf{u}=\mathbf{u}_{1}-\mathbf{u}_{2}$, $P=\Pi _{1}-\Pi _{2}$, $\Phi =\phi _{1}-\phi _{2}$, $\Theta =\mu _{1}-\mu _{2}=F^{\prime }(\phi
_{1})-F^{\prime }(\phi _{2})-J\ast \Phi$, which solve%
\begin{equation}
\begin{split}
&\rho \partial _{t}\mathbf{u}+\rho \mathrm{div}\,\left( \mathbf{u}_{1}\otimes
\mathbf{u}\right) +\rho \mathrm{div}\,\left( \mathbf{u}\otimes \mathbf{u}%
_{2}\right) -\mathrm{div}\,(\nu (\phi _{1})D\mathbf{u})-\mathrm{div}\,((\nu
(\phi _{1})-\nu (\phi _{2}))D\mathbf{u}_{2})+\nabla P \\
&\quad =\mu _{1}\nabla \Phi +\Theta \nabla \phi _{2}, \\[5pt]
&\partial _{t}\Phi +\mathbf{u}_{1}\cdot \nabla \Phi +\mathbf{u}\cdot \nabla
\phi _{2}=\Delta \Theta ,%
\end{split}
\label{U:modelH:2}
\end{equation}%
almost everywhere in $\Omega \times (0,\infty )$. Multiplying %
\eqref{U:modelH:2}$_{1}$ by $\mathbf{A}^{-1}\mathbf{u}$ and %
\eqref{U:modelH:2}$_{2}$ by $\mathcal{N}(\Phi -\overline{\Phi })$ (notice
that, by the conservation of mass, $\overline{\Phi }$ is constant),
integrating over $\Omega $ and adding the resulting equations together, we find the
identity
\begin{equation}
\begin{split}
& \frac{\mathrm{d}}{\mathrm{d}t}\left( \frac{\rho }{2}\Vert \mathbf{u}\Vert
_{\sharp }^{2}+\frac{1}{2}\Vert \Phi -\overline{\Phi }\Vert _{\ast
}^{2}\right) +(\Theta ,\Phi -\overline{\Phi })+(\nu (\phi _{1})D\mathbf{u}%
,\nabla \mathbf{A}^{-1}\mathbf{u}) \\
& =\rho (\mathbf{u}_{1}\otimes \mathbf{u},\nabla \mathbf{A}^{-1}\mathbf{u}%
)+\rho (\mathbf{u}\otimes \mathbf{u}_{2},\nabla \mathbf{A}^{-1}\mathbf{u}%
)-\left( (\nu (\phi _{1})-\nu (\phi _{2}))D\mathbf{u}_{2},\nabla \mathbf{A}%
^{-1}\mathbf{u}\right) \\
& \quad -(\mathbf{u}_{1}\cdot \nabla \Phi ,\mathcal{N}(\Phi -\overline{\Phi }%
))-(\mathbf{u}\cdot \nabla \phi _{2},\mathcal{N}(\Phi -\overline{\Phi }%
))+(\mu _{1}\nabla \Phi ,\mathbf{A}^{-1}\mathbf{u})+(\Theta \nabla \phi _{2},%
\mathbf{A}^{-1}\mathbf{u}).
\end{split}%
\end{equation}%
Arguing as in \cite[proof of Theorem 3.1]{GMT2019}, we observe that
\begin{align*}
\left( \nu (\phi _{1})D\mathbf{u},\nabla \mathbf{A}^{-1}\mathbf{u}\right) &
=\left( \nu (\phi _{1})\nabla \mathbf{u},D\mathbf{A}^{-1}\mathbf{u}\right)
=-\left( \mathbf{u},\mathrm{div}\,(\nu (\phi _{1})D\mathbf{A}^{-1}\mathbf{u}%
)\right) \\
& =-\left( \mathbf{u},\nu ^{\prime }(\phi _{1})D\mathbf{A}^{-1}\mathbf{u}%
\nabla \phi _{1}\right) -\frac{1}{2}\left( \mathbf{u},\nu (\phi _{1})\Delta
\mathbf{A}^{-1}\mathbf{u}\right) \\
& =-\left( \mathbf{u},\nu ^{\prime }(\phi _{1})D\mathbf{A}^{-1}\mathbf{u}%
\nabla \phi _{1}\right) +\frac{1}{2}\left( \mathbf{u},\nu (\phi _{1})\mathbf{%
u}\right) -\frac{1}{2}\left( \mathbf{u},\nu (\phi _{1})\nabla \pi \right) ,
\end{align*}%
where the artificial pressure $\pi \in L^{\infty }(0,T;H_{(0)}^{1}(\Omega ))$
is associated to the Stokes problem $-\Delta \mathbf{A}^{-1}\mathbf{U}%
+\nabla \pi =\mathbf{u}$ in $\Omega \times (0,\infty )$. Thanks to the above
relation and by \eqref{mu-diff} and \eqref{U:rel3}, we obtain the
differential inequality
\begin{equation}
\begin{split}
& \frac{\mathrm{d}}{\mathrm{d}t}\left( \frac{\rho }{2}\Vert \mathbf{u}\Vert
_{\sharp }^{2}+\frac{1}{2}\Vert \Phi -\overline{\Phi }\Vert _{\ast
}^{2}\right) +\frac{\nu _{\ast }}{2}\Vert \mathbf{u}\Vert _{L^{2}(\Omega
)}^{2}+\frac{3\alpha }{4}\Vert \phi \Vert _{L^{2}(\Omega )}^{2} \\
& \leq C\Vert \phi -\overline{\phi }\Vert _{\ast }^{2}+\left\vert \overline{%
\phi ^{1}}-\overline{\phi ^{2}}\right\vert \left( \Vert F^{\prime }(\phi
^{1})\Vert _{L^{1}(\Omega )}+\Vert F^{\prime }(\phi ^{2})\Vert
_{L^{1}(\Omega )}\right) +\rho (\mathbf{u}_{1}\otimes \mathbf{u},\nabla
\mathbf{A}^{-1}\mathbf{u}) \\
& \quad -\rho (\mathbf{u}\otimes \mathbf{u}_{2},\nabla \mathbf{A}^{-1}%
\mathbf{u})-\left( (\nu (\phi _{1})-\nu (\phi _{2}))D\mathbf{u}_{2},\nabla
\mathbf{A}^{-1}\mathbf{u}\right) +\left( \mathbf{u},\nu ^{\prime }(\phi
_{1})D\mathbf{A}^{-1}\mathbf{u}\nabla \phi _{1}\right) \\
& \quad +\frac{1}{2}\left( \mathbf{u},\nu (\phi _{1})\nabla \pi \right) -(%
\mathbf{u}_{1}\cdot \nabla \Phi ,\mathcal{N}(\Phi -\overline{\Phi }))-(%
\mathbf{u}\cdot \nabla \phi _{2},\mathcal{N}(\Phi -\overline{\Phi })) \\
& \quad -(\Phi (\nabla J\ast \phi _{1}),\mathbf{A}^{-1}\mathbf{u})-(\phi
_{2}(\nabla J\ast \Phi ),\mathbf{A}^{-1}\mathbf{u}).
\end{split}
\label{U:strong-dif}
\end{equation}%
By using \eqref{LADY} and \eqref{stoke}, we have
\begin{align*}
\left\vert \rho \int_{\Omega }\mathbf{u}_{1}\otimes \mathbf{u}:\nabla
\mathbf{A}^{-1}\mathbf{u}\,\mathrm{d}x\right\vert & \leq C\Vert \mathbf{u}%
_{1}\Vert _{L^{4}(\Omega )}\Vert \mathbf{u}\Vert _{L^{2}(\Omega )}\Vert
\nabla \mathbf{A}^{-1}\mathbf{u}\Vert _{L^{4}(\Omega )} \\
& \leq \frac{\nu _{\ast }}{24}\Vert \mathbf{u}\Vert _{L^{2}(\Omega
)}^{2}+C\Vert \mathbf{u}_{1}\Vert _{L^{4}(\Omega )}^{4}\Vert \mathbf{u}\Vert
_{\sharp }^{2}
\end{align*}%
and
\begin{align*}
\left\vert \rho \int_{\Omega }\mathbf{u}\otimes \mathbf{u}_{2}:\nabla
\mathbf{A}^{-1}\mathbf{u}\,\mathrm{d}x\right\vert & \leq \Vert \mathbf{u}%
_{2}\Vert _{L^{4}(\Omega )}\Vert \mathbf{u}\Vert _{L^{2}(\Omega )}\Vert
\nabla \mathbf{A}^{-1}\mathbf{u}\Vert _{L^{4}(\Omega )} \\
& \leq \frac{\nu _{\ast }}{24}\Vert \mathbf{u}\Vert _{L^{2}(\Omega
)}^{2}+C\Vert \mathbf{u}_{2}\Vert _{L^{4}(\Omega )}^{4}\Vert \mathbf{u}\Vert
_{\sharp }^{2}.
\end{align*}%
In a similar way, recalling the assumption \ref{nu}, we find
\begin{align*}
\left\vert \int_{\Omega }\left( \nu (\phi _{1})-\nu (\phi _{2})\right) D%
\mathbf{u}_{2}:\nabla \mathbf{A}^{-1}\mathbf{u}\,\mathrm{d}x\right\vert &
\leq C\Vert \Phi \Vert _{L^{2}(\Omega )}\Vert D\mathbf{u}_{2}\Vert
_{L^{4}(\Omega )}\Vert \nabla \mathbf{A}^{-1}\mathbf{u}\Vert _{L^{4}(\Omega
)} \\
& \leq \frac{\alpha }{16}\Vert \Phi \Vert _{L^{2}(\Omega )}^{2}+\frac{\nu
_{\ast }}{24}\Vert \mathbf{u}\Vert _{L^{2}(\Omega )}^{2}+C\Vert D\mathbf{u}%
_{2}\Vert _{L^{4}(\Omega )}^{4}\Vert \mathbf{u}\Vert _{\sharp }^{2}
\end{align*}%
and
\begin{align*}
\left\vert \int_{\Omega }\nu^{\prime }(\phi _{1})\mathbf{u}\cdot \left( D%
\mathbf{A}^{-1}\mathbf{u}\right) \nabla \phi _{1}\,\mathrm{d}x\right\vert &
\leq C\Vert \mathbf{u}\Vert _{L^{2}(\Omega )}\Vert \nabla \mathbf{A}^{-1}%
\mathbf{u}\Vert _{L^{4}(\Omega )}\Vert \nabla \phi _{1}\Vert _{L^{4}(\Omega
)} \\
& \leq \frac{\nu _{\ast }}{24}\Vert \mathbf{u}\Vert _{L^{2}(\Omega
)}^{2}+C\Vert \nabla \phi _{1}\Vert _{L^{4}(\Omega )}^{4}\Vert \mathbf{u}%
\Vert _{\sharp }^{2}.
\end{align*}%
Exploiting now Lemma \ref{press}, we obtain
\begin{align*}
\left\vert \frac{1}{2}\int_{\Omega }\nu (\phi _{1})\mathbf{u}\cdot \nabla
\pi \,\mathrm{d}x\right\vert & =\left\vert \frac{1}{2}\int_{\Omega }\nu
^{\prime }(\phi _{1})\mathbf{u}\cdot \nabla \phi _{1}\pi \,\mathrm{d}%
x\right\vert \\
& \leq C\Vert \mathbf{u}\Vert _{L^{2}(\Omega )}\Vert \nabla \phi _{1}\Vert
_{L^{4}(\Omega )}\Vert \pi \Vert _{L^{4}(\Omega )} \\
& \leq C\Vert \mathbf{u}\Vert _{L^{2}(\Omega )}\Vert \nabla \phi _{1}\Vert
_{L^{4}(\Omega )}\Vert \nabla \mathbf{A}^{-1}\mathbf{u}\Vert _{L^{2}(\Omega
)}^{\frac{1}{2}}\Vert \mathbf{u}\Vert _{L^{2}(\Omega )}^{\frac{1}{2}} \\
& \leq \frac{\nu _{\ast }}{24}\Vert \mathbf{u}\Vert _{L^{2}(\Omega
)}^{2}+C\Vert \nabla \phi _{1}\Vert _{L^{4}(\Omega )}^{4}\Vert \mathbf{u}%
\Vert _{\sharp }^{2}.
\end{align*}%
Next, arguing exactly as in \eqref{U:weak-u}, we get
\begin{equation*}
\left\vert \int_{\Omega }\mathbf{u}_{1}\cdot \nabla \Phi \cdot \mathcal{N}%
(\Phi -\overline{\Phi }))\,\mathrm{d}x\right\vert \leq \frac{\alpha }{16}%
\Vert \Phi \Vert _{L^{2}(\Omega )}^{2}+C\Vert \mathbf{u}_{1}\Vert
_{L^{4}(\Omega )}^{4}\Vert \Phi -\overline{\Phi }\Vert _{\ast
}^{2}+C\left\vert \overline{\Phi }\right\vert ^{2}.
\end{equation*}%
Since $\Vert \phi _{2}\Vert _{L^{\infty }(\Omega \times (0,\infty ))}\leq 1$%
, we infer that
\begin{equation*}
\left\vert \int_{\Omega }\mathbf{u}\cdot \nabla \phi _{2}\,\mathcal{N}(\Phi -%
\overline{\Phi }))\,\mathrm{d}x\right\vert =\left\vert \int_{\Omega }\phi
_{2}\mathbf{u}\cdot \nabla \mathcal{N}(\Phi -\overline{\Phi }))\,\mathrm{d}%
x\right\vert \leq \frac{\nu _{\ast }}{24}\Vert \mathbf{u}\Vert
_{L^{2}(\Omega )}^{2}+C\Vert \Phi -\overline{\Phi }\Vert _{\ast }^{2}.
\end{equation*}%
Lastly, by \ref{J-ass} and $\Vert \phi _{i}\Vert _{L^{\infty }(\Omega
\times (0,\infty ))}\leq 1$ for $i=1,2$, we deduce that
\begin{equation*}
\left\vert \int_{\Omega }\Phi (\nabla J\ast \phi _{1})\cdot \mathbf{A}^{-1}%
\mathbf{u}\,\mathrm{d}x\right\vert \leq \Vert \nabla J\ast \phi _{1}\Vert
_{L^{\infty }(\Omega )}\Vert \Phi \Vert _{L^{2}(\Omega )}\Vert \mathbf{A}%
^{-1}\mathbf{u}\Vert _{L^{2}(\Omega )}\leq \frac{\alpha }{16}\Vert \Phi
\Vert _{L^{2}(\Omega )}^{2}+C\Vert \mathbf{u}\Vert _{\sharp }^{2}
\end{equation*}%
and
\begin{equation*}
\left\vert \int_{\Omega }\phi _{2}(\nabla J\ast \Phi )\cdot \mathbf{A}^{-1}%
\mathbf{u}\,\mathrm{d}x\right\vert \leq \Vert \phi _{2}\Vert _{L^{\infty
}(\Omega )}\Vert \nabla J\ast \Phi \Vert _{L^{2}(\Omega )}\Vert \mathbf{A}%
^{-1}\mathbf{u}\Vert _{L^{2}(\Omega )}\leq \frac{\alpha }{16}\Vert \Phi
\Vert _{L^{2}(\Omega )}^{2}+C\Vert \mathbf{u}\Vert _{\sharp }^{2}.
\end{equation*}%
Combining \eqref{U:strong-dif} with above inequalities, we are led to
\begin{align}
& \frac{\mathrm{d}}{\mathrm{d}t}\left( \frac{\rho }{2}\Vert \mathbf{u}\Vert
_{\sharp }^{2}
+\frac{1}{2}\Vert \Phi -\overline{\Phi }\Vert _{\ast
}^{2}\right) 
%+\frac{\nu _{\ast }}{4}\Vert \mathbf{u}\Vert ^{2}+\frac{\alpha}{2}\Vert \Phi \Vert ^{2}  \notag 
 \leq \Lambda_1(t)\left( \frac{\rho }{2}\Vert \mathbf{u}\Vert _{\sharp }^{2}+\frac{1%
}{2}\Vert \Phi -\overline{\Phi }\Vert _{\ast }^{2}\right)
+ \Lambda_2(t)\left\vert
\overline{\Phi }\right\vert
+C\left\vert \overline{\phi }\right\vert ^{2},  \label{vv1bis}
\end{align}%
where
\begin{equation*}
\Lambda_1(t):=C\left( 1+\Vert \mathbf{u}_{1}(t)\Vert _{L^{4}(\Omega )}^{4}+\Vert
\mathbf{u}_{2}(t)\Vert _{L^{4}(\Omega )}^{4}+\Vert D\mathbf{u}_{2}(t)\Vert
_{L^{4}(\Omega )}^{4}+\Vert \nabla \phi _{1}(t)\Vert _{L^{4}(\Omega
)}^{4}\right)
\end{equation*}
and
\begin{equation*}
\Lambda_2(t):=  \Vert F^{\prime }(\phi _{1}(t))\Vert
_{L^{1}(\Omega )}+\Vert F^{\prime }(\phi _{2}(t))\Vert _{L^{1}(\Omega )}.
\end{equation*}
Owing to \eqref{regg}, it is easily seen that $\Lambda_j\in L^{1}(0,T)$ for $j=1,2$.
Thus, it follows from the Gronwall lemma that \eqref{weakstrong1} holds. The
proof is concluded.
%\begin{align*}
%& \frac{\rho }{2}\Vert \mathbf{u}_{1}(t)-\mathbf{u}_{2}(t)\Vert _{\sharp
%}^{2}+\frac{1}{2}\Vert \phi _{1}(t)-\phi _{2}(t)\Vert _{\ast }^{2} \\
%& \leq \left( \frac{\rho }{2}\Vert \mathbf{u}_{0}^{1}-\mathbf{u}%
%_{0}^{2}\Vert _{\sharp }^{2}+\frac{1}{2}\Vert \phi _{0}^{1}-\phi
%_{0}^{2}\Vert _{\ast }^{2}\right) \mathrm{e}^{\int_{0}^{t}S(s)\,\mathrm{d}s}
%\\
%& \quad +\mathrm{e}^{\int_{0}^{t}S(s)\,\mathrm{d}s}\left( \left\vert
%\overline{\phi _{0}^{1}}-\overline{\phi _{0}^{2}}\right\vert
%\int_{0}^{t}\Vert F^{\prime }(\phi _{1}(s))\Vert _{L^{1}(\Omega )}+\Vert
%F^{\prime }(\phi _{2}(s))\Vert _{L^{1}(\Omega )}\,\mathrm{d}s+Ct\left\vert
%\overline{\phi _{0}^{1}}-\overline{\phi _{0}^{2}}\right\vert ^{2}\right) ,
%\end{align*}%

\section{Proof of Theorem \protect\ref{stability}: Matched versus unmatched density}

\label{last} Let us fix $T>0$. Consider $(\mathbf{u},\Pi ,\phi )$ and $(%
\mathbf{u}_{H},\Pi _{H},\phi _{H})$ the strong solutions to the nonlocal AGG
model with density $\rho (\phi )$ and to the nonlocal Model H with constant
density $\overline{\rho }>0$ (i.e. \eqref{syst2}-\eqref{bic}) with $%
\overline{\rho }=\rho _{1}=\rho _{2}$, respectively. We assume that both $(%
\mathbf{u},\Pi ,\phi )$ and $(\mathbf{u}_{H},\Pi _{H},\phi _{H})$ originate
from the same initial datum $(\mathbf{u}_{0},\phi _{0})$. Therefore, setting $\mathbf{%
v}=\mathbf{u}-\mathbf{u}_{H}$, $Q=\Pi -\Pi _{H}$, $\Phi =\phi -\phi _{H}$, we have
\begin{equation}
\label{phih}
\begin{split}
&\left( \frac{\rho _{1}+\rho _{2}}{2}\right) \partial _{t}\mathbf{v}+\left(
\frac{\rho _{1}-\rho _{2}}{2}\phi \right) \partial _{t}\mathbf{u}+\left(
\frac{\rho _{1}+\rho _{2}}{2}-\overline{\rho }\right) \partial _{t}\mathbf{u}%
_{H}+\rho (\phi )(\mathbf{u}\cdot \nabla )\mathbf{u}-\overline{\rho }(%
\mathbf{u}_{H}\cdot \nabla ) \\
&\quad -\left( \frac{\rho _{1}-\rho _{2}}{2}\right) \left( (\nabla \mu \cdot \nabla
)\mathbf{u}\right) -\mathrm{div}\,(\nu (\phi )D\mathbf{v})-\mathrm{div}%
\,((\nu (\phi )-\nu (\phi _{H})D\mathbf{u}_{H})+\nabla Q \\
&=\mu \nabla \phi -\mu _{H}\nabla \phi _{H}, \\[5pt]
&\partial _{t}\Phi +\mathbf{u}\cdot \nabla \Phi +\mathbf{v}\cdot \nabla \phi
_{H}=\Delta M,%
\end{split}
\end{equation}
almost everywhere in $\Omega \times (0,T)$ where $M=\mu -\mu _{H}=F^{\prime
}(\phi )-F^{\prime }(\phi _{H})-J\ast \Phi $. Multiplying \eqref{phih}$_{1}$
by $\mathbf{A}^{-1}\mathbf{V}$ and integrating over $\Omega $, we find
\begin{equation}
\begin{split}
& \left( \frac{\rho _{1}+\rho _{2}}{4}\right) \frac{\mathrm{d}}{\mathrm{d}t}%
\Vert \mathbf{v}\Vert _{\sharp }^{2}+(\nu (\phi )D\mathbf{v},\nabla \mathbf{A%
}^{-1}\mathbf{v})=-\frac{\rho _{1}-\rho _{2}}{2}(\phi \partial _{t}\mathbf{u}%
,\mathbf{A}^{-1}\mathbf{v}) \\
& \quad -\left( \frac{\rho _{1}-\rho _{2}}{2}-\overline{\rho }\right)
(\partial _{t}\mathbf{u}_{H},\mathbf{A}^{-1}\mathbf{v})-((\rho (\phi )(%
\mathbf{u}\cdot \nabla )\mathbf{u}-\overline{\rho }(\mathbf{u}_{H}\cdot
\nabla )\mathbf{u}_{H}),\mathbf{A}^{-1}\mathbf{v}) \\
& \quad +\frac{\rho _{1}-\rho _{2}}{2}((\nabla \mu \cdot \nabla )\mathbf{u},%
\mathbf{A}^{-1}\mathbf{v})-((\nu (\phi )-\nu (\phi _{H}))D\mathbf{u}%
_{H},\nabla \mathbf{A}^{-1}\mathbf{v}) \\
& \quad -(\Phi (\nabla J\ast \phi ),\mathbf{A}^{-1}\mathbf{v})-(\phi
_{H}(\nabla J\ast \Phi ),\mathbf{A}^{-1}\mathbf{v}).
\end{split}
\label{stab:1}
\end{equation}%
Observe now that (cf. \cite[Section 3]{GMT2019})
\begin{equation}
(\nu (\phi )D\mathbf{v},\nabla \mathbf{A}^{-1}\mathbf{v})=-\left( \mathbf{v}%
,\nu ^{\prime }(\phi )D\mathbf{A}^{-1}\mathbf{v}\nabla \phi \right) +\frac{1%
}{2}\left( \mathbf{v},\nu (\phi )\mathbf{v}\right) +\frac{1}{2}\left(
\mathbf{v},\nu ^{\prime }(\phi )\tilde{\Pi}\nabla \phi \right) ,
\label{stab:nu}
\end{equation}%
where the artificial pressure is determined by the Stokes problem $-\Delta
\mathbf{A}^{-1}\mathbf{v}+\nabla \tilde{\Pi}=\mathbf{v}$ almost everywhere in $\Omega
\times (0,T)$. On the other hand, multiplying \eqref{phih}$_{2}$ by $%
\mathcal{N}\Phi $ (notice that $\overline{\Phi }\equiv 0$ by the
conservation of mass, since the two solutions originate from the same initial
data) and integrating over $\Omega $, we obtain (cf. \eqref{mu-diff})
\begin{equation}
\frac{1}{2}\frac{\mathrm{d}}{\mathrm{d}t}\Vert \Phi \Vert _{\ast }^{2}+\frac{%
3\alpha }{4}\Vert \Phi \Vert _{L^{2}(\Omega )}^{2}\leq C\Vert \Phi \Vert
_{\ast }^{2}+(\Phi \mathbf{u},\nabla \mathcal{N}\Phi )+(\phi _{H}\mathbf{v}%
,\nabla \mathcal{N}\varphi ).  \label{stab:2}
\end{equation}%
Here $C$ stands for a generic positive constant which may depend on given quantities and which may vary even within the same line.
Adding \eqref{stab:1} and \eqref{stab:2} together, and exploiting \eqref{stab:nu}%
, we end up with
\begin{equation}
\begin{split}
& \frac{\mathrm{d}}{\mathrm{d}t}\left( \left( \frac{\rho _{1}+\rho _{2}}{4}%
\right) \Vert \mathbf{v}\Vert _{\sharp }^{2}+\frac{1}{2}\Vert \Phi \Vert
_{\ast }^{2}\right) +\frac{\nu _{\ast }}{2}\Vert \mathbf{v}\Vert
_{L^{2}(\Omega )}^{2}+\frac{3\alpha }{4}\Vert \Phi \Vert _{L^{2}(\Omega
)}^{2} \\
& \leq -\frac{\rho _{1}-\rho _{2}}{2}(\phi \partial _{t}\mathbf{u},\mathbf{A}%
^{-1}\mathbf{v})-\left( \frac{\rho _{1}-\rho _{2}}{2}-\overline{\rho }%
\right) (\partial _{t}\mathbf{u}_{H},\mathbf{A}^{-1}\mathbf{v}) \\
& \quad -((\rho (\phi )(\mathbf{u}\cdot \nabla )\mathbf{u}-\overline{\rho }(%
\mathbf{u}_{H}\cdot \nabla )\mathbf{u}_{H}),\mathbf{A}^{-1}\mathbf{v})+\frac{%
\rho _{1}-\rho _{2}}{2}((\nabla \mu \cdot \nabla )\mathbf{u},\mathbf{A}^{-1}%
\mathbf{v}) \\
& \quad -((\nu (\phi )-\nu (\phi _{H}))D\mathbf{u}_{H},\nabla \mathbf{A}^{-1}%
\mathbf{v})+\left( \mathbf{v},\nu ^{\prime }(\phi )D\mathbf{A}^{-1}\mathbf{v}%
\nabla \phi \right) -\frac{1}{2}\left( \mathbf{v},\nu ^{\prime }(\phi )%
\tilde{\Pi}\nabla \phi \right) \\
& \quad +(\Phi \mathbf{u},\nabla \mathcal{N}\Phi )+(\phi _{H}\mathbf{v}%
,\nabla \mathcal{N}\Phi )-(\Phi (\nabla J\ast \phi ),\mathbf{A}^{-1}\mathbf{v%
})-(\phi _{H}(\nabla J\ast \Phi ),\mathbf{A}^{-1}\mathbf{v}).
\end{split}
\label{stab:3}
\end{equation}%
Since $\Vert \phi \Vert _{L^{\infty }(\Omega \times (0,T))}\leq 1$, we have
\begin{equation*}
\left\vert \frac{\rho _{1}-\rho _{2}}{2}\int_{\Omega }\phi \partial _{t}%
\mathbf{u}\cdot \mathbf{A}^{-1}\mathbf{v}\,\right\vert \leq C\left\vert
\frac{\rho _{1}-\rho _{2}}{2}\right\vert ^{2}\Vert \partial _{t}\mathbf{u}%
\Vert _{L^{2}(\Omega )}^{2}+C\Vert \mathbf{v}\Vert _{\sharp }^{2},
\end{equation*}%
and
\begin{equation*}
\left\vert \left( \frac{\rho _{1}-\rho _{2}}{2}-\overline{\rho }\right)
\int_{\Omega }\partial _{t}\mathbf{u}_{H}\cdot \mathbf{A}^{-1}\mathbf{v}\,%
\mathrm{d}x\right\vert \leq C\Vert \mathbf{v}\Vert _{\sharp
}^{2}+C\left\vert \frac{\rho _{1}-\rho _{2}}{2}-\overline{\rho }\right\vert
^{2}\Vert \partial _{t}\mathbf{u}_{H}\Vert _{L^{2}(\Omega )}^{2}.
\end{equation*}%
Integrating by parts, we find
\begin{align*}
& -\int_{\Omega }(\rho (\phi )(\mathbf{u}\cdot \nabla )\mathbf{u}-\overline{%
\rho }(\mathbf{u}_{H}\cdot \nabla )\mathbf{u}_{H})\cdot \mathbf{A}^{-1}%
\mathbf{v}\,\mathrm{d}x \\
& =-\int_{\Omega }\rho (\phi )(\mathbf{v}\cdot \nabla )\mathbf{u}\cdot
\mathbf{A}^{-1}\mathbf{v}\,\mathrm{d}x-\int_{\Omega }(\rho (\phi )-\overline{%
\rho })(\mathbf{u}_{H}\cdot \nabla )\mathbf{u}_{H}\cdot \mathbf{A}^{-1}%
\mathbf{v}\,\mathrm{d}x-\int_{\Omega }\rho (\phi )(\mathbf{u}_{H}\cdot
\nabla )\mathbf{v}\cdot \mathbf{A}^{-1}\mathbf{v}\,\mathrm{d}x \\
& =-\int_{\Omega }\rho (\phi )(\mathbf{v}\cdot \nabla )\mathbf{u}\cdot
\mathbf{A}^{-1}\mathbf{v}\,\mathrm{d}x-\int_{\Omega }(\rho (\phi )-\overline{%
\rho })(\mathbf{u}_{H}\cdot \nabla )\mathbf{u}_{H}\cdot \mathbf{A}^{-1}%
\mathbf{v}\,\mathrm{d}x+\int_{\Omega }\rho (\phi )(\mathbf{u}_{H}\cdot
\nabla )\mathbf{A}^{-1}\mathbf{v}\cdot \mathbf{v}\,\mathrm{d}x \\
& \quad +\frac{\rho _{1}-\rho _{2}}{2}\int_{\Omega }\left( \nabla \phi \cdot
\mathbf{u}_{H}\right) \left( \mathbf{v}\cdot \mathbf{A}^{-1}\mathbf{v}%
\right) \,\mathrm{d}x.
\end{align*}%
Then, recalling that $\mathbf{u}_{H}\in L^{\infty }(0,\infty ;H_{0,\sigma
}^{1}(\Omega ))$, we obtain
\begin{align*}
\left\vert \int_{\Omega }\rho (\phi )(\mathbf{v}\cdot \nabla )\mathbf{u}%
\cdot \mathbf{A}^{-1}\mathbf{v}\,\mathrm{d}x\right\vert & \leq C\Vert
\mathbf{v}\Vert _{L^{2}(\Omega )}\Vert \nabla \mathbf{u}\Vert _{L^{4}(\Omega
)}\Vert \mathbf{A}^{-1}\mathbf{v}\Vert _{L^{4}(\Omega )} \\
& \leq \frac{\nu _{\ast }}{28}\Vert \mathbf{v}\Vert _{L^{2}(\Omega
)}^{2}+C\Vert \nabla \mathbf{u}\Vert _{L^{4}(\Omega )}^{2}\Vert \mathbf{v}%
\Vert _{\sharp }^{2}
\end{align*}%
and
\begin{align*}
\left\vert -\int_{\Omega }(\rho (\phi )-\overline{\rho })(\mathbf{u}%
_{H}\cdot \nabla )\mathbf{u}_{H}\cdot \mathbf{A}^{-1}\mathbf{v}\,\mathrm{d}%
x\right\vert & \leq \Vert \rho (\phi )-\overline{\rho }\Vert _{L^{\infty
}(\Omega )}\Vert \mathbf{u}_{H}\Vert _{L^{4}(\Omega )}\Vert \nabla \mathbf{u}%
_{H}\Vert _{L^{2}(\Omega )}\Vert \mathbf{A}^{-1}\mathbf{v}\Vert
_{L^{4}(\Omega )} \\
& \leq C\left( \left\vert \frac{\rho _{1}-\rho _{2}}{2}\right\vert
+\left\vert \frac{\rho _{1}+\rho _{2}}{2}-\overline{\rho }\right\vert
\right) ^{2}+C\Vert \mathbf{v}\Vert _{\sharp }^{2}.
\end{align*}%
On the other hand, by \eqref{LADY}, \eqref{W14bis}, \eqref{W14tris} and %
\eqref{stoke}, we infer that
\begin{align*}
\left\vert \int_{\Omega }\rho (\phi )(\mathbf{u}_{H}\cdot \nabla )\mathbf{A}%
^{-1}\mathbf{v}\cdot \mathbf{v}\,\mathrm{d}x\right\vert & \leq C\Vert
\mathbf{u}_{H}\Vert _{L^{4}(\Omega )}\Vert \nabla \mathbf{A}^{-1}\mathbf{v}%
\Vert _{L^{4}(\Omega )}\Vert \mathbf{v}\Vert _{L^{2}(\Omega )} \\
& \leq \frac{\nu _{\ast }}{28}\Vert \mathbf{v}\Vert _{L^{2}(\Omega
)}^{2}+C\Vert \mathbf{v}\Vert _{\sharp }^{2}
\end{align*}%
and
\begin{align*}
\left\vert \int_{\Omega }\left( \nabla \phi \cdot \mathbf{u}_{H}\right)
\left( \mathbf{v}\cdot \mathbf{A}^{-1}\mathbf{v}\right) \,\mathrm{d}%
x\right\vert & \leq \left\vert \frac{\rho _{1}-\rho _{2}}{2}\right\vert
\Vert \mathbf{u}_{H}\Vert _{L^{4}(\Omega )}\Vert \mathbf{v}\Vert
_{L^{2}(\Omega )}\Vert \mathbf{A}^{-1}\mathbf{v}\Vert _{L^{\infty }(\Omega
)}\Vert \nabla \phi \Vert _{L^{4}(\Omega )} \\
& \leq C\left\vert \frac{\rho _{1}-\rho _{2}}{2}\right\vert \Vert \mathbf{v}%
\Vert _{L^{2}(\Omega )}^{\frac{3}{2}}\Vert \mathbf{v}\Vert _{\sharp }^{\frac{%
1}{2}}\Vert \nabla \phi \Vert _{L^{4}(\Omega )} \\
& \leq \frac{\nu _{\ast }}{28}\Vert \mathbf{v}\Vert _{L^{2}(\Omega
)}^{2}+C\Vert \nabla \phi \Vert _{L^{4}(\Omega )}^{4}\Vert \mathbf{v}\Vert
_{\sharp }^{2},
\end{align*}%
as well as
\begin{align*}
\left\vert \int_{\Omega }(\nu (\phi )-\nu (\phi _{H}))D\mathbf{u}_{H}:\nabla
\mathbf{A}^{-1}\mathbf{v}\,\mathrm{d}x\right\vert & \leq C\Vert \Phi \Vert
_{L^{2}(\Omega )}\Vert D\mathbf{u}_{H}\Vert _{L^{4}(\Omega )}\Vert \nabla
\mathbf{A}^{-1}\mathbf{v}\Vert _{L^{4}(\Omega )} \\
& \leq C\Vert \Phi \Vert _{L^{2}(\Omega )}\Vert D\mathbf{u}_{H}\Vert
_{L^{4}(\Omega )}\Vert \mathbf{v}\Vert _{\sharp }^{\frac{1}{2}}\Vert \mathbf{%
v}\Vert _{L^{2}(\Omega )}^{\frac{1}{2}} \\
& \leq \frac{\alpha }{12}\Vert \Phi \Vert _{L^{2}(\Omega )}^{2}+\frac{\nu
_{\ast }}{28}\Vert \mathbf{v}\Vert _{L^{2}(\Omega )}^{2}+C\Vert D\mathbf{u}%
_{H}\Vert _{L^{4}(\Omega )}^{4}\Vert \mathbf{v}\Vert _{\sharp }^{2}.
\end{align*}%
In a similar way, by using assumption \ref{nu}, we also get
\begin{align*}
\left\vert \int_{\Omega }\nu ^{\prime }(\phi )\mathbf{v}\cdot (D\mathbf{A}%
^{-1}\mathbf{v}\nabla \phi )\,\mathrm{d}x\right\vert & \leq C\Vert \mathbf{v}%
\Vert _{L^{2}(\Omega )}\Vert \nabla \mathbf{A}^{-1}\mathbf{v}\Vert
_{L^{4}(\Omega )}\Vert \nabla \phi \Vert _{L^{4}(\Omega )} \\
& \leq \frac{\nu _{\ast }}{28}\Vert \mathbf{v}\Vert _{L^{2}(\Omega
)}^{2}+C\Vert \nabla \phi \Vert _{L^{4}(\Omega )}^{4}\Vert \mathbf{v}\Vert
_{\sharp }^{2}.
\end{align*}%
Since $\mathbf{u}\in L^{\infty }(0,\infty ;H_{0,\sigma }^{1}(\Omega ))$, it
is easily seen that
\begin{align*}
\left\vert \frac{\rho _{1}-\rho _{2}}{2}\int_{\Omega }(\nabla \mu \cdot
\nabla )\mathbf{u}\cdot \mathbf{A}^{-1}\mathbf{v}\,\mathrm{d}x\right\vert &
\leq \left\vert \frac{\rho _{1}-\rho _{2}}{2}\right\vert \Vert \nabla \mu
\Vert _{L^{4}(\Omega )}\Vert \nabla \mathbf{u}\Vert _{L^{2}(\Omega )}\Vert
\mathbf{A}^{-1}\mathbf{v}\Vert _{L^{4}(\Omega )} \\
& \leq C\Vert \mathbf{v}\Vert _{\sharp }^{2}+C\left\vert \frac{\rho
_{1}-\rho _{2}}{2}\right\vert ^{2}\Vert \nabla \mu \Vert _{L^{4}(\Omega
)}^{2}.
\end{align*}%
Now, exploiting Lemma \ref{press}, we get
\begin{align*}
\left\vert \int_{\Omega }\nu ^{\prime }(\phi )\mathbf{v}\cdot \nabla \phi
\tilde{\Pi}\,\mathrm{d}x\right\vert & \leq C\Vert \mathbf{V}\Vert
_{L^{2}(\Omega )}\Vert \nabla \phi \Vert _{L^{4}(\Omega )}\Vert \tilde{\Pi}%
\Vert _{L^{4}(\Omega )} \\
& \leq C\Vert \mathbf{v}\Vert _{L^{2}(\Omega )}\Vert \nabla \phi \Vert
_{L^{4}(\Omega )}\Vert \nabla \mathbf{A}^{-1}\mathbf{v}\Vert _{L^{2}(\Omega
)}^{\frac{1}{2}}\Vert \mathbf{v}\Vert _{L^{2}(\Omega )}^{\frac{1}{2}} \\
& \leq \frac{\nu _{\ast }}{28}\Vert \mathbf{v}\Vert _{L^{2}(\Omega
)}^{2}+C\Vert \nabla \phi \Vert _{L^{4}(\Omega )}^{4}\Vert \mathbf{v}\Vert
_{\sharp }^{2}.
\end{align*}%
Finally, as in the proof of Theorem \ref{Uniq:strong:H}, we have
\begin{equation*}
\left\vert \int_{\Omega }\Phi \mathbf{u}\cdot \nabla \mathcal{N}\Phi \,%
\mathrm{d}x\right\vert \leq \frac{\alpha }{12}\Vert \Phi \Vert
_{L^{2}(\Omega )}^{2}+C\Vert \mathbf{u}\Vert _{L^{4}(\Omega )}^{4}\Vert \Phi
\Vert _{\ast }^{2}
\end{equation*}%
and
\begin{equation*}
\left\vert \int_{\Omega }\phi _{H}\mathbf{v}\cdot \nabla \mathcal{N}\Phi \,%
\mathrm{d}x\right\vert \leq \frac{\nu _{\ast }}{28}\Vert \mathbf{v}\Vert
_{L^{2}(\Omega )}^{2}+C\Vert \Phi \Vert _{\ast }^{2},
\end{equation*}%
as well as
\begin{equation*}
\left\vert \int_{\Omega }\Phi (\nabla J\ast \phi )\cdot \mathbf{A}^{-1}%
\mathbf{v}\,\mathrm{d}x\right\vert +\left\vert \int_{\Omega }\phi
_{H}(\nabla J\ast \Phi )\cdot \mathbf{A}^{-1}\mathbf{v}\,\mathrm{d}%
x\right\vert \leq \frac{\alpha }{12}\Vert \Phi \Vert _{L^{2}(\Omega
)}^{2}+C\Vert \mathbf{v}\Vert _{\sharp }^{2}.
\end{equation*}%
Combining the above estimates, we arrive at
\begin{align*}
& \frac{\mathrm{d}}{\mathrm{d}t}\left( \left( \frac{\rho _{1}+\rho _{2}}{4}%
\right) \Vert \mathbf{v}\Vert _{\sharp }^{2}+\frac{1}{2}\Vert \Phi \Vert
_{\ast }^{2}\right) +\frac{\nu _{\ast }}{4}\Vert \mathbf{v}\Vert
_{L^{2}(\Omega )}^{2}+\frac{\alpha }{2}\Vert \Phi \Vert _{L^{2}(\Omega )}^{2}
\\
& \quad \leq R_{1}\left( \left( \frac{\rho _{1}+\rho _{2}}{4}\right) \Vert
\mathbf{v}\Vert _{\sharp }^{2}+\frac{1}{2}\Vert \Phi \Vert _{\ast
}^{2}\right) +R_{2}\left( \left\vert \frac{\rho _{1}-\rho _{2}}{2}%
\right\vert ^{2}+\left\vert \frac{\rho _{1}+\rho _{2}}{2}-\overline{\rho }%
\right\vert ^{2}\right) ,
\end{align*}%

where
\begin{align*}
R_{1}& :=C\left( 1+\Vert \nabla \mathbf{u}\Vert _{L^{4}(\Omega )}^{2}+\Vert
\nabla \mathbf{u}_{H}\Vert _{L^{4}(\Omega )}^{4}+\Vert \nabla \phi \Vert
_{L^{4}(\Omega )}^{4}\right) , \\
R_{2}& :=C\left( 1+\Vert \partial _{t}\mathbf{u}_{H}\Vert _{L^{2}(\Omega
)}^{2}+\Vert \partial _{t}\mathbf{u}\Vert _{L^{2}(\Omega )}^{2}+\Vert \nabla
\mu \Vert _{L^{4}(\Omega )}^{2}\right) .
\end{align*}%
Notice that $C$ depends on the norm of the initial
data and the time $T$. An application of the Gronwall lemma yields
\begin{equation*}
\Vert \mathbf{v}(t)\Vert _{\sharp }^{2}+\Vert \Phi (t)\Vert _{\ast }^{2}\leq
\dfrac{\left( \left\vert \frac{\rho _{1}-\rho _{2}}{2}\right\vert
^{2}+\left\vert \frac{\rho _{1}+\rho _{2}}{2}-\overline{\rho }\right\vert
^{2}\right) }{\min \{\frac{\rho _{1}+\rho _{2}}{4},\frac{1}{2}\}}\int_{0}^{t}%
\mathrm{e}^{\int_{s}^{t}R_{1}(\tau )\, \d\tau } R_{2}(s)\,\mathrm{d}%
s,\quad \forall \,t\in \lbrack 0,T].
\end{equation*}%
Therefore, in light of \eqref{regg}, the above inequality implies the
desired conclusion (\ref{stab}). The proof is finished.

\appendix

\section{Global \textquotedblleft separated" solutions to \eqref{nCH} with smooth divergence free drift}

\label{Appendix} \setcounter{equation}{0}

In this Appendix, we establish the existence of global \textit{regular}
solutions to the nonlocal Cahn-Hilliard equation with \textit{smooth}
divergence-free drift. More precisely, we aim to construct solutions to %
\eqref{nCH} satisfying the separation property for all times.

\begin{theorem}
\label{REG-APPR-nCH} Let the assumptions \ref{Omega}-\ref{m-ass} hold
and let $T>0$ be given. If $\mathbf{u}\in \mathcal{D}(0,T;C_{0,\sigma
}^{\infty }(\Omega ;\mathbb{R}^{2}))$ and $\phi _{0}\in H^{1}(\Omega )\cap
L^{\infty }(\Omega )$ with $\Vert \phi _{0}\Vert _{L^{\infty }(\Omega )}<1$
and $|\overline{\phi _{0}}|<1$, then there exists a solution $\phi$ to \eqref{nCH}
such that %\begin{equation}
%\begin{split}
%&\phi\in C([0,T];L^2(\Omega)) \cap L^2(0,T; H^1(\Omega)) \cap H^1(0,T;H^1(\Omega)^\prime),\\
%&\phi\in L^\infty(\Omega\times(0,T)):\quad \vert\phi(x,t)\vert<1\ \text{a.e. in } \Omega\times(0,T),\\
%&\mu\in L^2(0,T;H^1(\Omega)),\quad F^\prime(\phi)\in L^2(0,T; H^1(\Omega)),
%\end{split}	
%\end{equation}
%\quad F(\phi)\in L^\infty(0,T;L^1(\Omega))
\begin{equation}
\begin{cases}
\phi \in L^{\infty }(0,T;H^{1}(\Omega )\cap L^{\infty }(\Omega )):\quad
\sup_{t\in \lbrack 0,T]}\Vert \phi (t)\Vert _{L^{\infty }(\Omega )}\leq
1-\delta , \\
\phi \in L^{q}(0,T;W^{1,p}(\Omega )),\quad q=\frac{2p}{p-2},\quad \forall
\,p\in (2,\infty ), \\
\partial _{t}\phi \in L^{\infty }(0,T;H^{1}(\Omega )^{\prime })\cap
L^{2}(0,T;L^{2}(\Omega )), \\
\mu \in C([0,T];H^{1}(\Omega ))\cap L^{2}(0,T;H^{2}(\Omega ))\cap
H^{1}(0,T;L^{2}(\Omega )),
\end{cases}
\label{Reg-approx-nch}
\end{equation}%
where $\delta \in (0,1)$ depends on  $T$, $\mathbf{u}$ and $\phi
_{0}$. Any solution satisfying the above properties is a strong solution, that is,
\begin{equation}
\begin{split}
&\partial _{t}\phi +\mathbf{u}\cdot \nabla \phi =\Delta \mu ,\quad \mu
=-J\ast \phi +F^{\prime }(\phi ), \quad \text{ a.e. in } \Omega \times (0,T)\\
& \partial _{\mathbf{n}}\mu =0, \quad \text{ a.e. on } \partial \Omega \times (0,T), \quad
 \phi (\cdot,0)=\phi _{0}, \quad \text{ a.e. in } \Omega .
 \end{split}
 \label{CH-strong_sense}
 \end{equation}

\end{theorem}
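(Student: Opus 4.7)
The plan is to regularize the singular potential into a globally defined convex potential, solve the resulting semilinear parabolic problem by classical methods, and then pass to the limit after establishing both uniform energy estimates and a uniform separation property via De Giorgi iteration. Specifically, for each small $\varepsilon > 0$ (say $\varepsilon < (1-\Vert \phi_0 \Vert_{L^\infty})/2$) I would introduce $F_\varepsilon \in C^2(\mathbb{R})$, convex, coinciding with $F$ on $[-1+\varepsilon,1-\varepsilon]$ and extended as a $C^2$-matching quadratic outside, so that $F_\varepsilon'' \geq \alpha$ on $\mathbb{R}$ and $F_\varepsilon''$ is globally bounded (with a bound depending on $\varepsilon$). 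I would also arrange $F_\varepsilon \nearrow F$ and $F_\varepsilon' \to F'$ pointwise on $(-1,1)$.

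For each $\varepsilon > 0$, since $F_\varepsilon'$ is globally Lipschitz and $\mathbf{u} \in \mathcal{D}(0,T; C_{0,\sigma}^\infty(\Omega;\mathbb{R}^2))$ is smooth, I would construct a solution $(\phi_\varepsilon,\mu_\varepsilon)$ to the regularized problem via a Faedo-Galerkin scheme using the Neumann Laplacian eigenfunctions, obtaining solutions of the Cahn-Hilliard type in a standard way. Next, exploiting the smoothness of $\mathbf{u}$ and a classical bootstrap, $\phi_\varepsilon$ acquires the parabolic regularity of \eqref{Reg-approx-nch} for each fixed $\varepsilon$. The $\varepsilon$-independent estimates are then obtained as in the energy and Sobolev estimates of the proof of Theorem \ref{ExistCahn}: the standard energy identity provides uniform control of $\phi_\varepsilon$ in $L^\infty(0,T;H^1)$ and of $\nabla \mu_\varepsilon$ in $L^2(0,T;L^2)$, while testing with $\partial_t \mu_\varepsilon$ and using the cancellation \eqref{drift} (which is available since $\mathbf{u}$ is smooth) gives uniform bounds on $\nabla\mu_\varepsilon$ in $L^\infty(0,T;L^2)$ and on $\partial_t\phi_\varepsilon$ in $L^2(0,T;L^2)$.

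The crux is the uniform separation property. I would adapt the De Giorgi iteration of \cite[Theorem 4.1]{GGG2022}, exactly as sketched for case (ii) of Theorem \ref{ExistCahn}. Testing the equation for $\phi_\varepsilon$ with $v = (\phi_\varepsilon - k_n)_+$ along the sequence $k_n = 1-\delta - \delta 2^{-n}$ (with $\delta \leq \varepsilon_0/2$ to be fixed) and observing, as in \eqref{Z-term}, that
\begin{equation*}
\int_\Omega \mathbf{u} \cdot \nabla \phi_\varepsilon \, (\phi_\varepsilon - k_n)_+ \, \mathrm{d}x = \tfrac{1}{2} \int_\Omega \mathbf{u} \cdot \nabla [(\phi_\varepsilon - k_n)_+^2] \, \mathrm{d}x = 0
\end{equation*}
by the divergence-free condition and the vanishing of $\mathbf{u}$ on $\partial \Omega$, the convective contribution drops out entirely. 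The lower bound $F''_\varepsilon(s) \geq F''(1-2\delta)$ on the superlevel set (valid for $\varepsilon$ small enough thanks to the monotonicity in \ref{h3}) together with the sharp quantitative estimate \eqref{sep} allows one to close the iteration with an $\varepsilon$-independent threshold $\delta > 0$. Symmetric arguments give the lower separation, yielding $\Vert \phi_\varepsilon \Vert_{L^\infty(\Omega\times(0,T))} \leq 1-\delta$ uniformly in $\varepsilon$.

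Finally, since the separation is uniform in $\varepsilon$ and $F_\varepsilon$ coincides with $F$ on $[-(1-\delta),1-\delta]$ for small $\varepsilon$, all higher-order bounds on $F'_\varepsilon(\phi_\varepsilon) = F'(\phi_\varepsilon)$, $F''_\varepsilon(\phi_\varepsilon) = F''(\phi_\varepsilon)$, and consequently on $\mu_\varepsilon$ in $L^\infty(0,T;H^1)\cap L^2(0,T;H^2)\cap H^1(0,T;L^2)$, become uniform in $\varepsilon$ (the time derivative of $\mu_\varepsilon$ is handled via difference quotients as in Remark \ref{Rem-SP}). Weak-$\star$ compactness and the Aubin-Lions lemma then deliver a limit $\phi$ that solves \eqref{CH-strong_sense} in the strong sense, inherits the separation property, and enjoys all the regularity listed in \eqref{Reg-approx-nch}. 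I expect the main obstacle to be ensuring that the De Giorgi iteration in Step 4 produces a threshold $\delta$ truly independent of $\varepsilon$: one must track all constants in the scheme carefully, verify that assumption \ref{h3} yields the required summability of iterates uniformly in $\varepsilon$, and justify the replacement of $F''$ by $F''_\varepsilon$ on the relevant superlevel sets without losing the quantitative control provided by \eqref{sep}.
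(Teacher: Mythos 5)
Your argument is correct, but it takes a genuinely different route from the paper's. The paper regularizes the singular potential by the Yosida approximation $F_\lambda = \tfrac{\lambda}{2}|A_\lambda \cdot|^2 + F\circ J_\lambda$, with $J_\lambda=(I+\lambda F')^{-1}$, while you use a truncation $F_\varepsilon$ that agrees \emph{exactly} with $F$ on $[-1+\varepsilon,1-\varepsilon]$ and extends quadratically. The crucial consequence is where the separation property enters. The Yosida approximation is globally Lipschitz and does \emph{not} satisfy the singular-growth hypothesis \ref{h3} for any fixed $\lambda$, so the paper cannot close the De Giorgi iteration uniformly in $\lambda$ for the approximations $\phi_\lambda$; it instead passes to the limit $\lambda\to0$ first, deduces $|\phi|<1$ a.e. and $F'(\phi)\in L^\infty(0,T;H^1(\Omega))$, extracts $F''(\phi)\in L^\infty(0,T;L^p(\Omega))$ via Trudinger--Moser and \ref{h3}, and only then applies the De Giorgi iteration of \cite{GGG2022} directly to the limit solution. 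Your truncation preserves the exact value $F_\varepsilon''(1-2\delta)=F''(1-2\delta)$ once $\varepsilon<2\delta$, which is precisely what allows the iteration to close with an $\varepsilon$-independent threshold: since $\delta$ produced by the iteration depends only on $T$, $\mathbf{u}$, $\phi_0$ and $F$ through \eqref{sep}, one can fix $\delta$ first and then restrict to $\varepsilon<2\delta$ without circularity, as you note. A second, more minor difference: you obtain the $L^2(0,T;L^2(\Omega))$ control of $\partial_t\phi_\varepsilon$ by testing with $\partial_t\mu_\varepsilon$ and using the cancellation \eqref{drift}, whereas the paper runs a difference-quotient argument, testing the incremental equation with $\mathcal{N}\partial_t^h\phi_\lambda$ and initialising via a Gronwall estimate for $\|\phi_\lambda(h)-\phi_0\|_*$; both deliver the same bounds when $\mathbf{u}$ is smooth. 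What the Yosida route buys is ready-made machinery (the resolvent formula for $F_\lambda''$, monotone pointwise convergence $F_\lambda\nearrow F$, and the catalogued properties from \cite{GGG2017}); what your truncation buys is a cleaner, pre-limit separation property, at the mild cost of constructing $F_\varepsilon$ and checking its convexity and pointwise convergence to $F$ by hand.
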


\begin{proof}

Let us first introduce the Yosida approximation of the singular potential $F$%
. For any $\lambda >0$, we define $F_{\lambda }:\mathbb{R}\rightarrow
\mathbb{R}$ such that $F_{\lambda }(s)=\frac{\lambda }{2}|A_{\lambda
}s|^{2}+F(J_{\lambda }(s))$ where $J_{\lambda }=\left( I+\lambda F^{\prime
}\right) ^{-1}$ and $A_{\lambda }=\frac{1}{\lambda }(I-J_{\lambda })$. We
report the following main properties (see \cite{Brezis} and \cite[Section 3]%
{GGG2017}):

\begin{enumerate}
\item[(a1)] \label{F1} for any $\lambda >0$, $F_{\lambda }\in C_{\text{loc}%
}^{2,1}(\mathbb{R})$ such that $F_{\lambda }(0)=F_{\lambda }^{\prime }(0)=0$;

\item[(a2)] \label{F2} for any $0<\lambda ^{\sharp }\leq 1$, there exists $%
C_{\sharp }>0$ such that
\begin{equation}
F_{\lambda }(s)\geq \frac{1}{4\lambda ^{\sharp }}s^{2}-C_{\sharp },\quad
\forall \,s\in \mathbb{R},\ \forall \,\lambda \in (0,\lambda ^{\sharp }];
\end{equation}

\item[(a3)] \label{F3} $F_{\lambda }$ is convex with
\begin{equation*}
F_{\lambda }^{\prime \prime }(s)\geq \frac{\alpha }{1+\alpha },\quad \forall
\,s\in \mathbb{R};
\end{equation*}

\item[(a4)] \label{F4} for any $\lambda >0$, $F_{\lambda }^{\prime }$ is
Lipschitz on $\mathbb{R}$ with constant $\frac{1}{\lambda }$;

\item[(a5)] \label{F5} as $\lambda \rightarrow 0$, $F_{\lambda
}(s)\rightarrow F(s)$ for all $s\in \mathbb{R}$, $|F_{\lambda }^{\prime
}(s)|\rightarrow |F^{\prime }(s)|$ for $s\in (-1,1)$ and $F_{\lambda
}^{\prime }$ converges uniformly to $F^{\prime }$ on any compact subset of $%
(-1,1)$; furthermore, $|F_{\lambda }^{\prime }(s)|\rightarrow +\infty $ for
every $|s|\geq 1$.
%{\color{red}Notice that this last result is essential to obtain a lower bound for the total energy, due to the presence of the convolution kernel $J$.}
\end{enumerate}

\noindent Let us now fix $\lambda ^{\star }$ to be positive and sufficiently
small. We will choose $\lambda ^{\star }$ will be defined later on. We
claim that, for any $\lambda \in (0,\lambda ^{\star })$, there exists a
function $\phi _{\lambda }$ such that%
\begin{equation}
\phi _{\lambda }\in L^{\infty }(0,T;L^{2}(\Omega ))\cap
L^{2}(0,T;H^{1}(\Omega ))\cap H^{1}(0,T;H^{1}(\Omega )^{\prime }),
\label{Reg-lambda}
\end{equation}%
%
%
%
%
%
%
%
%
%
%
%
%
%
%
%
%
%
%
%
%
%
%
%
%
%
%
%
%
%
%
%
%
%
%
%
%
%
%
%
%
%&\mu_\lambda \in L^2(0,T;H^1(\Omega)),\quad F_\lambda^\prime(\phi_\lambda)\in L^2(0,T; H^1(\Omega)),
which satisfies the variational formulation
\begin{equation}
\langle \partial _{t}\phi _{\lambda },v\rangle -(\phi _{\lambda }\,\mathbf{u}%
,\nabla v)+(\nabla \mu _{\lambda },\nabla v)=0,\quad \forall \,v\in
H^{1}(\Omega ),\ \text{a.e. in }(0,T),  \label{w-nCH-l1}
\end{equation}%
where $\mu _{\lambda }=F_{\lambda }^{\prime }(\phi _{\lambda })-J\ast \phi
_{\lambda }\in L^{2}(0,T;H^{1}(\Omega ))$, as well as $\phi (\cdot ,0)=\phi
_{0}(\cdot )$ in $\Omega $. The proof of the existence of the approximating
solution $\phi _{\lambda }$ is carried out by the Galerkin scheme. The
argument is rather standard and we refer the reader to \cite{DPGG2018,
GGG2017}. \medskip

\noindent \textbf{Conservation of mass and energy estimates.} First, taking $%
v=1$ in \eqref{w-nCH-l1}, we obtain that $\overline{\phi _{\lambda }}(t)=%
\overline{\phi _{0}}$ for all $t\in \lbrack 0,T]$. Since $\Vert \phi
_{0}\Vert _{L^{\infty }(\Omega )}<1$ by assumption, we clearly infer that $|%
\overline{\phi _{\lambda }}(t)|=|\overline{\phi _{0}}|<1$ for all $t\in
\lbrack 0,T]$. Next, we define the energy functional $\mathcal{E}_{\lambda
}:L^{2}(\Omega )\rightarrow \mathbb{R}$ as follows
\begin{equation*}
\mathcal{E}_{\lambda }(u):=\int_{\Omega }F_{\lambda }(u)\,\mathrm{d}x-\frac{1%
}{2}\int_{\Omega }\left( J\ast u\right) \,u\,\mathrm{d}x.
\end{equation*}%
In light of (a1) and (a4), it is easily seen that $|F_{\lambda }(s)|\leq
\frac{1}{\lambda }s^{2}$ for all $s\in \mathbb{R}$. In turn, this gives that
\begin{equation}
\mathcal{E}_{\lambda }(u)\leq \left( \frac{1}{\lambda }+\frac{\Vert J\Vert
_{W^{1,1}(\mathbb{R}^{2})}}{2}\right) \Vert u\Vert _{L^{2}(\Omega )}^{2},
\label{ener-abov}
\end{equation}%
thus $\mathcal{E}_{\lambda }$ is well defined in $L^{2}(\Omega
)$. Moreover, by the assumption on the kernel ${J}$ in \ref{J-ass} and
(a2), for any $\lambda <\lambda ^{\star }$, we have
\begin{equation}
\begin{split}
\mathcal{E}_{\lambda }(u)& \geq \frac{1}{4\lambda ^{\star }}\Vert u\Vert
_{L^{2}(\Omega )}^{2}-C_{\lambda ^{\star }}|\Omega |-\frac{1}{2}\Vert J\ast
u\Vert _{L^{2}(\Omega )}\Vert u\Vert _{L^{2}(\Omega )} \\
& \geq \left( \frac{1}{4\lambda ^{\star }}-\frac{\Vert J\Vert _{W^{1,1}(%
\mathbb{R}^{2})}}{2}\right) \Vert u\Vert _{L^{2}(\Omega )}^{2}-C_{\lambda
^{\star }}|\Omega |.
\end{split}
\label{ener}
\end{equation}%
Hence, setting $\lambda ^{\star }\leq \left[ 4\left( 1+\frac{\Vert J\Vert
_{W^{1,1}(\mathbb{R}^{2})}}{2}\right) \right] ^{-1}$, we are led to
\begin{equation}
\mathcal{E}_{\lambda }(u)\geq \Vert u\Vert _{L^{2}(\Omega )}^{2}-C_{b},\quad
\forall \,u\in L^{2}(\Omega ),\quad \forall \, \lambda \in (0,\lambda ^{\star}],  \label{ener2}
\end{equation}%
where $C_{b}>0$ is a constant independent of $\lambda $ as well as any other constant in the sequel unless it is explicitly pointed out.
%	This means that we can choose $\overline{\lambda}>0$ small enough so that there exists $\widetilde{\xi}>0$ such that
%	\begin{equation}
%	\mathcal{E}_\lambda(\phi_{n,\lambda})\geq \widetilde{\xi}\Vert\phi_{n,\lambda}\Vert^2-K\vert\Omega\vert.
%	\label{lower}
%	\end{equation}
Let us now take $v=\mu _{\lambda }$ in \eqref{w-nCH-l1}. By using %
\eqref{Reg-lambda}, \eqref{ener-abov}, \cite[Proposition 4.2]{CKRS2007} and
the definition of $\mu _{\lambda }$, we obtain
\begin{equation*}
\frac{\mathrm{d}}{\mathrm{d}t}\mathcal{E}_{\lambda }(\phi _{\lambda })+\Vert
\nabla \mu _{\lambda }\Vert _{L^{2}(\Omega )}^{2}+\int_{\Omega }\phi
_{\lambda }\,\mathbf{u}\cdot \nabla \mu _{\lambda }\,\mathrm{d}x=0.
\end{equation*}%
Thanks to \eqref{ener2}, we easily get
\begin{align*}
\left\vert \int_{\Omega }\phi _{\lambda }\,\mathbf{u}\cdot \nabla \mu
_{\lambda }\,\mathrm{d}x\right\vert & \leq \Vert \mathbf{u}\Vert _{L^{\infty
}(\Omega )}\Vert \phi _{\lambda }\Vert _{L^{2}(\Omega )}\Vert \nabla \mu
_{\lambda }\Vert _{L^{2}(\Omega )} \\
& \leq \frac{1}{2}\Vert \nabla \mu _{\lambda }\Vert _{L^{2}(\Omega )}^{2}+%
\frac{1}{2}\Vert \mathbf{u}\Vert _{L^{\infty }(\Omega )}^{2}\left( C_{b}+%
\mathcal{E}_{\lambda }(\phi _{\lambda })\right) .
\end{align*}%
Then, we find
\begin{equation}
\frac{\mathrm{d}}{\mathrm{d}t}\mathcal{E}_{\lambda }(\phi _{\lambda })+\frac{%
1}{2}\Vert \nabla \mu _{\lambda }\Vert _{L^{2}(\Omega )}^{2}\leq \frac{1}{2}%
\Vert \mathbf{u}\Vert _{L^{\infty }(\Omega )}^{2}\left( C_{b}+\mathcal{E}%
_{\lambda }(\phi _{\lambda })\right) .  \label{ener3}
\end{equation}%
In light of the general properties of the Yosida approximation of a convex
function (see, in particular, \cite[Proposition 1.8, Chapter IV]{SHOW1997}),
we recall that $F_{\lambda }(s)$ is increasing in $\lambda $ towards $F(s)$
for all $s\in \mathbb{R}$. Since $\Vert \phi _{0}\Vert _{L^{\infty }(\Omega
)}$, we infer that $\mathcal{E}_{\lambda }(\phi _{0})\leq \mathcal{E}(\phi
_{0})<\infty $. Therefore, it follows from the Gronwall lemma applied to %
\eqref{ener3} that
\begin{equation}
\mathcal{E}_{\lambda }(\phi _{\lambda }(t))\leq \left( \mathcal{E}(\phi
_{0})+\frac{C_{b}}{2}\int_{0}^{T}\Vert \mathbf{u}(\tau )\Vert _{L^{\infty
}(\Omega )}^{2}\,\mathrm{d}\tau \right) \mathrm{exp} \left(\int_{0}^{T}\mathbf{u}
(\tau )\Vert _{L^{\infty }(\Omega )}^{2}\,\mathrm{d}\tau \right),\quad \forall
\,t\in \lbrack 0,T].  \label{ener4}
\end{equation}%
Combining \eqref{ener2} with \eqref{ener4}, and integrating \eqref{ener3} on $%
[0,T]$, we obtain
\begin{equation}
\begin{split}
& \max_{t\in \lbrack 0,T]}\Vert \phi _{\lambda }(t)\Vert _{L^{2}(\Omega
)}^{2}+\frac{1}{2}\int_{0}^{T}\Vert \nabla \mu _{\lambda }(\tau )\Vert
_{L^{2}(\Omega )}^{2}\,\mathrm{d}\tau \\
&\leq 2\left( C_{b}+\mathcal{E}(\phi _{0})+\frac{C_{b}}{2}%
\int_{0}^{T}\Vert \mathbf{u}(\tau )\Vert _{L^{\infty }(\Omega )}^{2}\,%
\mathrm{d}\tau \right) 
 \mathrm{exp}\left(\int_{0}^{T}%
\mathbf{u}(\tau )\Vert _{L^{\infty }(\Omega )}^{2}\,\mathrm{d}\tau \right).
%\left( 1+\int_{0}^{T}\Vert \mathbf{u}(\tau )\Vert
%_{L^{\infty }(\Omega )}^{2}\,\mathrm{d}\tau \right)
\end{split}
\label{ener5}
\end{equation}

Next, recalling that $F_{\lambda }^{\prime }$ is Lipschitz and $\phi
_{\lambda }(t)\in H^{1}(\Omega )$ for almost every $t\in \lbrack 0,T]$, it
follows from \cite{MM} that $\nabla F_{\lambda }^{\prime }(\phi _{\lambda
}(t))=F_{\lambda }^{\prime \prime }(\phi _{\lambda }(t))\nabla \phi
_{\lambda }(t)$ for almost every $x\in \Omega $ and $t\in \lbrack 0,T]$. By
definition of $\mu _{\lambda }$ and (a3), we clearly have
\begin{equation}
\left( \frac{\alpha }{1+\alpha }\right) ^{2}\int_{\Omega }|\nabla \phi
_{\lambda }|^{2}\,\mathrm{d}x\leq 2\Vert \nabla \mu _{\lambda }\Vert
_{L^{2}(\Omega )}^{2}+2\Vert J\Vert _{W^{1,1}(\mathbb{R}^{2})}^{2}\Vert \phi
_{\lambda }\Vert _{L^{2}(\Omega )}^{2}.  \label{ener6}
\end{equation}%
%
%
%
%
%
%
%
%
%
%
%
%
%
%
%
%
%
%
%
%
%
%
%
%
%
%
%
%
%
%
%
%
%
%
%
%
%
%
%
%
%
%$$
%(F_\lambda^{\prime\prime}(\phi_{\lambda})\nabla\phi_{\lambda},\nabla\phi_{\lambda})=(\nabla\mu_{\lambda},\nabla\phi_{\lambda})+(\nabla J\ast \phi_{\lambda},\nabla \phi_{\lambda}).
%$$
%By \ref{F3}, $(F_\lambda^{\prime\prime}(\phi_{\lambda})\nabla\phi_{\lambda},
%\nabla\phi_{\lambda})\geq \frac{\alpha}{1+\alpha}\Vert\nabla\phi_{\lambda}\Vert^2$,
%thus, by Young's inequality,
%\begin{align}
%\nonumber&\frac{\alpha}{2}\Vert\nabla\phi_{n,\lambda}\Vert^2\leq C\Vert\nabla\mu_{n,\lambda}\Vert^2+C\Vert\nabla J\Vert_{L^1(\Omega)}^2\Vert\phi_{n,\lambda}\Vert^2
%\\&\leq \widetilde{C}\Vert\nabla\mu_{n,\lambda}\Vert^2+{C}\Vert\nabla J\Vert_{L^1(\Omega)}^2\left(1+\mathcal{E}_\lambda(\phi_{n,\lambda})\right),
%\label{en}
%\end{align}
%for some $\widetilde{C}>0$ and where in the last estimate we used the lower bound \eqref{ener}.
Let us now control $\overline{\mu _{\lambda }}$. Multiplying $\mu _{\lambda
} $ by $\phi _{\lambda }-\overline{\phi _{\lambda }}$ and integrating over $%
\Omega $, we find
\begin{equation*}
\int_{\Omega }F_{\lambda }^{\prime }(\phi _{\lambda })\left( \phi _{\lambda
}-\overline{\phi _{\lambda }}\right) \,\mathrm{d}x=\int_{\Omega }\mu
_{\lambda }\left( \phi _{\lambda }-\overline{\phi _{\lambda }}\right) \,%
\mathrm{d}x+\int_{\Omega }J\ast \phi _{\lambda }\left( \phi _{\lambda }-%
\overline{\phi _{\lambda }}\right) \,\mathrm{d}x.
\end{equation*}%
Observing that $(\overline{\mu _{\lambda }},\phi _{\lambda }-\overline{\phi }%
_{\lambda })=0$, we infer from the properties of $J$, the Poincar\'{e}
inequality, \eqref{energy} and the conservation of mass that
\begin{equation*}
\int_{\Omega }F_{\lambda }^{\prime }(\phi _{\lambda })\left( \phi _{\lambda
}-\overline{\phi }_{\lambda }\right) \,\mathrm{d}x=\int_{\Omega }\left( \mu
_{\lambda }-\overline{\mu _{\lambda }}\right) \left( \phi _{\lambda }-%
\overline{\phi _{\lambda }}\right) \,\mathrm{d}x+\int_{\Omega }J\ast \phi
_{\lambda }\left( \phi _{\lambda }-\overline{\phi _{\lambda }}\right) \,%
\mathrm{d}x\leq C\left( 1+\Vert \nabla \mu _{\lambda }\Vert _{L^{2}(\Omega
)}\right) .
\end{equation*}%
Now, we recall from \cite[Proof of Theorem 3.4]{GGG2017} (which is inspired
by \cite{MZ}) that
\begin{equation}
\Vert F_{\lambda }^{\prime }(\phi _{\lambda })\Vert _{L^{1}(\Omega )}\leq
C^{1}\left\vert \int_{\Omega }F_{\lambda }^{\prime }(\phi _{\lambda })\big(%
\phi _{\lambda }-\overline{\phi _{\lambda }}\big)\,\mathrm{d}x\right\vert
+C^{2},  \label{MZ-2}
\end{equation}%
where $C^{j}$, $j=1,2$, are positive constants that only depend on $F$, $\Omega $ and $\overline{\phi
_{0}}$. Then, combining
the above estimates with \eqref{MZ-2}, we have
\begin{equation}
\begin{split}
|\overline{\mu _{\lambda }}|& \leq \frac{1}{|\Omega |}\left( \int_{\Omega
}|F_{\lambda }^{\prime }(\phi _{\lambda })|\,\mathrm{d}x+\left\vert
\int_{\Omega }J\ast \phi _{\lambda }\,\mathrm{d}x\right\vert \right) \\
& \leq \frac{C^{1}}{|\Omega |}\left\vert \int_{\Omega }F_{\lambda }^{\prime
}(\phi _{\lambda })\big(\phi _{\lambda }-\overline{\phi _{\lambda }}\big)\,%
\mathrm{d}x\right\vert +\frac{C^{2}}{|\Omega |}+\frac{C\Vert J\Vert _{L^{1}(%
\mathbb{R}^{2})}}{|\Omega |}\Vert \phi _{\lambda }\Vert _{L^{2}(\Omega )} \\
& \leq C\left( 1+\Vert \nabla \mu _{\lambda }\Vert _{L^{2}(\Omega )}\right) ,
\end{split}
\label{mu5}
\end{equation}%
where $C$ depends on $F$, $\Omega $, $\overline{\phi _{0}}$, $\mathcal{E}%
(\phi _{0})$ and $\Vert \mathbf{u}\Vert _{L^{2}(0,T;L^{\infty }(\Omega ))}$.
%Hence, we deduce from the Poincar\'{e}-Wirtinger inequality that
%\begin{equation}
%\Vert   \mu_{\lambda}\Vert   _{L^2(0,T;H^1(\Omega))}\leq C.
%\label{mu3}
%\end{equation}
On the other hand, concerning $\partial _{t}\phi _{\lambda }$, it is
immediate to check that
\begin{equation}
\Vert \partial _{t}\phi _{\lambda }\Vert _{H^{1}(\Omega )^{\prime }}\leq
\Vert \phi _{\lambda }\Vert _{L^{2}(\Omega )}\Vert \mathbf{u}\Vert
_{L^{\infty }(\Omega )}+\Vert \nabla \mu _{\lambda }\Vert _{L^{2}(\Omega )}.
\label{ener9}
\end{equation}%
%
%
%
%
%
%
%
%
%
%
%
%
%
%
%
%
%
%
%
%
%
%
%
%
%
%
%
%
%
%
%
%
%
%
%
%
%
%
%
%
%
%\begin{align*}
%\vert ( \phi_\lambda \uu,\nabla \psi) \vert
%&\leq\Vert   \uu_{n,\lambda}\Vert   _{\textbf{L}^4(\Omega)}\Vert   \phi_{n,\lambda}\Vert   _{L^4(\Omega)}\Vert   \nabla\psi\Vert   \\&\leq C\Vert   \uu_{n,\lambda}\Vert   _{\textbf{L}^4(\Omega)} \Vert\phi_{n,\lambda}\Vert^{1/2}\Vert   \phi_{n,\lambda}\Vert^{1/2}   _{V}\Vert   \psi\Vert   _{V}\leq C_\lambda\left(\Vert   \uu_{n,\lambda}\Vert   _{\textbf{L}^4(\Omega)}^2+ \Vert\phi_{n,\lambda}\Vert_V\right)\Vert   \psi\Vert   _{V},
%\end{align*}
%so that
%\begin{equation*}
%\left\Vert  \partial_t \phi_{\lambda}\right\Vert  _{H^1(\Omega)'}
%\leq C_\lambda(\Vert   \uu_{n,\lambda}\Vert _{L^4(\Omega)}^2   +\Vert   \nabla\mu_{n,\lambda}\Vert+\Vert\phi_{n,\lambda}\Vert_V),
%\end{equation*}
%that implies, by the regularity of $\uu$,
%\begin{equation}
%\left\Vert  \partial_t \phi_{\lambda} \right\Vert  _{L^2(0,T;V')}\leq C_\lambda.
%\label{phi3}
%\end{equation}
Therefore, owing to \eqref{ener5}, \eqref{ener6}, \eqref{mu5} and \eqref{ener9}%
, we obtain
\begin{equation}
\Vert \phi _{\lambda }\Vert _{L^{\infty }(0,T;L^{2}(\Omega ))}+\Vert \nabla
\phi _{\lambda }\Vert _{L^{2}(0,T;L^{2}(\Omega ))}+\Vert \partial _{t}\phi
_{\lambda }\Vert _{L^{2}(0,T;H^{1}(\Omega )^{\prime })}+\Vert \mu _{\lambda
}\Vert _{L^{2}(0,T;H^{1}(\Omega ))}\leq C.  \label{energy}
\end{equation}%
In addition, we get by comparison that
\begin{equation}
\Vert F_{\lambda }^{\prime }(\phi _{\lambda })\Vert _{L^{2}(0,T;H^{1}(\Omega
))}\leq C.  \label{energy2}
\end{equation}%
\smallskip

\noindent \textbf{Sobolev estimates.}
We derive higher-order Sobolev estimates following the argument used in \cite%
{DPGG2018}. To this aim, we introduce the difference quotient $\partial
_{t}^{h}f(t)=h^{-1}\left( f(t+h)-f(t)\right) $ and the shift $%
S^{h}f(t)=f(t+h)$ for $0<t<T-h$. Subtracting now the weak formulation %
\eqref{w-nCH-l1} evaluated at time $t$ from the one at time $t+h$, dividing
by $h$ and choosing $\mathcal{N}\partial _{t}^{h}\phi _{\lambda }$ as test
function, we obtain
\begin{equation}
\frac{1}{2}\frac{\mathrm{d}}{\mathrm{d}t}\Vert \partial _{t}^{h}\phi
_{\lambda }\Vert _{\ast }^{2}-\int_{\Omega }S^{h}\phi _{\lambda }\,\partial
_{t}^{h}\mathbf{u}\cdot \nabla \mathcal{N}\partial _{t}\phi _{\lambda }\,%
\mathrm{d}x-\int_{\Omega }\partial _{t}^{h}\phi _{\lambda }\,\mathbf{u}\cdot
\nabla \mathcal{N}\partial _{t}^{h}\phi _{\lambda }\,\mathrm{d}%
x+\int_{\Omega }\partial _{t}^{h}\mu _{\lambda }\partial _{t}^{h}\phi
_{\lambda }\,\mathrm{d}x=0.  \label{differ}
\end{equation}%
By (a3), we have
\begin{equation}
\begin{split}
\int_{\Omega }\partial _{t}^{h}\mu _{\lambda }\partial _{t}^{h}\phi
_{\lambda }\,\mathrm{d}x& =\int_{\Omega }\frac{1}{h}\left( F_{\lambda
}^{\prime }(S^{h}\phi _{\lambda })-F_{\lambda }^{\prime }(\phi _{\lambda
})\right) \partial _{t}^{h}\phi _{\lambda }\,\mathrm{d}x-\int_{\Omega
}\left( J\ast \partial _{t}^{h}\phi _{\lambda }\right) \partial _{t}^{h}\phi
_{\lambda }\,\mathrm{d}x \\
& \geq \frac{\alpha }{1+\alpha }\Vert \partial _{t}^{h}\phi _{\lambda }\Vert
_{L^{2}(\Omega )}^{2}-\int_{\Omega }\left( J\ast \partial _{t}^{h}\phi
_{\lambda }\right) \partial _{t}^{h}\phi _{\lambda }\,\mathrm{d}x.
\end{split}
\label{mun}
\end{equation}%
Also, we observe that (cf. \eqref{mu-diff})
\begin{equation}
\begin{split}
(J\ast \partial _{t}^{h}\phi _{\lambda },\partial _{t}^{h}\phi _{\lambda })&
\leq \Vert \nabla J\ast \partial _{t}^{h}\phi _{\lambda }\Vert
_{L^{2}(\Omega )}\Vert \nabla \mathcal{N}\partial _{t}^{h}\phi _{\lambda
}\Vert _{L^{2}(\Omega )} \\
& \leq \frac{\alpha }{4(1+\alpha )}\Vert \partial _{t}^{h}\phi _{\lambda
}\Vert _{L^{2}(\Omega )}^{2}+C\Vert \partial _{t}^{h}\phi _{\lambda }\Vert
_{\ast }^{2}.
\end{split}%
\end{equation}%
On the other hand, we infer from \eqref{H_2}, \eqref{LADY} and \eqref{ener5}
that
\begin{align*}
\left\vert \int_{\Omega }S^{h}\phi _{\lambda }\,\partial _{t}^{h}\mathbf{u}%
\cdot \nabla \mathcal{N}\partial _{t}\phi _{\lambda }\,\mathrm{d}%
x\right\vert & \leq \Vert \partial _{t}^{h}\mathbf{u}\Vert _{L^{4}(\Omega
)}\Vert S^{h}\phi _{\lambda }\Vert _{L^{2}(\Omega )}\Vert \nabla \mathcal{N}%
\partial _{t}^{h}\phi _{\lambda }\Vert _{L^{4}(\Omega )} \\
& \leq \frac{\alpha }{4(1+\alpha )}\Vert \partial _{t}^{h}\phi _{\lambda
}\Vert _{L^{2}(\Omega )}^{2}+C\Vert \partial _{t}^{h}\mathbf{u}\Vert
_{L^{4}(\Omega )}^{2}
\end{align*}%
and
\begin{equation*}
\left\vert \int_{\Omega }\partial _{t}^{h}\phi _{\lambda }\,\mathbf{u}\cdot
\nabla \mathcal{N}\partial _{t}^{h}\phi _{\lambda }\,\mathrm{d}x\right\vert
\leq \frac{\alpha }{4(1+\alpha )}\Vert \partial _{t}^{h}\phi _{\lambda
}\Vert _{L^{2}(\Omega )}^{2}+C\Vert \mathbf{u}\Vert _{L^{\infty }(\Omega
)}^{2}\Vert \partial _{t}^{h}\phi _{\lambda }\Vert _{\ast }^{2}.
\end{equation*}%
Therefore, we derive from \eqref{differ} that
\begin{equation}  \label{increm}
\frac{1}{2}\frac{\mathrm{d}}{\mathrm{d}t}\Vert \partial _{t}^{h}\phi
_{\lambda }\Vert _{\ast }^{2}+\frac{\alpha }{4(1+\alpha )}\Vert \partial
_{t}^{h}\phi _{\lambda }\Vert _{L^{2}(\Omega )}^{2}\leq C\left( 1+\Vert
\mathbf{u}\Vert _{L^{\infty }(\Omega )}^{2}\right) \Vert \partial
_{t}^{h}\phi _{\lambda }^{k}\Vert _{\ast }^{2}+C\Vert \partial _{t}^{h}%
\mathbf{u}\Vert _{L^{4}(\Omega )}^{2}.
\end{equation}%
Recalling that $\overline{\phi _{\lambda }}(t)\equiv \overline{\phi _{0}}$,
and thereby $\overline{\partial _{t}\phi }(t)\equiv 0$, for all $t\in
\lbrack 0,T]$, we infer from \eqref{w-nCH-l1} that
\begin{equation}
\frac{1}{2}\frac{\mathrm{d}}{\mathrm{d}t}\Vert \phi _{\lambda }-{\phi }%
_{0}\Vert _{\ast }^{2}-\int_{\Omega }\phi _{\lambda }\,\mathbf{u}\cdot
\nabla \mathcal{N}(\phi _{\lambda }-{\phi }_{0})\,\mathrm{d}x+\int_{\Omega
}\mu \left( \phi _{\lambda }-\phi _{0}\right) \,\mathrm{d}x=0.
\label{contr-0h}
\end{equation}%
Observing that
\begin{equation*}
\int_{\Omega }\left( F_{\lambda }^{\prime }(\phi _{\lambda })-F_{\lambda
}^{\prime }(\phi _{0})\right) \left( \phi _{\lambda }-\phi _{0}\right) \,%
\mathrm{d}x\geq 0,
\end{equation*}%
and exploiting \eqref{ener5}, we obtain
\begin{align*}
-\int_{\Omega }\mu \left( \phi _{\lambda }-{\phi }_{0}\right) \,\mathrm{d}x&
=-\int_{\Omega }F_{\lambda }^{\prime }(\phi _{\lambda })\left( \phi
_{\lambda }-{\phi }_{0}\right) \,\mathrm{d}x+\int_{\Omega }J\ast \phi
_{\lambda }\left( \phi _{\lambda }-{\phi }_{0}\right) \,\mathrm{d}x \\
& \leq \int_{\Omega }F_{\lambda }^{\prime }(\phi _{0})\left( \phi _{\lambda
}-{\phi }_{0}\right) \,\mathrm{d}x+\int_{\Omega }J\ast \phi _{\lambda
}\left( \phi _{\lambda }-{\phi }_{0}\right) \,\mathrm{d}x \\
& \leq \Vert \nabla F_{\lambda }^{\prime }(\phi _{0})\Vert _{L^{2}(\Omega
)}\Vert \nabla \mathcal{N}(\phi _{\lambda }-{\phi }_{0})\Vert _{L^{2}(\Omega
)}+\Vert \nabla J\Vert _{L^{1}(\mathbb{R}^{2})}\Vert \phi _{\lambda }\Vert
_{L^{2}(\Omega )}\Vert \nabla \mathcal{N}(\phi _{\lambda }-{\phi }_{0})\Vert
_{L^{2}(\Omega )} \\
& \leq C(1+\Vert \nabla F_{\lambda }^{\prime }(\phi _{0})\Vert
_{L^{2}(\Omega )})\Vert \phi _{\lambda }-{\phi }_{0}\Vert _{\ast }.
\end{align*}%
Similarly, we have
\begin{equation*}
\left\vert \int_{\Omega }\phi _{\lambda }\,\mathbf{u}\cdot \nabla \mathcal{N}%
(\phi _{\lambda }-{\phi }_{0})\,\mathrm{d}x\right\vert \leq \Vert \mathbf{u}%
\Vert _{L^{\infty }(\Omega )}\Vert \phi _{\lambda }\Vert _{L^{2}(\Omega
)}\Vert \phi _{\lambda }-{\phi }_{0}\Vert _{\ast }\leq C\Vert \mathbf{u}%
\Vert _{L^{\infty }(\Omega )}\Vert \phi _{\lambda }-{\phi }_{0}\Vert _{\ast
}.
\end{equation*}%
In conclusion, integrating \eqref{contr-0h} in $(0,t)$ for $t\in (0,T)$, we
find
\begin{equation}
\frac{1}{2}\Vert \phi _{\lambda }(t)-{\phi }_{0}\Vert _{\ast }^{2}\leq
C\left( 1+\Vert \nabla F_{\lambda }^{\prime }(\phi _{0})\Vert _{L^{2}(\Omega
)}+\Vert \mathbf{u}\Vert _{L^{\infty }(0,T;L^{\infty }(\Omega ))}\right)
\int_{0}^{t}\Vert \phi _{\lambda }^{k}(s)-{\phi }_{0,k}\Vert _{\ast }\,%
\mathrm{d}s  \label{increm-2}
\end{equation}%
and a well-known version of the Gronwall lemma (see \cite[Lemma A.5]{Brezis})
implies that
\begin{equation*}
\frac{1}{2}\Vert \phi _{\lambda }(t)-{\phi }_{0}\Vert _{\ast }\leq t\left(
1+\Vert \nabla F_{\lambda }^{\prime }(\phi _{0})\Vert _{L^{2}(\Omega
)}+\Vert \mathbf{u}\Vert _{L^{\infty }(0,T;L^{\infty }(\Omega ))}\right)
,\quad \forall \,t\in (0,T).
\end{equation*}%
In order to obtain an uniform estimate in $\lambda $, we are left to control
$\Vert \nabla F_{\lambda }^{\prime }(\phi _{0})\Vert _{L^{2}(\Omega )}$. To
this aim, we first recall from \cite[Lemma 3.10]{GGG2017} that
\begin{equation*}
F_{\lambda }^{\prime \prime }(s)=\frac{1}{\lambda }\left[ 1-\frac{1}{%
1+\lambda F^{\prime \prime }(J_{\lambda }(s))}\right] ,\quad \forall \,s\in
(-1,1),
\end{equation*}%
where $J_{\lambda }$ is the resolvent operator. In light of \cite[Chapter
IV, Proposition 1.7]{SHOW1997}, $J_{\lambda }(s)\rightarrow s$ for all $s\in
(-1,1)$, which entails that $F_{\lambda }^{\prime \prime }(s)\rightarrow
F^{\prime \prime }(s)$ for all $s\in (-1,1)$. Furthermore, since $J_{\lambda
}(0)=0$ (cf. $F^{\prime }(0)=0$) and $J_{\lambda }$ is a contraction, $%
J_{\lambda }$ is bounded on compact subset of $(-1,1)$ independently of $%
\lambda $. Observing that $F_{\lambda }^{\prime \prime }(s)\leq F^{\prime
\prime }(J_{\lambda }(s))$, it follows that $F_{\lambda }^{\prime \prime
}(s) $ is also bounded on compact subset of $(-1,1)$ independently of $%
\lambda $. In particular, since $\Vert \phi _{0}\Vert _{L^{\infty }(\Omega
)}<1$, we have that $\Vert F_{\lambda }^{\prime \prime }(\phi _{0})\Vert
_{L^{\infty }(\Omega )}\leq C_{F}$, where $C_{F}$ is independent of $\lambda
$. By Lebesgue's dominated convergence theorem, we infer that
\begin{equation}
\lim_{\lambda \rightarrow 0}\Vert \nabla F_{\lambda }^{\prime }(\phi
_{0})\Vert _{L^{2}(\Omega )}=\lim_{\lambda \rightarrow 0}\Vert F_{\lambda
}^{\prime \prime }(\phi _{0})\nabla \phi _{0}\Vert _{L^{2}(\Omega )}=\Vert
F^{\prime \prime }(\phi _{0})\nabla \phi _{0}\Vert _{L^{2}(\Omega )}\leq
C_{F}\Vert \phi _{0}\Vert _{H^{1}(\Omega )}.  \label{F''-conv}
\end{equation}%
Therefore, choosing $t=h$ in \eqref{increm-2} and exploiting \eqref{F''-conv}
and $\mathbf{u}\in C_0^\infty((0,T);C_{0,\sigma }^{\infty }(\Omega ;\mathbb{R}%
^{2})$), we conclude that $\Vert \partial _{t}^{h}\phi _{\lambda }(0)\Vert _{\ast }\leq C$. Now, an application of Gronwall's lemma to \eqref{increm} entails that
\begin{equation*}
\begin{split}
& \max_{t\in \lbrack 0,T-h]}\Vert \partial _{t}^{h}\phi _{\lambda }(t)\Vert
_{\ast }^{2}+\int_{0}^{T}\Vert \partial _{t}^{h}\phi _{\lambda }(\tau )\Vert
_{L^{2}(\Omega )}^{2}\,\mathrm{d}\tau \\
& \quad \leq C\left( \Vert \partial _{t}^{h}\phi _{\lambda }(0)\Vert _{\ast
}^{2}+\int_{0}^{T}\Vert \partial _{t}^{h}\mathbf{u}(\tau )\Vert
_{L^{4}(\Omega )}^{2}\,\mathrm{d}\tau \right) \mathrm{exp} \left(CT+C\int_{0}^{T}\Vert \mathbf{u}(\tau )\Vert _{L^{\infty }(\Omega )}^{2}\,%
\mathrm{d}\tau  \right).
%\left( 1+T+\int_{0}^{T}\Vert \mathbf{u}(\tau )\Vert
%_{L^{\infty }(\Omega )}^{2}\,\mathrm{d}\tau \right) .
\end{split}%
\end{equation*}%
Recalling the inequality $\Vert \partial _{t}^{h}%
\mathbf{u}\Vert _{L^{2}(0,T-h;L^{4}(\Omega ))}\leq \Vert \partial _{t}%
\mathbf{u}\Vert _{L^{2}(0,T;L^{4}(\Omega ))}$, we conclude that
\begin{equation*}
\Vert \partial _{t}^{h}\phi _{\lambda }\Vert _{L^{\infty }(0,T;H^{1}(\Omega
)^{\prime })}+\Vert \partial _{t}^{h}\phi _{\lambda }\Vert
_{L^{2}(0,T;L^{2}(\Omega ))}\leq C,
\end{equation*}%
where $C$ is also independent of $h$. Passing then to the limit as
$h\rightarrow 0$, this gives
\begin{equation}
\Vert \partial _{t}\phi _{\lambda }\Vert _{L^{\infty }(0,T;H^{1}(\Omega
)^{\prime })}+\Vert \partial _{t}\phi _{\lambda }\Vert
_{L^{2}(0,T;L^{2}(\Omega ))}\leq C. \label{dtt}
\end{equation}%
Next, by comparison in %
\eqref{w-nCH-l1} and using \eqref{mu5}, we easily obtain that
\begin{equation*}
\Vert \mu _{\lambda }\Vert _{L^{\infty }(0,T;H^{1}(\Omega ))}+\Vert
F_{\lambda }^{\prime }(\phi _{\lambda })\Vert _{L^{\infty }(0,T;H^{1}(\Omega
))}\leq C.
\end{equation*}%
In addition, by elliptic regularity, we have
\begin{equation*}
\Vert \mu _{\lambda }\Vert _{H^{2}(\Omega )}\leq C\left( \Vert \partial
_{t}\phi _{\lambda }\Vert _{L^{2}(\Omega )}+\Vert \mathbf{u}\cdot \nabla
\phi _{\lambda }\Vert _{L^{2}(\Omega )}\right) \leq C\left( \Vert \partial
_{t}\phi _{\lambda }\Vert _{L^{2}(\Omega )}+\Vert \mathbf{u}\Vert
_{L^{\infty }(\Omega )}\Vert \nabla \phi _{\lambda }\Vert _{L^{2}(\Omega
)}\right) .
\end{equation*}%
Thus, thanks to \eqref{energy} and \eqref{dtt}, we also infer that
\begin{equation}
\Vert \mu _{\lambda }\Vert _{L^{2}(0,T;H^{2}(\Omega ))}\leq C,  \label{h2}
\end{equation}%
Finally, recalling that $F_{\lambda
}^{\prime \prime }(\phi _{\lambda })\nabla \phi _{\lambda }=\nabla \mu
_{\lambda }+\nabla J\ast \phi _{\lambda }$ almost everywhere in $\Omega
\times (0,T)$, we find (cf. also \eqref{Lp})
\begin{equation}
\Vert \nabla \phi _{\lambda }\Vert _{L^{p}(\Omega )}\leq C\left( 1+\Vert
\nabla \mu _{\lambda }\Vert _{L^{p}(\Omega )}\right) .  \label{mu6}
\end{equation}%
Then, by making use of the interpolation inequality $\Vert u\Vert
_{L^{q}(0,T;L^{p}(\Omega ))}\leq C\Vert u\Vert _{L^{\infty
}(0,T;L^{2}(\Omega ))}\Vert u\Vert _{L^{2}(0,T;H^{1}(\Omega ))}$, where $q=%
\frac{2p}{p-2}$ and $p\in (2,\infty )$, and by using \eqref{mu6} and %
\eqref{h2}, we are led to
\begin{equation}
\Vert \phi _{\lambda }\Vert _{L^{q}(0,T;W^{1,p}(\Omega ))}\leq C,\quad q=%
\frac{2p}{p-2},\quad \forall \,p\in (2,\infty ).  \label{Lq-Lp}
\end{equation}%
\smallskip

\noindent \textbf{Passage to the limit and further regularities.} Thanks to
the above estimates \eqref{energy}-\eqref{energy2}, \eqref{dtt}-\eqref{h2}
and to the convergence properties (a5), we deduce by standard compactness
arguments and by passing to the limit as $\lambda \rightarrow 0$ in %
\eqref{w-nCH-l1} that there exist $\phi \in L^{\infty }(0,T;H^{1}(\Omega
))\cap L^{q}(0,T;W^{1,p}(\Omega ))$, where $q=\frac{2p}{p-2}$ and $p\in
(2,\infty )$, such that $\partial _{t}\phi \in L^{\infty }(0,T;H^{1}(\Omega
)^{\prime })\cap L^{2}(0,T;L^{2}(\Omega ))$ and $\mu \in L^{\infty
}(0,T;H^{1}(\Omega ))\cap L^{2}(0,T;H^{2}(\Omega ))$, which satisfy
the problem \eqref{CH-strong_sense}.
Furthermore, by a classical argument for singular potentials (see,
e.g., the proof of \cite[Theorem 3.15]{GGG2017}), we deduce that $\phi \in
L^{\infty }(\Omega \times (0,T))$ such that $|\phi|<1$ almost
everywhere in $\Omega \times (0,T)$.
%Exploiting \eqref{energy_bis2}, \eqref{dtt}-\eqref{h2}, by standard compactness arguments (see, e.g., the proof of \cite[Thm 3.15]{GGG2017}) we can then pass to the limit as $\lambda\rightarrow 0$, Notice that, by classical arguments, we have also that $\vert\phi^k\vert<1$ almost everywhere in $\Omega\times(0,T)$. %Thus, since we have $\Vert\phi^k\Vert_{L^\infty(\Omega\times(0,T))}\leq1$,
%by comparison we immediately infer
%$$\Vert\partial_t\phi^k\Vert_{V^\prime}\leq C(\Vert\nabla\mu^k\Vert+\Vert\uu\Vert),$$
%implying, by \eqref{a2} and the regularity of $\uu$,
%$$
%\Vert\partial_t\phi^k\Vert_{L^\infty(0,T;V^\prime)}\leq C_k,
%$$
By comparison in \eqref{CH-strong_sense}$_1$, we infer that $F^{\prime }(\phi
)\in L^{\infty }(0,T;H^{1}(\Omega ))$. In light of assumption \ref{h3},
arguing as in \cite{GGG2022} we find $F^{\prime \prime }(\phi )\in
L^{\infty }(0,T;L^{p}(\Omega ))$ for all $p\in \lbrack 2,\infty )$. Owing to
this regularity, we can recast the argument in \cite[%
Section 4.1]{GGG2022} for the advective case by observing that the
corresponding drift term vanishes once again (i.e., $\mathcal{Z}=0,$ cf. %
\eqref{Z-term}, see the proof of Theorem \ref{ExistCahn}, (ii)). This yields
the existence of a constant $\delta >0$ such that $\Vert \phi \Vert
_{L^{\infty }(\Omega \times (0,T))}\leq 1-\delta $. To conclude this proof,
we are left to show an estimate for $\partial _{t}\mu $. We observe that
\begin{equation}
\partial _{t}^{h}\mu =\partial _{t}^{h}\phi \left( \int_{0}^{1}F^{\prime
\prime }(sS^{h}\phi +(1-s)\phi )\,\mathrm{d}s\right) -J\ast \partial
_{t}^{h}\phi ,\quad 0<t\leq T-h.  \label{a}
\end{equation}%
By the separation property, $\Vert sS^{h}\phi +(1-s)\phi )\Vert
_{L^{\infty }(\Omega \times (0,T-h))}\leq 1-\delta $ for all $s\in (0,1)$.
Then, by the properties of $J$ and exploiting that $\Vert \partial
_{t}^{h}\phi \Vert _{L^{2}(0,T-h;L^{2}(\Omega ))}\leq \Vert \partial
_{t}\phi \Vert _{L^{2}(0,T;L^{2}(\Omega ))}$, we obtain that
$
\Vert \partial
_{t}^{h}\mu ^{k}\Vert _{L^{2}(0,T-h;L^{2}(\Omega ))}\leq C,
$
where $C>0$ is independent of $h$. This implies that $\partial _{t}\mu \in L^{2}(0,T;L^{2}(\Omega ))$.
The proof is now completed.
\end{proof}

\medskip\noindent
{\bf Acknowledgments.} M. Grasselli and A. Poiatti have been partially funded by MIUR-PRIN research grant n. 2020F3NCPX.
M. Grasselli and A. Giorgini are members of Gruppo Nazionale per l'Analisi Ma\-te\-ma\-ti\-ca, la Probabilit\`{a} e le loro Applicazioni (GNAMPA), Istituto Nazionale di Alta Matematica (INdAM).

\end{document}